\numberwithin{equation}{section}
\title[A Noncommutative Transport Metric]{A Noncommutative Transport Metric and Symmetric Quantum Markov Semigroups as Gradient Flows of the Entropy}
\author[Wirth]{Melchior Wirth}
\address{{Institute of Mathematics, Department of Mathematics and Computer Science, Friedrich Schiller University Jena, 07737 Jena, Germany\newline Current address: Institute of Science and Technology Austria (IST Austria), Am Campus 1, 3400 Klosterneuburg, Austria}}
\email{melchior.wirth@ist.ac.at}
\thanks{The author was financially supported by the German Academic Scholarship Foundation (Studienstiftung des deutschen Volkes) and the German Research Foundation (DFG) via RTG 1523.}
\subjclass{81S22 (primary), 35K20, 35R20, 46L57, 47D06 (secondary)}
\newcommand{\A}{\mathcal{A}}
\newcommand{\B}{\mathcal{B}}
\def\C{\mathcal{C}}
\newcommand{\D}{\mathcal{D}}
\newcommand{\IC}{\mathbb{C}}
\newcommand{\E}{\mathcal{E}}
\newcommand{\F}{\mathcal{F}}
\newcommand{\Gam}{\boldsymbol{\Gamma}}
\renewcommand{\H}{\mathcal{H}}
\newcommand{\I}{\mathcal{I}}
\renewcommand{\L}{\mathcal{L}}
\newcommand{\M}{\mathcal{M}}
\newcommand{\IN}{\mathbb{N}}
\newcommand{\p}{\mathfrak{p}}
\newcommand{\IR}{\mathbb{R}}
\newcommand{\W}{\mathcal{W}}
\newcommand{\IZ}{\mathbb{Z}}
\renewcommand{\epsilon}{\varepsilon}
\renewcommand{\phi}{\varphi}
\newcommand{\AC}{\mathrm{AC}}
\newcommand{\AM}{\mathrm{AM}}
\renewcommand{\div}{\operatorname{div}}
\newcommand{\Ent}{\mathrm{Ent}}
\DeclareMathOperator*{\esssup}{ess\,sup}
\newcommand{\EVI}{\mathrm{EVI}}
\newcommand{\GE}{\mathrm{GE}}
\newcommand{\loc}{\mathrm{loc}}
\renewcommand{\Re}{\operatorname{Re}}
\newcommand{\tr}{\operatorname{tr}}
\newcommand{\vol}{\mathrm{vol}}
\newcommand{\lra}{\longrightarrow}
\newcommand{\1}{\mathds{1}}
\newcommand{\abs}[1]{\lvert#1\rvert}
\newcommand{\Abs}[1]{\left\lvert#1\right\rvert}
\newcommand{\norm}[1]{\lVert#1\rVert}
\theoremstyle{plain}
\newtheorem{lemma}{Lemma}[section]
\newtheorem{theorem}[lemma]{Theorem}
\newtheorem{proposition}[lemma]{Proposition}
\newtheorem{corollary}[lemma]{Corollary}
\theoremstyle{definition}
\newtheorem{definition}[lemma]{Definition}
\theoremstyle{remark}
\newtheorem{remark}[lemma]{Remark}
\newtheorem{example}[lemma]{Example}
\begin{document}

\begin{abstract}
We study quantum Dirichlet forms and the associated symmetric quantum Markov semigroups on noncommutative $L^2$ spaces. It is known from the work of Cipriani and Sauvageot that these semigroups induce a first order differential calculus, and we use this differential calculus to define a noncommutative transport metric on the set of density matrices. This construction generalizes both the $L^2$-Wasserstein distance on a large class of metric spaces as well as the discrete transport distance introduced by Maas, Mielke, and Chow--Huang--Li--Zhou. Assuming a Bakry--Émery-type gradient estimate, we show that the quantum Markov semigroup can be viewed as a metric gradient flow of the entropy with respect to this transport metric. Under the same assumption we also establish that the set of density matrices with finite entropy endowed with the noncommutative transport metric is a geodesic space and that the entropy is semi-convex along these geodesics.
\end{abstract}

\maketitle

\tableofcontents
\addtocontents{toc}{\protect\setcounter{tocdepth}{1}}

\section*{Introduction}

Since the seminal work of Jordan--Kinderlehrer--Otto \cite{JKO98} and Otto \cite{Ott01} it is known that the space of probability measures on $\IR^d$ endowed with the $L^2$-Wasserstein metric can, at least formally, be viewed as an infinite-dimensional Riemannian manifold and the heat flow as gradient flow of the Boltzmann entropy.

This insight spawned a lot of subsequent activity, extending the gradient flow characterization to various other geometric settings (see \cite{AGS14a,AS18,Erb10,GKO13,Jui14,OS09}) as well as to other evolution equations (see \cite{Erb16,Ott01}).

Not the least, the characterization of the heat flow as gradient flow of the entropy played a crucial role in the work of Ambrosio, Gigli and Savaré \cite{AGS14a,AGS14b,AGS15} that provided an understanding of the connection between synthetic lower bounded Ricci curvature  bounds in the sense of Lott--Sturm--Villani \cite{LV09,Stu06a,Stu06b} and Bakry--Émery \cite{BE85}.

In contrast, for discrete spaces respectively evolution equations with nonlocal generators, the Monge--Kantorovich formulation of transport distances has turned out not to be useful in this direction: While one can define the Wasserstein distances $W_p$ for an arbitrary metric $d$, absolutely continuous curves in the Wasserstein space are constant when $d$ is discrete and thus there are no non-trivial gradient flows.

However, Maas \cite{Maa11}, Mielke \cite{Mie11}, and Chow--Huang--Li--Zhou \cite{CHLZ12} independently defined a discrete transport metric $\W$ on the set of probability densities over a finite graph such that the heat flow for the graph Laplacian coincides with the gradient flow of the entropy with respect to $\W$. Instead of the Monge--Kantorovich optimal transport problem, their approach is based on a discrete version of the Benamou--Brenier formula \cite{BB00}, which gives an equivalent description of the $L^2$-Wasserstein metric on probability measures on Euclidean space.

This new metric has already proven to be very fertile. On the one hand, the gradient flow characterization has been generalized to the heat equation for generators of jump processes \cite{Erb14} as well as a variety of other evolution equations on graphs \cite{CLZ18,EM14,EFLS16,LM13}. On the other, (variants of) the metric $\W$ has been used (among other things) to define lower Ricci curvature bounds for graphs \cite{EM12} and to study a new discrete version of the nonlinear Schrödinger equation \cite{CLZ19}

Moreover, in recent years have seen new activity in the study of matrix-valued optimal transport with several groups studying a version of the metric $\W$ for matrix algebras (see \cite{CM14,CM17a,CGGT17,CGT18a,MM17}); and, independently, Brenier \cite{Bre17a,Bre18} discovered a surprising connection between matrix-valued optimal transport and fluid dynamics.  Notably, Carlen and Maas \cite{CM14,CM17a} showed that the metric $\W$ allows to view the flow determined by a finite-dimensional Markovian quantum master equation as gradient flow of the von Neumann entropy.

Both in the case of graphs and matrix algebras, all work so far has been limited to a finite-dimensional setting and the problem of extending it to the infinite-dimensional case has been raised in several of the aforementioned articles. This problem is solved in the present article. More precisely, we give a definition of $\W$ and a characterization of the flow defined by a Markovian quantum master equation as gradient flow of the entropy for \emph{quantum Dirichlet forms} -- a setting that generalizes many of the ones above -- based on the first order differential calculus developed by Cipriani and Sauvageot \cite{CS03}.

In particular, this article gives the first unified approach to the results in the local case (for example the heat equation on Euclidean space, manifolds, infinitesimally Riemannian metric measure spaces) on the one hand and non-local case (e.g. heat equation on graphs, for fractional powers of the Laplacian) on the other hand, which could only be treated by analogy until now.

On the noncommutative side, this setting does not only treat infinite-dimensional quantum systems, but also some classical examples of noncommutative geometry such as the noncommutative heat semigroup on the noncommutative torus. This could open the door to a theory of Ricci curvature for noncommutative spaces, a concept that has been notoriously elusive in noncommutative geometry until now.

Let us shortly comment on the differences to prior work. In contrast to the case of metric measure spaces, many powerful tools of optimal transport are not available here, and in the Benamou--Brenier formulation, the continuity equation depends linearly on the measure density in the local case, while in our setting, it is in general a nonlinear equation in the density.

These problems have already been tackled successfully in the non-local case of graphs and jump processes, however, the necessary analysis of monotonicity and convexity properties turns out more difficult in the noncommutative setting as operator monotonicity and operator convexity are decidedly more rigid notions than their commutative counterparts.

Compared to previous work on matrix-valued optimal transport, we deal not only with quantum Markov semigroups on an infinite-dimensional space, but in general with semigroups that are not uniformly bounded. This means that the Lindblad characterization of generators of quantum Markov semigroups, which is a central tool for the construction of $\W$ for matrix algebras, is no longer applicable. Moreover, the density matrices in our setting are not only operators on an infinite-dimensional space (as opposed to matrices), but in general unbounded ones. For that reason our analysis requires a careful adaptation of classical tools for operator monotonicity and convexity, which are usually only developed for bounded operators.  Furthermore, it is only in the infinite-dimensional case that the full power of the theory of gradient flows in metric spaces is needed, whereas in finite dimensions the elementary theory of gradient flows on Riemannian manifolds suffices.

Let us therefore stress that the infinite-dimensional setting does not only include new classes of examples, but that it is also necessary for unified treatment of the local and non-local case is possible since locality is a purely infinite-dimensional phenomenon (incidentally, it did not appear in the seminal work of Beurling--Deny \cite{BD58} on Dirichlet forms, as they only treated the finite-dimensional case).

Among other possible applications, we hope to lay the ground for a systematic study of displacement convexity of the entropy for infinite-dimensional quantum systems, a topic which has already proven useful for convergence results in the finite-dimensional case \cite{CM17a}.

Moreover, the theory developed here could provide a framework for approximation results of smooth spaces or infinite-dimensional systems by discrete spaces or finite-dimensional systems, which so far have only been treated in some particular cases \cite{GM13a,Tri17}.

Let us summarize the content of this article in some more detail. In Section \ref{Basics} we recall some basic facts about noncommutative integration and quantum Dirichlet forms, including the first order differential calculus of Cipriani and Sauvageot. One classical example of a (commutative) Dirichlet form is the Dirichlet energy on $\IR^n$, that is,
\begin{align*}
\E(u)=-\int u\Delta u\,dx.
\end{align*}
By partial integration, $\E$ can equivalently be expressed as
\begin{align*}
\E(u)=\int_{\IR^n}\abs{\nabla u}^2,
\end{align*}
and $\nabla$ is a derivation in the sense that it satisfies the product rule $\nabla(uv)=u\nabla v+v\nabla u$.

Now, if $\E$ is a Dirichlet form on the noncommutative $L^2$ space $L^2(\M,\tau)$ with generator $\L$, the first order differential calculus of Cipriani and Sauvageot (Theorem \ref{diff_calc}) asserts that it can be represented in the same way, that is, there exists a Hilbert bimodule $\H$ and an operator $\partial$ with values in $\H$ such that
\begin{align*}
\E(a)=\norm{\partial a}_\H^2
\end{align*}
and $\partial$ satisfies the product rule $\partial (ab)=a\partial b+(\partial a)b$. Notice that unlike in the case of the Dirichlet energy on $\IR^n$, the left and right multiplication on $\H$ may be different.

A priori, the left and right multiplication on $\H$ are only defined for elements in $D(\E)\cap \M$. In Section \ref{energy_dominance} we study when they can be extended to all of $\M$. It turns out that this question is closely related to the carré du champ
\begin{align*}
\Gam(a)(x)=\langle x\partial a,\partial a\rangle_\H.
\end{align*}
The carré du champ is $\sigma$-weakly continuous (in the commutative case, this holds if and only if the energy measure is absolutely continuous with respect to the reference measure) for all $a\in D(\E)$ if and only if the left and right multiplication have a $\sigma$-weakly continuous extension to all of $\M$ (Theorem \ref{char_energy_dom}). For the rest of the article we work under the standing assumption that this property holds.

In the classical Otto calculus on Euclidean space, the tangent space of $\mathcal{P}(\IR^n)$ at $\rho\, dx$ is identified with $H^1(\IR^n, \rho\,dx)$. In particular, the norm of the tangent vector $\psi$ is given by $\int \abs{\nabla \psi}^2\rho\,dx$. In our framework, the latter could be replaced by $\langle \rho\partial a,\partial a\rangle_\H$ or $\langle (\partial a)\rho,\partial a\rangle_\H$ or any ``mean'' of these two extreme cases (these two examples correspond to the left and right trivial mean).

We study a class of such means in Section \ref{A_E}. For a suitable mean $\theta$ we define $\hat\rho=\theta(L(\rho),R(\rho))$. A key technical role in this study plays the Lipschitz algebra $\A_\theta$ of bounded elements $a$ of $D(\E)$ such that
\begin{align*}
\norm{\partial a}_\rho^2=\langle \hat\rho\, \partial a, \partial a\rangle_\H
\end{align*}
is bounded on the space of density matrices. In this case, $\norm{\partial a}_\rho^2$ depends upper semicontinuously on $\rho$ (Theorem \ref{theta_norm_usc}). The elements of $\A_\theta$ can be tested uniformly against $\hat\rho$ for all density matrices $\rho$, which makes them a suitable choice of test ``function'' in the continuity equation discussed below.

While the discussion of Section \ref{A_E} is rather general, we will later focus on the logarithmic mean
\begin{align*}
\theta(s,t)=\frac{s-t}{\log s-\log t},
\end{align*}
which was already identified by Maas and Mielke as the correct mean to study gradient flows of the entropy $\Ent(\rho)=\tau(\rho\log \rho)$.

In Section \ref{Maas-Wasserstein} we introduce the noncommutative transport metric $\W$ via an analogue of the Benamou--Brenier formula (Definition \ref{def_W}). It is given as the length metric associated with the action functional 
\begin{align*}
(\rho_t)\mapsto \int \langle \hat\rho_t D\rho_t,D\rho_t\rangle_\H\,dt
\end{align*}
defined on a class of admissible curves, where the velocity vector field $(D\rho_t)$ is determined by the abstract continuity equation
\begin{align*}
\dot\rho_t=\partial^\ast (\hat\rho_t D\rho_t)
\end{align*}
in a suitable weak sense. Further we establish some basic properties such as the convexity of $\W$ (Lemma \ref{W_convex}) and lower semicontinuity of the action functional with respect to pointwise weak convergence in $L^1$ (Theorem \ref{energy_lsc}).

Section \ref{entropy_information}, which is quite technical in nature, deals with the entropy and the Fisher information. The latter is formally given by $\I(\rho)=\E(\rho,\log \rho)$, but this expression suffers from several regularity issues (not all density matrices are in the domain of $\E$, the logarithm is not a Lipschitz function), so we spend much of this section giving a rigorous definition via approximation and showing that several different approximations yield the same result. Then we go on to show that solutions of the quantum master equation are admissible curves in the definition of $\W$ (Proposition \ref{heat_flow_admissible}, Corollary \ref{heat_flow_L1}) and the entropy dissipation rate along these curves is given by the Fisher information (Proposition \ref{entropy_Fisher_decay}).

In Section \ref{heat_gradient_flow}, we come to the identification of the flow defined by the Markovian quantum master equation with the gradient flow of the entropy. First, we introduce the gradient estimate $\GE(K,\infty)$ in Definition \ref{def_GE}. If $\E$ is the Dirichlet energy on a Riemannian manifold, then $\GE(K,\infty)$ reduces to the well-known Bakry--Émery gradient estimate
\begin{align*}
\Gamma(P_t f)\leq e^{-2Kt} P_t\Gamma(f),
\end{align*}
which is equivalent to $\mathrm{Ric}\geq K$.

After a technical subsection singling out a suitable class of regular curves, we can then show in Theorem \ref{EVI_gradient_flow} that $\GE(K,\infty)$ implies that the flow defined by the Markovian quantum Master equation
\begin{align*}
\dot \rho_t =-\L\rho_t
\end{align*}
is an $\EVI_K$ gradient flow of the entropy, that is,
\begin{align*}
\frac 1 2\frac{d^+}{dt}\W(P_t\rho,\sigma)^2+\frac K 2\W(P_t\rho,\sigma)^2+\Ent(P_t\rho)\leq \Ent(\sigma).
\end{align*}
In Section \ref{geodesic_convex} we study consequences of the gradient flow characterization with regard to semi-convexity of the entropy along geodesics. The $\EVI_K$ implies that the distance $\W$ between two density matrices with finite entropy can be realized by a sequence of curves with uniformly bounded entropy. Combining this fact with the weak $L^1$-compactness of sublevel sets of the entropy, we conclude that density matrices with finite entropy are joined by a minimizing geodesic (Theorem \ref{D(Ent)_geodesic}). Finally, by an abstract result on gradient flows, the entropy is $K$-convex along geodesics, that is,
\begin{align*}
\Ent(\rho_t)\leq (1-t)\Ent(\rho_0)+t\Ent(\rho_1)-\frac K 2t(1-t)\W(\rho_0,\rho_1)^2.
\end{align*}
The relation between $\GE(K,\infty)$, $\EVI_K$ and geodesic $K$-convexity are summarized in Theorem \ref{main_theorem_summary}.

The content of this article was included in the author's PhD thesis at the University of Jena.

\subsection*{Acknowledgments}
The author would like to express his gratitude to Daniel Lenz for the support and helpful advice as his supervisor. He wants to thank Matthias Erbar and Jan Maas for fruitful discussions on the topic of this article, Aljosha Sukeylo for the translation of the article \cite{Tik87} and Simon Puchert for the proof of Lemma \ref{increasing_Lip}.

After the project had grown, we learned that ideas for a related, but slightly different definition of the metric $\mathcal{W}$ had also been developed independently by David Hornshaw \cite{Hor18}. The author is grateful for the exchange of draft versions and discussions.

\section{Quantum Dirichlet Forms}\label{Basics}

In this section we give a short overview over the theory of noncommutative integration and noncommutative Dirichlet forms, and show how some examples encountered later fit into that framework. In particular, we review the first order differential calculus developed by Cipriani and Sauvageot, which will be a central tool later on.

To begin, let us recall some basics of noncommutative integration theory as developed in \cite{Seg53a,Seg53b}; a good overview is given in \cite{PX03}.

An algebra $\M$ of bounded operators on a Hilbert space $H$ is called \emph{von Neumann algebra} if it is closed under taking adjoints, contains $1$ and is closed in the weak operator topology. The set $\M'=\{x\in B(H)\mid xy=yx\text{ for all }y\in\M\}$ is called the \emph{commutant of $\M$}. By the bicommutant theorem, a unital $\ast$-subalgebra of $B(H)$ is a von Neumann algebra if and only if $\M=\M^{\prime\prime}$. The set of all positive operators in $\M$ is denoted by $\M_+$.

Let $\M$ be a von Neumann algebra. A map $\tau\colon \M_+\lra[0,\infty]$ is called a \emph{weight} if $\tau(\lambda x)=\lambda \tau(x)$ and $\tau(x+y)=\tau(x)+\tau(y)$ for all $\lambda\geq 0$, $x,y\in\M_+$ (with the convention $0\cdot\infty=0$).

The weight $\tau$ is called 
\begin{itemize}
\item \emph{normal} if $\tau(\sup_i x_i)=\sup_i \tau(x_i)$ for every increasing net $(x_i)$ in $\M_+$,
\item \emph{semi-finite} if $\{x\in \M_+\mid \tau(x)<\infty\}$ generates $\M$ as von Neumann algebra,
\item \emph{faithful} if $\tau(x^\ast x)=0$ implies $x=0$, 
\item \emph{tracial} or a \emph{trace} if $\tau(x^\ast x)=\tau(xx^\ast)$ for all $x\in \M$.
\end{itemize}

We say that $\tau$ is an \emph{n.s.f. trace} if it is an normal, semi-finite, faithful, tracial weight, and call the pair $(\M,\tau)$ a \emph{tracial von Neumann algebra}. Every n.s.f. trace $\tau$ induces a faithful normal representation $\pi_\tau$ on a Hilbert space $H_\tau$. We will routinely identify $\M$ with $\pi_\tau(\M)$.

A closed, densely defined operator $x$ is said to be \emph{affiliated} with $\M$ if $xu=ux$ for every unitary $u\in \M'$. A self-adjoint operator $x$ is affiliated with $\M$ if and only if $\phi(x)\in \M$ for every bounded Borel function $\phi\colon \IR\lra \IR$. An affiliated operator $x$ is called \emph{$\tau$-measurable} if $\tau(\1_{(\lambda,\infty)}(\abs{x}))<\infty$ for some $\lambda\geq 0$. The set of all $\tau$-measurable operators is denoted by $L^0(\M,\tau)$.

The trace $\tau$ can be extended to the set of positive $\tau$-measurable operators via
\begin{align*}
\tau(x)=\int_{[0,\infty)}\lambda\,d(\tau\circ e)(\lambda),
\end{align*}
where $e$ denotes the spectral measure of $x$. Equivalently, $\tau(x)=\sup_{n\in\IN}\tau(x\wedge n)$.

The noncommutative $L^p$ spaces are defined as 
\begin{align*}
L^p(\M,\tau)=\{x\in L^0(\M,\tau)\mid \tau(\abs{x}^p)<\infty\}
\end{align*}
for $p\in[1,\infty)$ and endowed with the norm $\norm{\cdot}_p=\tau(\abs{\cdot}^p)^{1/p}$. For $p=\infty$ one sets $L^\infty(\M,\tau)=\M$.

The space $L^p(\M,\tau)$ coincides with the completion of $\{x\in\M\mid\tau(\abs{x}^p)<\infty\}$ with respect to $\norm\cdot_p$. In particular, it is a Banach space. Moreover, $L^2(\M,\tau)\cong H_\tau$ with the action of $\M$ on $L^2(\M,\tau)$ given by left multiplication. In the following, we will always identify $\M$ with its image in $B(L^2(\M,\tau))$.

\begin{example}
If $(X,\B,m)$ is a localizable (for example $\sigma$-finite) measure space, then $L^\infty(X,m)$ is a von Neumann algebra (acting on $L^2(X,m)$ by multiplication) and the functional
\begin{align*}
\tau_m\colon L^\infty_+(X,m)\lra[0,\infty],\,\tau_m(f)=\int_X f\,dm
\end{align*}
is an n.s.f. trace. Moreover, every commutative tracial von Neumann algebra arises in this way.

The space $L^p(L^\infty(X,m),\tau_m)$ is isometrically isomorphic to $L^p(X,m)$, and the isomorphism can be chosen in such a way that it is consistent for all $p\in[1,\infty]$.
\end{example}

\begin{remark}
If $A$ is a $C^\ast$-algebra and $\tau\colon A_+\lra[0,\infty]$ a lower semicontinuous, semi-finite, faithful trace, one also gets a GNS representation $\pi_\tau$ on a Hilbert space $H_\tau$. In this case, $L^\infty(A,\tau)=\pi_\tau(A)''$ is a von Neumann algebra and $\tau$ extends to an n.s.f. weight on $L^\infty(A,\tau)$. One sets $L^p(A,\tau):=L^p(L^\infty(A,\tau),\tau)$.

From a theoretical point of view one can therefore always assume to be given a tracial von Neumann algebra $(\M,\tau)$, but in the examples it will sometimes be more convenient to give a description in terms of $(A,\tau)$.
\end{remark}

Denote by $L^p_h(\M,\tau)$ the set of self-adjoint (as operators on $H_\tau$) elements of $L^p(\M,\tau)$. One advantage of the definition via affiliated operators over that as abstract completion is the fact that we can use functional calculus for elements of $L^p_h(\M,\tau)$.

For the next lemma recall that for a nonempty, closed, convex subset $C$ of a Hilbert space $H$ and $x\in H$ there is a unique element $y\in C$ with $\norm{x-y}=\inf_{z\in C}\norm{x-z}$. The map $P_C\colon x\mapsto y$ is called (metric) projection onto $C$. The element $P_C(x)$ can alternatively be characterized as the unique $y\in C$ such that
\begin{align*}
\Re\langle x-y,z-y\rangle\leq 0
\end{align*}
for all $z\in C$.

We write $\alpha\wedge \beta=\min\{\alpha,\beta\}$ and $\alpha\vee\beta=\max\{\alpha,\beta\}$ for $\alpha,\beta\in\IR$. If $x$ is a self-adjoint operator, $x\wedge\alpha$ stands for the application of the function $\min\{\,\cdot\,,\alpha\}$ to $x$, which is the infimum of $x$ and $\alpha1$ in the (commutative) unital $C^\ast$-algebra generated by $x$.

\begin{lemma}
Let $(\M,\tau)$ be a tracial von Neumann algebra and let $C$ be the closure of $\{x\in L^2_h(\M,\tau)\cap\M\mid x\leq 1\}$ in $L^2(\M,\tau)$. Then $C$ is convex and the projection $P_C$ onto $C$ is given by $P_C(a)=a\wedge 1$ for all $a\in L^2_h(\M,\tau)$.
\end{lemma}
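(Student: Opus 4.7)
The convexity of $C$ is immediate: the set $S:=\{x\in L^2_h(\M,\tau)\cap\M\mid x\leq 1\}$ is convex because self-adjointness and the bound $x\leq 1$ are preserved by convex combinations, and the closure of a convex set in a Hilbert space is convex. To show $a\wedge 1\in C$ for every $a\in L^2_h(\M,\tau)$, one applies the Borel functional calculus to $t\mapsto t\wedge 1$; the resulting operator is self-adjoint and affiliated with $\M$, and the spectral identity $a-(a\wedge 1)=(a-1)_+\leq a_+\leq\abs{a}$ places it in $L^2$. The truncations $x_n:=(a\wedge 1)\vee(-n)\in\M$ then belong to $S$ and satisfy $\abs{x_n-(a\wedge 1)}\leq\abs{a}$ spectrally, so a dominated convergence argument gives $x_n\to a\wedge 1$ in $L^2$.

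By the characterization of the projection recalled just before the lemma, it then suffices to verify
\[ \Re\langle a-(a\wedge 1),z-(a\wedge 1)\rangle\leq 0\qquad\text{for all }z\in C. \]
Set $p:=(a-1)_+=a-(a\wedge 1)$ and $q:=(1-a)_+=1-(a\wedge 1)$. Both are nonnegative elements of $L^2$ obtained from $a$ via functional calculus, hence commute, and the scalar identity $(t-1)_+(1-t)_+=0$ forces $pq=0$. Moreover $p\leq a^2$ spectrally (on the spectral region $\{a\geq 1\}$ one has $a-1\leq a\leq a^2$, and $p=0$ off this set), so $\tau(p)\leq\norm{a}_2^2<\infty$ and in particular $p\in L^1(\M,\tau)$. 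Decomposing $z-(a\wedge 1)=(z-1)+q$ and using $pq=0$ yields
\[ \langle p,z-(a\wedge 1)\rangle=\tau(p(z-1))+\tau(pq)=\tau(p(z-1)). \]

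For $z\in S$ the operator $z-1\in\M$ is self-adjoint and satisfies $z-1\leq 0$, so by cyclicity of the trace $\tau(p(z-1))=\tau(p^{1/2}(z-1)p^{1/2})\leq 0$, the integrand being a nonpositive self-adjoint element of $L^1$; in particular the quantity is real. Density of $S$ in $C$ together with continuity of the $L^2$-inner product extends the inequality to all $z\in C$, finishing the proof. The main technical hurdle is the potential unboundedness of $a$: one has to justify that $a\wedge 1\in L^2$, that $(a-1)_+\in L^1$ (so that the trace manipulations are legal), and that the various rearrangements of unbounded products and traces are valid, each of which ultimately rests on the domination by $\abs{a}\in L^2$.
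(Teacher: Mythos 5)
Your proof is correct and follows essentially the same route as the paper's: convexity is immediate, $a\wedge 1\in C$ by truncation, and the projection inequality is verified on $S$ and extended to $C$ by continuity; the key computation in both cases rests on sandwiching by $(a-1)_+^{1/2}$ and exploiting that $(a-1)_+$ and $1-(a\wedge 1)=(a-1)_-$ annihilate each other. Your version decomposes $z-(a\wedge 1)=(z-1)+(a-1)_-$ first and then uses $z-1\leq 0$, whereas the paper applies $z\leq 1$ directly to bound $z-(a\wedge 1)\leq 1-(a\wedge 1)$ before sandwiching — a cosmetic reordering of the same argument; you also make explicit the integrability point $p=(a-1)_+\leq a^2\in L^1$, which the paper leaves implicit.
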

\begin{proof}
It is easy to see that $C$ is convex. For $a\in L^2_h(\M,\tau)$ let $a_n=(a\wedge 1)\vee(-n)$. Then $a_n\in L^2_h(\M,\tau)\cap \M$, $a_n\leq 1$ and $a_n\to a\wedge 1$ in $L^2(\M,\tau)$, hence $a\wedge 1\in C$. If $b\in \M\cap L^2_h(\M,\tau)$ with $b\leq 1$, then
\begin{align*}
\tau((a-a\wedge 1)(b-a\wedge 1))&=\tau((a-1)_+^{1/2}(b-a\wedge 1)(a-1)_+^{1/2})\\
&\leq \tau((a-1)_+^{1/2}(1-a\wedge 1)(a-1)_+^{1/2})\\
&=\tau((a-1)_+(a-1)_-)\\
&=0.
\end{align*}
For arbitrary $b\in C$, the inequality above follows by continuity. Thus $P_C(a)=a\wedge 1$.
\end{proof}
Now we can turn to the theory of Dirichlet forms and Markovian semigroups in the noncommutative setting. For some basic references see \cite{AH77,DL92}, for the first order differential calculus described below see \cite{CS03} and the expository article \cite{Cip08}.

A quadratic form $\E\colon L^2(\M,\tau)\lra[0,\infty]$ is \emph{real} if $\E(a^\ast)=\E(a)$ for all $a\in L^2(\M,\tau)$ and \emph{Markovian} if $\E(a\wedge 1)\leq \E(a)$ for all $a\in L^2_h(\M,\tau)$. The lemma above shows that the cut-off $a\wedge 1$ can be understood either as an application of functional calculus or as projection in $L^2(\M,\tau)$.

By the next lemma (see \cite[Proposition 2.12]{DL92} and \cite[Theorem 10.2]{CS03}), Markovian forms automatically satisfy a stronger contraction property with respect to Lipschitz functional calculus.

\begin{lemma}
A closed real quadratic form $\E\colon L^2(\M,\tau)\lra[0,\infty]$ is Markovian if and only if $\E(f(a))\leq \E(a)$ for all $a\in L^2_h(\M,\tau)$ and all $1$-Lipschitz functions $f\colon \IR\lra\IR$ with $f(0)=0$.
\end{lemma}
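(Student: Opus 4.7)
The $(\Leftarrow)$ direction is immediate: the function $f(t)=t\wedge 1$ is $1$-Lipschitz with $f(0)=0$, so the assumed contractivity applied to this $f$ yields $\E(a\wedge 1)\leq\E(a)$ for all $a\in L^2_h(\M,\tau)$, i.e.\ Markovianity. For the nontrivial $(\Rightarrow)$ direction, my plan is to promote the single contraction $\E(a\wedge 1)\leq\E(a)$ to Lipschitz contractivity in three stages. In the first stage I extend Markovianity to arbitrary one-sided cutoffs of the correct sign: using the quadratic scaling $\E(\lambda a)=\lambda^2\E(a)$, one has
\begin{align*}
\E(a\wedge\beta)=\beta^2\E((a/\beta)\wedge 1)\leq \beta^2\E(a/\beta)=\E(a)\qquad(\beta>0),
\end{align*}
and combining this with the reality hypothesis $\E(-a)=\E(a)$ via the identity $a\vee\alpha=-((-a)\wedge(-\alpha))$ gives $\E(a\vee\alpha)\leq\E(a)$ for $\alpha<0$.

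In the second stage I observe that the class $\mathcal F$ of $1$-Lipschitz functions $f\colon\IR\lra\IR$ with $f(0)=0$ satisfying $\E(f(a))\leq\E(a)$ for every $a\in L^2_h(\M,\tau)$ is closed under composition: if $f,g\in\mathcal F$ then $g\circ f$ is $1$-Lipschitz with $(g\circ f)(0)=0$ and $\E(g(f(a)))\leq\E(f(a))\leq\E(a)$. Combined with negation (which preserves $\E$), the one-sided cutoffs of Stage~1 generate, as finite compositions, every continuous piecewise-linear function with slopes in $\{-1,0,1\}$ that vanishes at the origin.

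In the third stage I pass to general $1$-Lipschitz $f$ with $f(0)=0$ by approximating $f$ locally uniformly by piecewise-linear $f_n\in\mathcal F$. For fixed $a\in L^2_h(\M,\tau)$, the spectral-theorem identity
\begin{align*}
\norm{f_n(a)-f(a)}_2^2=\int_\IR\abs{f_n(\lambda)-f(\lambda)}^2\,d(\tau\circ E_a)(\lambda),
\end{align*}
combined with the bound $\abs{f_n(\lambda)-f(\lambda)}^2\leq 4\lambda^2$ (integrable with respect to $d(\tau\circ E_a)$ since $\tau(a^2)=\norm{a}_2^2<\infty$), yields $f_n(a)\to f(a)$ in $L^2(\M,\tau)$ by dominated convergence. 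Closedness of $\E$ implies lower semicontinuity on $L^2$, whence
\begin{align*}
\E(f(a))\leq\liminf_{n\to\infty}\E(f_n(a))\leq\E(a).
\end{align*}

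The main obstacle is the combinatorial content of Stage~2: verifying that every piecewise-linear $1$-Lipschitz function vanishing at $0$ really does arise as a finite composition of one-sided cutoffs with the cutoff constant of the correct sign at every stage. The hypothesis $f(0)=0$ is what makes this work, since it keeps all intermediate functions anchored at the origin and therefore forces the next cutoff to be either $t\wedge\beta$ with $\beta\geq 0$ or $t\vee\alpha$ with $\alpha\leq 0$, precisely the cases controlled by Stage~1. The remaining analytic content -- scaling, spectral-calculus approximation, and lower semicontinuity -- is then routine.
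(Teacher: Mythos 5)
Your Stage 1 and Stage 3 are fine, and $(\Leftarrow)$ is trivially correct, but Stage 2 has a fatal gap that cannot be patched within your framework. Every function you can reach by finitely composing $t\mapsto t\wedge\beta$ ($\beta\ge 0$), $t\mapsto t\vee\alpha$ ($\alpha\le 0$), and $t\mapsto -t$ is \emph{monotone}: the first two are nondecreasing, negation is nonincreasing, and a composition of monotone maps is monotone. Hence the class generated by your cutoffs is a class of monotone piecewise-linear $1$-Lipschitz functions vanishing at $0$, not "every" piecewise-linear $1$-Lipschitz function with $f(0)=0$. A tent-shaped function such as
\begin{align*}
f(t)=\begin{cases}0,&t\le 0,\\ t,&0\le t\le 1,\\ 2-t,&1\le t\le 2,\\ 0,&t\ge 2,\end{cases}
\end{align*}
is $1$-Lipschitz with $f(0)=0$ but is not monotone, so it is not a finite composition of your generators, nor is it a locally uniform limit of such compositions (locally uniform limits of monotone functions are monotone). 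Since non-monotone normal contractions are exactly the interesting cases, the composition mechanism cannot carry the argument.

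Note also that the paper does not give a self-contained proof of this lemma -- it cites \cite[Proposition 2.12]{DL92} and \cite[Theorem 10.2]{CS03}. The standard route (both classically and noncommutatively) is genuinely different from yours: one shows that the unit contraction operating on $\E$ forces the resolvent $R_\beta$ (equivalently the semigroup $P_t$) to be sub-Markovian, then observes that the bounded approximating forms $\E_\beta(a)=\beta\langle a,(I-\beta R_\beta)a\rangle$ admit a structure on which \emph{every} normal contraction manifestly operates, and finally passes to the limit $\beta\to\infty$ via the closedness (lower semicontinuity) of $\E$. The key ingredient is the representation of the approximating form coming from the sub-Markov resolvent, not an algebraic decomposition of $f$ into cutoffs. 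If you want to salvage your plan, that is the idea you would need to import; composition of cutoffs alone is provably insufficient.
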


For $n\in\IN$ denote by $\tr_n$ the normalized trace on $M_n(\IC)$ and let $\tau_n=\tau\otimes \tr_n$ on $(\M\otimes M_n(\IC))_+\cong M_n(\M)_+$, that is,
\begin{align*}
\tau_n\colon M_n(\M)_+\lra[0,\infty],\,\tau_n((a_{ij}))=\frac 1 n\sum_{i=1}^n \tau(a_{ii}).
\end{align*}
The quadratic form $\E$ can be extended to $L^2(M_n(\M),\tau_n)$ via
\begin{align*}
\E_n\colon L^2(M_n(\M),\tau_n)\lra[0,\infty],\,\E_n((a_{ij}))=\sum_{i,j=1}^n \E(a_{ij}).
\end{align*}
We say that $\E$ is \emph{completely Markovian} if $\E_n$ is Markovian for all $n\in\IN$.

A lower semicontinuous, densely defined, real, completely Markovian quadratic form $\E$ on $L^2(\M,\tau)$ is called \emph{completely Dirichlet form} on $(\M,\tau)$.

\begin{remark}
For every quadratic form $q$ on a Hilbert space $H$ there is an associated sesquilinear form $\tilde q$ defined as
\begin{align*}
\tilde q\colon D(q)\times D(q)\lra \IC,\,\tilde q(u,v)=\frac 1 4\sum_{k=0}^3 i^k q(u+i^k v),
\end{align*}
where $D(q)=\{u\in H\mid q(u)<\infty\}$. We will use these two points of view interchangeably and write $q$ for both of these maps.
\end{remark}

\begin{remark}
In \cite{CS03}, an additional condition called \emph{regularity} is imposed in most results. This property depends not only on the form $\E$, but also on the choice of some $C^\ast$-subalgebra of $\M$. Every completely Dirichlet form $\E$ is regular with respect to the norm closure of $D(\E)\cap \M$.
\end{remark}

\begin{example}
Let $(X,\B,m)$ be a localizable measure space. Every Markovian form on $L^\infty(X,m)$ is completely Markovian so that Dirichlet forms on $L^2(X,m)$ in the sense of Beurling--Deny \cite{BD58,BD59} can be identified with completely Dirichlet forms on $L^2(L^\infty(X,m),\tau_m)$.
\end{example}

There is a bijective correspondence between quantum Dirichlet forms and quantum sub-Markov semigroups on $(\M,\tau)$ analogous to the commutative case: The semigroup $(P_t)$ generated by a positive self-adjoint operator $\L$ on $L^2(\M,\tau)$ is sub-Markovian, that is, $0\leq P_t(a)\leq 1$ for $0\leq a\leq 1$, if and only if the quadratic form generated by $\L$ is a Markovian form (see \cite[Theorems 2.7, 2.8]{AH77} in the finite case and \cite[Theorems 2.13, 3.3]{DL92} in the semi-finite case).

Moreover, $(P_t)$ extends uniquely to strongly continuous semigroups on $L^p(\M,\tau)$ for $p\in[1,\infty)$ and to a quantum sub-Markov semigroup on $\M$. We will usually denote these extensions by the same symbol, occasionally also writing $(P_t^{(p)})$ when the space on which the semigroup acts is important. Similarly, $L^{(p)}$ denotes the generator of $(P_t)$ on $L^p(\M,\tau)$

The curve $(P_t^{(p)} a)_{t\geq 0}$ is the unique (mild) solution of the initial value problem for the Markovian quantum master equation
\begin{align*}
\begin{cases}\dot x_t=-\L^{(p)} x_t,\\
x_0=a
\end{cases}
\end{align*}
in $L^p(\M,\tau)$.

The semigroup $(P_t)_{t\geq 0}$ is called \emph{conservative} if $P_t 1=1$. As we want to study the evolution on density matrices, conservativeness is a natural assumption, so we reserve a special name for the associated Dirichlet forms (motivated by the term quantum Markov semigroup for the corresponding semigroup on $\M$).

\begin{definition}[Quantum Dirichlet form]
A completely Dirichlet form $\E$ is called \emph{quantum Dirichlet form} if the associated semigroup $(P_t)$ is conservative.
\end{definition}

The following representation theorem for completely Dirichlet forms by Cipriani and Sauvageot (see \cite[Theorems 4.7, 8.2, 8.3]{CS03}) is central to our investigations.

\begin{theorem}[First order differential calculus]\label{diff_calc}
Let $\E$ be a quantum Dirichlet form on the tracial von Neumann algebra $(\M,\tau)$ and $\C=D(\E)\cap \M$.

Then $\C$ is a $\ast$-algebra and there exist a Hilbert space $\H$, commuting non-degenerate $\ast$-representations $L$ of $\C$ and $R$ of $\C^\circ$ on $\H$, an anti-linear isometric involution $J\colon\H\lra \H$, and a closed operator $\partial \colon D(\E)\lra \H$ such that 
\begin{itemize}
\item $\{L(a)\partial b\mid a,b\in\C\}$ is dense in $\H$,
\item $J$ intertwines $L$ and $R$: $L(a)=J R(a)^\ast J$ for all $a\in\C$,
\item $\partial$ is $J$-real: $J\partial a=\partial(a^\ast)$ for all $a\in\C$,
\item $\partial$ satisfies the Leibniz rule: $\partial (ab)=L(a)\partial b+R(b)\partial a$ for all $a,b\in\C$,
\item $\E$ can be represented by $\partial$: $\E(a)=\norm{\partial a}_{\H}^2$ for all $a\in D(\E)$.
\end{itemize}
If $(\tilde\partial, \tilde \H,\tilde L,\tilde R,\tilde J)$ is another quintuple with the same properties, then there exists a unitary map $U\colon \H\lra\tilde \H$ such that
\begin{itemize}
\item $U\partial =\tilde \partial$,
\item $UL=\tilde L$, $UR=\tilde R$,
\item $UJ=\tilde J U$.
\end{itemize}
\end{theorem}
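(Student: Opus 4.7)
The plan is to realize the desired bimodule $\H$ as a Hilbert space completion of the algebraic tensor product $\C\otimes_{\mathrm{alg}}\C$ modulo the null space of a suitable pre-inner product built from $\E$. First I would verify that $\C=D(\E)\cap\M$ is a $\ast$-algebra: stability under the adjoint is immediate from $\E$ being real, while stability under multiplication follows from a truncation argument combined with the strong Lipschitz contraction property of the previous lemma (scaled to handle $f(t)=t^2$ restricted to $[-n,n]$) and the polarization identity $4ab=(a+b)^2-(a-b)^2+i(a+ib)^2-i(a-ib)^2$.

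Second, on $\C\otimes_{\mathrm{alg}}\C$ I would introduce a sesquilinear form $\langle\cdot,\cdot\rangle_0$ whose value on elementary tensors is an explicit expression in $\E$ applied to products in $\C$, designed so that after quotienting and completing, the identification $a\otimes b\leftrightarrow L(a)\partial(\cdot)R(b)$ produces the correct bimodule inner product. The central technical step, which I expect to be the main obstacle, is to prove that $\langle\cdot,\cdot\rangle_0$ is positive semi-definite. Positivity of $\langle\xi,\xi\rangle_0$ for $\xi=\sum_i a_i\otimes b_i$ is not a formal consequence of Markovianity but depends on the \emph{complete} Markovianity of $\E$: one arranges the data $(a_i),(b_i)$ as entries of matrices and rephrases the inequality as a contractivity bound for $\E_n$ on $L^2(M_n(\M),\tau_n)$, which is then derived from the matrix-level Lipschitz contraction property.

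Third, I would take the quotient of $\C\otimes_{\mathrm{alg}}\C$ by $\{\xi:\langle\xi,\xi\rangle_0=0\}$ and complete to obtain $\H$. The left and right actions are defined on elementary tensors by $L(x)(a\otimes b)=(xa)\otimes b$ and $R(y)(a\otimes b)=a\otimes (by)$; boundedness and the $\ast$-representation property extend from this definition, and they commute by construction. The antilinear involution is defined by $J[a\otimes b]=[b^*\otimes a^*]$ and intertwines $L$ and $R$ by inspection. For the derivation I would set $\partial a=[a\otimes 1-1\otimes a]$, treating the non-unital case by inserting an approximate identity in $\C$ and verifying independence of the choice. The Leibniz rule then reduces to the algebraic identity $ab\otimes 1-1\otimes ab=(a\otimes 1-1\otimes a)b+a(b\otimes 1-1\otimes b)$; $J$-reality is immediate; and $\E(a)=\lVert\partial a\rVert_\H^2$ is built into the pre-inner product. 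Closability of $\partial$ then follows from closedness of $\E$ via this identity.

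Finally, for uniqueness, given another quintuple $(\tilde\partial,\tilde\H,\tilde L,\tilde R,\tilde J)$ with the same properties, I would define $U$ on the span of elements of the form $L(a)\partial b$ in $\H$ (which is dense because $a\otimes b=ab\otimes 1-L(a)\partial b$) by $U(L(a)\partial b)=\tilde L(a)\tilde\partial b$. Using the Leibniz and polarization identities, the inner products on both sides can be expressed purely in terms of $\E$, so $U$ is isometric, extends to a unitary onto the corresponding dense span on the $\tilde\H$-side, and the three intertwining relations hold by construction.
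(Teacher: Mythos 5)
The paper does not prove Theorem~\ref{diff_calc}; it cites \cite[Theorems 4.7, 8.2, 8.3]{CS03}, so what you are sketching is essentially the Cipriani--Sauvageot construction itself. The architecture of your sketch matches that construction: realize $\H$ as a completion of (a quotient of) $\C\otimes_{\mathrm{alg}}\C$, use complete Markovianity to obtain positivity of the pre-inner product, define $L,R,J,\partial$ on elementary tensors, and deduce uniqueness from density of $\mathrm{lin}\{L(a)\partial b\}$. That part is sound in outline.

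There is, however, a genuine gap in the step that $\C=D(\E)\cap\M$ is a $\ast$-algebra. The polarization identity you write,
\begin{equation*}
4ab = (a+b)^2 - (a-b)^2 + i(a+ib)^2 - i(a-ib)^2,
\end{equation*}
is identically zero: $(a+b)^2-(a-b)^2 = 2(ab+ba)$ and $i\bigl[(a+ib)^2-(a-ib)^2\bigr] = -2(ab+ba)$. More fundamentally, squares of linear combinations of $a,b$ can only produce $a^2,\,b^2,\,ab+ba$, so no polarization in terms of squares can isolate $ab$ in a noncommutative algebra. The correct $\ast$-polarization is $4b^*a = \sum_{k=0}^3 i^k (a+i^k b)^*(a+i^k b)$, but this introduces expressions $x^*x$ with $x$ not self-adjoint, and the truncated-Lipschitz argument only applies to the functional calculus of self-adjoint elements. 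To conclude $x^*x\in D(\E)$ one needs the $M_2$ trick: form the self-adjoint matrix $\tilde x=\begin{pmatrix}0&x^*\\x&0\end{pmatrix}\in M_2(\M)\cap D(\E_2)$, note $\tilde x^2=\begin{pmatrix}x^*x&0\\0&xx^*\end{pmatrix}$, and apply the truncated square via \emph{complete} Markovianity at level $n=2$. In other words, complete Markovianity is indispensable already for the $\ast$-algebra property of $\C$, not only for positivity of the bimodule inner product; your proposal acknowledges it only in the latter role. With that repair (and the $(\cdot)^*(\cdot)$ rather than $(\cdot)^2$ polarization), the step goes through.
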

In the sense of this theorem, we can speak of \emph{the} first order differential calculus associated with $\E$. The $\ast$-representations $L,R$ are to be understood as left and right multiplication of $\C$ on $\H$. Accordingly, we will write $a\cdot\xi$ and $\xi\cdot b$ for $L(a)\xi$ and $R(b)\xi$, respectively.

\begin{remark}
In the theorem, $\C^\circ$ denotes the opposite algebra of $\C$, that is, the $\ast$-algebra with same underlying vector space and involution, but with multiplication given by $a\circ b= ba$ for $a,b\in \C$.
\end{remark}

\begin{remark}
Instead of conservativeness, it suffices to assume that the killing term of $\E$ vanishes in the sense of \cite[Theorem 8.1]{CS03}.
\end{remark}

An important consequence of the product rule for the first order differential calculus is a (two-variable) chain rule. For that purpose, let
\begin{align*}
\tilde f\colon I\times I\lra \IR,\,\tilde f(s,t)=\begin{cases}\frac{f(s)-f(t)}{s-t}&\text{if } s\neq t,\\f'(s)&\text{if } s=t\end{cases}
\end{align*}
for $f\in C^1(I)$. The function $\tilde f$ is sometimes called the quantum derivative of $f$. With this notation, the chain rule reads as follows (\cite[Lemma 7.2]{CS03}).
\begin{lemma}[Chain rule]
If $f\in C^1(\IR)$ has bounded derivative and $f(0)=0$, then
\begin{align*}
\partial f(a)=\tilde f(L(a),R(a))\partial a.
\end{align*}
for all $a\in D(\E)_h$.
\end{lemma}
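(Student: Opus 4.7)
The plan is to establish the identity first for polynomials $p$ with $p(0)=0$ via the Leibniz rule and induction on degree, then extend to general $C^1$-functions by uniform approximation on the spectrum of $a$, and finally handle unbounded $a\in D(\E)_h$ by truncation. The key analytic input is the preceding Lipschitz functional calculus lemma, which controls the left-hand side via the bound $\E(g(a))\leq \mathrm{Lip}(g)^2\,\E(a)$, while an integral representation of the quantum derivative $\tilde g(s,t)=\int_0^1 g'(us+(1-u)t)\,du$ controls the right-hand side.

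For the polynomial case I would proceed by induction. The monomial case $f(x)=x^n$ with $n\geq 2$ follows from the Leibniz rule $\partial a^n=L(a)\partial a^{n-1}+R(a)^{n-1}\partial a$ (using that $R$ is a representation of $\C^\circ$, so $R(a^{n-1})=R(a)^{n-1}$); plugging in the inductive hypothesis and using the commutativity of $L(a)$ and $R(a)$ yields
\begin{align*}
\partial a^n=\sum_{k=0}^{n-1}L(a)^k R(a)^{n-1-k}\,\partial a,
\end{align*}
which matches $\tilde f(L(a),R(a))\partial a$ since $\tilde f(s,t)=\sum_{k=0}^{n-1}s^k t^{n-1-k}$ for $f(x)=x^n$. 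Linearity extends this to every polynomial with vanishing constant term.

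Next I would treat bounded $a\in D(\E)\cap\M_h$, writing $r:=\norm{a}_\M$. The commuting bounded self-adjoint operators $L(a),R(a)$ on $\H$ then have joint spectrum in $[-r,r]^2$. Choose polynomials $p_n$ with $p_n(0)=0$ approximating $f$ in the $C^1$-norm on $[-r,r]$ (first approximate $f'$ by polynomials via Weierstrass, then integrate and adjust the constant). Extending $p_n-f$ to a Lipschitz function on $\IR$ vanishing at $0$ with the same Lipschitz constant $\norm{p_n'-f'}_{[-r,r]}$, the preceding lemma gives
\begin{align*}
\norm{\partial p_n(a)-\partial f(a)}_\H^2=\E((p_n-f)(a))\leq \norm{p_n'-f'}_{[-r,r]}^2\,\E(a)\longrightarrow 0.
\end{align*}
For the right-hand side, the integral representation shows $\norm{\tilde p_n-\tilde f}_{[-r,r]^2}\leq \norm{p_n'-f'}_{[-r,r]}\to 0$, so $\tilde p_n(L(a),R(a))\to\tilde f(L(a),R(a))$ in operator norm by the bounded Borel functional calculus for the commuting pair $(L(a),R(a))$. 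Combined with the polynomial case, this proves the identity for bounded $a$.

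Finally, for general $a\in D(\E)_h$ I would truncate by setting $a_m=(a\wedge m)\vee(-m)\in D(\E)\cap\M_h$ and apply the bounded case to each $a_m$. Since the truncating function is $1$-Lipschitz with value $0$ at $0$, the Lipschitz bound together with lower semicontinuity of $\E$ gives $\E(a_m)\to\E(a)$ and hence $\partial a_m\to\partial a$ in $\H$; the same argument applied to $f\circ\phi_m$ gives $\partial f(a_m)\to\partial f(a)$. Using the $\sigma$-weak continuity of the extensions of $L$ and $R$ to $\M$ (standing assumption) together with the spectral theorem, one obtains $L(a_m)\to L(a)$ and $R(a_m)\to R(a)$ in strong resolvent sense, so $\tilde f(L(a_m),R(a_m))\to\tilde f(L(a),R(a))$ in the strong operator topology with uniform bound $\norm{f'}_\infty$; paired with $\partial a_m\to\partial a$ in norm, this gives the needed convergence of the right-hand side. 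The main obstacle is precisely this last passage: making unbounded functional calculus on $\H$ behave simultaneously with vector convergence, which requires the joint spectral calculus for the commuting, possibly unbounded, self-adjoint operators $L(a),R(a)$ and its compatibility with the bimodule structure inherited from the standing assumption.
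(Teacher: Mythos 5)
The paper does not give a proof of this chain rule; it is imported directly as \cite[Lemma 7.2]{CS03}, so there is no ``paper's own proof'' to compare against. Your blind reconstruction is therefore a genuine proof proposal rather than a re-derivation, and the three-step scheme you follow (polynomials via Leibniz and induction, uniform $C^1$-approximation for bounded $a$ via Weierstrass together with the Markovian Lipschitz contraction, truncation for unbounded $a$) is a natural and essentially correct way to establish it.

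The polynomial and bounded cases are clean: the identity $\partial a^n=\sum_{k=0}^{n-1}L(a)^kR(a)^{n-1-k}\partial a$ follows exactly as you say (with $R(a^{n-1})=R(a)^{n-1}$ because $R$ is a representation of the opposite algebra), and the transition to $f\in C^1$ on $\sigma(a)\subset[-r,r]$ is correctly controlled on the left by $\E((p_n-f)(a))\leq\norm{p_n'-f'}^2_{[-r,r]}\E(a)$ and on the right by $\norm{\tilde p_n-\tilde f}_{[-r,r]^2}\leq\norm{p_n'-f'}_{[-r,r]}$ together with the joint bounded functional calculus.

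Two remarks on the last step. First, the claim that ``the same argument applied to $f\circ\phi_m$ gives $\partial f(a_m)\to\partial f(a)$'' is imprecise if read as norm convergence: the Markovian lemma gives the uniform bound $\E(f(a_m))\leq\norm{f'}_\infty^2\E(a)$, but not $\E(f(a_m))\leq\E(f(a))$, since $f\circ\phi_m$ need not be a normal contraction of $f$ when $f$ is non-monotone; thus the ``energy converges'' argument that worked for $a_m\to a$ does not transfer verbatim. This is harmless, though: you do not need norm convergence of the left side. Closedness of $\partial$ together with $f(a_m)\to f(a)$ in $L^2$ (which holds here simply because $a_m$ and $a$ commute, so $f(a_m)-f(a)=(f\circ\phi_m-f)(a)$ and dominated convergence applies) and the norm convergence of the right side already identify $\partial f(a)=\tilde f(L(a),R(a))\partial a$. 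Second, note that in the paper the lemma appears in Section~1, \emph{before} the energy-dominance standing assumption of Section~2 is introduced, whereas the left and right functional calculi $L(a),R(a)$ for unbounded self-adjoint $a$ are only constructed in Section~3 under that assumption; your truncation step uses exactly this extended calculus. Either the paper implicitly restricts to $a\in D(\E)\cap\M_h$ here, or the unbounded extension is understood in the sense later made precise; it is worth flagging that your proof of the unbounded case genuinely relies on the $\sigma$-weak extension of $L$ and $R$, which \cite{CS03} handles by their own devices.
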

If $\E$ is strongly local, then $L=R$ and one recovers the usual chain rule $\partial f(a)=f'(a)\partial a$.

\begin{example}[Weighted graphs]\label{ex_graphs}
Let $X$ be a countable set, $m\colon X\lra (0,\infty)$ and $b\colon X\times X\lra[0,\infty)$ such that
\begin{itemize}
\item $b(x,x)=0$ for all $x\in X$,
\item $b(x,y)=b(y,x)$ for all $x,y\in X$,
\item $\sum_y b(x,y)<\infty$ for all $x\in X$.
\end{itemize}
The triple $(X,b,m)$ is called a \emph{weighted} graph (compare \cite{KL10,KL12}). Often one allows for an additional killing weight $c\colon X\lra[0,\infty)$, but the associated Dirichlet form will never be conservative if $c\neq 0$, so we drop it from the beginning.

The associated Dirichlet form with Neumann boundary conditions is
\begin{align*}
\E^{(N)}\colon \ell^2(X,m)\lra [0,\infty],\,\E^{(N)}(u)=\frac 1 2\sum_{x,y}b(x,y)\abs{u(x)-u(y)}^2.
\end{align*}
The associated Dirichlet form with Dirichlet boundary conditions $\E^{(D)}$ is the closure of the restriction of $\E^{(D)}$ to $C_c(X)$.

The first order differential calculus associated with $\E^{(N)}$ is given by $\H=\ell^2(X\times X,\frac 1 2 b)$, $(u\cdot\xi)(x,y)=u(x)\xi(x,y)$, $(\xi\cdot v)(x,y)=\xi(x,y) v(y)$, $\partial u(x,y)=u(x)-u(y)$ and $(J\xi)(x,y)=-\overline{\xi(y,x)}$.

The first order differential calculus associated with $\E^{(D)}$ is obtained by suitable restriction.
\end{example}

\begin{example}[Riemannian manifolds]\label{ex_manifolds}
Let $(M,g)$ be a complete Riemannian manifold and $\E$ the standard Dirichlet integral
\begin{align*}
\E\colon L^2(M)\lra [0,\infty],\,\E(u)=\begin{cases}\int_M \abs{\nabla u}^2\,d\vol_g&\text{if } \nabla u\in L^2(M),\\\infty& \text{otherwise}.\end{cases}
\end{align*}
The first order differential calculus for $\E$ is given by $\H=L^2(M;T M)$, $(u\xi)(x)=(\xi u)(x)=u(x)\xi(x)$, $\partial=\nabla$ and $J\xi=\overline{\xi}$.
\end{example}

\begin{example}[Metric measure spaces]\label{ex_RCD-space}
If $(X,d,m)$ is an infinitesimally Hilbertian metric measure space (see \cite{AGS14b}) and $\E$ the associated Dirichlet form (twice the Cheeger energy), then the first order differential calculus described above coincides with first order differential calculus developed in \cite{Gig14a}.
\end{example}

Notice that the crucial difference between Example \ref{ex_graphs} on the one hand and Examples \ref{ex_manifolds}, \ref{ex_RCD-space} on the other hand is that left and right multiplication on $\H$ coincide for the Dirichlet forms on Riemannian manifolds and metric measure spaces while they differ for graphs. More generally, left and right multiplication coincide in the commutative setting whenever $\E$ is a strongly local regular Dirichlet form (see \cite[Theorem 2.7]{IRT12}).

\begin{example}[Noncommutative torus]\label{ex_nc_heat_semigroup}
Let $\theta\in (0,1)$ be irrational and let $U,V\in B(H)$ be unitaries with $VU=e^{2\pi i \theta}UV$. The unital $C^\ast$-algebra $A_\theta$ generated by $U,V$ is called \emph{noncommutative torus} (and, up to $\ast$-isomorphism, it is indeed independent of the choice of $U$, $V$). Let $\A_\theta$  be the linear hull of $\{U^m V^n\mid m,n\in\IZ\}$, which is clearly a dense $\ast$-subalgebra of $A_\theta$.

The map
\begin{align*}
\tau\colon \A_\theta\lra\IC,\,\tau(U^mV^n)=\delta_{m,0}\delta_{n,0}
\end{align*}
extends to a tracial state on $A_\theta$. Furthermore, the map 
\begin{align*}
P_t\colon \A_\theta\lra \A_\theta,\,P_t(U^m V^n)=e^{-t(m^2+n^2)}U^m V^n
\end{align*}
extends to a bounded linear operator on $L^2(A_\theta,\tau)$ and $(P_t)_{t\geq 0}$ is a $\tau$-symmetric quantum Markov semigroup, called \emph{noncommutative heat semigroup}. The associated Dirichlet form $\E$ acts on $\A_\theta$ as
\begin{align*}
\E\left(\sum_{m,n\in\IZ}\alpha_{m,n}U^m V^n\right)=\sum_{m,n\in\IZ}(m^2+n^2)\abs{\alpha_{m,n}}^2.
\end{align*}

Let $\partial_1,\partial_2\colon \A_\theta\lra \A_\theta$ be defined by $\partial_1(U^m V^n)=im U^m V^n$ and $\partial_2 (U^m V^n)=inU^m V$, and let $\H$ be the the closed linear hull of $\{(a\partial_1(b),a\partial_2(b))\mid a,b\in \A_\theta\}$ in $L^2(A_\theta,\tau)\oplus L^2(A_\theta,\tau)$.

The first order differential calculus associated with $\E$ is given by $\partial=\partial_1\oplus \partial_2$, $L(a)(u,v)=(au,av)$, $R(b)(u,v)=(ub,vb)$, $J(u,v)=-(v^\ast,u^\ast)$.
\end{example}

\begin{example}[Fermionic Clifford algebra]
Let $H$ be an infinite-dimensional, se\-pa\-rable real Hilbert space and $\IC\ell(H)$ the Clifford $C^\ast$-algebra over $H$ (see \cite{SS64}). It is well-known that $\IC\ell(H)$ is a simple $C^\ast$-algebra with a unique tracial state $\tau$. The von Neumann algebra $L^\infty(\IC\ell(H),\tau)$ is the hyperfinite type II$_1$ factor.

Let $(e_i)_{i\in\IN}$ be an orthonormal basis of $H$. The linear hull of all products of the form $e_{i_1}\dots e_{i_k}$ with $i_1<\dots<i_k$ and $k\in\{0,1,\dots\}$ is a dense $\ast$-subalgebra of $\IC\ell(H)$, which we denote by $\A$. 

Let $\F_-(H)$ be the fermionic Fock space over $H$, that is, $\F_-(H)=\bigoplus_{k\geq 0}\bigwedge^k H$. The map
\begin{align*}
\A\lra \F_-(H),\,\sum_{i_1<\dots<i_k} \alpha_{i_1\dots i_k} e_{i_1}\dots e_{i_k}\mapsto \sum_{i_1<\dots<i_k} \alpha_{i_1\dots i_k} e_{i_1}\wedge \dots\wedge e_{i_k}
\end{align*}
extends to an isometric isomorphism $\Phi\colon L^2(\IC\ell(H),\tau)\lra\F_-(H)$, the Chevalley--Segal isomorphism.

The number operator on $\F_-(H)$ is defined by
\begin{align*}
D(N)=\{(\psi_k)\in \F_-(H)\mid \sum_{k\geq 0}k^2\norm{\psi_k}_{\Lambda^k H}^2<\infty\},\,N(\psi_k)=(k\psi_k),
\end{align*}
and $\Phi^{-1} N\Phi$ generates a conservative quantum Dirichlet form $\E_N$ on $L^2(\IC\ell(H),\tau)$.

Let $a_i$ be the annihilation operator on $\F_-(H)$ characterized by
\begin{align*}
a_i(e_{j_1}\wedge\dots\wedge e_{j_k})=\frac 1{\sqrt k}\sum_{l=1}^k (-1)^l \langle e_i,e_{j_l}\rangle e_{j_1}\wedge\dots\wedge \widehat{e_{j_l}}\wedge\dots\wedge e_{j_k}
\end{align*}
and by $\gamma\colon L^\infty(\IC\ell(H),\tau)\lra L^\infty(\IC\ell(H),\tau)$ the grading operator.

The first order differential calculus for $\E_N$ is given by $\H=\sum_{i\geq 0}L^2(\IC\ell(H),\tau)$, $L(x)(\xi_i)=(x\xi_i)$, $R(x)(\xi_i)=(\gamma(x)\xi_i)$, $J(\xi_i)=-(\xi_i^\ast)$ and $\partial=\bigoplus_{i\geq 0}\Phi^{-1}a_i\Phi$.
\end{example}

\section{Carré du champ}\label{energy_dominance}

In this section we study the question of when the first order differential calculus introduced in the last section can be extended to $\M$. It turns out that this question is closely related to the so-called carré du champ operator defined below. More precisely we show in \ref{char_energy_dom} that the carré du champ $\Gam(a)$ has a density with respect to $\tau$ if and only if the left and right action of $D(\E)\cap \M$ have normal extensions to $\M$. This provides a characterization of the noncommutative analogue of energy dominant measures.

Throughout the section let $(\M,\tau)$ be a tracial von Neumann algebra, $\E$ a quantum Dirichlet form on $L^2(\M,\tau)$, $\C=D(\E)\cap \M$, and $(\partial,\H,L,R,J)$ the associated first order differential calculus.

The \emph{carré du champ} $\Gam$ of $\E$  is defined as
\begin{align*}
\Gam\colon \C\times\C\lra \C^\ast,\,\Gam(a,b)(x)=\langle x\partial a,\partial b\rangle_\H.
\end{align*}
We write $\Gam(a)$ for $\Gam(a,a)$. It is easy to see that $\Gam$ is sesquilinear and $\norm{\Gam(a,b)}_{\C^\ast}\leq \E(a)^{1/2}\E(b)^{1/2}$ for all $a,b\in\C$.

\begin{remark}
In terms of $\E$, the carré du champ can be expressed as 
\begin{align*}
\Gam(a)(x)=\frac 12(\E(a,ax^\ast)+\E(ax,a)-\E(a^\ast a,x^\ast))
\end{align*}
for all $a,x\in \C$.
\end{remark}

For $\xi=\sum_i a_i\partial b_i$ we define (compare \cite{HRT13} in the commutative case)
\begin{align*}
\Gam_\H(\xi)=\sum_{i,k} a_i\Gam(b_i,b_k)a_k^\ast.
\end{align*}
Then
\begin{align*}
\abs{\Gam_\H(\xi)(x)}&=\Abs{\sum_{i,k} \Gam(b_i,b_k)(a_k^\ast x a_i)}\\
&=\Abs{\sum_{i,k} \langle x a_i\partial b_i,a_k \partial b_k\rangle_\H}\\
&=\abs{\langle x\xi,\xi\rangle_\H}\\
&\leq \norm{x}_\M \norm{\xi}_\H^2
\end{align*}
for all $x\in \C$. Hence the map $\Gamma_\H\colon\mathrm{lin}\{a\partial b\mid a,b\in\C\}\lra \C^\ast$ is $\norm\cdot_\H$-$\norm\cdot_{\C^\ast}$ continuous. Since $L$ is non-degenerate, we can extend $\Gam_\H$ continuously to $\H$.

For the following two results, we use the $\sigma$-weak topology, which in our situation can be described as follows. Every von Neumann algebra $\M$ is isometrically isomorphic to the dual space of a Banach space, and if $\tau$ is an n.s.f. trace on $\M$, this isomorphism can be realized as
\begin{align*}
\M\lra L^1(\M,\tau)^\ast,\,x\mapsto\tau(x\,\cdot\,).
\end{align*}
The weak$^\ast$ topology under this identification is called \emph{$\sigma$-weak topology}. For $\M\subset B(H)$ the $\sigma$-weak topology can equivalently be characterized as the topology generated by the seminorms
\begin{align*}
p_{(\xi_n),(\eta_n)}\colon \M\lra [0,\infty],\,x\mapsto \sum_{n=1}^\infty\abs{\langle x\xi_n,\eta_n\rangle}
\end{align*}
for sequences $(\xi_n)$, $(\eta_n)$ in $H$ with $\sum_n (\norm{\xi_n}^2+\norm{\eta_n}^2)<\infty$. The space of all $\sigma$-weakly continuous linear functionals on $\M$ is denoted by $\M_\ast$. If $\tau$ is an n.s.f. trace on $\M$, then $L^1(\M,\tau)\cong \M_\ast$ via
\begin{align*}
L^1(\M,\tau)\lra \M_\ast,x\mapsto \tau(x\,\cdot\,).
\end{align*}

\begin{lemma}\label{C_ultraweakly_dense}
If $\E$ is a quantum Dirichlet form on $(\M,\tau)$, then $\C$ is $\sigma$-weakly dense in $\M$.
\end{lemma}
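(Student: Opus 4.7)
The plan is to approximate any $x \in \M$ in the $\sigma$-weak topology by elements of $\C$ built from the heat semigroup applied to an $L^2$-truncation of $x$. The two ingredients are that $\M \cap L^2(\M,\tau)$ is already $\sigma$-weakly dense in $\M$, and that $P_t$ sends $\M \cap L^2(\M,\tau)$ into $\C$ while converging $\sigma$-weakly to the identity as $t \to 0$.

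For the first ingredient, by semi-finiteness of $\tau$ there is an increasing net $(e_\alpha)$ of projections in $\M$ with $\tau(e_\alpha) < \infty$ and $e_\alpha \nearrow 1$ in the $\sigma$-strong topology. For $x \in \M$ the element $x_\alpha = e_\alpha x e_\alpha$ satisfies $\tau(\abs{x_\alpha}^2) = \tau(e_\alpha x^\ast e_\alpha x e_\alpha) \leq \norm{x}_\M^2\,\tau(e_\alpha) < \infty$, so $x_\alpha \in \M \cap L^2(\M,\tau)$; and $x_\alpha \to x$ $\sigma$-weakly because left and right multiplication by the net $(e_\alpha)$ is $\sigma$-strongly-$\ast$ continuous on the bounded element $x$.

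For the second ingredient, fix $y \in \M \cap L^2(\M,\tau)$ and $t > 0$. Since $\L^{(2)}$ is a positive self-adjoint operator on $L^2(\M,\tau)$, the semigroup $(P_t^{(2)})$ is holomorphic, so $P_t y \in D(\L^{(2)}) \subset D((\L^{(2)})^{1/2}) = D(\E)$. At the same time $P_t^{(\infty)}$ and $P_t^{(2)}$ agree on $\M \cap L^2(\M,\tau)$ (being extensions of a common densely defined operator semigroup), and $P_t^{(\infty)}$ is a contraction on $\M$; hence $P_t y \in \M$, and therefore $P_t y \in \C$.

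Finally I would show $P_t y \to y$ $\sigma$-weakly as $t \to 0$ for every $y \in \M$: for $z \in L^1(\M,\tau) \cong \M_\ast$ the duality $\tau(z\, P_t y) = \tau((P_t^{(1)} z)\, y)$ combined with strong continuity of $(P_t^{(1)})$ on $L^1(\M,\tau)$ yields $\tau(z\,P_t y) \to \tau(zy)$. Applied with $y = x_\alpha$, this places each $x_\alpha$ in the $\sigma$-weak closure of $\C$; the first ingredient then places $x$ itself there. The mildly subtle point is precisely this $\sigma$-weak continuity of $(P_t^{(\infty)})$ at $t = 0$, which does not follow directly from strong continuity on $L^p(\M,\tau)$ for finite $p$ but is forced by predual duality with the strongly continuous $L^1$-semigroup.
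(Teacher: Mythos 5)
Your proof is correct and follows the same strategy as the paper's: it approximates $x\in\M$ $\sigma$-weakly by elements of $L^2(\M,\tau)\cap\M$, uses the regularizing property $P_t(L^2\cap\M)\subset D(\E)\cap\M=\C$, and lets $t\searrow 0$ using $\sigma$-weak continuity of the semigroup. You merely spell out the three steps that the paper's one-sentence proof leaves implicit (semi-finiteness for the density of $L^2\cap\M$, holomorphy of $P_t^{(2)}$ for the regularization, and $L^1$-duality for the $\sigma$-weak convergence).
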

\begin{proof}
Let $(P_t)_{t\geq 0}$ be the quantum Markov semigroup associated with $\E$. If $a\in L^2(\M,\tau)\cap \M$, then $P_t(a)\in D(\E)\cap \M$ for all $t>0$ and $P_t (a)\to a$ $\sigma$-weakly as $t\searrow 0$. Now the assertions follows from the fact that $L^2(\M,\tau)\cap \M$ is $\sigma$-weakly dense in $\M$.
\end{proof}
\begin{remark}
Since $\C$ is a $\ast$-algebra, the Kaplansky density theorem (\cite[Theorem II.4.8]{Tak02}) asserts that $D(\E)\cap \M_1$ is even strongly dense in $\M_1$, where $\M_1$ denotes the unit ball in $\M$.
\end{remark}

\begin{theorem}[Characterization energy dominant trace]\label{char_energy_dom}
Let $\E$ be a quantum Di\-rich\-let form on the tracial von Neumann algebra $(\M,\tau)$. The following assertions are equivalent:
\begin{enumerate}[(i)]
\item $L$ is $\sigma$-weakly continuous
\item $R$ is $\sigma$-weakly continuous
\item $\Gam(a)$ is $\sigma$-weakly continuous for all $a\in \C$
\item $\Gam_\H(\xi)$ is $\sigma$-weakly continuous for all $\xi\in \H$
\end{enumerate}
\end{theorem}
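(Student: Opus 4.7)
The strategy is to close the cycle (i) $\Rightarrow$ (iv) $\Rightarrow$ (iii) $\Rightarrow$ (i), handling (i) $\Leftrightarrow$ (ii) separately via the $J$-intertwining.

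For (i) $\Leftrightarrow$ (ii), note that $R(a)^\ast = R(a^\ast)$ since $R$ is a $\ast$-representation, so the intertwining rewrites as $L(a) = JR(a^\ast)J$. The involution $a \mapsto a^\ast$ is $\sigma$-weakly continuous on $\M$, and the conjugation $T \mapsto JTJ$ on $\L(\H)$ is likewise $\sigma$-weakly continuous thanks to the identity $\langle JTJ\xi,\eta\rangle_\H = \overline{\langle TJ\xi,J\eta\rangle_\H}$, which follows from the antiunitarity of $J$. Hence $\sigma$-weak continuity of one of $L, R$ transfers to the other.

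For the implications (i) $\Rightarrow$ (iv) $\Rightarrow$ (iii), suppose $L$ admits a normal extension $\tilde L\colon\M\to\L(\H)$. Then for any $\xi\in\H$ the functional $x\mapsto\langle\tilde L(x)\xi,\xi\rangle_\H$ is $\sigma$-weakly continuous on $\M$ and restricts to $\Gam_\H(\xi)$ on $\C$. Specializing $\xi=\partial a$ gives $\Gam(a)=\Gam_\H(\partial a)$, so (iii) follows.

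For the key step (iii) $\Rightarrow$ (i), polarization promotes (iii) to a normal extension of $\Gam(a,b)$ to $\M$ for all $a,b\in\C$. The identity $L(x)L(a)\partial b=L(xa)\partial b$ then yields, for finite sums $\xi=\sum_i L(a_i)\partial b_i$ and $\eta=\sum_k L(c_k)\partial d_k$,
\begin{align*}
\langle L(x)\xi,\eta\rangle_\H=\sum_{i,k}\langle L(c_k^\ast x a_i)\partial b_i,\partial d_k\rangle_\H=\sum_{i,k}\Gam(b_i,d_k)(c_k^\ast xa_i),
\end{align*}
which is $\sigma$-weakly continuous in $x$ because each summand is the composition of a normal functional with the $\sigma$-weakly continuous multiplication $x\mapsto c_k^\ast xa_i$. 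The linear span of $\{L(a)\partial b\mid a,b\in\C\}$ is dense in $\H$ (a standard property of the Cipriani--Sauvageot construction, ensured here by $1\in\C$ from conservativeness of $\E$), so any $\xi,\eta\in\H$ can be approximated by such finite sums. The uniform bound $|\langle L(x)\xi,\eta\rangle_\H|\leq\|x\|_\M\|\xi\|_\H\|\eta\|_\H$ makes the approximation uniform on $\C\cap\M_1$. By the Kaplansky density statement recorded in the remark after Lemma \ref{C_ultraweakly_dense}, the restriction $\M_\ast\hookrightarrow\ell^\infty(\C\cap\M_1)$ is an isometric embedding with closed range; consequently, the uniform limit on $\C\cap\M_1$ is the restriction of a normal functional $\phi_{\xi,\eta}$ on $\M$. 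For each $x\in\M$, the assignment $(\xi,\eta)\mapsto\phi_{\xi,\eta}(x)$ is a sesquilinear form bounded by $\|x\|_\M$ (again by Kaplansky), producing a bounded operator $\tilde L(x)$ on $\H$; checking $\ast$-multiplicativity of $\tilde L$ by $\sigma$-weak approximation with elements of $\C$ completes the normal extension.

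The main obstacle is this final density argument: one must preserve normality of the matrix coefficients $x\mapsto\langle L(x)\xi,\eta\rangle_\H$ when passing from the dense span of $\{L(a)\partial b\}$ to all of $\H$, and this hinges precisely on the Kaplansky density statement, which guarantees that uniform limits on $\C\cap\M_1$ of restrictions of normal functionals are again restrictions of normal functionals.
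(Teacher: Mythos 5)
Your proposal is correct in substance and closes the implication cycle in a different order from the paper: you prove (i)$\Rightarrow$(iv)$\Rightarrow$(iii)$\Rightarrow$(i), whereas the paper proves (iii)$\Rightarrow$(iv), (iv)$\Rightarrow$(iii), (i)$\Rightarrow$(iii) and (iv)$\Rightarrow$(i). In both cases the real work is getting back to (i). Where you differ from the paper is in the mechanism: you build the operator $\tilde L(x)$ pointwise from the bounded sesquilinear form $(\xi,\eta)\mapsto\phi_{\xi,\eta}(x)$ and then argue that $\tilde L$ is a normal $\ast$-homomorphism, whereas the paper avoids the operator-valued extension altogether. It instead observes that $\sigma$-weak continuity of $L$ is equivalent to $\phi\circ L$ being $\sigma$-weakly continuous for every $\phi\in\L(\H)_\ast$, writes such a $\phi$ as $\sum_n\langle\,\cdot\,\xi_n,\eta_n\rangle_\H$ with $\sum_n(\norm{\xi_n}^2+\norm{\eta_n}^2)<\infty$, and notes that $\sum_n\Gam_\H(\xi_n,\eta_n)$ converges absolutely in $\M^\ast$ to an element of the closed subspace $\M_\ast$. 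Your approach buys a genuinely constructed normal extension $\tilde L$ (which the paper derives only afterwards), at the cost of having to verify $\ast$-multiplicativity and to invoke the fact that a $\ast$-homomorphism whose vector matrix coefficients are all normal is automatically $\sigma$-weakly continuous; the paper's route is shorter because it only needs the linear statement about $\M_\ast$ being norm-closed.

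One small mis-justification: you assert that density of $\mathrm{lin}\{L(a)\partial b\mid a,b\in\C\}$ in $\H$ is ``ensured here by $1\in\C$ from conservativeness of $\E$.'' When $\tau$ is infinite, $1\notin L^2(\M,\tau)$ so $1\notin\C$, yet the theorem is stated without a finiteness hypothesis. The density you need is a structural feature of the Cipriani--Sauvageot tangent bimodule (and is what the paper invokes, tacitly, through non-degeneracy of $L$), not a consequence of the unit lying in $\C$. The main argument survives because you only use the density itself, not the $1\in\C$ claim; but the parenthetical reason should be dropped or replaced.

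Finally, the concluding remark ``checking $\ast$-multiplicativity of $\tilde L$ by $\sigma$-weak approximation with elements of $\C$ completes the normal extension'' is compressed: to deduce (i) you still need to pass from ``all vector functionals $x\mapsto\langle\tilde L(x)\xi,\eta\rangle$ are normal'' to ``$\tilde L$ is $\sigma$-weakly continuous,'' which requires first that $\tilde L$ is a $\ast$-homomorphism (hence positive, hence normal by the vector-state criterion), and then the standard theorem that normal $\ast$-homomorphisms between von Neumann algebras are $\sigma$-weakly continuous. This is all correct, but worth spelling out since it is precisely the point the paper short-circuits by working with functionals rather than operators.
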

\begin{proof}
(i)$\iff$(ii): Since multiplication by a fixed bounded operator and taking adjoints are $\sigma$-weakly continuous, the equivalence of (i) and (ii) follows from $L(\cdot)=JR(\cdot)^\ast J$. 

(iii)$\implies$(iv): It is easy to see that $\Gam_\H(\xi)$ is $\sigma$-weakly continuous for $\xi\in\mathrm{lin}\{a\partial b\mid a,b\in\C\}$. Combined with the fact that the norm limit of $\sigma$-weakly continuous functionals is $\sigma$-weakly continuous, (iv) follows.

(iv)$\implies$ (iii): obvious.

(i)$\implies$(iii): This is a consequence of the fact that $\sigma$-weak convergence implies weak operator convergence.

(iv)$\implies$(i): By Lemma \ref{C_ultraweakly_dense} and the subsequent remark, the set $D(\E)\cap\M_1$ is $\sigma$-weakly dense in $\M_1$. Moreover, since $\Gamma_\H(\xi)$ is linear and $\sigma$-weakly continuous, it is uniformly continuous with respect to the $\sigma$-weak topology (see \cite[Theorem 1.17]{Rud91}). Thus, by \cite[Theorem II.2]{Bou89}, for every $\xi\in \H$ there is a unique $\sigma$-weakly continuous extension of $\Gam_\H(\xi)$ to $\M$ with the same norm. We continue to write $\Gam_\H(\xi)$ for this extension.

For $L$ to be $\sigma$-weakly continuous it suffices to show that $\phi\circ L$ is $\sigma$-weakly continuous for all $\phi\in B(\H)_\ast$. Every $\phi\in B(\H)_\ast$ is of the form $\phi=\sum_n \langle\,\cdot\,\xi_n,\eta_n\rangle_\H$ for sequences $(\xi_n)$, $(\eta_n)$ in $\H$ such that $\sum_n (\norm{\xi_n}_\H^2+\norm{\eta_n}_\H^2)<\infty$. Then
\begin{align*}
\sum_{n=1}^\infty \norm{\Gam_\H(\xi_n,\eta_n)}_{\M^\ast}\leq\sum_{n=1}^\infty \norm{\xi_n}_\H\norm{\eta_n}_\H\leq \frac 1 2 \sum_{n=1}^\infty(\norm{\xi_n}_\H^2+\norm{\eta_n}_\H^2).
\end{align*}
Hence $\sum_n \Gam_H(\xi_n,\eta_n)$ converges absolutely with respect to $\norm{\cdot}_{\M^\ast}$ to some $\omega\in\M^\ast$. Since the space $\M_\ast$ of $\sigma$-weakly continuous linear functionals is closed in $\M^\ast$, we have $\omega\in \M_\ast$.

Now let $(a_i)$ be a sequence in $\M$ such that $a_i\to 0$ $\sigma$-weakly. Then
\begin{align*}
\sum_n \langle a_i\xi_n,\eta_n\rangle_\H=\sum_n \Gam_\H(\xi_n,\eta_n)(a_i)=\omega(a_i)\overset{i}\to 0.
\end{align*}
Hence $L$ is $\sigma$-weakly continuous.
\end{proof}

\begin{definition}[Energy dominant trace]
Let $\E$ be a quantum Dirichlet form on the tracial von Neumann algebra $(\M,\tau)$. We say that $\tau$ is \emph{energy dominant} if one of the equivalent assertions of Theorem \ref{char_energy_dom} holds.
\end{definition}
As already seen in the proof of Theorem \ref{char_energy_dom}, if the trace $\tau$ is energy dominant, the functional $\Gam_\H(\xi)$ has a unique $\sigma$-weakly continuous extension to $\M$ for all $\xi\in\H$ of the same norm. Since $L$ is non-degenerate, this extension is still a positive functional.

We denote by $\Gamma_\H(\xi)$ the preimage of $\Gam_\H(\xi)$ under the isomorphism $L^1(\M,\tau)\lra \M_\ast,\,x\mapsto \tau(x\,\cdot\,)$, that is, $\Gamma_\H(\xi)$ is the unique element in $L^1(\M,\tau)$ such that $\Gam_\H(\xi)(x)=\tau(x\Gamma_\H(\xi))$ for all $x\in\C$. Similarly we define $\Gamma(a)\in L^1(\M,\tau)$ for $a\in D(\E)$.

On the other hand, if $\tau$ is energy dominant, also the left and right action $L$ and $R$ have unique $\sigma$-weakly continuous extensions $\tilde L$ and $\tilde R$ to $\M$ and $\M^\circ$, respectively. These extensions are characterized by
\begin{align*}
\langle \tilde L(a)\xi,\eta\rangle_\H=\tau(a\Gamma_\H(\xi,\eta))
\end{align*}
for $a\in \M$, $\xi,\eta\in \H$, and similarly for $\tilde R$.

Since the vector space operations as well as the multiplication and the involution on $\M$ are all (separately) $\sigma$-weakly continuous, the extensions $\tilde L$ and $\tilde R$ are again $\ast$-homomorphisms. From now on we will denote these extensions simply by $L$, $R$.

\begin{remark}
If $\E$ is a Dirichlet form on $L^2(X,m)$, then $\Gam$ is twice the (linear functional induced by the) energy measure as defined in \cite[Section 3.2]{FOT94}. In this case, the measure $m$ is energy dominant if and only if $\Gam(u)$ is absolutely continuous with respect to $m$ for all $u\in D(\E)$. This concept was introduced by Kusuoka \cite{Kus89,Kus93} in the study of Dirichlet forms on fractals.
\end{remark}

\begin{remark}
In the noncommutative setting, energy dominant traces were studied for example in \cite{JZ15}, where the corresponding semigroups are called \emph{noncommutative diffusion semigroups}. We do not adopt this terminology as it conflicts with the well-established definition of diffusion semigroups in the commutative case.
\end{remark}

\begin{remark}
For an irreducible local Dirichlet form $\E$ it is always possible to construct an energy dominant measure $\mu$ such that $\E$ is closable in $L^2(X,\mu)$, see \cite[Theorem 5.1]{HRT13}.

In the noncommutative setting, it is not clear why an analogously constructed weight should be tracial.
\end{remark}

\section{\texorpdfstring{Operator means and the algebra $\A_\theta$}{Operator means and the algebra Aθ}}\label{A_E}

In this section we study means of the left and right action on $\H$, which will later appear both in the action functional and the constraint in the definition of the metric $\W$. As an important tool we introduce the space $\A_\theta$, which will take on the role of a space of test ``functions''. We prove several continuity properties of these means, which are important technical tools for the remainder of the thesis, especially the semicontinuity property established in Theorem \ref{theta_norm_usc}.

In the later sections we will focus on the logarithmic mean as it gives the connection to gradient flows of the entropy. In this section however we keep the discussion more general since means other than the logarithmic one have also proven useful in the commutative case (see for example \cite{CLLZ17} for an application to evolutionary games).

Throughout this section let $(\M,\tau)$ be a tracial von Neumann algebra and $\E$ a quantum Dirichlet form on $L^2(\M,\tau)$ with associated first-order differential calculus $(\partial,\H,L,R,J)$. We further assume that $\tau$ is energy dominant.

Since we assume $\tau$ to be energy dominant, the left and right action $L$ and $R$ extend to $\M$ by Theorem \ref{char_energy_dom}. Using the spectral theorem, we can even extend them to operators affiliated with $\M$ in the following way.

For self-adjoint $a\in \M$ let
\begin{align*}
a=\int_{\IR}\lambda\,de(\lambda)
\end{align*}
be the spectral decomposition. Since $L,R$ are normal $\ast$-homomorphisms, the maps $L\circ e$ and $R\circ e$ are spectral measures on $\H$ and
\begin{align*}
L(a)=\int_{\IR}\lambda\,d(L\circ e)(\lambda),
\end{align*}
and analogously for $R(a)$. This formula obviously extends to self-adjoint operators affiliated with $\M$. We continue to denote also these extensions by $L$ and $R$. For arbitrary $a$ affiliated with $\M$ with polar decomposition $a=u\abs{a}$ we define $L(a)=L(u)L(\abs{a})$ and $R(a)=R(\abs{a})R(u)$. Again this definition is clearly consistent for $a\in \M$, which justifies the use of the same symbol both for the maps on $\M$ and their extensions to operators affiliated with $\M$.

 It is easy to see that for self-adjoint $a,b$ affiliated with $\M$ the operators $L(a)$ and $R(b)$ commute strongly, that is, the spectral measures of $L(a)$ and $R(b)$ commute. Hence we can make sense of expressions of the form $\theta(L(\rho),R(\rho))$ via functional calculus (see \cite[Section 5.5]{Sch12}) for positive self-adjoint $\rho$ affiliated with $\M$.

\begin{definition}\label{def_rho_hat}
Let $\rho$ be a positive self-adjoint operator affiliated with $\M$ and let $\theta\colon [0,\infty)^2\lra [0,\infty)$ be measurable. Let $e$ denote the joint spectral measure of $L(\rho)$ and $R(\rho)$. The multiplication operator $\hat\rho$ is defined by
\begin{align*}
D(\hat\rho)&=\left\lbrace \xi\in \H\,\left|\, \int_{[0,\infty)^2}\theta(s,t)^2\,d\langle e(s,t)\xi,\xi\rangle_\H<\infty\right.\right\rbrace,\\
\langle\hat\rho \xi,\eta\rangle_\H&=\int_{[0,\infty)^2}\theta(s,t)\,d\langle e(s,t)\xi,\eta\rangle_\H.
\end{align*}
\end{definition}

\begin{remark}\label{rep_Beurling_Deny}
If $\M$ is commutative, one could alternatively define $\hat\rho$ separately for the strongly local and the jump part of $\E$ (recall that one can always regularize $\E$, even if at the cost of a huge state space). Indeed, in the light of the discussion in \cite[Section 10.1]{CS03} it is not hard to see that
\begin{align*}
\norm{\hat\rho^{1/2} u\partial v}_\H^2&=\int \theta(\rho(x),\rho(x))\abs{u(x)}^2\,d\Gamma^{(c)}(v)(x)\\
&\quad+\frac 1 2\int \theta(\rho(x),\rho(y))\abs{u(x)}^2 \abs{v(x)-v(y)}^2\,dJ(x,y).
\end{align*}
However, such a definition would be against the spirit of the present thesis to give a unified treatment of the local and non-local case. Moreover, there is no obvious way to extend this kind of definition to noncommutative Dirichlet forms.

Note that in the strongly local case, $\hat\rho$ only depends on the diagonal values of $\theta$.
\end{remark}

\begin{lemma}\label{theta_increasing}
Assume that $\theta\colon [0,\infty)^2\lra [0,\infty)$ is measurable and increasing in both arguments. For positive self-adjoint $\rho$ affiliated with $\M$ let $\rho_n=\rho\wedge n$. Then $\xi\in D(\hat \rho^{1/2})$ if and only if $\sup_n \langle\hat\rho_n \xi,\xi\rangle_\H<\infty$, and
in this case
\begin{align*}
\norm{\hat \rho^{1/2}\xi}_\H^2=\sup_{n\in \IN}\langle \hat\rho_n\xi,\xi\rangle_\H.
\end{align*}
\end{lemma}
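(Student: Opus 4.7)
The plan is to reduce the claim to monotone convergence applied to the joint spectral measure of $L(\rho)$ and $R(\rho)$. Denote this joint spectral measure by $e$ on $[0,\infty)^2$ (well-defined since, as noted in the paragraph preceding the lemma, these two operators commute strongly), and write $\mu_\xi = \langle e(\cdot)\xi,\xi\rangle_\H$ for the associated positive finite measure on $[0,\infty)^2$ corresponding to $\xi\in\H$.

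The first step is to identify $L(\rho_n) = L(\rho)\wedge n$ and $R(\rho_n) = R(\rho)\wedge n$. By the extension procedure described immediately before the lemma, the spectral measure of $L(a)$ for self-adjoint $a$ affiliated with $\M$ is precisely $L\circ e_a$, where $e_a$ is the spectral measure of $a$. Since truncation $a\mapsto a\wedge n$ corresponds to pushing forward the spectral measure under $\lambda\mapsto \lambda\wedge n$, and this operation commutes with $L$ (indeed $L((a\wedge n)\text{-spectral value on }A)=L(e_a(\{\lambda:\lambda\wedge n\in A\}))=(L\circ e_a)(\{\lambda:\lambda\wedge n\in A\})$), the claimed identity follows, and similarly for $R$. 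Consequently the joint spectral measure of $(L(\rho_n),R(\rho_n))$ is the pushforward of $e$ under $(s,t)\mapsto(s\wedge n,t\wedge n)$.

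By the very definition of $\hat\rho_n = \theta(L(\rho_n),R(\rho_n))$ via functional calculus recalled above the lemma, this yields
\begin{align*}
\langle \hat\rho_n \xi,\xi\rangle_\H \;=\; \int_{[0,\infty)^2} \theta(s\wedge n,t\wedge n)\,d\mu_\xi(s,t).
\end{align*}
Because $\theta$ is continuous, nonnegative, and increasing in both arguments, the integrand is nonnegative, pointwise increasing in $n$, and converges to $\theta(s,t)$ as $n\to\infty$. Monotone convergence therefore gives
\begin{align*}
\sup_{n\in\IN}\langle \hat\rho_n\xi,\xi\rangle_\H \;=\; \int_{[0,\infty)^2}\theta(s,t)\,d\mu_\xi(s,t),
\end{align*}
with the identity holding in $[0,\infty]$ regardless of finiteness. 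The right-hand side is, by the spectral description of $\hat\rho = \theta(L(\rho),R(\rho))$ given above the lemma, equal to $\norm{\hat\rho^{1/2}\xi}_\H^2$ whenever $\xi\in D(\hat\rho^{1/2})$ and equals $+\infty$ otherwise. This simultaneously characterizes $D(\hat\rho^{1/2})$ and yields the claimed equality.

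The only step requiring any care is the commutation of $L$ and $R$ with the truncation functional calculus for unbounded affiliated operators; once one reads off the spectral measures of $L(\rho)$ and $R(\rho)$ from the extension formula, everything reduces to pushforward bookkeeping plus monotone convergence. There is no delicate analytic obstacle beyond that.
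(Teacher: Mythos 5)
Your argument is correct and follows the same route as the paper: write $\langle\hat\rho_n\xi,\xi\rangle_\H=\int_{[0,\infty)^2}\theta(s\wedge n,t\wedge n)\,d\langle e(s,t)\xi,\xi\rangle_\H$ using the joint spectral measure $e$ of $(L(\rho),R(\rho))$, then apply monotone convergence and read off the domain of $\hat\rho^{1/2}$ from the spectral representation. The only difference is that you spell out the pushforward bookkeeping behind the truncation identity $L(\rho_n)=L(\rho)\wedge n$, which the paper leaves implicit.
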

\begin{proof}
Let $\xi\in\H$ and let $e$ be a joint spectral measure for $L(\rho)$ and $R(\rho)$. Then
\begin{align*}
\langle\hat\rho_n\xi,\xi\rangle_\H=\int_{[0,\infty)^2}\theta(s\wedge n,t\wedge n)\,d\langle e(s,t)\xi,\xi\rangle.
\end{align*}
By assumption, $\theta(s\wedge n,t\wedge n)\nearrow \theta(s,t)$ for all $s,t\geq 0$. The monotone convergence theorem gives
\begin{align*}
\int_{[0,\infty)^2}\theta(s,t)\,d\langle e(s,t)\xi,\xi\rangle=\sup_{n\in\IN}\langle\hat\rho_n\xi,\xi\rangle_\H.
\end{align*}
Thus $\xi \in D(\hat\rho^{1/2})=D(\theta(L(\rho),R(\rho))^{1/2})$ if and only if $\sup_n \langle\hat\rho_n\xi,\xi\rangle_\H<\infty$, and in this case $\norm{\hat\rho^{1/2}\xi}_\H^2=\sup_n\langle\hat\rho_n\xi,\xi\rangle_\H$.
\end{proof}

\begin{definition}\label{def_norm_rho}
For a positive self-adjoint operator $\rho$ affiliated with $\M$ and a measurable function $\theta\colon [0,\infty)^2\lra [0,\infty)$ we define
\begin{align*}
\norm\cdot_\rho\colon\H\lra [0,\infty],\,\norm{\xi}_\rho=\begin{cases}\norm{\hat\rho^{1/2}\xi}_\H&\text{if }\xi\in D(\hat\rho^{1/2}),\\\infty&\text{otherwise.}\end{cases}
\end{align*}
\end{definition}
In other words, $\norm\cdot_\rho^2$ is the quadratic form generated by $\rho$. Note that this definition implicitly depends on the choice of $\theta$. Lemma \ref{theta_increasing} shows that if $\theta$ is increasing in both arguments, this norm can alternatively be computed as $\norm{\xi}_\rho^2=\sup_n\langle\hat\rho_n\xi,\xi\rangle_\H$ for $\xi\in \H$.

\begin{lemma}\label{rho_norm_lsc}
Assume that $\theta\colon [0,\infty)^2\lra[0,\infty)$ is continuous, increasing in both arguments and $\theta(s,t)>0$ for $s,t>0$. If $\rho$ is an invertible positive self-adjoint operator affiliated with $\M$, then the map
\begin{align*}
L^2(\M,\tau)\lra[0,\infty],\,a\mapsto \begin{cases}\norm{\partial a}_\rho^2&\text{if } a\in D(\E)\\
\infty&\text{otherwise}\end{cases}
\end{align*}
is lower semicontinuous.
\end{lemma}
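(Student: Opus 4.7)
The plan is to exploit invertibility of $\rho$, which we read as $\rho \geq \epsilon \1$ for some $\epsilon > 0$, in order to compare $\norm{\partial a}_\rho$ with the energy norm $\E(a)^{1/2}$, and then pass to the limit in a weakly convergent sequence using closedness arguments.

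First I would prove the coercivity bound. Since $L, R$ are normal $\ast$-homomorphisms extended to affiliated operators, they preserve order, so $L(\rho), R(\rho) \geq \epsilon I$ on $\H$. The strongly commuting pair $L(\rho), R(\rho)$ has joint spectrum contained in $[\epsilon, \infty)^2$, and the monotonicity of $\theta$ together with $\theta(\epsilon,\epsilon) > 0$ implies $\hat\rho = \theta(L(\rho), R(\rho)) \geq \theta(\epsilon,\epsilon)\, I$ on $\H$. In particular, for every $a \in D(\E)$,
\[
\norm{\partial a}_\rho^2 = \norm{\hat\rho^{1/2}\partial a}_\H^2 \geq \theta(\epsilon,\epsilon)\,\E(a).
\]

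Next, I would pass to the limit. Let $a_k \to a$ in $L^2(\M,\tau)$ and set $L := \liminf_k \norm{\partial a_k}_\rho^2$; if $L = \infty$ there is nothing to show, so assume $L < \infty$ and extract a subsequence with $\norm{\partial a_k}_\rho^2 \to L$. Then $a_k \in D(\E)$ and the coercivity bound forces $\sup_k \E(a_k) < \infty$. Lower semicontinuity of the closed form $\E$ on $L^2(\M,\tau)$ yields $a \in D(\E)$. Boundedness of $\partial a_k$ in $\H$ combined with weak closedness of the graph of the closed operator $\partial$ (a closed linear subspace of $L^2(\M,\tau) \oplus \H$, hence weakly closed) give, along a further subsequence, $\partial a_k \rightharpoonup \partial a$ weakly in $\H$. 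Finally, $\hat\rho^{1/2}\partial a_k$ is bounded in $\H$, so another subsequence converges weakly to some $b \in \H$; weak closedness of the graph of the self-adjoint operator $\hat\rho^{1/2}$ together with $\partial a_k \rightharpoonup \partial a$ forces $\partial a \in D(\hat\rho^{1/2})$ and $\hat\rho^{1/2}\partial a = b$. Weak lower semicontinuity of the norm then gives $\norm{\partial a}_\rho^2 = \norm{b}_\H^2 \leq L$.

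The main difficulty is the coercivity step, where the two assumptions on $\theta$ (monotonicity in each variable and strict positivity on $(0,\infty)^2$) combine with invertibility of $\rho$ to produce a uniform lower bound on $\hat\rho$. Without such a bound there is no reason for a sequence with $\norm{\partial a_k}_\rho^2$ controlled to remain in $D(\E)$ in the limit, and one would have to fall back on the more delicate strategy of writing $\norm{\partial a}_\rho^2 = \sup_n \langle \hat\rho_n \partial a, \partial a\rangle_\H$ (with $\rho_n = \rho \wedge n$ from Lemma \ref{theta_increasing}) and showing each term is lower semicontinuous. Once the coercivity bound is in place, however, the rest reduces to a standard weak-compactness plus graph-closedness argument for the two closed operators $\partial$ and $\hat\rho^{1/2}$.
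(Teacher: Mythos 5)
Your proof is correct. It also genuinely departs from the paper's route, which splits into two cases: for bounded invertible $\rho$ it observes that $\hat\rho$ is then bounded and invertible, so $\hat\rho^{1/2}\partial$ is a closed operator and lower semicontinuity is the standard fact for closed operators; for unbounded $\rho$ it reduces to the bounded case by writing $\norm{\partial a}_\rho^2 = \sup_n\norm{\partial a}_{\rho\wedge n}^2$ via Lemma~\ref{theta_increasing} and invoking that a supremum of lower semicontinuous functions is lower semicontinuous. You instead give a single unified argument: the coercivity bound $\hat\rho\geq\theta(\epsilon,\epsilon)I$ (available for unbounded $\rho$ as well, since it only uses $\rho\geq\epsilon$ and monotonicity of $\theta$) controls $\E(a_k)$, and then you apply the weak-compactness/graph-closedness mechanism twice, once to $\partial$ and once to the self-adjoint $\hat\rho^{1/2}$, rather than once to the composition $\hat\rho^{1/2}\partial$. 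This sidesteps the obstruction that a composition of closed operators need not be closed when $\hat\rho^{1/2}$ is unbounded, and thereby avoids the truncation step. The paper's version is shorter once Lemma~\ref{theta_increasing} is in hand; yours is more self-contained and makes the role of invertibility (hence coercivity) explicit from the start. One stylistic remark: you could abbreviate the second paragraph by noting that what you are really proving is that the quadratic form $\xi\mapsto\norm{\hat\rho^{1/2}\xi}_\H^2$ (extended by $+\infty$) is weakly lower semicontinuous on $\H$ because $\hat\rho^{1/2}$ is closed, and then composing with the weak convergence $\partial a_k\rightharpoonup\partial a$ furnished by the closedness of $\partial$ and the coercivity bound.
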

\begin{proof}
First assume that $\rho$ is bounded. Since $\rho$ is invertible and $\theta(s,t)>0$ for $s,t>0$, the operator $\hat \rho$ is also invertible. Thus $\hat\rho^{1/2}\partial$ is closed and the lower semicontinuity follows from a standard Hilbert space argument. If $\rho$ is not necessarily bounded, the lower semicontinuity follows from Lemma \ref{theta_increasing} and the first part.
\end{proof}

\begin{definition}\label{def_A_theta}
Let $\theta\colon[0,\infty)^2\lra [0,\infty)$ be measurable. For $a\in D(\E)$ let
\begin{align*}
\norm{a}_\theta^2=\sup_{\rho\in L^1_+(\M,\tau)}\frac{\norm{\partial a}_\rho^2}{\norm{\rho}_1}.
\end{align*}
The test space $\A_\theta$ is the set of all $a\in D(\E)\cap\M$ with $\norm{a}_\theta<\infty$.
\end{definition}

\begin{example}
If $\E$ is a strongly local commutative Dirichlet form on $L^2(X,m)$, then
\begin{align*}
\norm{\partial f}_\rho^2=\int_X \theta(\rho(x),\rho(x))\Gamma(f)\,dm.
\end{align*}
In particular, if $\theta(s,s)=s$, then $\norm{f}_\theta^2=\norm{\Gamma(f)}_\infty$ and
\begin{align*}
\A_\theta=\{f\in D(\E)\cap\M\mid \Gamma(f)\in L^\infty(X,m)\}.
\end{align*}
This is the space of test functions used in \cite{AES16}.
\end{example}

\begin{example}\label{ex_AM}
If $\theta=\mathrm{AM}$, the arithmetic mean, then
\begin{align*}
\norm{\partial a}_\rho^2=\frac 1 2\tau((\Gamma(a)+\Gamma(a^\ast))\rho)
\end{align*}
and
\begin{align*}
\A_{\mathrm{AM}}=\{a\in D(\E)\cap\M\mid \Gamma(a),\Gamma(a^\ast)\in \M\}.
\end{align*}
A variant of this algebra (without the assumption $\Gamma(a^\ast)\in \M$) was introduced in \cite[Definition 10.7]{Cip16} under the name \emph{Lipschitz algebra}. By \cite[Proposition 10.6]{Cip16} the boundedness of $\Gamma(a)$ is equivalent to the boundedness of the commutator $[D,a^\ast]$, where 
\begin{align*}
D=\begin{pmatrix}0&\partial^\ast\\\partial&0\end{pmatrix}
\end{align*}
is the Dirac operator acting on $L^2(\M,\tau)\oplus \H$. Hence the space $\A_{\mathrm{AM}}$ is closely related to spectral triples in Connes' noncommutative geometry \cite{Con94} (compare also Remark \ref{L1-Wasserstein}).
\end{example}

\begin{remark}
In general, it does not seem feasible to give a more explicit description of $\A_\theta$. Note however that if $\theta$ is concave, there exist $\alpha,\beta>0$ such that $\theta\leq \alpha \mathrm{AM}+\beta$ and thus $\A_{\mathrm{AM}}\subset \A_\theta$.
\end{remark}

\begin{lemma}\label{A_theta_symmetric}
If $\theta\colon [0,\infty)^2\lra[0,\infty)$ is a symmetric measurable function, then $\norm{\partial a^\ast}_\rho^2=\norm{\partial a}_\rho^2$ for all $a\in D(\E)\cap \M$ and positive self-adjoint operators $\rho$ affiliated with $\M$. In particular, $\A_\theta$ is self-adjoint.
\end{lemma}
\begin{proof}
It follows from the properties of the first-order differential calculus that $J\1_A(L(\rho))\1_B(R(\rho))=\1_B(L(\rho))\1_A(R(\rho))J$ for all Borel sets $A,B\subset [0,\infty)$. Thus, if $e$ denotes the joint spectral measure of $L(\rho)$ and $R(\rho)$, then
\begin{align*}
\norm{\partial a^\ast}_\rho^2&=\int_{[0,\infty)^2}\theta(s,t)\,d\langle e(s,t)J\partial a,J\partial a\rangle_\H\\
&=\int_{[0,\infty)^2}\theta(s,t)\,d\langle e(t,s)\partial a,\partial a\rangle\\
&=\norm{\partial a}_\rho^2,
\end{align*}
since $\theta$ is symmetric.
\end{proof}

\begin{lemma}
If $\tau$ is a state and $\theta\colon [0,\infty)^2\lra [0,\infty)$ is continuous, increasing in both arguments and $\theta(s,t)>0$ for $s,t>0$, then $\norm{\cdot}_\theta$ is lower semicontinuous on $L^2(\M,\tau)$.
\end{lemma}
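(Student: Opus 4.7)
My plan is to exhibit $\norm\cdot_\theta$ as a pointwise supremum of lower semicontinuous functions on $L^2(\M,\tau)$. Lemma~\ref{rho_norm_lsc} already gives lower semicontinuity of $a\mapsto \norm{\partial a}_\rho$ for every \emph{invertible} $\rho\in L^1_+(\M,\tau)$, and multiplying by the positive constant $1/\norm{\rho}_1$ preserves this property; a supremum of such lsc functions is again lsc. So the lemma will follow once I show
\[
\norm{a}_\theta = \sup\left\{\frac{\norm{\partial a}_\rho}{\norm{\rho}_1} \,\middle|\, \rho\in L^1_+(\M,\tau),\; \rho \text{ bounded and invertible}\right\}
\]
for every $a\in L^2(\M,\tau)$.

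The $\geq$ inequality is trivial, and for the reverse I will approximate an arbitrary $\rho\in L^1_+(\M,\tau)$ with $\norm{\rho}_1>0$ by bounded invertible elements in two steps. First I set $\rho_n=\rho\wedge n$, which is bounded and lies in $L^1_+(\M,\tau)$. Lemma~\ref{theta_increasing} applied to $\xi=\partial a$ gives $\norm{\partial a}_{\rho_n}^2\nearrow\norm{\partial a}_\rho^2$, and normality of $\tau$ yields $\norm{\rho_n}_1\nearrow\norm{\rho}_1$, so the ratios converge in $[0,\infty]$. Second, for each $n$ I set $\rho_{n,m}=\rho_n+m^{-1}\cdot 1$, which is bounded above by $n+m^{-1}$, bounded below by $m^{-1}$, and a member of $L^1_+(\M,\tau)$ because the hypothesis that $\tau$ is finite forces $1\in L^1(\M,\tau)$. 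Since $L(\rho_{n,m})$ and $R(\rho_{n,m})$ are bounded, commute strongly, and have joint spectrum in the compact set $[0,n+1]^2$, continuity of $\theta$ there forces uniform convergence $\theta(s+m^{-1},t+m^{-1})\to\theta(s,t)$ and hence operator-norm convergence $\widehat{\rho_{n,m}}\to\widehat{\rho_n}$ as $m\to\infty$. Combined with $\norm{\rho_{n,m}}_1\to\norm{\rho_n}_1$, this yields $\norm{\partial a}_{\rho_{n,m}}/\norm{\rho_{n,m}}_1\to\norm{\partial a}_{\rho_n}/\norm{\rho_n}_1$. Concatenating the two limits produces bounded invertible $\rho_{n,m}$ along which the ratio tends to $\norm{\partial a}_\rho/\norm{\rho}_1$, which is exactly the required domination.

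The main technical obstacle, and the reason I split the approximation into truncation followed by perturbation rather than applying the naive shift $\rho\mapsto\rho+\epsilon\cdot 1$ directly, is that for unbounded $\rho$ the joint spectrum of $(L(\rho),R(\rho))$ is unbounded, so uniform approximation of $\theta$ on that spectrum is unavailable and $\widehat{\rho+\epsilon}$ need not converge to $\hat\rho$ in any manageable topology. Truncating first confines the analysis to the bounded, unital $C^\ast$-algebra regime where continuous functional calculus transfers uniform perturbations of $\theta$ into norm convergence on $\H$, and the finiteness of $\tau$ is then precisely what is needed to ensure that the shifted operators $\rho_{n,m}$ remain in $L^1_+(\M,\tau)$.
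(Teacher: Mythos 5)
Your proof is correct, but it takes a genuinely different route from the paper's. The reduction is the same in both: exhibit $\norm\cdot_\theta$ as a supremum of lower semicontinuous functionals (via Lemma~\ref{rho_norm_lsc}), so it suffices to show that the supremum over invertible $\rho$ dominates the full supremum. The paper then does this in one step, setting $\rho^\epsilon=(\tau(\rho)+\epsilon)^{-1}(\rho+\epsilon\cdot 1)$ and using only a \emph{one-sided} comparison: since $\rho$ and $\rho^\epsilon$ commute, the pointwise monotonicity of $\theta$ read through the joint spectral measure (the same mechanism as in Lemma~\ref{theta_increasing}), together with positive homogeneity, gives $\norm{\partial a}_{\rho^\epsilon}^2\geq(1+\epsilon)^{-1}\norm{\partial a}_\rho^2$, and $\norm{\rho^\epsilon}_1\to\norm{\rho}_1$. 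No two-sided convergence $\hat\rho^\epsilon\to\hat\rho$ is ever needed. You instead truncate first, apply the monotone-convergence statement of Lemma~\ref{theta_increasing}, and then shift, exploiting uniform continuity of $\theta$ on the now-compact joint spectrum to obtain operator-norm convergence of $\widehat{\rho_{n,m}}$. The worry you voice in your final paragraph --- that the unbounded joint spectrum makes the naive shift $\rho\mapsto\rho+\epsilon$ intractable --- is in fact a non-issue precisely because a one-sided estimate suffices: from $\rho\leq\rho+\epsilon$ with commuting operators, monotonicity of $\theta$ in each variable immediately yields $\norm{\partial a}_\rho^2\leq\norm{\partial a}_{\rho+\epsilon}^2$, and that is all the paper's argument uses. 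Your two-step version is longer but more self-contained, avoiding the implicit extension of Lemma~\ref{theta_increasing} to general commuting comparisons; the paper's is slicker but leaves that deduction to the reader.
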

\begin{proof}
By Lemma \ref{rho_norm_lsc} it suffices to show that the supremum in the definition of $\norm\cdot_\theta$ can be taken over all invertible $\rho\in L^1_+(\M,\tau)$.

For $\rho\in L^1_+(\M,\tau)$ and $\epsilon>0$ let $\rho^\epsilon=\rho+\epsilon$. Evidently, $\rho^\epsilon\in L^1_+(\M,\tau)$ is invertible and $\norm{\rho^\epsilon}_1=\norm{\rho}_1+\epsilon$. Since $\theta$ is increasing in both arguments, one sees as in the proof of Lemma \ref{theta_increasing} that
\begin{align*}
\norm{\partial a}_{\rho^\epsilon}^2= \norm{\partial a}_{\rho+\epsilon}^2\geq \norm{\partial a}_\rho^2.
\end{align*}
Thus 
\begin{equation*}
\sup_{\epsilon> 0}\frac{\norm{\partial a}_{\rho^\epsilon}^2}{\norm{\rho^\epsilon}_1}\geq\sup_{\epsilon>0}\frac{\norm{\partial a}_\rho^2}{\norm{\rho}_1+\epsilon}= \frac {\norm{\partial a}_\rho^2}{\norm{\rho}_1}.\qedhere
\end{equation*}
\end{proof}

\begin{corollary}
If $\tau$ is a state and $\theta$ is continuous, increasing in both arguments  and $\theta(s,t)>0$ for $s,t>0$, then $\A_\theta$ is complete in the norm $\norm{\cdot}_\M+\norm\cdot_\theta$.
\end{corollary}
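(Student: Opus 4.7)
The plan is a standard completeness argument that leverages the lower semicontinuity established in the previous lemma, so I expect no serious conceptual obstacle. I would begin by picking a sequence $(a_n)$ in $\A_\theta$ that is Cauchy with respect to $\norm{\cdot}_\M + \norm{\cdot}_\theta$. Since $\M$ is complete in its operator norm, there is $a \in \M$ with $a_n \to a$ uniformly. Because $\tau$ is finite, the estimate $\norm{b}_2 \leq \tau(1)^{1/2}\norm{b}_\M$ gives that $a_n \to a$ also in $L^2(\M,\tau)$. This is the only place finiteness of $\tau$ is used in this step.

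Next I would pass to the limit in the Cauchy condition for $\norm{\cdot}_\theta$. Interpret $\norm{\cdot}_\theta$ as a functional on all of $L^2(\M,\tau)$ with the value $+\infty$ outside $D(\E)$; the previous lemma (whose proof itself relied on Lemma \ref{rho_norm_lsc} and the approximation $\rho^\epsilon$) asserts this extension is lower semicontinuous. Fix $\epsilon > 0$ and $N$ with $\norm{a_n - a_m}_\theta < \epsilon$ whenever $n,m \geq N$. For each fixed $n \geq N$, the sequence $a_n - a_m$ converges in $L^2$ to $a_n - a$ as $m \to \infty$, so lower semicontinuity yields
\begin{align*}
\norm{a_n - a}_\theta \leq \liminf_{m \to \infty}\norm{a_n - a_m}_\theta \leq \epsilon.
\end{align*}

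Finally, I would deduce membership in $\A_\theta$ from this bound. The finiteness of $\norm{a_n - a}_\theta$ in the extended sense forces $a_n - a \in D(\E)$, hence $a \in D(\E)$, and
\begin{align*}
\norm{a}_\theta \leq \norm{a_n}_\theta + \norm{a_n - a}_\theta < \infty,
\end{align*}
so $a \in D(\E) \cap \M$ with $\norm{a}_\theta < \infty$, i.e. $a \in \A_\theta$. Combined with $\norm{a_n - a}_\M \to 0$, this gives $a_n \to a$ in $\norm{\cdot}_\M + \norm{\cdot}_\theta$, establishing completeness. The only point requiring a moment of care is the convention that $\norm{\cdot}_\theta = +\infty$ off $D(\E)$, which is needed both to apply the lower semicontinuity result and to conclude that the $L^2$-limit $a$ lies in $D(\E)$ without a separate argument; everything else is a routine Cauchy-sequence computation.
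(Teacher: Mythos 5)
Your proof is correct and takes exactly the route the paper intends: the corollary appears immediately after the lower-semicontinuity lemma for $\norm{\cdot}_\theta$ on $L^2(\M,\tau)$ (proved there via Lemma~\ref{rho_norm_lsc} and the regularization $\rho^\epsilon$), and the paper gives no separate proof for the corollary because the Cauchy-plus-lower-semicontinuity argument you wrote out is the standard deduction. Your care about extending $\norm{\cdot}_\theta$ by $+\infty$ off $D(\E)$, about using finiteness of $\tau$ only to pass from uniform to $L^2$ convergence, and about concluding $a\in D(\E)$ from $\norm{a_n-a}_\theta<\infty$ are all exactly the points that make the argument go through.
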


The following technical lemma is certainly known to experts, but because we could not find a reference we include its proof for the convenience of the reader.

\begin{lemma}\label{Lp_conv_implies_src}
If $(x_n)$ is a sequence in $L^p_h(\M,\tau)$ and $x\in L^p_h(\M,\tau)$ such that $x_n\to x$ in $L^p$, then $(x_n)$ converges in the strong resolvent sense to $x$, that is, $(x_n+z)^{-1}\to (x+z)^{-1}$ strongly for every $z\in \IC\setminus \IR$.
\end{lemma}
\begin{proof}
Since $((x_n+z)^{-1})_n$ is bounded in $B(L^2(\M,\tau))$, it is  enough to prove that
\begin{align*}
\tau(a ((x_n+z)^{-1}-(x+z)^{-1})b)\to 0
\end{align*}
for $a,b\in L^2(\M,\tau)\cap L^{2q}(\M,\tau)$, where $q$ is the dual exponent of $p$.

Using the resolvent formula, we see
\begin{align*}
\abs{\tau(a((x_n+z)^{-1}-(x+z)^{-1})b)}&=\abs{\tau(a(x+z)^{-1}(x-x_n)(x_n+z)^{-1}b)}\\
&\leq \norm{x_n-x}_p\norm{a(x+z)^{-1}}_{2q}\norm{(x_n+i)^{-1}b}_{2q}\\
&\leq \frac 1{(\operatorname{Im} z)^2}\norm{x-x_n}_p\norm{a}_{2q}\norm{b}_{2q}\\
&\to 0.\qedhere
\end{align*}
\end{proof}

Next we study continuity properties of the map $\rho\mapsto \norm{\hat\rho^{1/2}\partial a}_\H^2$. We start with an auxiliary result for bounded $\theta$.

\begin{lemma}\label{theta_bounded_cont}
Denote by $\C_h(\H)$ the set of all self-adjoint operators on $\H$. If $\theta\colon [0,\infty)^2\lra [0,\infty)$ is continuous, then the map
\begin{align*}
L^1_+(\M,\tau)\lra \C_h(\H),\,\rho\mapsto \theta(L(\rho),R(\rho))
\end{align*}
is continuous with respect to the norm topology on $L^1$ and the strong resolvent topology on $\C_h(\H)$.

If $\theta$ is additionally bounded, then the map
\begin{align*}
L^1_+(\M,\tau)\lra [0,\infty),\,\rho\mapsto \norm{\xi}_\rho^2
\end{align*}
is continuous for all $\xi\in \H$.
\end{lemma}
\begin{proof}
Let $(\rho_n)$ be a sequence in $L^1_+(\M,\tau)$ and $\rho\in L^1_+(\M,\tau)$ such that $\rho_n\to \rho$ in the strong $L^1$ topology. By Lemma \ref{Lp_conv_implies_src} the sequence $(\rho_n)$ also converges to $\rho$ in the strong resolvent sense. Since $L$ and $R$ are normal $\ast$-homomorphisms, $L(\rho_n)\to L(\rho)$ and $R(\rho_n)\to R(\rho)$ in the strong resolvent sense as well.

Now $\theta(L(\rho_n),R(\rho_n))\to \theta (L(\rho),R(\rho))$ follows from \cite[Theorem VIII.20]{RS78}. As the strong resolvent topology coincides with the strong topology on norm bounded subsets of $B(\H)$, the last part is clear.
\end{proof}

\begin{theorem}\label{theta_norm_usc}
If $\theta\colon [0,\infty)^2\lra[0,\infty)$ is continuous, then
\begin{align*}
\Lambda\colon L^1_+(\M,\tau)\lra [0,\infty),\,\rho\mapsto \norm{\xi}_\rho^2
\end{align*}
is lower semicontinuous with respect to $\norm\cdot_1$ for all $\xi\in \H$. If $\theta$ is additionally concave, then $\Lambda$ is continuous for $\xi=\partial a$ with $a\in\A_\AM$.

In particular, if $\Lambda$ is concave, then it is weakly upper semicontinuous for $\xi=\partial a$ with $a\in\A_\AM$.
\end{theorem}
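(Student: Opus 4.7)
The plan is to handle the two assertions about $\Lambda$ separately and then deduce the ``in particular'' clause from standard convex analysis.

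\emph{Lower semicontinuity.} First I would invoke Lemma~\ref{theta_increasing} to write
\begin{align*}
\Lambda(\rho)=\norm{\xi}_\rho^2=\sup_{n\in\IN}\langle \widehat{\rho\wedge n}\,\xi,\xi\rangle_\H,
\end{align*}
where $\widehat{\rho\wedge n}=\theta_n(L(\rho),R(\rho))$ with $\theta_n(s,t):=\theta(s\wedge n,t\wedge n)$. For each $n$ the function $\theta_n$ is bounded and continuous on $[0,\infty)^2$ and its limits at infinity in each variable trivially exist, so Lemma~\ref{theta_bounded_cont} applied to $\theta_n$ yields that $\rho\mapsto \widehat{\rho\wedge n}$ is continuous from $(L^1_+(\M,\tau),\norm{\cdot}_1)$ into $\L(\H)$ equipped with the strong operator topology. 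Hence each $\rho\mapsto \langle \widehat{\rho\wedge n}\,\xi,\xi\rangle_\H$ is continuous and, as a supremum of continuous functions, $\Lambda$ is lower semicontinuous.

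\emph{Upper semicontinuity for $\xi=\partial a$, $a\in\A_\theta$.} This is the main obstacle. My plan is to establish the uniform tail estimate
\begin{align*}
\sup_j\bigl(\norm{\partial a}_{\rho_j}^2-\norm{\partial a}_{\rho_j\wedge n}^2\bigr)\to 0\quad\text{as }n\to\infty
\end{align*}
along any sequence $\rho_j\to\rho$ in $L^1_+(\M,\tau)$. Given this, continuity of $\rho\mapsto\norm{\partial a}_{\rho\wedge n}^2$ from the first step, applied to the $L^1$-convergent sequence $\rho_j\wedge n\to\rho\wedge n$ (truncation being a contraction in $L^1$), together with the monotone bound $\norm{\partial a}_{\rho\wedge n}^2\leq \norm{\partial a}_\rho^2$ gives $\limsup_j\norm{\partial a}_{\rho_j}^2\leq \norm{\partial a}_\rho^2+\varepsilon$ for every $\varepsilon>0$ and $n$ large enough, hence upper semicontinuity. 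The tail estimate itself should combine three ingredients: the defining bound of the test algebra, $\norm{\partial a}_\sigma^2\leq \norm{a}_\theta^2\norm{\sigma}_1$, available for every $\sigma\in L^1_+(\M,\tau)$ and $a\in\A_\theta$; the uniform integrability of the $L^1$-convergent sequence $(\rho_j)$, which forces $\sup_j\norm{(\rho_j-n)_+}_1\to 0$ as $n\to\infty$; and a pointwise supergradient inequality for the concave, positively homogeneous, monotone function $\theta$ which, because $\rho_j\wedge n$ and $(\rho_j-n)_+$ are commuting functions of $\rho_j$, lifts through joint functional calculus to an operator inequality bounding $\widehat{\rho_j}-\widehat{\rho_j\wedge n}$ in terms of a suitable multiple of $\widehat{(\rho_j-n)_+}$. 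Testing the resulting operator inequality against $\partial a$ and invoking the $\A_\theta$ bound on $(\rho_j-n)_+$ then yields the uniform tail. The hard part will be to execute this last comparison in a way whose right-hand side only involves $\norm{(\rho_j-n)_+}_1$, since the partial derivatives of $\theta$ are merely homogeneous of degree zero and may blow up near the boundary of $[0,\infty)^2$; one must exploit the fact that those blow-ups occur precisely where the spectral mass of $\partial a$ under $(L(\rho_j\wedge n),R(\rho_j\wedge n))$ is absorbed.

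\emph{Weak upper semicontinuity.} The ``in particular'' clause is a direct consequence of convex analysis: the superlevel sets $\{\Lambda\geq c\}$ of a concave function are convex, and by the Hahn-Banach theorem a convex subset of a Banach space has the same norm closure and weak closure. The continuity established in the previous step makes these superlevel sets norm-closed and hence weakly closed, so $\Lambda$ is weakly upper semicontinuous.
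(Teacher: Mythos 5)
Your treatment of lower semicontinuity and of the weak-upper-semicontinuity corollary matches the paper's proof exactly, and your general plan for the hard direction (decompose into a truncated part plus a tail, use the $\A_\theta$ bound and an $L^1$ tail estimate) is the right skeleton. But the central ingredient of that plan, the comparison you label ``the hard part,'' is left unproved, and the worry you attach to it (blow-up of the partial derivatives of $\theta$ near the boundary of $[0,\infty)^2$) points you in the wrong direction. That comparison is not a derivative estimate at all; it is a one-line consequence of concavity. For a concave function $\theta$ one has the subadditivity-type inequality, for all $s,t\geq 0$ and $k>0$,
\begin{align*}
\theta(s,t)\leq \theta(s\wedge k,\,t\wedge k)+\theta\bigl((s-k)_+,\,(t-k)_+\bigr),
\end{align*}
with equality off the four corner regions. (This follows from concavity plus $\theta(0,0)=0$: if $x\geq y\geq 0$ componentwise and $v\geq 0$, concavity forces $\theta(x+v)-\theta(x)\leq\theta(y+v)-\theta(y)$; apply this with $y=(0,0)$ and $x=(s\wedge k,t\wedge k)$, $v=((s-k)_+,(t-k)_+)$.) Since $\rho$, $\rho\wedge k$, and $(\rho-k)_+$ are all functions of the same self-adjoint operator, $L$ and $R$ factor them through a common joint spectral measure, so the scalar inequality passes to
\begin{align*}
\norm{\partial a}_\rho^2\leq \norm{\partial a}_{\rho\wedge k}^2+\norm{\partial a}_{(\rho-k)_+}^2\leq \norm{\partial a}_{\rho\wedge k}^2+\norm{a}_\theta^2\,\tau\bigl((\rho-k)_+\bigr).
\end{align*}
No control of $\nabla\theta$ is needed, and the multiple of $\widehat{(\rho-k)_+}$ you were hunting for is just $1$.

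The paper in fact avoids even the uniform integrability you invoke: it first sends $n\to\infty$ at fixed $k$, using that $\sigma\mapsto\tau((\sigma-k)_+)$ is a convex functional bounded above on $L^1$-balls and hence continuous, and only afterwards sends $k\to\infty$ by normality of $\tau$. (Your uniform-tail route also works: the scalar function $t\mapsto(t-k)_+$ is $1$-Lipschitz, and by the $L^1$-Lipschitz functional calculus of Tikhonov and Ando one gets $\tau((\rho_j-k)_+)\leq\tau((\rho-k)_+)+\norm{\rho_j-\rho}_1$, which gives uniform integrability along an $L^1$-convergent sequence; but this is an extra theorem you would have to cite.) The one genuine gap is the missing concavity inequality above; once you have it, your own strategy closes, modulo either the convexity-continuity argument or the Lipschitz functional calculus to handle $\tau((\rho_j-k)_+)$.

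One further small caveat worth mentioning: the paper's written decomposition $\theta(s,t)=\theta_k(s,t)+\1_{[k,\infty)^2}(s,t)(\theta(s,t)-\theta(k,k))$ is stated as an equality but only holds as such on $[0,k)^2\cup[k,\infty)^2$; on the mixed regions it is an inequality $\geq$. Similarly the paper's concavity estimate is printed with the sign reversed. Since the proof only uses these as upper bounds on $\norm{\partial a}_{\rho_n}^2$ and then takes $\limsup$, the argument survives these typos, but you should not be misled by the printed signs when reconstructing the comparison you were missing.
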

\begin{proof}
For $k\in\IN$ let $\theta_k=\theta\wedge k$. By Lemma \ref{theta_bounded_cont} the map
\begin{align*}
\Lambda_k\colon L^1_+(\M,\tau)\lra [0,\infty),\,\rho\mapsto \langle\theta_k(L(\rho),R(\rho))\xi,\xi\rangle_\H
\end{align*}
is continuous with respect to $\norm\cdot_1$ for all $\xi\in\H$. Since $\Lambda_k\nearrow \Lambda$ by functional calculus, the map $\Lambda$ is lower semicontinuous as supremum of continuous maps.

To prove the continuity when $\xi=\partial a$ with $a\in \A_\AM$, it only remains to show upper semicontinuity. Since $\theta$ is concave, there exist $\alpha,\beta\geq 0$ such that $\theta \leq \alpha\AM+\beta$. Since $a\in \A_\AM$, we have
\begin{align*}
\norm{(\alpha \AM(L(\rho),R(\rho))+\beta)^{1/2}\partial a}_\H^2=\frac\alpha 2\tau((\Gamma(a)+\Gamma(a^\ast))\rho)+\beta \E(a),
\end{align*}
which clearly depends continuously on $\rho$.

Moreover, the map
\begin{align*}
L^1_+(\M,\tau)\lra [0,\infty),\,\rho\mapsto \norm{(\alpha \AM+\beta-\theta)^{1/2}(L(\rho),R(\rho))\partial a}_\H^2
\end{align*}
is lower semicontinuous by the first part. Thus $-\Lambda$ is lower semicontinuous as the sum of two lower semicontinuous maps in this case.

Finally, the weak upper semicontinuity for concave $\Lambda$ follows from the Hahn-Banach theorem.
\end{proof}

\begin{remark}
In general, concavity of $\theta$ is not sufficient for concavity of $\Lambda$. However, in the following we will study a class of functions $\theta$ for which $\Lambda$ is concave.
\end{remark}

\begin{remark}
Note that for the upper semicontinuity part we need $a\in \A_\AM$ instead of $\A_\theta$. Whether upper semicontinuity still holds for $a$ in the bigger space $\A_\theta$ is unclear.
\end{remark}

In the first part of this section we saw that a crucial property of the multiplication operator is the concavity of the map $\rho\mapsto\hat\rho$. An important class of functions $\theta$ for which this property holds are those that can be represented as an operator mean in the sense of Kubo--Ando \cite{KA80}. We will now review the definition and a representation theorem for operator means before we turn to the application to the multiplication operator.

\begin{definition}\label{def_op_mean}
Let $H$ be an infinite-dimensional Hilbert space. An \emph{operator mean} is a map $\#\colon B(H)_+\times B(H)_+\lra B(H)_+$ such that
\begin{itemize}
\item $x_1\leq x_2$ and $y_1\leq y_2$ imply $x_1\# y_1\leq x_2\# y_2$,
\item $z(x\#y)z\leq (zxz)\#(zyz)$ for $x,y,z\in B(H)_+$,
\item $x_n\searrow x$ and $y_n\searrow y$ imply $x_n\#y_n\searrow x\#y$,
\item $1\#1=1$.
\end{itemize}
If $H$ is finite-dimensional, a map $\#\colon B(H)_+\times B(H)_+\lra B(H)_+$ is called operator mean if $H$ embeds into an infinite-dimensional Hilbert space $K$ such that $\#$ extends to an operator mean on $K$.

An operator mean $\#$ is called \emph{symmetric} if $x\#y=y\#x$ for all $x,y\in B(H)_+$.
\end{definition}

We say that a continuous function $\theta\colon [0,\infty)^2\lra [0,\infty)$ can be \emph{represented by a (symmetric) operator mean} if there exists a (symmetric) operator mean such that $\theta(x,y)=x\#y$ for all commuting $x,y\in B(\H)_+$.

\begin{example}\label{ex_operator_means}
Examples of symmetric operator means include
\begin{itemize}
\item the arithmetic operator mean $(x,y)\mapsto\frac 1  2(x+y)$,
\item the logarithmic operator mean, given by the generating function $f(t)=(t-1)/\log t$ (see Proposition \ref{prop_generating_fct}),
\item the harmonic operator mean $(x,y)\mapsto 2(x^{-1}+y^{-1})^{-1}$,
\item the geometric operator mean $(x,y)\mapsto x^{1/2}(x^{-1/2} y x^{-1/2})^{1/2} x^{1/2}$.
\end{itemize}
Two examples of non-symmetric operator means are 
\begin{itemize}
\item the left trivial mean $(x,y)\mapsto x$, and
\item the right trivial mean $(x,y)\mapsto y$.
\end{itemize}
\end{example}

There is a close relation between operator means and operator monotone functions. A continuous function $f\colon I\lra\IR$ is called \emph{operator monotone} if $x\leq y$ implies $f(x)\leq f(y)$ for all bounded self-adjoint operators $x,y$ with spectrum in $I$.

\begin{proposition}[{\cite[Theorem 3.2]{KA80}}]\label{prop_generating_fct}
Let $H$ be a Hilbert space. For every operator monotone function $f\colon (0,\infty)\lra (0,\infty)$ with $f(1)=1$ there exists a unique operator mean $\#$ such that
\begin{align*}
x\# y=x^{1/2}f(x^{-1/2} y x^{-1/2})x^{1/2}
\end{align*}
for all invertible $x,y\in B_+(H)$, and every operator mean arises this way.
\end{proposition}
In the situation of the proposition above, the operator monotone function $f$ is called the \emph{generating function} of $\#$. An important result of Löwner's seminal work on operator monotone functions (see \cite{Low34}) is that every operator monotone function admits an integral representation. A variant of this theorem reads as follows.

\begin{proposition}[{\cite[Theorem 4.9]{Han80}}]\label{Loewner}
A function $f\colon (0,\infty)\lra (0,\infty)$ is operator monotone if and only if there exists a finite Borel measure $\mu$ on $[0,1]$ such that
\begin{align*}
f(t)=\int_0^1\frac{t}{\lambda +(1-\lambda)t}\,d\mu(t)
\end{align*}
for $t>0$.
\end{proposition}

\begin{corollary}
A function $\theta\colon [0,\infty)^2\lra[0,\infty)$ can be represented by an operator mean if and only if there exists a Borel probability measure $\mu$ on $[0,1]$ such that
\begin{align*}
\theta(s,t)=\int_0^1\frac{st}{\lambda s+(1-\lambda)t}\,d\mu(\lambda)
\end{align*}
for $s,t>0$.

In this case $\theta$ is increasing in both arguments, positively homogeneous and satisfies $\theta(s,s)=s$ for $s\geq 0$. The resulting mean is symmetric if and only if $\mu(A)=\mu(1-A)$ for all Borel sets $A\subset [0,1]$.

Conversely, if $\#$ is an operator mean with generating function $f$ and one defines $\theta$ by $\theta(s,t)=s f(s/t)$ for $s,t>0$, then $x\# y=\theta(x,y)$ for all commuting $x,y\in B(\H)_+$.
\end{corollary}

After we introduced the multiplication operator $\hat\rho=\theta(L(\rho),R(\rho))$ and operator means, we will now discuss some additional properties of the multiplication operator in the case when $\theta$ can be represented by an operator mean.

\begin{proposition}\label{A_theta_algebra}
If $\theta$ can be represented by a symmetric operator mean, then $\A_\theta$ is a $\ast$-algebra.
\end{proposition}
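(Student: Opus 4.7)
The plan is to verify the three closure properties of a $\ast$-algebra in turn. Closure under sums is immediate from the triangle inequality for $\norm{\cdot}_\rho$ (which is a Hilbert-space seminorm since $\norm{\xi}_\rho = \norm{\hat\rho^{1/2}\xi}_\H$). Closure under involution is precisely where the symmetry of $\theta$ enters: it is the content of Lemma~\ref{A_theta_symmetric}. The real work lies in closure under products, which I would establish together with a quantitative product-rule bound
\begin{equation*}
\norm{ab}_\theta \leq C(\norm{a}_\M \norm{b}_\theta + \norm{b}_\M \norm{a}_\theta)
\end{equation*}
for some absolute constant $C$.

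Applying the Leibniz rule $\partial(ab) = L(a)\partial b + R(b)\partial a$ and the triangle inequality for $\norm{\cdot}_\rho$ reduces the task to bounding each of $\norm{L(a)\partial b}_\rho^2$ and $\norm{R(b)\partial a}_\rho^2$ by a multiple of $\norm{\rho}_1$. For the first term I would prove the operator inequality
\begin{equation*}
L(a)^\ast\, \hat\rho\, L(a) \leq \hat\sigma, \qquad \sigma := a^\ast\rho a + \norm{a}_\M^2\,\rho,
\end{equation*}
from which the bound $\norm{L(a)\partial b}_\rho^2 \leq \norm{\partial b}_\sigma^2 \leq \norm{b}_\theta^2 \norm{\sigma}_1 \leq 2\norm{a}_\M^2\norm{b}_\theta^2\norm{\rho}_1$ follows by the definition of $\norm{b}_\theta$ together with the traciality estimate $\norm{a^\ast\rho a}_1 = \tau(aa^\ast \rho) \leq \norm{a}_\M^2 \norm{\rho}_1$. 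To establish the displayed inequality I would apply the Kubo--Ando transformer inequality $C^\ast(X \# Y)C \leq (C^\ast X C) \# (C^\ast Y C)$ with $C = L(a)$: since $L$ is a $\ast$-homomorphism, $L(a)^\ast L(\rho) L(a) = L(a^\ast\rho a)$, and since $L$ and $R$ commute, $L(a)^\ast R(\rho) L(a) = L(a^\ast a) R(\rho) \leq \norm{a}_\M^2 R(\rho)$. Monotonicity of $\#$ together with the componentwise bounds $L(\sigma)\geq L(a^\ast\rho a)$ and $R(\sigma)\geq \norm{a}_\M^2 R(\rho)$ then yields the claim. The estimate for $\norm{R(b)\partial a}_\rho^2$ is entirely symmetric, with $\sigma$ replaced by $b\rho b^\ast + \norm{b}_\M^2 \rho$; here one uses the opposite-algebra identity $R(b)^\ast R(b) = R(bb^\ast)$ in place of $L(a)^\ast L(a) = L(a^\ast a)$.

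The main technical obstacle I anticipate is that for a general $\rho \in L^1_+(\M,\tau)$ the operators $L(\rho)$ and $R(\rho)$ are unbounded, while the transformer inequality is classically stated for bounded positive operators. I would circumvent this by first running the argument for the bounded truncations $\rho_n = \rho \wedge n \in L^1_+(\M,\tau) \cap \M$, obtaining $\norm{L(a)\partial b}_{\rho_n}^2 \leq \norm{\partial b}_{\sigma_n}^2$ with $\sigma_n := a^\ast\rho_n a + \norm{a}_\M^2\rho_n \leq \sigma$; passing to the supremum in $n$ by Lemma~\ref{theta_increasing} on the left-hand side and using monotonicity of $\theta$ on the right transfers the estimate to the (possibly unbounded) $\rho$.
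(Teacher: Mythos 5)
Your proof is correct, and it takes a genuinely different route from the paper's. The paper handles the conjugation $L(a)^\ast\,\hat\rho\, L(a)$ by rewriting $L(\rho)\# R(\rho)$ through the generating function $f$ of the mean, $L(\rho)\# R(\rho)=R(\rho)^{1/2}f(R(\rho)^{-1/2}L(\rho)R(\rho)^{-1/2})R(\rho)^{1/2}$, applying Hansen's inequality $L(a^\ast)f(T)L(a)\leq f(L(a^\ast)TL(a))$ for the operator-monotone $f$ (after normalizing $\norm{a}_\M\leq1$ so $L(a)$ is a contraction), which yields $L(a^\ast)\hat\rho L(a)\leq L(a^\ast\rho a)\#R(\rho)$, and then uses the joint concavity of $\#$ to symmetrize this to $\leq L(\rho+a^\ast\rho a)\# R(\rho+a^\ast\rho a)=\widehat{\rho+a^\ast\rho a}$. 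You instead invoke the generalized transformer inequality $C^\ast(X\#Y)C\leq(C^\ast XC)\#(C^\ast YC)$ directly with $C=L(a)$, compute $L(a)^\ast L(\rho)L(a)=L(a^\ast\rho a)$ and $L(a)^\ast R(\rho)L(a)=L(a^\ast a)R(\rho)\leq\norm{a}_\M^2 R(\rho)$ by commutativity of the two actions, and close via monotonicity of $\#$, with the asymmetric majorant $\sigma=a^\ast\rho a+\norm{a}_\M^2\rho$. This is slightly more direct: it avoids the generating function and Hansen's inequality entirely, and it makes transparent that the product-closure step does not actually use the symmetry of $\theta$ (only the $\ast$-closure, via Lemma \ref{A_theta_symmetric}, does). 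One point worth flagging: the Kubo--Ando axioms state the transformer inequality only for positive $z$; for general $C$ it is a known extension (it follows, e.g., from the integral representation of $\#$ by parallel sums together with the Anderson--Duffin transformer inequality for parallel sums, or from the positive case via the polar decomposition of $C$), but you should say so and cite it, since you are using it in the non-positive form. Your reduction of the unbounded case to truncations $\rho\wedge n$ via Lemma \ref{theta_increasing} on the left and $\sigma_n\leq\sigma$ on the right is sound and mirrors what the paper does implicitly.
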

\begin{proof}
We have already proven that $\A_\theta$ is self-adjoint in Lemma \ref{A_theta_symmetric}. It remains to show that $\A_\theta$ is an algebra.

Let $a,b\in \A_\theta$. By Lemma \ref{theta_increasing} it suffices to show that there exists a constant $C>0$ such that $\norm{\partial(ab)}_\rho^2\leq C\norm{\rho}_1$ for all $\rho\in L^1_+(\M,\tau)\cap \M$. We can assume without loss of generality $\norm{a}_\M,\norm{b}_\M\leq 1$.

By the product rule we have
\begin{align}
\begin{split}
\norm{\partial(ab)}_\rho^2&=\norm{\hat\rho^{1/2}(L(a)\partial b+R(b)\partial a)}_\H^2\\
&\leq 2\langle L(a^\ast)\hat\rho L(a)\partial b,\partial b\rangle_\H^2+2\langle R(b^\ast)\hat\rho R(b)\partial a,\partial a\rangle_\H.
\label{bound_rho_norm_prod}
\end{split}
\end{align}
Let $f$ be the generating function of the operator mean $\#$ that is represented by $\theta$. If $x\in \M_+$ is invertible, then
\begin{align*}
L(a^\ast)(L(x)\# R(x))L(a)&=R(x)^{1/2}L(a^\ast)f(R(x)^{-1/2}L(x)R(x)^{-1/2})L(a)R(x)^{1/2}\\
&\leq R(x)^{1/2}f(R(x)^{-1/2}L(a^\ast x a)R(x)^{-1/2})R(x)^{1/2}\\
&=L(a^\ast x a)\#R(x),
\end{align*}
where the inequality in the second line follows from the operator monotonicity of $f$ (see \cite{Han80}). If $x$ is not necessarily invertible, the same inequality still holds by the continuity property of $\#$.

Thus
\begin{align}
\begin{split}
L(a^\ast)\hat \rho L(a)&\leq L(a^\ast \rho a)\# R(\rho)\\
&\leq L(a^\ast \rho a)\# R(\rho)+ L(\rho)\# R(a^\ast \rho a)\\
&\leq L(a^\ast \rho a+\rho)\# R(\rho+ a^\ast \rho a),
\label{bound_left_mult}
\end{split}
\end{align}
where we used the concavity of operator means (\cite[Theorem 3.5]{KA80}) for the last inequality.

Since $\#$ is assumed to be symmetric, the inequality
\begin{align}
R(b^\ast)\hat \rho R(b)\leq L(b^\ast \rho b+\rho )\# R(\rho+b^\ast \rho b)\label{bound_right_mult}
\end{align}
follows analogously.

If we combine (\ref{bound_rho_norm_prod}), (\ref{bound_left_mult}) and (\ref{bound_right_mult}), we obtain
\begin{align*}
\norm{\partial(ab)}_\rho^2&\leq 2\norm{\partial a}_{\rho+b^\ast \rho b}^2+2\norm{\partial b}_{\rho+a^\ast \rho a}^2\\
&\leq 2\norm{a}_\theta^2\norm{\rho+b^\ast \rho b}_1+2\norm{b}_\theta^2\norm{\rho+a^\ast \rho a}_1\\
&\leq 4(\norm{a}_\theta^2+\norm{b}_\theta^2)\norm{\rho}_1.
\end{align*}
Hence $ab\in \A_\theta$.
\end{proof}

\begin{lemma}\label{theta_leq_AM}
If $\theta$ can be represented by a symmetric operator mean, then
\begin{align*}
\norm{\partial a}_\rho^2\leq \frac 1 2\tau((\Gamma(a)+\Gamma(a^\ast))\rho)
\end{align*}
for $a\in D(\E)$ and $\rho\in L^1_+(\M,\tau)$.
\end{lemma}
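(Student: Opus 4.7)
The strategy is to reduce the operator inequality to a pointwise scalar inequality and then test against $\partial a$. Concretely, I claim that for any symmetric operator mean, the associated function $\theta$ satisfies the pointwise bound $\theta(s,t)\leq (s+t)/2$ for all $s,t\geq 0$. This is what is needed, since once this holds at the scalar level, the functional calculus for the strongly commuting positive operators $L(\rho)$ and $R(\rho)$ (whose strong commutation was noted at the start of this section) immediately yields the operator inequality
\begin{align*}
\hat\rho=\theta(L(\rho),R(\rho))\leq \tfrac{1}{2}(L(\rho)+R(\rho))
\end{align*}
in the sense of bounded positive operators on $\H$ (at first for bounded $\rho$).

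To establish the scalar inequality I would use the integral representation recalled in the remark preceding Example \ref{ex_operator_means}: there exists a probability measure $\mu$ on $[0,1]$, symmetric under $\lambda\mapsto 1-\lambda$, such that $\theta(s,t)=\int_0^1 \frac{st}{\lambda s+(1-\lambda)t}\,d\mu(\lambda)$. Symmetrizing the integrand using the $\lambda\mapsto 1-\lambda$ invariance of $\mu$ gives
\begin{align*}
\theta(s,t)=\frac{1}{2}\int_0^1\frac{st(s+t)}{(\lambda s+(1-\lambda)t)((1-\lambda)s+\lambda t)}\,d\mu(\lambda),
\end{align*}
and expanding the denominator shows it equals $st+\lambda(1-\lambda)(s-t)^2\geq st$, so indeed $\theta(s,t)\leq (s+t)/2$.

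With the operator inequality in hand, I would test both sides against $\partial a$. For the left multiplication the extension of $L$ and the definition of $\Gamma(a)$ give $\langle L(\rho)\partial a,\partial a\rangle_\H=\tau(\rho\,\Gamma(a))$. For the right multiplication I would use $R(\rho)=JL(\rho)J$, the $J$-reality $J\partial a=\partial a^\ast$, and the fact that $\langle L(\rho)\xi,\xi\rangle_\H$ is real to obtain
\begin{align*}
\langle R(\rho)\partial a,\partial a\rangle_\H=\langle L(\rho)J\partial a,J\partial a\rangle_\H=\tau(\rho\,\Gamma(a^\ast)).
\end{align*}
Combining these with the operator inequality yields the claim for bounded $\rho$.

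Finally, to handle general $\rho\in L^1_+(\M,\tau)$, I would apply the bounded case to $\rho_n=\rho\wedge n\in L^1_+(\M,\tau)\cap\M$ and take $n\to\infty$: the left-hand side converges to $\norm{\partial a}_\rho^2$ by Lemma \ref{theta_increasing} (using that a symmetric operator mean is monotone in both arguments), while the right-hand side converges by the normality of $\tau$ applied to the positive elements $\rho_n\,\Gamma(a)$ and $\rho_n\,\Gamma(a^\ast)$. I do not expect a real obstacle here; the only delicate point is the interface between the scalar inequality and the functional calculus for unbounded $\rho$, which is exactly what Lemma \ref{theta_increasing} was designed to handle.
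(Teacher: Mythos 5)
Your proof is correct and shares the paper's overall structure: reduce to the operator mean bound $\theta\leq\mathrm{AM}$, then invoke the two identities $\langle L(\rho)\partial a,\partial a\rangle_\H=\tau(\Gamma(a)\rho)$ and $\langle R(\rho)\partial a,\partial a\rangle_\H=\tau(\Gamma(a^\ast)\rho)$, the latter via $R(\rho)=JL(\rho)J$ and $J\partial a=\partial(a^\ast)$. The one substantive difference is how the bound $\theta\leq\mathrm{AM}$ is obtained. The paper simply cites Kubo--Ando \cite[Theorem 4.5]{KA80}, which asserts that the arithmetic mean dominates every symmetric operator mean; you instead derive the scalar inequality $\theta(s,t)\leq\tfrac12(s+t)$ by hand from the integral representation $\theta(s,t)=\int_0^1\frac{st}{\lambda s+(1-\lambda)t}\,d\mu(\lambda)$ recalled earlier in the section, symmetrizing in $\lambda$ (using $\mu(A)=\mu(1-A)$) and observing that the resulting denominator equals $st+\lambda(1-\lambda)(s-t)^2\geq st$. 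Your computation is correct, more elementary and self-contained, and relies on exactly the structure (the Löwner-type representation together with symmetry of $\mu$) that encodes Kubo--Ando's maximality statement. You are also more explicit than the paper about passing from bounded to general $\rho\in L^1_+(\M,\tau)$ via truncation, Lemma~\ref{theta_increasing}, and normality of the trace; the paper leaves this implicit, but since both sides of the inequality are effectively defined as monotone limits from truncations, this step is as routine as you say.
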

\begin{proof}
By \cite[Theorem 4.5]{KA80} we have $\theta\leq \mathrm{AM}$. Now it suffices to notice that
\begin{equation*}
\langle L(\rho)\partial a,\partial a\rangle_\H=\tau(\Gamma(a)\rho)
\end{equation*}
and
\begin{equation*}
\langle R(\rho)\partial a,\partial a\rangle_\H=\tau(\Gamma(a^\ast)\rho).\qedhere
\end{equation*}
\end{proof}

\begin{lemma}\label{norm_log_monotone}
Assume that $\theta$ can be represented by an operator mean. If $\rho_0,\rho_1\in L^1_+(\M,\tau)$ with $\rho_0\leq \rho_1$, then $\norm{\xi}_{\rho_0}\leq \norm{\xi}_{\rho_1}$ for all $\xi\in\H$.
\end{lemma}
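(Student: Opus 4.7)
The plan is to reduce the claim to the setting of bounded operators, where the defining monotonicity axiom of operator means applies directly, and then pass to the limit via Lemma \ref{theta_increasing}.

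First I would truncate: set $\rho_i^{(n)}=\rho_i\wedge n$ for $i=0,1$. Since the scalar function $t\mapsto t\wedge n$ is operator monotone on $[0,\infty)$, the hypothesis $\rho_0\leq \rho_1$ yields $\rho_0^{(n)}\leq \rho_1^{(n)}$. Because $L$ and $R$ are normal $*$-representations, they are order-preserving, so
\begin{align*}
L(\rho_0^{(n)})\leq L(\rho_1^{(n)}), \qquad R(\rho_0^{(n)})\leq R(\rho_1^{(n)}),
\end{align*}
and all four of these operators are bounded on $\H$.

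Now I would invoke the operator mean $\#$ which represents $\theta$. Since $L(\rho_i^{(n)})$ and $R(\rho_i^{(n)})$ commute (the spectral measures of $L(\rho)$ and $R(\rho)$ commute strongly, as noted at the start of this section), we have $L(\rho_i^{(n)})\#R(\rho_i^{(n)})=\theta(L(\rho_i^{(n)}),R(\rho_i^{(n)}))=\widehat{\rho_i^{(n)}}$. By the monotonicity axiom of operator means,
\begin{align*}
\widehat{\rho_0^{(n)}}=L(\rho_0^{(n)})\#R(\rho_0^{(n)})\leq L(\rho_1^{(n)})\#R(\rho_1^{(n)})=\widehat{\rho_1^{(n)}}.
\end{align*}
Thus for every $\xi\in\H$,
\begin{align*}
\langle \widehat{\rho_0^{(n)}}\,\xi,\xi\rangle_\H\leq \langle\widehat{\rho_1^{(n)}}\,\xi,\xi\rangle_\H.
\end{align*}

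Finally I would pass to the limit $n\to\infty$. Since $\theta$ is representable by an operator mean, it is continuous, nonnegative, and increasing in both arguments (as recorded in the remark preceding Example \ref{ex_operator_means}), so Lemma \ref{theta_increasing} applies to both $\rho_0$ and $\rho_1$ and gives
\begin{align*}
\norm{\xi}_{\rho_0}^2=\sup_{n\in\IN}\langle\widehat{\rho_0^{(n)}}\,\xi,\xi\rangle_\H\leq \sup_{n\in\IN}\langle\widehat{\rho_1^{(n)}}\,\xi,\xi\rangle_\H=\norm{\xi}_{\rho_1}^2,
\end{align*}
where the case $\norm{\xi}_{\rho_1}=\infty$ is trivial and otherwise the supremum on the right is finite so the one on the left is as well. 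This yields the desired inequality.

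There is no serious obstacle here; the only subtlety is that $\rho_0,\rho_1$ are unbounded in general, which forces us to work with truncations before invoking the operator-mean axioms (which are formulated for bounded positive operators) and then to recover $\hat\rho_i$ by monotone approximation—exactly the content of Lemma \ref{theta_increasing}.
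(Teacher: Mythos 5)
There is a genuine gap. Your reduction hinges on the claim that $t\mapsto t\wedge n$ is operator monotone on $[0,\infty)$, and this is false. By L\"owner's theorem, operator monotone functions on an open interval are real-analytic (they are restrictions of Pick functions), whereas $t\mapsto\min(t,n)$ has a corner at $t=n$. Consequently $\rho_0\leq\rho_1$ does \emph{not} imply $\rho_0\wedge n\leq\rho_1\wedge n$; this already fails for $2\times 2$ matrices (e.g. with
$A=\begin{pmatrix}1&0\\0&0\end{pmatrix}$ and $B=\begin{pmatrix}2&1\\1&1\end{pmatrix}$ one has $0\leq A\leq B$ but $A\wedge 1\not\leq B\wedge 1$). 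Since the comparison $L(\rho_0^{(n)})\leq L(\rho_1^{(n)})$ (and its $R$-analogue) is exactly what feeds the monotonicity axiom of the operator mean, the argument breaks at this point: the truncated densities are not comparable, and Lemma~\ref{theta_increasing} cannot be chained to an inequality that was never established at the truncated level.

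The fix is to approximate by a genuinely operator monotone function, as the paper does: take $f_\epsilon(r)=r(1+\epsilon r)^{-1}$, which is operator monotone (it is, up to affine terms, a negative of a resolvent) and satisfies $f_\epsilon(r)\nearrow r$ as $\epsilon\to 0$. Then $f_\epsilon(\rho_0)\leq f_\epsilon(\rho_1)$ are bounded, the operator-mean monotonicity gives $\widehat{f_\epsilon(\rho_0)}\leq\widehat{f_\epsilon(\rho_1)}$, and the monotone-convergence argument (in the same spirit as Lemma~\ref{theta_increasing}, applied to the joint spectral measure and the increasing approximants $\theta(f_\epsilon(s),f_\epsilon(t))\nearrow\theta(s,t)$) yields $\norm{\xi}_{f_\epsilon(\rho_i)}\to\norm{\xi}_{\rho_i}$. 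Everything else in your argument — normality of $L,R$, commutation of the spectral measures, invoking the monotonicity axiom for bounded operators, and passing to the limit — is sound once the approximating family is chosen to be operator monotone.
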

\begin{proof}
If $\rho_0,\rho_1$ are bounded, then the claim is immediate from the definition of operator means. In the general case let $f_\epsilon(r)=r(1+\epsilon r)^{-1}$. This function is operator monotone, hence $f_\epsilon(\rho_0)\leq f_\epsilon(\rho_1)$. Moreover, $f_\epsilon(r)\nearrow r$ as $\epsilon\to 0$ implies $\norm{\xi}_{f_\epsilon(\rho_i)}\to \norm{\xi}_{\rho_i}$ as $\epsilon\to 0$ for $i\in\{0,1\}$. Combining this convergence with the monotonicity in the bounded case, we obtain
\begin{equation*}
\norm{\xi}_{\rho_0}=\lim_{\epsilon\to 0}\norm{\xi}_{f_\epsilon(\rho_0)}\leq \lim_{\epsilon\to 0}\norm{\xi}_{f_\epsilon(\rho_1)}=\norm{\xi}_{\rho_1}.\qedhere
\end{equation*}
\end{proof}

\begin{corollary}\label{monotone_conv_rho_norm}
Assume that $\theta$ can be represented by an operator mean. If $\rho_n\to \rho$ in $L^1_+(\M,\tau)$ and $\rho_n\leq \rho$, then $\norm{\xi}_{\rho_n}\to\norm{\xi}_\rho$ for $\xi\in \H$. If moreover $\xi\in D(\hat\rho^{1/2})$, then $\xi\in D(\hat\rho_n^{1/2})$ for all $n\in\IN$ and $\hat\rho_n^{1/2}\xi\to \hat\rho^{1/2}\xi$ in $\H$.
\end{corollary}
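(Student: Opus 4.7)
My plan is to reduce to the bounded setting of Lemma \ref{theta_bounded_cont} by truncating. Fix $k \in \IN$ and set $\rho^k := \rho \wedge k$, $\rho_n^k := \rho_n \wedge k$, and $\theta_k(s,t) := \theta(s \wedge k, t \wedge k)$. Since $L$ and $R$ are normal $\ast$-homomorphisms, $\widehat{\rho^k} = \theta_k(L(\rho), R(\rho))$ and $\widehat{\rho_n^k} = \theta_k(L(\rho_n), R(\rho_n))$. As $\theta_k$ is bounded continuous with finite limits at infinity, Lemma \ref{theta_bounded_cont} yields $\widehat{\rho_n^k} \to \widehat{\rho^k}$ strongly in $\L(\H)$ for each fixed $k$, and applying the lemma to $\sqrt{\theta_k}$ gives $\widehat{\rho_n^k}^{1/2} \to \widehat{\rho^k}^{1/2}$ strongly as well.

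For the first assertion, the bound $\limsup_n \norm{\xi}_{\rho_n}^2 \leq \norm{\xi}_\rho^2$ follows from $\rho_n \leq \rho$ and Lemma \ref{norm_log_monotone}. The matching lower bound comes from writing $\norm{\xi}_{\rho_n}^2 \geq \langle \widehat{\rho_n^k}\xi, \xi\rangle$ (again by Lemma \ref{norm_log_monotone}, since $\rho_n^k \leq \rho_n$), passing $n \to \infty$ via the strong convergence above, and then letting $k \to \infty$ while invoking Lemma \ref{theta_increasing} to recover $\norm{\xi}_\rho^2$.

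For the strong-convergence assertion, the hypothesis $\xi \in D(\hat\rho^{1/2})$ together with Lemma \ref{norm_log_monotone} immediately gives $\xi \in D(\hat\rho_n^{1/2})$ for all $n$. The workhorse is the triangle decomposition
\begin{align*}
\hat\rho^{1/2}\xi - \hat\rho_n^{1/2}\xi = \bigl(\hat\rho^{1/2} - \widehat{\rho^k}^{1/2}\bigr)\xi + \bigl(\widehat{\rho^k}^{1/2} - \widehat{\rho_n^k}^{1/2}\bigr)\xi + \bigl(\widehat{\rho_n^k}^{1/2} - \hat\rho_n^{1/2}\bigr)\xi.
\end{align*}
The outer two terms are controlled by the elementary scalar inequality $(\sqrt a - \sqrt b)^2 \leq a - b$ valid for $0 \leq b \leq a$, applied pointwise inside the integrals defining the norms via the joint spectral calculus of the commuting pairs $(L(\rho), R(\rho))$ and $(L(\rho_n), R(\rho_n))$ respectively; this yields
\begin{align*}
\norm{(\hat\rho^{1/2} - \widehat{\rho^k}^{1/2})\xi}_\H^2 \leq \norm{\xi}_\rho^2 - \norm{\xi}_{\rho^k}^2, \qquad \norm{(\hat\rho_n^{1/2} - \widehat{\rho_n^k}^{1/2})\xi}_\H^2 \leq \norm{\xi}_{\rho_n}^2 - \norm{\xi}_{\rho_n^k}^2.
\end{align*}
Lemma \ref{theta_increasing} makes the first right-hand side vanish as $k \to \infty$; the second, in the limit $n \to \infty$, tends to $\norm{\xi}_\rho^2 - \norm{\xi}_{\rho^k}^2$ by the just-established first assertion combined with $\norm{\xi}_{\rho_n^k}^2 \to \norm{\xi}_{\rho^k}^2$ (itself a consequence of the strong convergence of the first paragraph). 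The middle term vanishes for each fixed $k$ by $\widehat{\rho_n^k}^{1/2} \to \widehat{\rho^k}^{1/2}$ strongly. A routine $\epsilon/3$-argument, choosing $k$ first and then $n$, completes the proof.

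The main technical point to watch is that $\hat\rho$ and $\hat\rho_n$ live in the joint spectral calculi of different, generally noncommuting pairs $(L(\rho), R(\rho))$ and $(L(\rho_n), R(\rho_n))$, so direct operator comparisons between them are not available. The truncation decomposition circumvents this by comparing each operator to its own truncation within its own spectral calculus, and only crossing between different $n$ in the bounded-strong-topology regime where Lemma \ref{theta_bounded_cont} applies.
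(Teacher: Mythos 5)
Your proof is correct and follows essentially the same route as the paper: the same triangle decomposition through the truncated operators, the same scalar inequality $(\sqrt{a}-\sqrt{b})^2 \le a - b$ applied via the joint spectral calculus to control the outer terms, strong convergence of the bounded truncations via Lemma \ref{theta_bounded_cont} for the middle term, and the same order of limits (truncation parameter first, then $n$). The only cosmetic difference is that for the first assertion you re-derive the $\liminf$ bound directly by truncation, whereas the paper cites the lower semicontinuity established in Theorem \ref{theta_norm_usc}.
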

\begin{proof}
The first part is an immediate consequence of Theorem \ref{theta_norm_usc} and Lemma \ref{norm_log_monotone}. For the second part first note that $\widehat{\rho_n\wedge N}^{1/2}\to \widehat{\rho\wedge N}^{1/2}$ strongly as $n\to\infty$ by Lemma \ref{theta_bounded_cont}.

Let $e_n$ denote the joint spectral measure of $L(\rho_n)$ and $R(\rho_n)$. Then
\begin{align*}
\norm{(\hat\rho_n^{1/2}-\widehat{\rho_n\wedge N}^{1/2})\xi}_\H^2&=\int_{[0,\infty)^2}(\theta(s,t)^{1/2}-\theta(s\wedge N,t\wedge N)^{1/2})^2\,d\langle e_n(s,t)\xi,\xi\rangle_\H\\
&\leq \int_{[0,\infty)^2}(\theta(s,t)-\theta(s\wedge N,t\wedge N))\,d\langle e_n(s,t)\xi,\xi\rangle_\H\\
&=\norm{\xi}_{\rho_n}^2-\norm{\xi}_{\rho_n\wedge N}^2.
\end{align*}
The same holds for $\rho_n$ replaced by $\rho$. Thus
\begin{align*}
\norm{(\hat\rho_n^{1/2}-\hat\rho^{1/2})\xi}_\H&\leq \norm{(\hat\rho_n^{1/2}-\widehat{\rho_n\wedge N}^{1/2})\xi}_\H+\norm{(\widehat{\rho_n\wedge N}^{1/2}-\widehat{\rho\wedge N}^{1/2})\xi}_\H\\
&\quad + \norm{(\widehat{\rho\wedge N}^{1/2}-\hat\rho^{1/2})\xi}_\H\\
&\leq (\norm{\xi}_{\rho_n}^2-\norm{\xi}_{\rho_n\wedge N}^2)^{1/2}+\norm{(\widehat{\rho_n\wedge N}^{1/2}-\widehat{\rho\wedge N}^{1/2})\xi}_\H\\
&\quad+(\norm{\xi}_{\rho}^2-\norm{\xi}_{\rho\wedge N}^2)^{1/2}.
\end{align*}
Hence
\begin{align*}
\limsup_{n\to\infty}\norm{(\hat\rho_n^{1/2}-\hat\rho^{1/2})\xi}_\H\leq 2(\norm{\xi}_{\rho}^2-\norm{\xi}_{\rho\wedge N}^2)^{1/2},
\end{align*}
which goes to zero as $N\to\infty$.
\end{proof}

\begin{lemma}\label{logarithmic_mean_concave}
If $\theta$ can be represented by an operator mean, then
\begin{align*}
L^1_+(\M,\tau)\lra [0,\infty),\,\rho\mapsto \norm{\xi}_\rho^2
\end{align*}
is concave for all $\xi\in\H$.
\end{lemma}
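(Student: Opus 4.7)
The plan is to reduce to the bounded case and then invoke the joint concavity of Kubo--Ando operator means (\cite[Theorem 3.5]{KA80}), which has already been used in the proof that $\A_\theta$ is an algebra. Fix $\rho_0,\rho_1\in L^1_+(\M,\tau)$ and $\lambda\in[0,1]$, and set $\rho=\lambda\rho_0+(1-\lambda)\rho_1$. We want to show
\begin{align*}
\norm{\partial a}_\rho^2\geq \lambda\norm{\partial a}_{\rho_0}^2+(1-\lambda)\norm{\partial a}_{\rho_1}^2.
\end{align*}

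\textbf{Step 1 (bounded case).} Assume first that $\rho_0,\rho_1\in \M_+$ are bounded. Since $L$ and $R$ are normal $\ast$-homomorphisms, $L(\rho)=\lambda L(\rho_0)+(1-\lambda)L(\rho_1)$ and similarly for $R(\rho)$. As $L(\rho_i)$ and $R(\rho_i)$ commute, the representation of $\theta$ by the operator mean $\#$ gives $\hat\rho_i=L(\rho_i)\# R(\rho_i)$, and analogously $\hat\rho=L(\rho)\# R(\rho)$. The joint concavity of $\#$ then yields
\begin{align*}
\hat\rho=\bigl(\lambda L(\rho_0)+(1-\lambda)L(\rho_1)\bigr)\#\bigl(\lambda R(\rho_0)+(1-\lambda)R(\rho_1)\bigr)\geq \lambda\hat\rho_0+(1-\lambda)\hat\rho_1
\end{align*}
as bounded positive operators on $\H$. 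Taking the inner product against $\partial a$ on both sides delivers the concavity inequality in the bounded case.

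\textbf{Step 2 (reduction to the bounded case).} For general $\rho_0,\rho_1\in L^1_+(\M,\tau)$, set $\rho_i^n:=\rho_i\wedge n\in\M_+$. Step 1 applied to $\rho_0^n,\rho_1^n$ gives
\begin{align*}
\norm{\partial a}_{\lambda\rho_0^n+(1-\lambda)\rho_1^n}^2\geq \lambda\norm{\partial a}_{\rho_0^n}^2+(1-\lambda)\norm{\partial a}_{\rho_1^n}^2.
\end{align*}
Since $\lambda\rho_0^n+(1-\lambda)\rho_1^n\leq \rho$, the monotonicity established in Lemma \ref{norm_log_monotone} implies $\norm{\partial a}_{\lambda\rho_0^n+(1-\lambda)\rho_1^n}^2\leq\norm{\partial a}_\rho^2$. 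On the other hand, Lemma \ref{theta_increasing} gives $\norm{\partial a}_{\rho_i^n}^2\nearrow\norm{\partial a}_{\rho_i}^2$ as $n\to\infty$. Passing to the limit in the inequality above finishes the proof.

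The argument contains no real obstacle: once one recognises that concavity of the operator mean is the right tool, everything else is bookkeeping. The only point that requires care is that $\hat\rho$ may be unbounded, which is why truncation is needed; the combination of Lemmas \ref{theta_increasing} and \ref{norm_log_monotone} is exactly tailored to handle this, since the norm $\norm{\cdot}_\rho$ is, by convention, set to $+\infty$ whenever the relevant vector falls outside $D(\hat\rho^{1/2})$, so that the inequality remains meaningful throughout.
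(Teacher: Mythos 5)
Your proof is correct and follows essentially the same route as the paper: establish concavity on bounded densities from the joint concavity of Kubo--Ando means (\cite[Theorem 3.5]{KA80}), then pass to the limit via truncation, using Lemma \ref{theta_increasing} for monotone convergence of the truncated norms and Lemma \ref{norm_log_monotone} for the comparison with $\norm{\partial a}_\rho^2$. The only difference is that you spell out Step 1 in more detail; the structure and the key lemmas invoked are identical.
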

\begin{proof}
Since operator means are jointly concave by \cite[Theorem 3.5]{KA80}, the map $\rho\mapsto\norm{\xi}_\rho^2$ is concave on $L^1_+(\M,\tau)\cap\M$. Hence, if $\rho_0,\rho_1\in L^1_+(\M,\tau)$ and $\lambda>0$, then
\begin{align*}
\norm{\xi}_{(1-\lambda)(\rho_0\wedge n)+\lambda(\rho_1\wedge n)}^2\geq (1-\lambda)\norm{\xi}_{\rho_0\wedge n}^2+\lambda\norm{\xi}_{\rho_1\wedge n}^2.
\end{align*}
By Lemma \ref{theta_increasing}, the right-hand side converges to $(1-\lambda)\norm{\xi}_{\rho_0}^2+\lambda\norm{\xi}_{\rho_1}^2$ as $n\to\infty$. On the other hand, Lemma \ref{norm_log_monotone} gives 
\begin{align*}
\norm{\xi}_{(1-\lambda)\rho_0+\lambda\rho_1}^2\geq \norm{\xi}_{(1-\lambda)\rho_0\wedge n+\lambda \rho_1\wedge n}^2.
\end{align*}
Thus $\rho\mapsto \norm{\xi}_\rho^2$ is concave.
\end{proof}

As mentioned before, we will later focus on the case when $\theta$ is the logarithmic mean
\begin{align*}
\mathrm{LM}\colon [0,\infty)^2\lra [0,\infty),\,(s,t)\mapsto \begin{cases}\frac{s-t}{\log s-\log t}&\text{if }s\neq t,\\s&\text{otherwise}.\end{cases}
\end{align*}
Alternatively, it can be represented as
\begin{align*}
\mathrm{LM}(s,t)=\int_0^1 s^\alpha t^{1-\alpha}\,d\alpha.
\end{align*}
A direct calculation shows that $\mathrm{LM}$ can be represented by a symmetric operator mean, namely the logarithmic operator mean from Example \ref{ex_operator_means}. Thus, all the results from this section are applicable in this case.

It is the following identity that sets the logarithmic mean apart from other possible choices of operator means in our context:
\begin{align*}
\mathrm{LM}(L(a),R(a))\partial \log(a)=\mathrm{LM}(L(a),R(a))\widetilde{\log}(L(a),R(a))\partial a=\partial a.
\end{align*}
This cancellation effect relies only on the chain rule for the first-order differential calculus. It would therefore be natural to consider more general functions $\theta$ of the form
\begin{align*}
\theta(s,t)=\frac{s-t}{\psi(s)-\psi(t)}.
\end{align*}
However, if we additionally require that $\theta$ can be represented by an operator mean, then it is not hard to see that $\psi$ is already forced to be the logarithm (up to an additive constant). Thus the choice of the logarithmic mean (and the von Neumann entropy later) is not arbitrary, but a consequence of these two simple structural assumptions.

\section{\texorpdfstring{The noncommutative transport metric $\W$}{The noncommutative transport metric W}}\label{Maas-Wasserstein}

In this section we define a transport metric on the space of density operators that generalizes both the discrete transport metric $\W$ from \cite{Maa11,Mie11,CHLZ12} and the Wasserstein metric $W_2$ on Riemannian manifolds.

The study of the optimal transport problem
\begin{align*}
&\int_{X\times X}d(x,y)^2\,d\pi(x,y)\to \min\\
&(\mathrm{pr}_1)_\#\pi=\mu,(\mathrm{pr}_2)_\#\pi=\nu
\end{align*}
defining the $L^2$-Wasserstein metric goes back to the work of Monge \cite{Mon81} and Kantorovich \cite{Kan42,Kan48}, who formulated the relaxed problem in the modern form. Especially for the quadratic case metric, the name ``Wasserstein metric'' is misleading, and some authors prefer to call it \emph{Monge-Kantorovich metric} or some variations of that. More information on the history of the Wasserstein metric as well as optimal transport in general can be found in the bibliographical notes in Villani's book \cite{Vil09}.

The Benamou--Brenier formula
\begin{align*}
W_2(\mu,\nu)^2=\inf\left\lbrace\int_0^1 \int_{\IR^n}\abs{v_t}^2\,d\mu_t\,dt\,\bigg|\, \dot\mu_t+\nabla\cdot(\mu_t v_t)=0,\mu_0=\mu,\mu_1=\nu\right\rbrace
\end{align*}
gives an equivalent definition of the Wasserstein metric on Borel probability measures over $\mathbb{R}^n$ as dynamical optimization problem. It was found by Benamou and Brenier \cite{BB00} in relation to numerical algorithms for the Wasserstein distance and later generalized to considerably more general settings (see for example \cite{AES16}).

Our definition of the transport metric $\W$ relies on a modification of the Benamou--Brenier formula. As already observed in the articles mentioned above in the case of finite graphs and matrix algebras, the crucial step is to not only replace the action functional in the classical Benamou--Brenier formula, but also the constraint by a suitable noncommutative version of the continuity equation.

While the form of this continuity equation is easily adapted from the previous work on the finite-dimensional case, finding a good weak formulation is still challenging. As it turns out, especially in view of the results in Section \ref{heat_gradient_flow}, the algebra $\A_\mathrm{AM}$ introduced in the last section is a good choice of test ``functions''. Among several other useful properties of the metric $\W$, we will use the continuity properties from the last section to prove lower semicontinuity of the energy functional defining $\W$ (Theorem \ref{energy_lsc}).

As usual, $(\M,\tau)$ is a tracial von Neumann algebra, $\E$ a quantum Dirichlet form on $L^2(\M,\tau)$ such that $\tau$ is energy dominant, and $(\partial,\H,L,R,J)$ the associated first-order differential calculus. We further assume that $\theta\colon [0,\infty)^2\lra [0,\infty)$ is a continuous function that can be represented by a symmetric operator mean. In particular, all results from Section \ref{A_E} are applicable. All expressions like $\hat \rho$, $\norm\cdot_\rho$ etc. are to be understood with respect to this particular choice of $\theta$.

\begin{definition}[Density operator]\label{def_density_op}
A \emph{density operator} is an element $\rho$ of $L^1_+(\M,\tau)$ with $\tau(\rho)=1$. The space of all density operators over $(\M,\tau)$ is denoted by $\D(\M,\tau)$.
\end{definition}

Under the map $\rho\mapsto\tau(\,\cdot\,\rho)$, the density matrices correspond exactly to the normal states on $\M$. Of course, if $\M$ is commutative, the density operators over $(\M,\tau)$ are just the classical probability densities.

\begin{definition}[Hilbert space $\H_\rho$]
For $\rho\in \D(\M,\tau)$ let $\tilde\H_\rho$ be the Hilbert space obtained from $D(\hat\rho^{1/2})$ after separation and completion with respect to $\norm\cdot_\rho$. Let $\H_\rho$ be the closure of $\partial \A_\AM$ in $\tilde\H_\rho$.

If $(\rho_t)_{t\in I}$ is a curve in $\D(\M,\tau)$, we say that a curve $(\xi_t)_{t\in I}$ with $\xi_t\in \H_{\rho_t}$ is measurable if $t\mapsto\norm{\xi_t}_{\rho_t}$ is measurable and $t\mapsto \langle \xi_t,\partial a\rangle_{\rho_t}$ is measurable for all $a\in\A_\AM$. The space of all a.e.-equivalence classes of measurable curves $(\xi_t)$ such that $\int_I \norm{\xi_t}_{\rho_t}^2\,dt<\infty$ is denoted by $L^2(I;(\H_{\rho_t})_{t\in I})$. The space $L^2_\loc(I;(\H_{\rho_t})_{t\in I})$ is defined accordingly.
\end{definition}

\begin{remark}
If there exists a countable subset $E$ of $\A$ such that $\partial E$ is dense in $\H_{\rho_t}$ for all $t\in I$, then $(\H_{\rho_t})_{t\in I}$ is a measurable field of Hilbert spaces in the sense of \cite[Definition 8.9]{Tak02} and $L^2(I;(\H_{\rho_t})_{t\in I})$ is just a different notation for the direct integral $\int_I^\oplus \H_{\rho_t}\,dt$.
\end{remark}

\begin{definition}[Admissible curves]\label{def_admissible_curve}
A curve $(\rho_t)_{t\in I}$ in $\D(\M,\tau)$ is \emph{admissible} if $t\mapsto \tau(\rho_t a)$ is locally absolutely continuous for all $a\in\A_\AM$ and there exists $\xi\in L^2_\loc(I;(\H_{\rho_t})_{t\in I})$ such that for all $a\in\A_\AM$ the continuity equation
\begin{align*}
\frac{d}{dt}\tau(a \rho_t)=\langle \partial a,\xi_t\rangle_{\rho_t}\tag{CE}\label{CE}
\end{align*}
holds for a.e. $t\in I$.

If it exists, such an element $\xi\in L^2_\loc(I;(\H_{\rho_t})_{t\in I})$ is necessarily unique since $\partial\A_\AM$ is dense in $\H_{\rho_t}$ for all $t\in I$, and we write $D\rho=\xi$ in this case.
\end{definition}

A couple of remarks are in order. First, the definition of absolutely continuous functions allows for an integral characterization of admissible curves that will be useful later on.

\begin{remark}
Let $(\rho_t)_{t\in I}$ be a curve in $\D(\M,\tau)$. It is easy to see that $(\rho_t)$ is admissible if and only if there exists a $c\in L^2_\loc(I)$ such that
\begin{align*}
\abs{\tau(a\rho_t)-\tau(a\rho_s)}\leq\int_s^t c(r)\norm{\partial a}_{\rho_r}\,dr
\end{align*}
for all $s,t\in I$ and $a\in \A_\AM$, and in this case, $r\mapsto \norm{D\rho_r}_{\rho_r}$ is the minimal function $c$ with this property.
\end{remark}

\begin{remark}
First rudiments of a solution theory of equations of similar type based on the noncommutative differential calculus have been developed in \cite{Zae16}.
\end{remark}

\begin{remark}
If $\E$ is the standard Dirichlet energy on a complete Riemannian mani\-fold $(M,g)$, then (\ref{CE}) reduces to the classical continuity equation
\begin{align*}
\dot\rho_t+\div (\rho_t\xi_t)=0
\end{align*}
(weakly in duality with the bounded Lipschitz functions).

Accordingly, if $(M,g)$ has lower bounded Ricci curvature, then a curve $(\rho_t)_{t\in I}$ of probability densities is admissible if and only if it is in $\AC^2_\loc(I;(P_2(M),W_2))$ by \cite[Proposition 2.5]{Erb10}. Compare also Example \ref{ex_2_Wasserstein} and Proposition \ref{prop_regular_sublevels}.
\end{remark}

Finally, let us also discuss two possible variants of the definition of admissible curves.

\begin{remark}
Instead of restricting to $\xi_t\in \H_{\rho_t}$ in (\ref{CE}), one might want to take $\xi_t\in \tilde \H_{\rho_t}$. This is no longer unique, but if it exists, the orthogonal projection $\eta_t$ of $\xi_t$ onto $\H_{\rho_t}$ still satisfies (\ref{CE}) and $\norm{\eta_t}_{\rho_t}\leq \norm{\xi_t}_{\rho_t}$. Instead of minimizing over all admissible curves $(\rho_t)$ with unique ``velocity vector field'' $(D\rho_t)$ in the definition $\W$ below, one can therefore equivalently minimize over all pairs of curves $(\rho_t,\xi_t)$ satisfying (\ref{CE}), where we only assume $\xi_t\in \tilde \H_{\rho_t}$.
\end{remark}

\begin{remark}\label{rmk_admissible_curves}
Since the definition of the multiplication operator $\hat \rho$ uses the mean $\theta$, it might appear more natural to replace $\A_\AM$ by the bigger space $\A_\theta$ both in the definition of $\H_\rho$ and the weak continuity equation (\ref{CE}). The crucial point is that the upper semicontinuity property from Theorem \ref{theta_norm_usc} is only guaranteed for $\A_\AM$.

However, under suitable conditions on the Dirichlet form $\E$ we introduce in Chapter \ref{subsec_GE}, the closure of $\partial \A_\theta$ in $\tilde \H_\rho$ coincides with $\H_\rho$ and the duality in the continuity equation can be extended to $a\in \A_\theta$ so that both of these possible definitions finally yield the same result.
\end{remark}

Under strong conditions on the curve $(\rho_t)$, the duality in (\ref{CE}) can be extended beyond to $D(\E)$.

\begin{lemma}\label{CE_bounded}
Assume that $\A_\AM\subset D(\E)$ is dense. If $(\rho_t)_{t\in I}$ is an admissible curve in $\D(\M,\tau)$ such that
\begin{equation*}
\sup_{J\subset I}\norm{\rho_t}_\M<\infty
\end{equation*}
for all compact $J\subset I$, then $t\mapsto \tau(\rho_t a)$ is locally absolutely continuous for all $a\in D(\E)$ and
\begin{align*}
\frac {d}{dt}\tau(a\rho_t)=\langle \partial a,D\rho_t\rangle_{\rho_t}
\end{align*}
for a.e. $t\in I$.
\end{lemma}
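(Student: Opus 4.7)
The plan is to use the density of $\A_\theta$ in $D(\E)$ to transfer the continuity equation from test elements $a \in \A_\theta$ to arbitrary $a \in D(\E)$, with the local $\M$-bound on $(\rho_t)$ serving to control the $\rho_t$-norm of $\partial a$ uniformly by the form norm.

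The central estimate is the following. For any $b \in D(\E)$ and any $\rho \in L^1_+(\M,\tau) \cap \M$, Lemma \ref{theta_leq_AM} together with the tracial H\"older inequality and the identity $\tau(\Gamma(b)) = \E(b) = \tau(\Gamma(b^\ast))$ (using that $\E$ is real and that $\Gam(b)(1) = \norm{\partial b}_\H^2$) gives
\begin{align*}
\norm{\partial b}_\rho^2 \leq \tfrac{1}{2}\bigl(\tau(\Gamma(b)\rho) + \tau(\Gamma(b^\ast)\rho)\bigr) \leq \E(b)\,\norm{\rho}_\M.
\end{align*}
On a compact subinterval $[s,t] \subset I$ with $M := \esssup_{r \in [s,t]}\norm{\rho_r}_\M < \infty$, this bounds $\norm{\partial a}_{\rho_r}$ uniformly in $r$ by $(M\E(a))^{1/2}$, ensures that $\partial a$ is a well-defined element of $\tilde\H_{\rho_r}$, and (as a norm limit of $\partial a_n$ below) shows that it lies in the closed subspace $\H_{\rho_r}$ so that the pairing $\langle \partial a, D\rho_r\rangle_{\rho_r}$ makes sense.

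Now pick $(a_n) \subset \A_\theta$ converging to $a$ in the graph norm of $\E$. Admissibility of $(\rho_t)$ gives the integral identity
\begin{align*}
\tau(a_n \rho_t) - \tau(a_n \rho_s) = \int_s^t \langle \partial a_n, D\rho_r\rangle_{\rho_r}\,dr
\end{align*}
for all $[s,t] \subset I$. Since $\rho_r \in L^1(\M,\tau) \cap \M \subset L^2(\M,\tau)$ and $a_n \to a$ in $L^2(\M,\tau)$, the left-hand side converges to $\tau(a\rho_t) - \tau(a\rho_s)$. For the right-hand side, the central estimate yields $\norm{\partial a_n - \partial a}_{\rho_r}^2 \leq M\E(a_n - a) \to 0$ pointwise in $r$, together with the uniform bound $\abs{\langle \partial a_n, D\rho_r\rangle_{\rho_r}} \leq (M\E(a_n))^{1/2}\norm{D\rho_r}_{\rho_r}$, and the majorant $\norm{D\rho_\cdot}_{\rho_\cdot}$ belongs to $L^2([s,t]) \subset L^1([s,t])$. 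Dominated convergence transfers the integral identity to $a$. Finally, the pointwise bound $\abs{\langle \partial a, D\rho_r\rangle_{\rho_r}} \leq (M\E(a))^{1/2}\norm{D\rho_r}_{\rho_r}$, which lies in $L^2_\loc(I)$, yields local absolute continuity of $t \mapsto \tau(a\rho_t)$ and identifies its a.e.\ derivative with $\langle \partial a, D\rho_t\rangle_{\rho_t}$.

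I do not anticipate any serious obstacle: once the central estimate $\norm{\partial a}_\rho^2 \leq \E(a)\norm{\rho}_\M$ is in hand, the argument is a routine density plus dominated convergence extension. The only mildly delicate bookkeeping is checking that $\partial a$ represents an element of $\H_{\rho_r}$ rather than merely $\tilde\H_{\rho_r}$, but this is immediate from the norm convergence $\partial a_n \to \partial a$ in $\tilde\H_{\rho_r}$ together with $\partial a_n \in \partial\A_\theta \subset \H_{\rho_r}$.
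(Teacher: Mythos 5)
Your proof is correct and follows essentially the same route as the paper's: a density argument with $(a_n)\subset\A_\theta$, the uniform bound $\norm{\partial a}_{\rho_r}^2\leq \E(a)\,\norm{\rho_r}_\M$, and dominated convergence, with a small extra (and worthwhile) remark on why $\partial a$ lies in $\H_{\rho_r}$ rather than merely $\tilde\H_{\rho_r}$. The only cosmetic difference is that you obtain the key bound via Lemma~\ref{theta_leq_AM} and the tracial H\"older inequality, whereas the paper notes it directly from the boundedness of $\hat\rho_t$ as an operator on $\H$ (since $\theta$ is represented by an operator mean, $\norm{\hat\rho_t}_{\L(\H)}\leq\norm{\rho_t}_\M$); both routes give exactly the same estimate.
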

\begin{proof}
Let $(a_k)$ be a sequence in $\A_\AM$ such that $a_k\to a$ w.r.t. $\norm\cdot_\E$. Since $\rho_t\in \D(\M,\tau)\cap\M\subset L^2(\M,\tau)$, we have $\tau(a_k\rho_t)\to\tau(a\rho_t)$ as $k\to \infty$. On the other hand, since $\hat\rho_t$ is bounded and $\partial a_k\to \partial a$, we also have $\langle\partial a_k,D\rho_t\rangle_{\rho_t}\to \langle \partial a,D\rho_t\rangle_{\rho_t}$ as $k\to\infty$. Moreover,
\begin{align*}
\abs{\langle\partial a_k,D\rho_t\rangle_{\rho_t}}\leq \norm{\rho_t}_\M^{1/2}\E(a_k)^{\frac 1 2}\norm{D\rho_t}_{\rho_t}.
\end{align*}
Since $(\E(a_k))_k$ is bounded and $t\mapsto \norm{\rho_t}_\M$ is bounded on compact intervals, we can apply the dominated convergence theorem to get
\begin{align*}
\tau(a(\rho_t-\rho_s))=\lim_{k\to\infty}\tau(a_k(\rho_t-\rho_s))=\lim_{k\to\infty}\int_s^t \langle \partial a_k,D\rho_r\rangle_{\rho_r}\,dr=\int_s^t \langle \partial a,D\rho_r\rangle_{\rho_r}\,dr.
\end{align*}
From this equality, both the claimed absolute continuity and the identity for the derivative follow easily.
\end{proof}

We are now in the position to introduce the transport metric $\W$ as a length metric with a length functional defined on the class of admissible curves.

Strictly speaking, the map $\W$ will not be a metric since it might be degenerate and take the value infinity. Let us therefore recall the following extended concept of metrics.

\begin{definition}[Extended pseudometric]
Let $X$ be a set. An \emph{extended pseudometric} on $X$ is a map $d\colon X\times X\lra[0,\infty]$ such that
\begin{itemize}
\item $d(x,x)=0$ for $x\in X$,
\item $d(x,y)=d(y,x)$ for $x,y\in X$,
\item $d(x,y)\leq d(x,z)+d(z,y)$ for $x,y,z\in X$.
\end{itemize}
An extended pseudometric $d$ is an \emph{extended metric} if $d(x,y)=0$ implies $x=y$.
\end{definition}

\begin{definition}[Transport metric $\W$]\label{def_W}
The extended pseudometric $\W$ on $\D(\M,\tau)$ is defined by
\begin{align*}
&\W\colon\D(\M,\tau)\times\D(\M,\tau)\lra[0,\infty],\\
&\W(\bar\rho_0,\bar\rho_1)=\inf\left\{\left.\int_0^1\norm{D\rho_t}_{\rho_t}\,dt\,\right\rvert (\rho_t)\text{ admissible}, \rho_0=\bar\rho_0,\,\rho_1=\bar\rho_1\right\}.
\end{align*}
\end{definition}

\begin{remark}
If we endow $\D(\M,\tau)$ with the topology induced by the seminorms $\tau(a\,\cdot\,)$ for $a\in \A_\AM$, then the class of admissible curves together with the map that sends an admissible curve $(\rho_t)_{t\in I}$ to $\int_I\norm{D\rho_t}_{\rho_t}\,dt$ is a length structure in the sense of \cite[Chapter 2]{BBI01} and $\W$ is the associated length metric. The topological condition from their definition is verified in Proposition \ref{W_implies_weak_con}.
\end{remark}

\begin{remark}
Contrary to the Wasserstein metric, but also the metric $\W$ defined for certain jump processes in \cite{Erb14}, we define $\W$ only on densities. This is enough to study gradient flows of the entropy, which is only finite on measures with density anyway, but it would be interesting to see if there is an extension of $\W$ to a larger class of states.
\end{remark}

\begin{remark}
A different approach to noncommutative analogues of the Wasserstein distances, which relies on approximation by commutative subalgebras, has been studied in \cite{Zae15}. Contrary to our construction, if the algebra $\M$ is commutative, the metric $W_2$ defined by Zaev is the usual $L^2$-Was\-ser\-stein distance. In particular, in some examples it coincides and in some examples it is different from the metric constructed here. It is not clear if there is any deeper connection between these two approaches in the noncommutative case.
\end{remark}

\begin{example}
Let $(X,b,m)$ be a weighted graph and $\E^{(N)}$ as in Example \ref{ex_graphs}. Then
\begin{align*}
\norm{\xi}_{\rho}^2=\frac 1 2\sum_{x,y}b(x,y)\theta(\rho(x),\rho(y))\abs{\xi(x,y)}^2
\end{align*}
for $\rho\in \mathcal{P}(X,m)$ and $\xi\in \ell^2(X\times X,\frac 1 2 b)$. In particular, if $X$ is finite, this norm coincides with the one defined in \cite{Maa11}. Consequently, our metric $\W$ coincides with the metric $\W$ defined in \cite{Maa11} for finite graphs.
\end{example}

\begin{example}\label{ex_2_Wasserstein}
If $\E$ is the standard Dirichlet energy on $\IR^n$, then
\begin{align*}
\norm{\partial u}_\rho^2=\int_{\IR^n}\abs{\nabla u}^2\rho\,dx
\end{align*}
and the definition of $\W$ coincides with the Benamou--Brenier formulation \cite{BB00} of the $L^2$-Wasserstein distance.
\end{example}

\begin{example}\label{W_local}
More generally, let $\E$ be a strongly local regular Dirichlet form on $L^2(X,m)$ and assume that $m$ is energy dominant. Then
\begin{align*}
\norm{\partial u}_\rho^2=\int_X \Gamma(u) \rho\,dm.
\end{align*}
In this case $\W$ coincides with the metric $\W_\E$ defined in \cite[Definition 10.4]{AES16}. This in turn was shown in \cite[Theorem 12.5]{AES16} to coincide with the $L^2$-Was\-ser\-stein distance $\W_2$ if $(X,d,m)$ is an $\mathrm{RCD}(K,\infty)$ space and $\E$ is twice the Cheeger energy (see also \cite{AGS14b} for the relevant definitions).
\end{example}

Note that in the last two examples the transport metric $\W$ does not depend on the choice of the mean $\theta$. That is because if $\E$ is strongly local, only the values of $\theta$ on the diagonal matter, and these are already determined by the assumption that $\theta$ can be represented by an operator mean.

If one is only interested in the commutative case, one might want to relax the condition of operator concavity of $\theta$ to mere concavity. In this case, $\theta$ does not necessarily reduce to the identity on the diagonal. Metrics of this type (in the strongly local case) were studied in \cite{DNS09,CLSS10}.

\begin{example}
Let $\M$ be a finite-dimensional von Neumann algebra, $\tr$ the normalized trace on $\M$, and $(P_t)$ a quantum Markov semigroup on $\M$. Under the assumption that $(P_t)$ satisfies the quantum detailed balance condition, Carlen and Maas \cite{CM17a} defined a \emph{Riemannian} metric on the space $\D_+(\M,\tr)$ of  strictly positive density matrices. Let us shortly summarize their construction.

Given $\omega=(\omega_1,\dots,\omega_n)\in\IR^n$ and $c=(c_1,\dots,c_n)\in\IR^n$ (which are canonically associated with $(P_t)$) and a density matrix $\rho$, they define
\begin{align*}
[\rho]_{\omega_j}\colon L^2(\M,\tr)\lra L^2(\M,\tr),\,[\rho]_{\omega_j}=\int_0^1 (e^{-\omega_j/2}L_\rho)^s(e^{\omega_j/2}R_\rho)^{1-s}\,ds,
\end{align*} 
where $L_\rho$ and $R_\rho$ are the left and right multiplication with $\rho$ on $\M$. Further, $[\rho]_\omega=[\rho]_{\omega_1}\oplus\dots\oplus [\rho]_{\omega_n}$.

The norm of a tangent vector $\dot\rho_0$ is defined by
\begin{align*}
g(\dot\rho_0,\dot\rho_0)=\inf_V\sum_{j=1}^n c_j\langle V_j,[\rho_0]_{\omega_j}V_j\rangle_{L^2(\M,\tr)},
\end{align*}
where the infimum is taken over all $V$ satisfying a continuity equation of the form
\begin{align*}
\dot\rho_0=\div([\rho_0]_\omega V).
\end{align*}

In the case when $(P_t)$ is tracially symmetric and $\theta=\mathrm{LM}$, one has $c_j=1$, $\omega_j=0$ for all $j\in \{1,\dots,n\}$, and it is easily checked that $L$, $R$, $\div$ etc. coincide with the operations obtained from the first- order differential calculus. Therefore, $[\rho]_0=\hat\rho$ and the distance function induced by $g$ coincides with $\W$.

However, it should be stressed that the class of quantum Markov semigroups satisfying the detailed balance condition is larger than the class of tracially symmetric ones, so we do not fully recover the construction from \cite{CM17a}. It is an interesting open question how one can generalize the construction of the metric $\W$ to the case of infinite-dimensional quantum Markov semigroups satisfying the detailed balance condition for a non-tracial state or weight.
\end{example}

Next we collect some basic properties of $\W$. The first one is a sufficient condition to make $\W$ non-degenerate.

\begin{proposition}\label{W_implies_weak_con}
If $\rho_0,\rho_1\in \D(\M,\tau)$ and $a\in\A_\AM$, then
\begin{align*}
\abs{\tau(a(\rho_0-\rho_1))}^2\leq \norm{a}_\theta^2\W(\rho_0,\rho_1)^2\leq \norm{a}_{\AM}^2\W(\rho_0,\rho_1)^2.
\end{align*}

In particular, if $\A_\AM$ is $\sigma$-weakly dense in $\M$, then $\W$ is non-degenerate.
\end{proposition}
\begin{proof}
We can assume that $\W(\rho_0,\rho_1)<\infty$, otherwise there is nothing to prove. Let $(\rho_t)_{t\in [0,1]}$ be an admissible curve connecting $\rho_0$ and $\rho_1$. By definition $\norm{\partial a}_{\rho_t}^2\leq \norm{a}_\theta^2$ for all $t\in [0,1]$. Thus
\begin{align*}
\abs{\tau(a(\rho_1-\rho_0))}\leq \int_0^1\abs{\langle \partial a,D\rho_t\rangle_{\rho_t}}\,dt\leq\norm{a}_\theta\int_0^1 \norm{D\rho_t}\,dt.
\end{align*}
Taking the infimum over all admissible curves connecting $\rho_0$ and $\rho_1$ yields the first inequality.

The second inequality follows directly from the first an Lemma \ref{theta_leq_AM}. Finally, the last claim is an immediate consequence of the first inequality.
\end{proof}

\begin{remark}\label{L1-Wasserstein}
According to  \cite[Proposition 10.6]{Cip16}, the seminorm
\begin{align*}
\norm{\cdot}_\mathrm{AM}\colon \A_\mathrm{AM}\lra[0,\infty),\,a\mapsto \left(\frac 1 2(\norm{\Gamma(a)+\Gamma(a^\ast)}_\M)\right)^{1/2}
\end{align*}
is a Lipschitz seminorm in the spirit of \cite{Con89, Rie99}. The induced metric $W_\Gamma$ on $\D(\M,\tau)$ given by
\begin{align*}
W_\Gamma(\rho,\sigma)= \sup\{\abs{\tau(a(\rho-\sigma))}\colon a\in \A_\mathrm{AM},\,\norm{a}_\mathrm{AM}\leq 1\}
\end{align*}
is a noncommutative analog of the $L^1$-Wasserstein distance (depending on the context, it is also called Connes distance or spectral distance).
\end{remark}

The following lemma is standard.
\begin{lemma}\label{constant_speed}
If $(\rho_s)_{s\in[0,1]}$ is an admissible curve with $D\rho_s\neq 0$ for a.e. $s\in[0,1]$, then $(\rho_s)$ can be reparametrized so that the resulting curve $(\sigma_t)_{t\in I}$ has constant speed and 
\begin{align*}
\int_0^1\norm{D\sigma_t}_{\sigma_t}^2\,dt\leq \int_0^1\norm{D\rho_s}_{\rho_s}^2\,ds.
\end{align*}
\end{lemma}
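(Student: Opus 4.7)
The plan is to reparametrize by a rescaled arclength function. Set $\ell(s) := \norm{D\rho_s}_{\rho_s}$; by hypothesis $\ell > 0$ a.e. The case $\int_0^1 \ell(s)^2\,ds = \infty$ is trivial (any constant-speed parametrization satisfies the claimed inequality), so assume this integral is finite. Then by Cauchy--Schwarz $L := \int_0^1 \ell(s)\,ds \in (0,\infty)$.

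Define $s : [0,1] \to [0,1]$ by $s(t) := L^{-1}\int_0^t \ell(r)\,dr$. This is a strictly increasing absolutely continuous bijection with $s'(t) = \ell(t)/L > 0$ for a.e. $t$. Let $\phi := s^{-1}$. I would verify that $\phi$ is itself absolutely continuous by checking the Lusin $(N)$ property: for a Lebesgue null set $E \subset [0,1]$, the identity $m(s(F)) = \int_F s'(t)\,dt$ (valid for Borel $F$ and monotone AC $s$) applied to $F := \phi(E)$ gives $0 = m(E) = \int_{\phi(E)} s'(t)\, dt$, and since $s' > 0$ a.e. this forces $m(\phi(E)) = 0$. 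Combined with monotonicity and continuity, this yields $\phi \in \AC([0,1])$ with derivative $\phi'(t) = L/\ell(\phi(t))$ a.e.

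Now define the reparametrized curve $\sigma_t := \rho_{\phi(t)}$. For each $a \in \A_\theta$, the function $t \mapsto \tau(a\sigma_t)$ is the composition of the AC function $s \mapsto \tau(a\rho_s)$ with the AC monotone map $\phi$, hence is AC, and the chain rule together with admissibility of $(\rho_s)$ gives
\begin{align*}
\frac{d}{dt}\tau(a\sigma_t) = \phi'(t)\langle \partial a, D\rho_{\phi(t)}\rangle_{\rho_{\phi(t)}} = \langle \partial a, \phi'(t) D\rho_{\phi(t)}\rangle_{\sigma_t}
\end{align*}
for a.e. $t$. Hence $(\sigma_t)$ is admissible with velocity field $D\sigma_t = \phi'(t) D\rho_{\phi(t)}$, and
\begin{align*}
\norm{D\sigma_t}_{\sigma_t} = \phi'(t)\,\ell(\phi(t)) = L \quad \text{for a.e. } t,
\end{align*}
so $(\sigma_t)$ has constant speed $L$. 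The energy inequality then follows from Cauchy--Schwarz:
\begin{align*}
\int_0^1 \norm{D\sigma_t}_{\sigma_t}^2\,dt = L^2 = \Bigl(\int_0^1 \ell(s)\,ds\Bigr)^2 \leq \int_0^1 \ell(s)^2\,ds.
\end{align*}

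The principal technical point is the absolute continuity of the inverse $\phi$, together with the fact that $\phi$ can be inserted into the weak continuity equation to produce a genuine admissible velocity field for $(\sigma_t)$. This is precisely where the hypothesis $D\rho_s \neq 0$ a.e. is essential: without it, $s(\cdot)$ could have intervals of constancy, the inverse would fail to be well-defined (or would fail the $(N)$ property), and the chain-rule identification of $D\sigma_t$ in terms of $D\rho_{\phi(t)}$ would break down.
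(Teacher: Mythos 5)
Your proof is correct and follows essentially the same route as the paper's: reparametrize by normalized arclength, take the inverse $\phi$ of the cumulative-speed map, set $\sigma_t=\rho_{\phi(t)}$, identify $D\sigma_t=\phi'(t)D\rho_{\phi(t)}$ via the chain rule, and conclude constant speed plus the energy inequality by Cauchy--Schwarz. You spell out the absolute continuity of $\phi$ via the Lusin $(N)$ property, which the paper leaves implicit in "denote its inverse by $\theta$" — a worthwhile detail, and exactly where $D\rho_s\neq 0$ a.e.\ is used (your closing remark is on the mark).
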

\begin{proof}
By assumption, the map
\begin{align*}
[0,1]\lra[0,1],\,s\mapsto\frac{\int_0^s\norm{D\rho_r}_{\rho_r}\,dr}{\int_0^1 \norm{D\rho_r}_{\rho_r}\,dr}
\end{align*}
is continuous and strictly increasing, hence a homeomorphism. Denote its inverse by $\theta$ and let $\sigma_t=\rho_{\theta(t)}$. It is immediate from the definition that $\sigma$ is admissible and $D\sigma_t=\dot\theta(t)D\rho_{\theta(t)}$ for a.e. $t\in[0,1]$. Note that 
\begin{align*}
\dot\theta(t)=\frac{\int_0^1\norm{D\rho_r}_{\rho_r}\,dr}{\norm{D\rho_{\theta(t)}}_{\rho_{\theta(t)}}}.
\end{align*}
Thus $(\sigma_t)$ has constant speed and
\begin{equation*}
\int_0^1 \norm{D\sigma_t}_{\sigma_t}^2\,dt=\left(\int_0^1\norm{D\rho_r}_{\rho_r}\,dr\right)^2\leq \int_0^1\norm{D\rho_r}_{\rho_r}^2\,dr.\qedhere
\end{equation*}
\end{proof}

\begin{corollary}
The pseudometric $\W$ can alternatively be calculated as
\begin{align*}
\W(\bar\rho_0,\bar\rho_1)^2=\inf\left\lbrace\left.\int_0^1\norm{D\rho_t}_{\rho_t}^2\,dt\,\right|\, (\rho_t)\text{ admissible}, \rho_0=\bar\rho_0,\rho_1=\bar\rho_1\right\rbrace.
\end{align*}
\end{corollary}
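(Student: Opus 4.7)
The plan is to prove the two inequalities separately. The forward direction $\W(\bar\rho_0,\bar\rho_1)^2 \leq \inf\int_0^1\|D\rho_t\|_{\rho_t}^2\,dt$ is immediate from the Cauchy--Schwarz inequality on $[0,1]$, which gives $\bigl(\int_0^1\|D\rho_t\|_{\rho_t}\,dt\bigr)^2 \leq \int_0^1\|D\rho_t\|_{\rho_t}^2\,dt$ for any admissible curve $(\rho_t)$ joining $\bar\rho_0$ and $\bar\rho_1$; the left-hand side is bounded below by $\W(\bar\rho_0,\bar\rho_1)^2$ by definition of $\W$, so taking the infimum on the right finishes this direction.

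For the reverse direction, the key observation is that constant-speed reparametrizations achieve equality in Cauchy--Schwarz. Given $\delta>0$, I would pick an admissible curve $(\rho_s)_{s\in[0,1]}$ from $\bar\rho_0$ to $\bar\rho_1$ with $\int_0^1\|D\rho_s\|_{\rho_s}\,ds \leq \W(\bar\rho_0,\bar\rho_1)+\delta$. If $\|D\rho_s\|_{\rho_s} > 0$ for almost every $s$, Lemma \ref{constant_speed} supplies a constant-speed reparametrization $(\sigma_t)$, and the computation in its proof actually yields the sharper identity $\int_0^1\|D\sigma_t\|_{\sigma_t}^2\,dt = \bigl(\int_0^1\|D\rho_s\|_{\rho_s}\,ds\bigr)^2 \leq (\W(\bar\rho_0,\bar\rho_1)+\delta)^2$. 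Sending $\delta\to 0$ concludes the proof in this case.

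The only real obstacle is the case where $\|D\rho_s\|_{\rho_s}$ vanishes on a set of positive measure, which breaks the strict monotonicity of the length function used in Lemma \ref{constant_speed}. To handle this I would regularize: set $\phi_\epsilon(s) = L_\epsilon^{-1}\int_0^s(\|D\rho_r\|_{\rho_r} + \epsilon)\,dr$ with $L_\epsilon = \int_0^1\|D\rho_r\|_{\rho_r}\,dr + \epsilon$. This is a bi-Lipschitz increasing homeomorphism of $[0,1]$ with Lipschitz inverse $\psi_\epsilon$, so the reparametrized curve $\sigma^\epsilon_t := \rho_{\psi_\epsilon(t)}$ is admissible with $D\sigma^\epsilon_t = \dot\psi_\epsilon(t)\,D\rho_{\psi_\epsilon(t)}$ by the chain rule in the continuity equation. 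Since $\dot\psi_\epsilon(t) = L_\epsilon/(\|D\rho_{\psi_\epsilon(t)}\|_{\rho_{\psi_\epsilon(t)}} + \epsilon)$, a direct computation yields $\|D\sigma^\epsilon_t\|_{\sigma^\epsilon_t} \leq L_\epsilon$ pointwise, so $\int_0^1\|D\sigma^\epsilon_t\|_{\sigma^\epsilon_t}^2\,dt \leq L_\epsilon^2 \leq (\W(\bar\rho_0,\bar\rho_1)+\delta+\epsilon)^2$. Letting first $\epsilon\to 0$ and then $\delta\to 0$ finishes the proof. The only step that needs any care is verifying that admissibility transfers to $\sigma^\epsilon$ under the Lipschitz reparametrization, which is routine via the speed-based characterization of admissibility stated in the remark following its definition.
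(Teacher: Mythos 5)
Your proof is correct, and it follows the same route the paper leaves implicit: the corollary is stated directly after Lemma \ref{constant_speed} with no written proof, the intended argument being Cauchy--Schwarz for the easy inequality and the constant-speed reparametrization of that lemma for the reverse one. What you add, and what I think is genuinely valuable, is that you noticed Lemma \ref{constant_speed} carries the hypothesis $D\rho_s\neq 0$ for a.e.\ $s$, which a near-optimal competitor curve need not satisfy. Your $\epsilon$-regularized reparametrization $\phi_\epsilon(s) = L_\epsilon^{-1}\int_0^s(\norm{D\rho_r}_{\rho_r}+\epsilon)\,dr$ cleanly patches this: since $\phi_\epsilon$ is bi-Lipschitz, the chain rule in the continuity equation is unproblematic (null sets are preserved in both directions), and the pointwise bound $\norm{D\sigma^\epsilon_t}_{\sigma^\epsilon_t}\leq L_\epsilon$ gives the required estimate uniformly. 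Sending $\epsilon\to 0$ and then $\delta\to 0$ closes the argument in full generality, so your version is in fact a bit tighter than what the paper supplies.
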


\begin{lemma}[Convexity of the squared distance]\label{W_convex}
For $\rho^i_j\in\D(\M,\tau)$, $i,j\in\{0,1\}$, let $\rho^i_t=(1-t)\rho^i_0+t\rho^i_1$ for $i\in\{0,1\}$, $t\in[0,1]$. Then
\begin{align*}
\W^2(\rho^0_t,\rho^1_t)\leq (1-t)\W^2(\rho^0_0,\rho^1_0)+t \W^2(\rho^0_1,\rho^1_1)
\end{align*}
for all $t\in[0,1]$.
\end{lemma}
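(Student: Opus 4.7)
The plan is to adapt the classical Benamou--Brenier convexity argument. Fix $\epsilon>0$ and choose admissible curves $(\alpha_s)_{s\in[0,1]}$ from $\rho^0_0$ to $\rho^1_0$ and $(\beta_s)_{s\in[0,1]}$ from $\rho^0_1$ to $\rho^1_1$ whose actions lie within $\epsilon$ of $\W^2(\rho^0_0,\rho^1_0)$ and $\W^2(\rho^0_1,\rho^1_1)$ respectively. Setting $\gamma_s:=(1-t)\alpha_s+t\beta_s$ yields a curve in $\D(\M,\tau)$ with $\gamma_0=\rho^0_t$ and $\gamma_1=\rho^1_t$; moreover $s\mapsto\tau(a\gamma_s)$ is locally absolutely continuous for every $a\in\A_\theta$, with derivative
\[
\eta_s(a):=(1-t)\langle\partial a,D\alpha_s\rangle_{\alpha_s}+t\langle\partial a,D\beta_s\rangle_{\beta_s}.
\]

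The crux will be to realize $\eta_s$ as $\langle\partial a,D\gamma_s\rangle_{\gamma_s}$ for some $D\gamma_s\in\H_{\gamma_s}$ satisfying $\norm{D\gamma_s}_{\gamma_s}^2\leq(1-t)\norm{D\alpha_s}_{\alpha_s}^2+t\norm{D\beta_s}_{\beta_s}^2$. For this I would pass to the Legendre dual
\[
A_\rho^*(\eta):=\sup_{a\in\A_\theta}\bigl\{2\Re\eta(a)-\norm{\partial a}_\rho^2\bigr\}.
\]
Because $\rho\mapsto\hat\rho$ is jointly concave in the Kubo--Ando sense (this is the content of Lemma \ref{logarithmic_mean_concave}), $\rho\mapsto\norm{\partial a}_\rho^2$ is concave, so $A_\rho^*(\eta)$ is jointly convex in $(\rho,\eta)$ as a supremum of functions that are linear in $\eta$ and convex in $\rho$. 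A standard Legendre computation for the quadratic form $a\mapsto\norm{\partial a}_\rho^2$ on the Hilbert space $\H_\rho$ identifies $A_{\alpha_s}^*(\eta_s^\alpha)=\norm{D\alpha_s}_{\alpha_s}^2$ where $\eta_s^\alpha(a)=\langle\partial a,D\alpha_s\rangle_{\alpha_s}$, and similarly for $\beta$. Since $\eta_s=(1-t)\eta_s^\alpha+t\eta_s^\beta$ and $\gamma_s=(1-t)\alpha_s+t\beta_s$, joint convexity yields
\[
A_{\gamma_s}^*(\eta_s)\leq(1-t)\norm{D\alpha_s}_{\alpha_s}^2+t\norm{D\beta_s}_{\beta_s}^2.
\]
Once $A_{\gamma_s}^*(\eta_s)$ is finite, the Young-type inequality $|\eta_s(a)|^2\leq A_{\gamma_s}^*(\eta_s)\norm{\partial a}_{\gamma_s}^2$ (obtained by testing with $\lambda a$ and optimizing over $\lambda\in\IC$) shows that $\eta_s$ extends to a bounded functional on the dense subspace $\partial\A_\theta\subset\H_{\gamma_s}$, and Riesz representation delivers the desired $D\gamma_s\in\H_{\gamma_s}$ with $\norm{D\gamma_s}_{\gamma_s}^2=A_{\gamma_s}^*(\eta_s)$. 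Integrating over $s\in[0,1]$ and letting $\epsilon\to 0$ concludes.

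The main technical obstacle I expect is the careful bookkeeping around the Legendre dual in this unbounded setting: one must confirm that taking the supremum only over $\A_\theta$ still recovers the squared $\H_\rho$-norm of the Riesz representative, which rests on the density of $\partial\A_\theta$ in $\H_\rho$ that is built into its definition together with the fact that $\partial a\in D(\hat\rho^{1/2})$ for every $a\in\A_\theta$ and $\rho\in\D(\M,\tau)$. One must also verify that $s\mapsto D\gamma_s$ is measurable in the sense required by the definition of admissibility, but this is routine once the pointwise construction has been carried out, because the functionals $\eta_s$ depend measurably on $s$ by the admissibility of $\alpha$ and $\beta$.
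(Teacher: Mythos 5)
Your proposal is correct. It uses the same interpolated curve $\gamma_s=(1-t)\alpha_s+t\beta_s$ and the same two essential ingredients — the concavity of $\rho\mapsto\norm{\partial a}_\rho^2$ (Lemma~\ref{logarithmic_mean_concave}) and the convexity of the perspective-type quotient — but you package the estimate via the Fenchel--Legendre dual $A_\rho^*(\eta)$ rather than directly. The paper's proof establishes the bound on $\norm{D\gamma_s}_{\gamma_s}^2$ by a two-step estimate of the ratio $\abs{\eta_s(a)}^2/\norm{\partial a}_{\gamma_s}^2$: first enlarge the denominator using $\norm{\partial a}_{\gamma_s}^2\geq(1-t)\norm{\partial a}_{\alpha_s}^2+t\norm{\partial a}_{\beta_s}^2$, then split using the joint convexity of $(x,y)\mapsto y^2/x$. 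Your approach absorbs both steps into the single statement that $(\rho,\eta)\mapsto A_\rho^*(\eta)$ is jointly convex, and recovers the bound on $\norm{D\gamma_s}_{\gamma_s}^2$ from $A_{\gamma_s}^*(\eta_s)$ via the Riesz representation. These are two faces of the same argument — $(x,y)\mapsto y^2/x$ being convex is precisely the statement that the Legendre conjugate in $\eta$ of $-\norm{\partial a}_\rho^2$ composes well with concavity in $\rho$ — so the ingredients are identical; your version is slightly more systematic and would transfer more directly to other action functionals, while the paper's is more self-contained and avoids having to verify that the supremum defining $A_\rho^*$ over $\A_\theta$ recovers the full Hilbert-space conjugate (which you correctly observe follows from the density of $\partial\A_\theta$ in $\H_\rho$ and $\partial a\in D(\hat\rho^{1/2})$ for $a\in\A_\theta$).
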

\begin{proof}
We can assume that $\W(\rho^0_0,\rho^1_0),\W(\rho^0_1,\rho^1_1)<\infty$. For $j\in\{0,1\}$ let $(\rho^s_j)_{s\in[0,1]}$ be admissible curves connecting $\rho^0_j$ and $\rho^1_j$ and let  $\xi^s_j=D_s\rho^s_j$. Define $\rho^s_t=(1-t)\rho^s_0+t\rho^s_1$ for $s,t\in[0,1]$. Obviously, $s\mapsto \tau(a\rho^s_t)$ is locally absolutely continuous for all $a\in \A_\AM$ and $t\in [0,1]$.

We will show that the map
\begin{align*}
\partial\A_\AM\lra \IC,\,\partial a\mapsto \frac{d}{ds}\tau(a \rho^s_t)
\end{align*}
is well-defined and continuous with respect to $\norm{\cdot}_{\rho^s_t}$: Indeed,
\begin{align*}
\frac{\Abs{\frac{d}{ds}\tau(a \rho^s_t)}^2}{\norm{\partial a}_{\rho^s_t}^2}&=\frac{\Abs{(1-t)\frac{d}{ds}\tau(a \rho^s_0)+t\frac{d}{ds}\tau(a\rho^s_1)}^2}{\norm{\partial a}_{\rho^s_t}^2}\\
&=\frac{\abs{(1-t)\langle\partial a,\xi^s_0\rangle_{\rho^s_0}+t\langle\partial a,\xi^s_1\rangle_{\rho^s_1}}^2}{\norm{\partial a}_{\rho^s_t}^2}\\
&\leq \frac{\abs{(1-t)\langle\partial a,\xi^s_0\rangle_{\rho^s_0}+t\langle\partial a,\xi^s_1\rangle_{\rho^s_1}}^2}{(1-t)\norm{\partial a}_{\rho^s_0}^2+t\norm{\partial a}_{\rho^s_1}^2}\\
&\leq (1-t)\frac{\abs{\langle\partial a,\xi_0^s\rangle_{\rho^s_0}}^2}{\norm{\partial a}_{\rho_0^s}^2}+t\frac{\abs{\langle\partial a,\xi_1^s\rangle_{\rho_1^s}}^2}{\norm{\partial a}_{\rho_1^s}^2}\\
&\leq (1-t)\norm{\xi^s_0}_{\rho^s_0}^2+t\norm{\xi^s_1}_{\rho^s_1}^2.
\end{align*}
For the first inequality we used Lemma \ref{logarithmic_mean_concave}, while the second inequality follows from the convexity of the function $(x,y)\mapsto \frac{y^2}{x}$.

Thus, $(\rho^s_t)_{s\in[0,1]}$ is admissible for every $t\in[0,1]$ and $(D_s\rho^s_t)_{s\in[0,1]}$ satisfies
\begin{align*}
\norm{D_s\rho^s_t}_{\rho^s_t}^2=\sup_{a\in\A_\AM}\frac{\abs{\langle \partial a,D_s\rho^s_t\rangle_{\rho^s_t}}^2}{\norm{\partial a}_{\rho^s_t}^2}\leq (1-t)\norm{D_s\rho^s_0}_{\rho^s_0}^2+t\norm{D_s\rho^s_1}_{\rho^s_1}^2.
\end{align*}
Therefore
\begin{align*}
\W^2(\rho^0_t,\rho^1_t)\leq (1-t)\int_0^1 \norm{D_s\rho^s_0}_{\rho^s_0}^2\,ds+t\int_0^1 \norm{D_s\rho^s_1}_{\rho^s_1}^2\,ds.
\end{align*}
Taking the infimum over all admissible curves $(\rho^s_j)_{s\in[0,1]}$ connecting $\rho^0_j$ and $\rho^1_j$ yields the assertion.
\end{proof}

\begin{definition}\label{def_AC}
Let $(X,d)$ be an extended metric space. A curve $\gamma\colon I\lra X$ is called \emph{$p$-locally absolutely continuous} if there exists a positive function $g\in L^p_\loc(I)$ such that
\begin{equation}
d(\gamma_s,\gamma_t)\leq \int_s^t g(r)\,dr\tag{AC$_p$}\label{AC}
\end{equation}
for all $s,t\in I$. We write $\AC_\loc^p(I;(X,d))$ for the space of all $p$-locally absolutely continuous curves in $(X,d)$. If $\gamma\in \AC_\loc^p(I;(X,d))$, then the \emph{metric speed}
\begin{align*}
\abs{\dot\gamma_t}_d:=\lim_{h\to 0}\frac{d(\gamma_{t+h},\gamma_t)}{\abs{h}}
\end{align*}
exists for a.e. $t\in I$ and $\abs{\dot\gamma}_d$ is the minimal $g\in L^p_\loc(I)$ such that (\ref{AC}) holds. 
\end{definition}

It is immediate from the definition that every admissible curve $(\rho_t)_{t\in I}$ belongs to $\AC^2_\loc(I;(\D(\M,\tau),\W))$ and $\abs{\dot\rho_t}_{\W}\leq \norm{D\rho_t}_{\rho_t}$ for a.e. $t\in I$.

\begin{corollary}[Convexity squared metric speed]\label{metric_speed_convex}
Let $(\rho^i_t)_{t\in I}$, $i\in\{0,1\}$, be locally absolutely continuous curves in $(\D(\M,\tau),\W)$ and $\rho^s=(1-s)\rho^0+s \rho^1$ for $s\in[0,1]$. Then $\rho^s$ is locally absolutely continuous and
\begin{align*}
\abs{\dot \rho^s}_{\W}^2\leq (1-s) \abs{\dot\rho^0}_{\W}^2+s\abs{\dot\rho^1}_{\W}^2
\end{align*}
for all $s\in[0,1]$.
\end{corollary}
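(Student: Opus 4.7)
The plan is to reduce the corollary directly to the pointwise convexity inequality of Lemma \ref{W_convex}. For any pair of times $t_1, t_2 \in I$, apply Lemma \ref{W_convex} to the four density matrices $\rho^0_{t_1}, \rho^1_{t_1}, \rho^0_{t_2}, \rho^1_{t_2}$ with interpolation parameter $s$: by construction, the two relevant convex combinations are precisely $\rho^s_{t_1}$ and $\rho^s_{t_2}$, so the lemma delivers the key estimate
\begin{align*}
\W^2(\rho^s_{t_1}, \rho^s_{t_2}) \leq (1-s)\W^2(\rho^0_{t_1}, \rho^0_{t_2}) + s\W^2(\rho^1_{t_1}, \rho^1_{t_2}).
\end{align*}

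From this the local absolute continuity of $\rho^s$ is immediate: using $\sqrt{a+b} \leq \sqrt{a} + \sqrt{b}$ and the bounds $\W(\rho^i_{t_1}, \rho^i_{t_2}) \leq \int_{t_1}^{t_2} \abs{\dot\rho^i_r}_\W\, dr$ coming from the hypothesis, one obtains
\begin{align*}
\W(\rho^s_{t_1}, \rho^s_{t_2}) \leq \int_{t_1}^{t_2}\bigl(\sqrt{1-s}\,\abs{\dot\rho^0_r}_\W + \sqrt{s}\,\abs{\dot\rho^1_r}_\W\bigr)\, dr,
\end{align*}
which shows that $\rho^s \in \AC^2_\loc(I; (\D(\M,\tau), \W))$.

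To get the sharper bound on the squared metric speed, I would then localize the inequality in time. Since $\rho^0$, $\rho^1$, and $\rho^s$ are all locally absolutely continuous, the limits $\abs{\dot\rho^i_t}_\W = \lim_{h \to 0} \W(\rho^i_{t+h}, \rho^i_t)/\abs{h}$ exist for a.e.\ $t \in I$, and one may pick a common full-measure set on which all three limits exist. On this set, dividing the pointwise inequality above by $h^2$ with $t_2 = t+h$ and passing to the limit yields exactly
\begin{align*}
\abs{\dot\rho^s_t}_\W^2 \leq (1-s)\abs{\dot\rho^0_t}_\W^2 + s\abs{\dot\rho^1_t}_\W^2.
\end{align*}

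There is no real obstacle; the argument is a short deduction from Lemma \ref{W_convex}. The point worth flagging is that the \emph{squared} form of the convexity estimate is essential: had one only used the triangle-style bound $\W(\rho^s) \leq \sqrt{1-s}\,\W(\rho^0) + \sqrt{s}\,\W(\rho^1)$ at the level of distances, Cauchy--Schwarz would yield only $\abs{\dot\rho^s_t}_\W^2 \leq \abs{\dot\rho^0_t}_\W^2 + \abs{\dot\rho^1_t}_\W^2$, missing the convex-combination improvement.
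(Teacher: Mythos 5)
Your proof is correct, and it is the natural argument: the paper states the corollary with no explicit proof, and the deduction you give from Lemma \ref{W_convex} (apply the convexity inequality to the four endpoints, take square roots for absolute continuity, divide the squared inequality by $h^2$ and pass to the limit on a common full-measure set for the metric speed) is exactly the intended argument. One small remark: the corollary only assumes the curves $\rho^i$ are locally absolutely continuous (i.e.\ $\AC^1_\loc$), in which case your argument yields $\rho^s\in\AC^1_\loc$ rather than $\AC^2_\loc$; this is still sufficient since it guarantees existence of the metric speed a.e., which is all the final limit step requires.
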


At the present stage we cannot say much about when the distance $\W$ between two density matrices is finite. However, if $\E$ satisfies some functional inequalities, we get estimates on $\W$.
\begin{proposition}\label{W_bound_Poincare}
Assume that $\tau$ is a state. If $\E$ satisfies the Poincaré inequality with constant $c_P>0$, that is,
\begin{align*}
\norm{a-\tau(a)}_{2}^2\leq c_P^2\E(a)
\end{align*}
for all $a\in D(\E)$, then
\begin{align*}
\W(\rho_0,\rho_1)\leq \frac{c_P}{\lambda}\norm{\rho_1-\rho_0}_{2}
\end{align*}
for all $\rho_0,\rho_1\in \D(\M,\tau)\cap L^2(\M,\tau)$ with $\rho_0,\rho_1\geq \lambda^2>0$
\end{proposition}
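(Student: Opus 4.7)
The plan is to take the affine interpolation $\rho_t := (1-t)\rho_0 + t\rho_1$, $t \in [0,1]$, as an explicit admissible curve between $\rho_0$ and $\rho_1$ and to estimate its action directly via the Poincaré inequality, using the uniform lower bound $\rho_t \geq \lambda^2 \cdot 1$ to convert $\E(a)^{1/2}$ into the $\rho_t$-norm of $\partial a$.

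First I would verify that $(\rho_t)$ lands in $\D(\M,\tau) \cap L^2(\M,\tau)$ and satisfies $\rho_t \geq \lambda^2$ for all $t$: this is immediate from convexity of positivity, linearity of $\tau$, and the assumptions on $\rho_0,\rho_1$. Also, for any $a \in \A_\theta$ the function $t \mapsto \tau(a\rho_t)$ is affine, hence locally absolutely continuous, with derivative $\tau(a(\rho_1 - \rho_0))$. Using $\tau(\rho_1 - \rho_0) = 0$ to subtract the mean and then Cauchy--Schwarz combined with Poincaré,
\begin{align*}
\abs{\tau(a(\rho_1-\rho_0))} = \abs{\tau((a-\tau(a))(\rho_1-\rho_0))} \leq \norm{a-\tau(a)}_{L^2}\,\norm{\rho_1-\rho_0}_{L^2} \leq c_P\, \E(a)^{1/2}\,\norm{\rho_1-\rho_0}_{L^2}.
\end{align*}

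The next step is to dominate $\E(a)^{1/2}$ by $\norm{\partial a}_{\rho_r}$ uniformly in $r$. Since $\theta$ is represented by an operator mean it is positively homogeneous and satisfies $\theta(1,1)=1$, so as elements affiliated with $\M$ we have $\widehat{\lambda^2\cdot 1} = \theta(L(\lambda^2), R(\lambda^2)) = \lambda^2 \cdot 1$, whence $\norm{\partial a}_{\lambda^2 \cdot 1}^2 = \lambda^2 \E(a)$. Because $\rho_r \geq \lambda^2\cdot 1$ pointwise and both are in $L^1_+(\M,\tau)$ (using $\tau(1)=1$), Lemma \ref{norm_log_monotone} gives
\begin{align*}
\lambda^2\, \E(a) = \norm{\partial a}_{\lambda^2 \cdot 1}^2 \leq \norm{\partial a}_{\rho_r}^2 \quad \text{for every } r \in [0,1],
\end{align*}
hence $\E(a)^{1/2} \leq \lambda^{-1}\norm{\partial a}_{\rho_r}$.

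Putting the two estimates together, for $0 \leq s \leq t \leq 1$ and $a \in \A_\theta$,
\begin{align*}
\abs{\tau(a(\rho_t-\rho_s))} = (t-s)\abs{\tau(a(\rho_1-\rho_0))} \leq \int_s^t \frac{c_P}{\lambda}\norm{\rho_1-\rho_0}_{L^2}\,\norm{\partial a}_{\rho_r}\,dr.
\end{align*}
By the characterization of admissibility recorded in the remark after Definition \ref{def_W} (admissibility $\Leftrightarrow$ existence of $c \in L^2_{\mathrm{loc}}$ with $\abs{\tau(a\rho_t)-\tau(a\rho_s)} \leq \int_s^t c(r)\norm{\partial a}_{\rho_r}\,dr$, and then $\norm{D\rho_r}_{\rho_r}$ is the minimal such $c$), the curve $(\rho_t)$ is admissible and $\norm{D\rho_r}_{\rho_r} \leq \frac{c_P}{\lambda}\norm{\rho_1-\rho_0}_{L^2}$ for a.e.\ $r$. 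Integrating over $[0,1]$ then yields $\W(\rho_0,\rho_1) \leq \frac{c_P}{\lambda}\norm{\rho_1-\rho_0}_{L^2}$ as claimed.

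I do not expect a serious obstacle: the only non-routine point is the identification $\widehat{\lambda^2\cdot 1} = \lambda^2\cdot 1$ together with the monotonicity $\rho \mapsto \norm{\cdot}_\rho$, both of which are provided by the operator-mean assumption on $\theta$ via Lemma \ref{norm_log_monotone}. Once these are in place, linear interpolation, Poincaré, and the admissibility criterion assemble directly into the bound.
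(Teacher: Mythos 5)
Your proof is correct and follows essentially the same route as the paper's: linear interpolation $\rho_t = (1-t)\rho_0 + t\rho_1$, the Poincaré inequality after centering $a$, the observation that $\rho_t\geq\lambda^2$ forces $\hat\rho_t\geq\lambda^2$ (you derive this via $\widehat{\lambda^2\cdot 1}=\lambda^2\cdot 1$ and Lemma \ref{norm_log_monotone}, which is what the paper implicitly invokes), and the admissibility characterization to conclude $\norm{D\rho_r}_{\rho_r}\leq \frac{c_P}{\lambda}\norm{\rho_1-\rho_0}_{L^2}$. Your write-up is slightly more explicit about why the monotonicity and the identity $\hat{1}=1$ hold, but the argument is the same.
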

\begin{proof}
Let $\rho_t=(1-t)\rho_0+t\rho_1$ and notice that $\rho_t\geq \lambda^2$ implies $\hat\rho_t\geq \lambda^2$. For all $a\in\A_\AM$ we have
\begin{align*}
\abs{\tau(a(\rho_t-\rho_s))}&=\abs{t-s}\abs{\tau((a-\tau(a))(\rho_1-\rho_0))}\\
&\leq c_P\abs{t-s}\norm{\rho_1-\rho_0}_{2}\norm{\partial a}_{\H}\\
&\leq \frac{c_P}{\lambda}\norm{\rho_1-\rho_0}_{2}\int_s^t \norm{\partial a}_{\rho_r}\,dr.
\end{align*}
Hence $(\rho_r)_{r\in[0,1]}$ is admissible with $\norm{D\rho_r}_{\rho_r}\leq \frac{c_P}{\lambda}\norm{\rho_1-\rho_0}_{2}$.
\end{proof}

\begin{remark}
Qualitatively, this result can be rephrased as follows: The form $\E$ satisfies a Poincaré inequality if and only if $\ker \partial$ is spanned by $1$ and $\partial^\ast$ has closed range. In this case, if $(\rho_t)$ is the linear interpolation between two density matrices in $L^2$, then
\begin{align*}
\dot\rho_t=\partial^\ast \eta_t
\end{align*}
has a solution $\eta_t\in \H$. If $\rho_t$ is additionally bounded away from zero, then there is a solution $\xi_t$ to
\begin{align*}
\hat\rho_t\xi_t=\eta_t,
\end{align*}
and $(\rho_t)$ satisfies the continuity equation for the vector field $(\xi_t)$.
\end{remark}

\begin{remark}
One typical problem for length spaces we have not touched upon yet is the existence of geodesics, that is, length-minimizing curves for a given start and endpoint. A length space is called geodesic if any two points with finite distance are joined by a geodesic. This property is one of the advantages of the metric $\W$ in the finite-dimensional case compared to the Wasserstein distance, with the geometry of the geodesics an object of recent attention (see \cite{GLM17,EMW19}).

Unfortunately, $(\D(\M,\tau),\W)$ can fail to be geodesic even in the commutative case, as was pointed out to the author by Erbar. However, we will see in Chapter \ref{heat_gradient_flow} that (under suitable conditions) the subset of all density matrices with finite entropy is indeed geodesic, and this is enough for the study of geodesic convexity of the entropy.
\end{remark}

Next we prove that the action functional appearing in the definition of $\W$ is lower semicontinuous with respect to pointwise weak convergence in $L^1$ and show some first consequences. This property will later be important for several approximation arguments.

\begin{theorem}[Lower semicontinuity of the action]\label{energy_lsc}
If $L^1(\M,\tau)$ is separable, then the action functional
\begin{align*}
E\colon \D(\M,\tau)^{[0,1]}\lra [0,\infty],\,(\rho_t)\mapsto \begin{cases}\int_0^1 \norm{D\rho_t}_{\rho_t}^2\,dt&\text{if } (\rho_t)\text{ is admissible},\\\infty&\text{otherwise}\end{cases}
\end{align*}
is lower semicontinuous with respect to pointwise weak convergence in $L^1(\M,\tau)$.
\end{theorem}
\begin{proof}
Let $(\rho^n)$ be a sequence in $\D(\M,\tau)^{[0,1]}$ and $\rho\colon [0,1]\lra \D(\M,\tau)$ such that $\rho_t^n\to\rho_t$ weakly in $L^1$ for all $t\in[0,1]$. Otherwise passing to subsequence, we may assume that $(E(\rho^n))_n$ is convergent. Moreover, if the limit is infinite, there is nothing to prove, so we assume additionally that $\sup_n E(\rho^n)<\infty$. In particular, the curve $(\rho^n_t)_t$ is admissible for all $n\in\IN$.

Fix $a\in\A_\AM$. Since $\rho^n_t\to \rho_t$ weakly for all $t\in [0,1]$, we have
\begin{align*}
\abs{\tau(a(\rho_t-\rho_s))}= \lim_{n\to\infty}\abs{\tau(a(\rho_t^n-\rho_s^n))}\leq \liminf_{n\to\infty}\int_s^t \norm{D \rho_r^n}_{\rho_r^n}\norm{\partial a}_{\rho_r^n}\,dr.
\end{align*}

Let $c^n\colon [0,1]\lra [0,\infty),\,c^n(r)=\norm{D\rho_r^n}_{\rho_r^n}$. By assumption, $(c^n)$ is bounded in $L^2([0,1])$, hence we may assume that $c^n\to c$ weakly in $L^2([0,1])$.

Note that the separability of $L^1(\M,\tau)$ implies that the weak $L^1$-topology restricted to $\D(\M,\tau)$ is metrizable (see \cite[Theorem V.5.1]{DS88}). Since $\rho\mapsto\norm{\partial a}_\rho$ is upper semicontinuous (Theorem \ref{theta_norm_usc}), there is a decreasing sequence $(G_k)$ of weakly continuous functions on $\D(\M,\tau)$ such that
\begin{align*}
\norm{\partial a}_{\rho}=\inf_{k\in\IN}G_k(\rho)
\end{align*}
for all $\rho\in \D(\M,\tau)$. Moreover, we can assume that $G_k\leq \norm{a}_\theta$ for all $k\in\IN$.

Let $g^n(r)=G_k(\rho_r^n)$ and $g(r)=G_k(\rho_r)$. The dominated convergence theorem gives $g^n\to g$ strongly in $L^2([0,1])$.

Hence
\begin{align*}
\abs{\tau(a(\rho_t-\rho_s))}\leq \lim_{n\to\infty}\int_s^t c^n(r) g^n(r)\,dr=\int_s^t c(r) g(r)\,dr=\int_s^t c(r) G_k(\rho_r)\,dr
\end{align*}
for all $k\in\IN$.

Finally, another application of the dominated convergence theorem yields
\begin{align*}
\abs{\tau(a(\rho_t-\rho_s))}\leq \lim_{k\to\infty}\int_s^t c(r)G_k(\rho_r)\,dr=\int_s^t c(r)\norm{\partial a}_{\rho_r}\,dr.
\end{align*}
This inequality implies that $(\rho_t)$ is admissible and $\int_0^1\norm{ D\rho_t}_{\rho_t}^2\,dt\leq \norm{c}_{L^2([0,1])}^2$. Thus
\begin{equation*}
\int_0^1 \norm{D \rho_t}_{\rho_t}^2\,dt\leq \norm{c}_{L^2([0,1])}^2\leq \liminf_{n\to\infty}\int_0^1 \norm{D \rho_t^n}_{\rho_t^n}^2\,dt.\qedhere
\end{equation*}
\end{proof}

\begin{remark}
The separability of $L^1(\M,\tau)$ is equivalent to each of the following properties:
\begin{itemize}
\item[(i)] The $\sigma$-weak topology on the unit ball of $\M$ is metrizable.
\item[(ii)] $\M$ has a faithful normal representation on a separable Hilbert space.
\end{itemize}
A von Neumann algebra with one of these properties is often called \emph{separable} or \emph{separably acting}.
\end{remark}

\begin{lemma}\label{admissible_measurable}
If $\A_\AM$ is $\sigma$-weakly dense in $\M$ and $L^1(\M,\tau)$ is separable, then every admissible curve is measurable in $L^1(\M,\tau)$.
\end{lemma}
\begin{proof}
Let $A$ be the uniform closure of $\A_\AM$. By Kaplansky's density theorem, $A\cap\M_1$ is $\sigma$-weakly dense in $\M_1$. Since $L^1(\M,\tau)$ is separable, the $\sigma$-weak topology is metrizable on $\M_1$. Thus, for every $a\in \M$ there exists a sequence $(a_k)$ in $A$ such that $a_k\to a$ $\sigma$-weakly.

If $(\rho_t)$ is an admissible curve, then $t\mapsto \tau(\rho_t a_k)$ is continuous for all $k\in\IN$. Therefore $t\mapsto \tau(\rho_t a)$ is measurable as pointwise limit of a sequence of continuous functions. Using once more the separability of $L^1(\M,\tau)$, we conclude that $(\rho_t)$ is measurable in $L^1(\M,\tau)$ due to Pettis' measurability theorem (see \cite[Theorem II.2]{DU77}).
\end{proof}

\begin{lemma}\label{W_diff_curves}
Assume that $\A_\AM$ is $\sigma$-weakly dense in $\M$ and $L^1(\M,\tau)$ is separable. If $(\rho_t)_{t\in[0,1]}$ is an admissible curve in $\D(\M,\tau)$, then there exists a family of admissible curves $(\rho_t^\epsilon)\in C^\infty([0,1];L^1(\M,\tau))$ such that $\rho_0^\epsilon=\rho_0$, $\rho_1^\epsilon=\rho_1$ for $\epsilon>0$, and
\begin{align*}
\limsup_{\epsilon\to 0}\int_0^1 \norm{D\rho_t^\epsilon}_{\rho_t^\epsilon}^2\,dt&\leq \int_0^1 \norm{D\rho_t}_{\rho_t}^2\,dt,\\
\limsup_{\epsilon\to 0}\esssup_{t\in[0,1]}\norm{D\rho_t^\epsilon}_{\rho_t^\epsilon}^2&\leq \esssup_{t\in[0,1]} \norm{D\rho_t}_{\rho_t}^2.
\end{align*}
In particular, the infimum in the definition of $\W$ can alternatively be taken over $L^1$-smooth admissible curves.
\end{lemma}
\begin{proof}
Let $(\rho_t)_{t\in[0,1]}$ be an admissible curve. Extend it to a curve $(\rho_t)_{t\in \IR}$ by setting $\rho_t=\rho_0$ for $t<0$ and $\rho_t=\rho_1$ for $t>1$. Note that the extended curve is still admissible with $D\rho_t=0$ for $t\in (-\infty,0)\cup(1,\infty)$.

Let $(\eta_\epsilon)_{\epsilon>0}$ be a mollifying kernel on $\IR$ with $\operatorname{supp}\eta_\epsilon\subset (-\epsilon,\epsilon)$ and set
\begin{align*}
\rho^\epsilon_t=\int_\IR \eta_\epsilon(s)\rho_{t-s}\,ds,
\end{align*}
where the integral is to be understood as Pettis integral in $L^1(\M,\tau)$. The measurability of $(\rho_t)$ is guaranteed by Lemma \ref{admissible_measurable}.

The curve $(\rho^\epsilon_t)_{t\in\IR}$ is in $C^\infty(\IR;L^1(\M,\tau))$ and satisfies $\rho^\epsilon_{t}=\rho_0$ for $t\leq-\epsilon$ and $\rho^\epsilon_{t}=\rho_1$ for $t\geq 1+\epsilon$. Moreover, if $a\in\A_{\AM}$, then
\begin{align*}
\abs{\tau(a(\rho^\epsilon_t-\rho^\epsilon_s))}&\leq\int_\IR\eta_\epsilon(r)\abs{\tau(a(\rho_{t-r}-\rho_{s-r}))}\,dr\\
&\leq\int_\IR \eta_\epsilon(r)\int_{s-r}^{t-r}\norm{\partial a}_{\rho_u}\norm{D\rho_u}_{\rho_u}\,du \,dr\\
&=\int_s^t \int_\IR \eta_\epsilon(r)\norm{\partial a}_{\rho_{u-r}}\norm{D\rho_{u-r}}_{\rho_{u-r}}\,dr\,du\\
&\leq\int_s^t \left(\int_\IR\eta_\epsilon(r)\norm{\partial a}_{\rho_{u-r}}^2\,dr\right)^{\frac 1 2}\left(\int_\IR\eta_\epsilon(r)\norm{D\rho_{u-r}}_{\rho_{u-r}}^2\,dr\right)^{\frac 1 2}\,du
\end{align*}
Since $\rho\mapsto\norm{\partial a}_\rho^2$ is upper semicontinuous and concave by Lemmas \ref{theta_norm_usc}, \ref{logarithmic_mean_concave}, we can apply the vector-valued version of Jensen's inequality (see \cite[Theorem 3.10]{Per74}) to get
\begin{align*}
\int_\IR\eta_\epsilon(r)\norm{\partial a}_{\rho_{u-r}}^2\,dr\leq \norm{\partial a}_{\rho_u^\epsilon}^2.
\end{align*}
Thus $(\rho^\epsilon_t)_{t\in\IR}$ is admissible and
\begin{align*}
\norm{D\rho^\epsilon_t}_{\rho^\epsilon_t}^2\leq \int_\IR \eta_\epsilon(r)\norm{D\rho_{t-r}}_{\rho_{t-r}}^2\,dr.
\end{align*}
for a.e. $t\in [0,1]$.

Finally one can reparametrize $(\rho_t^\epsilon)$ in such a way that $\rho_0^\epsilon=\rho_0$, $\rho^1_\epsilon=\rho_1$ and the claimed inequalities are retained.
\end{proof}

\begin{remark}
Mollifying in the time variable to restrict minimization problems to smooth curves is a standard argument, but the nonlinearity of $\norm{\xi}_\rho$ in $\rho$ requires some additional care in our case. In particular, the upper semicontinuity of $\rho\mapsto \norm{\partial a}_\rho^2$ is crucial here, because the vector-valued version of Jensen's inequality may fail otherwise (see \cite{Per74}).
\end{remark}

\section{Von Neumann entropy and Fisher information}\label{entropy_information}

In this section we introduce two important quantities for the gradient flow characterization of the heat flow, namely the (von Neumann) entropy and the Fisher information or entropy production, and carefully analyze convexity and continuity properties of these two as well as some related functionals.

The Fisher information appears in quantum information theory as the derivative of the entropy along heat flow curves, hence the name entropy production, but we will see that it also occurs in the metric speed of heat flow trajectories with respect to $\W$. This foreshadows already the close relation between entropy, heat flow and the metric $\W$, which we will exploit for the gradient flow characterization in the next section.

Throughout this section let $(\M,\tau)$ be a tracial von Neumann algebra, $\E$ a quantum Dirichlet form on $L^2(\M,\tau)$, $(\partial,\H,L,R,J)$ the associated first order differential calculus and assume that $\tau$ is energy dominant. We further assume that $\theta$ can be represented by a symmetric operator mean. Denote by $(P_t)_{t\geq 0}$ the quantum Markov semigroup associated with $\E$ and by $\L=\partial^\ast\partial$ its generator.

The \emph{von Neumann entropy} is defined as
\begin{align*}
\Ent\colon \D(\M,\tau)\lra [-\infty,\infty],\,\Ent(\rho)=\begin{cases}\tau(\rho\log \rho)&\text{if } (\rho\log \rho)_+\in L^1(\M,\tau),\\ \infty& \text{otherwise}.\end{cases}
\end{align*}
Its domain of definition is $D(\Ent)=\{\rho\in \D(\M,\tau)\mid \Ent(\rho)\in\IR\}$. Here and in the following, the expression $\rho\log \rho$ is to be understood as $f(\rho)$ for the (continuous) function
\begin{align*}
f\colon [0,\infty)\lra \IR,\,x\mapsto\begin{cases}x\log x&\text{if } x>0,\\0&\text{if } x=0.\end{cases}
\end{align*}

\begin{remark}
If $\tau(1)=1$, an application of Jensen's inequality shows that $\Ent\geq 0$. If $\tau(1)=\infty$, the entropy can be rather ill-behaved already in the commutative case (see e.g. \cite[Example 4.4]{Stu06a}). For that reason, we will from now on concentrate on the finite case. However, we believe that this assumption is not essential and that similar modifications as in \cite{AGS14a} should also work in our setting.
\end{remark}

\begin{remark}
Some authors, especially in the physics community, define the entropy with the opposite sign. We choose the sign in such a way that the entropy is positive if $\tau$ is a state and the semigroup $(P_t)$ is a gradient flow of $\Ent$ instead of $-\Ent$.
\end{remark}

An important property of the entropy is its lower semicontinuity with respect to suitable topologies on $\D(\M,\tau)$, in our case the topology induced by $\W$. To prove it, we first establish a variational formulation of the entropy. In the noncommutative case, it is (along with the idea of proof presented here) originally due to Petz \cite{Pet88}. We just adapt it to be applicable in duality with $\A_\mathrm{LM}$ instead of $\M$.

\begin{proposition}[Variational formula for the entropy]\label{Ent_variational}
Assume $\tau(1)=1$. If $A$ is a $\sigma$-weakly dense $C^\ast$-subalgebra of $\M$, then
\begin{align*}
\Ent(\rho)=\sup\{\tau(a\rho)-\log \tau(e^a)\mid a\in A_+\}
\end{align*}
for all $\rho\in\D(\M,\tau)$.
\end{proposition}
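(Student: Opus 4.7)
\emph{Upper bound.} For any $a \in A_+$ (in fact any $a \in \M_h$), set $c = \log \tau(e^a)$ and $\sigma_a = e^{a-c}$. Since $a$ is bounded, $\sigma_a$ is a strictly positive density matrix with bounded logarithm. The function $f(x) = x\log x$ is operator convex on $(0,\infty)$, so Klein's trace inequality applied to $\rho$ and $\sigma_a$ yields the positivity of the relative entropy:
\[
0 \leq \tau\bigl(\rho \log \rho - \rho \log \sigma_a\bigr) = \Ent(\rho) - \tau(\rho a) + \log \tau(e^a),
\]
where $\tau(\rho \log \sigma_a)$ makes sense because $\log \sigma_a = a - c\cdot 1$ is bounded and $\rho \in L^1$. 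Since $\tau(1)=1$, Jensen's inequality gives $\Ent(\rho) \geq 0$, so the case $\Ent(\rho) = -\infty$ does not occur; if $\Ent(\rho) = +\infty$ the inequality is trivial. This gives $\sup \leq \Ent(\rho)$.

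\emph{Lower bound: reduction.} The functional $a \mapsto \tau(a\rho) - \log \tau(e^a)$ is invariant under adding a real multiple of $1$: replacing $a$ by $a + t\cdot 1$ changes both terms by $t$. Hence it suffices to prove $\sup$ over $a \in A_h$ equals $\Ent(\rho)$, and then translate each approximant to lie in $A_+$. To approximate $\Ent(\rho)$ from below, I will first construct bounded elements $b_n \in \M_h$ realizing the entropy in the limit, then approximate these by elements of $A_h$ using Kaplansky's density theorem.

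\emph{Lower bound: the approximation.} Define $b_n = \log\bigl((\rho + \tfrac{1}{n}) \wedge n\bigr) \in \M_h$, so that $e^{b_n} = (\rho + \tfrac{1}{n}) \wedge n$. As $n \to \infty$, $e^{b_n} \to \rho$ in $L^1(\M,\tau)$ (this is a routine truncation argument using $\tau(1) = 1$), hence $\log \tau(e^{b_n}) \to 0$. For the linear term, $\tau(b_n \rho) = \tau\bigl(\rho \log((\rho + \tfrac 1n)\wedge n)\bigr) \to \tau(\rho \log \rho) = \Ent(\rho)$ by splitting into the spectral parts $\rho \leq 1$ and $\rho > 1$ and applying monotone convergence on each (this also handles the case $\Ent(\rho) = +\infty$, where the positive part forces the limit to diverge). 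Next, for each fixed $n$, Kaplansky's density theorem (applied to the $\sigma$-weakly dense $\ast$-subalgebra $A$) yields $c_{n,k} \in A_h$ with $\norm{c_{n,k}}_\M \leq \norm{b_n}_\M$ and $c_{n,k} \to b_n$ $\sigma$-strongly as $k \to \infty$. Writing $\tau(c_{n,k}\rho) = \langle c_{n,k}\rho^{1/2}, \rho^{1/2}\rangle_{L^2}$ and $\tau(e^{c_{n,k}}) = \langle e^{c_{n,k}}\cdot 1, 1\rangle_{L^2}$ (using $1 \in L^2(\M,\tau)$ since $\tau(1) = 1$), the continuity of the bounded functional calculus for uniformly norm-bounded self-adjoint operators together with strong convergence of $c_{n,k}$ to $b_n$ gives $\tau(c_{n,k}\rho) \to \tau(b_n\rho)$ and $\tau(e^{c_{n,k}}) \to \tau(e^{b_n})$. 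Finally set $a_{n,k} = c_{n,k} + \norm{c_{n,k}}_\M \cdot 1 \in A_+$; this does not change the value of the functional. Extracting a diagonal subsequence yields $a_m \in A_+$ with $\tau(a_m \rho) - \log \tau(e^{a_m}) \to \Ent(\rho)$, completing the lower bound.

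\emph{Main obstacle.} The easy direction is pure operator convexity. The difficulty is in the lower bound, where we must simultaneously handle the unboundedness of $\log \rho$, the possibility that $\Ent(\rho) = +\infty$, and the restriction to the $\sigma$-weakly dense subalgebra $A$ (rather than all of $\M_h$). The double approximation --- spectral truncation of $\rho$ followed by Kaplansky density --- combined with the translation invariance trick to stay inside $A_+$, is what makes the proof work in this generality.
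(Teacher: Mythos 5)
Your overall strategy is close to the paper's, but there are two genuine issues in the details.

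\textbf{Upper bound.} You invoke ``Klein's trace inequality'' for $f(x)=x\log x$ applied to the pair $\rho,\sigma_a$, but the version of Klein's inequality available in this paper (Lemma~\ref{Klein_ineq}) is stated for convex $C^1$ \emph{Lipschitz} functions and for $\rho_0,\rho_1 \in L^1_h$. The function $x\log x$ is not Lipschitz, its derivative $\log x + 1$ is unbounded near $0$ and $\infty$, and $\rho$ is a general, typically unbounded, element of $L^1_+$. You cannot simply cite the lemma; the non-negativity of the Umegaki relative entropy $\tau(\rho\log\rho)-\tau(\rho\log\sigma_a)$ in the semifinite setting with unbounded $\rho$ is a nontrivial fact that must be argued, not asserted. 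The paper avoids this altogether by first proving the identity for bounded invertible $\rho$ (where it \emph{does} apply Klein's inequality, but to the function $f=\exp$, which can legitimately be truncated to a Lipschitz function on the compact joint spectrum of the bounded operators $x=\log\rho$ and $y=a-\log\tau(e^a)$), and then extends to general $\rho$ by the two-sided truncation $\rho^{(n)}=(\rho\wedge n)\vee\tfrac1n$ (Step~2). Your direct argument for general $\rho$ would need an explicit approximation step or an external reference to the non-negativity of relative entropy for semifinite von Neumann algebras.

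\textbf{Lower bound.} Your choice $b_n=\log((\rho+\tfrac1n)\wedge n)$ is a sensible variant of the paper's $a_n=\log(n\rho^{(n)})$, and the endpoint convergences $\tau(e^{b_n})\to 1$ and $\tau(b_n\rho)\to\Ent(\rho)$ do hold. However, your justification via ``monotone convergence on each spectral part'' is not quite accurate: for $\lambda>1$ the sequence $n\mapsto\lambda\log((\lambda+\tfrac1n)\wedge n)$ is \emph{not} monotone in $n$ (it increases while $n\lesssim\lambda$ and then decreases); you need either a squeeze using the monotone lower bound $\lambda\log(\lambda\wedge n)$ together with the upper bound $\lambda\log\lambda+\tfrac1n$, or dominated convergence when $\Ent(\rho)<\infty$. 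More importantly, after Kaplansky you shift by $\norm{c_{n,k}}_\M\cdot 1$ to land in $A_+$, but this presumes $1\in A$, which is \emph{not} part of the hypothesis ($A$ is only assumed to be a $\sigma$-weakly dense $C^\ast$-subalgebra). The clean fix is the paper's: since the functional is invariant under adding multiples of $1$, replace $b_n$ by $\tilde b_n=b_n+\log n\in\M_+$ \emph{before} applying Kaplansky, and use the positive-part refinement of Kaplansky to approximate $\tilde b_n$ by a net in $A_+$ directly. Also note that Kaplansky produces a \emph{net}, not a sequence; since the statement is a supremum this is harmless, but your ``extract a diagonal subsequence'' phrasing should be adjusted.

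In summary, your blueprint (non-negativity of relative entropy, a logarithmic truncation of $\rho$, Kaplansky density) is the right one and parallels the paper's three-step proof, but as written the upper bound relies on an inapplicable version of Klein's inequality, and the lower bound has a monotonicity slip and an implicit unitality assumption on $A$.
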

\begin{proof}
\emph{Step 1:} The equality
\begin{align*}
\Ent(\rho)=\max\{\tau(a\rho)-\log\tau(e^a)\mid a\in \M_+\}
\end{align*}
holds for invertible $\rho\in \D(\M,\tau)\cap \M$, and the maximum is attained at $a=\log M\rho$ for all sufficiently large $M$:

By assumption, there are constants $c,C>0$ such that $c\leq \rho\leq C$. Let $a=\log M\rho\in \M_+$ for $M\geq c^{-1}$. Then
\begin{align*}
\tau(a\rho)-\log\tau(e^a)=\tau(\rho\log \rho)+\log M-\log \tau(M\rho)=\tau(\rho\log \rho).
\end{align*}
For the converse inequality let $x=\log \rho$, $y=a-\log \tau(e^a)$. By Klein's inequality (Lemma \ref{Klein_ineq}) we have
\begin{align*}
0&\leq \tau(e^y -e^x-e^x(y-x))\\
&=\tau\left(\rho-\frac{e^a}{\tau(e^a)}-\rho(a-\log\tau(e^a)-\log \rho)\right)\\
&=\tau(\rho\log\rho)-(\tau(a\rho)-\log\tau(e^a)).
\end{align*}

\emph{Step 2: } The equality
\begin{align*}
\Ent(\rho)=\sup\{\tau(a \rho)-\log\tau(e^a)\mid a\in \M_+\}
\end{align*}
holds for all $\rho\in\D(\M,\tau)$:

Let $\rho^{(n)}=(\rho\wedge n)\vee\frac 1 n$, denote by $e$ the spectral measure of $\rho$ and let $\mu=\tau\circ e$. Then
\begin{align*}
\Ent(\rho^{(n)})&=\int_{[0,e^{-1})} \left(\lambda\vee\frac 1 n\right)\log\left(\lambda\vee\frac 1 n\right)\,d\mu(\lambda)+\int_{[e^{-1},\infty)}(\lambda\wedge n)\log(\lambda\wedge n)\,d\mu(\lambda)\\
&\to \int \lambda \log \lambda\,d\mu(\lambda),\,n\to\infty
\end{align*}
by monotone convergence.

Moreover,
\begin{align*}
\tau(\log\rho^{(n)}(\rho^{(n)}-\rho))=\int_{[0,\infty)}\log\lambda^{(n)}(\lambda^{(n)}-\lambda)\,d\mu(\lambda)\leq 0,
\end{align*}
as can be seen by a decomposition of $[0,\infty)$ into $[0,1/n)$, $[1/n,n]$ and $(n,\infty)$.

Let $a_n=\log n\rho^{(n)}$. Using the first step, we get
\begin{align*}
\Ent(\rho)=\lim_{n\to\infty}\Ent(\rho^{(n)})=\lim_{n\to\infty}(\tau(\rho^{(n)}a_n)-\log\tau(e^{a_n}))\leq \sup_{n\in\IN}(\tau(\rho a_n)-\log\tau(e^{a_n})).
\end{align*}
For the converse inequality, first observe that
\begin{align*}
\tau(\abs{\rho-\rho^{(n)}})=\int_{[0,1/n)}\left(\frac 1 n-\lambda\right)\,d\mu(\lambda)+\int_{(n,\infty)}(\lambda-n)\,d\mu(\lambda)\to 0,\,n\to\infty
\end{align*}
by dominated convergence.

Now let $a\in \M_+$. Then
\begin{align*}
\Ent(\rho)=\lim_{n\to\infty}\Ent(\rho^{(n)})\geq \lim_{n\to\infty}\tau(a\rho^{(n)})-\log\tau(e^a)=\tau(a\rho )-\log\tau(e^a).
\end{align*}
\emph{Step 3:} The equality
\begin{align*}
\Ent(\rho)=\sup\{\tau(a\rho)-\log\tau(e^a)\mid a\in A_+\}
\end{align*}
holds for all $\rho\in\D(\M,\tau)$:

Let $\epsilon>0$. By the second step, there is an $a\in \M_+$ such that 
\begin{align*}
\Ent(\rho)\leq \tau(a\rho)-\log \tau(e^a)+\frac{\epsilon} 3.
\end{align*}
Since $A\subset \M$ is $\sigma$-weakly dense, the unit ball of $A$ is strongly dense in the unit ball of $\M$ by Kaplansky's density theorem. Thus there is a net $(a_i)$ in $A_+$ with $\norm{a_i}_\M\leq \norm{a}_\M$ such that $a_i\to a$. By the continuity of the functional calculus, $(e^{a_i})$ converges strongly to $e^a$.

Let $i\in I$ such that $\tau(a\rho )\leq \tau(a_i \rho)+\epsilon/3$, $-\log\tau(e^a)\leq -\log\tau(e^{a_i})+\epsilon/3$. Then
\begin{align*}
\Ent(\rho)\leq \tau(a_i\rho )-\log \tau(e^{a_i})+\epsilon.
\end{align*}
Thus $\Ent(\rho)\leq \sup\{\tau(a \rho)-\log \tau(e^a)\mid a\in A_+\}$. The converse inequality is clear from Step 2.
\end{proof}

For the next corollary recall that a convex function is called \emph{proper} if it is not identically $\infty$.

\begin{corollary}\label{Ent_lsc_convex}
If $\tau(1)=1$ and $\A_\mathrm{LM}\subset \M$ is $\sigma$-weakly dense, then $\Ent$ is a proper lower semicontinuous convex functional on $(\D(\M,\tau),\W)$.
\end{corollary}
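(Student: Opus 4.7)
The plan is to deduce all three properties — properness, convexity, lower semicontinuity — from the variational formula of Proposition \ref{Ent_variational}. Properness is immediate: since $\tau(1)=1$, the identity is a density matrix and $\Ent(1)=\tau(1\log 1)=0<\infty$.

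For convexity and lower semicontinuity, I would apply Proposition \ref{Ent_variational} with $A$ equal to the norm closure of $\A_\mathrm{LM}$ in $\M$, which is a $C^\ast$-subalgebra (because $\A_\mathrm{LM}$ is a $\ast$-algebra, as shown earlier) and $\sigma$-weakly dense by hypothesis. Setting $F_a(\rho):=\tau(a\rho)-\log\tau(e^a)$, this yields
\[\Ent(\rho)=\sup\{F_a(\rho):a\in A_+\}.\]
The main step is then to replace the supremum over $A_+$ by one over $\A_{\mathrm{LM},+}$. Given $a\in A_+$, I write $a=c^\ast c$ with $c\in A$ and choose $c_k\in\A_\mathrm{LM}$ with $c_k\to c$ in $\M$-norm; then $c_k^\ast c_k\in\A_{\mathrm{LM},+}$ and $c_k^\ast c_k\to a$ in $\M$-norm. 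Since $\tau$ is finite, continuity of the exponential functional calculus together with continuity of $\log$ at $\tau(e^a)\geq 1$ gives $F_{c_k^\ast c_k}(\rho)\to F_a(\rho)$, so
\[\Ent(\rho)=\sup\{F_b(\rho):b\in\A_{\mathrm{LM},+}\}.\]

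From this refined representation both remaining properties follow directly: each $F_b$ is affine in $\rho$, so $\Ent$ is convex as a pointwise supremum of affine functionals; and Proposition \ref{W_implies_weak_con} yields $\abs{F_b(\rho_1)-F_b(\rho_0)}\leq \norm{b}_\theta\W(\rho_0,\rho_1)$ for $b\in\A_\mathrm{LM}$ (with $\theta=\mathrm{LM}$), so each $F_b$ is $\W$-continuous and $\Ent$ is lower semicontinuous on $(\D(\M,\tau),\W)$ as a supremum of $\W$-continuous functions. I anticipate no serious obstacle; the only non-trivial ingredient is the norm approximation of elements of $A_+$ by elements of $\A_{\mathrm{LM},+}$, which is routine given that $\A_\mathrm{LM}$ is a $\ast$-algebra.
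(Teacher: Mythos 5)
Your proposal is correct and follows essentially the same route as the paper: both invoke Proposition~\ref{Ent_variational} with $A$ the norm closure of $\A_\mathrm{LM}$ and then deduce lower semicontinuity and convexity from $\W$-continuity (via Proposition~\ref{W_implies_weak_con}) and affinity of $\rho\mapsto\tau(a\rho)-\log\tau(e^a)$. The only minor difference is where the norm-density of $\A_\mathrm{LM}$ in $A$ is used: you first shrink the supremum from $A_+$ to $\A_{\mathrm{LM},+}$ and then apply the Lipschitz estimate directly, whereas the paper keeps the supremum over $A_+$ and extends $\W$-continuity of $\rho\mapsto\tau(a\rho)$ from $a\in\A_\mathrm{LM}$ to $a\in A_\mathrm{LM}$ using $L^1$-boundedness of the converging sequence $(\rho_n)$; both variants are valid.
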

\begin{proof}
Denote by $A_\mathrm{LM}$ the uniform closure of $\A_\mathrm{LM}$. Let $(\rho_n)$ be a sequence in $\D(\M,\tau)$ such that $\W(\rho_n,\rho)\to 0$. It follows from Proposition \ref{W_implies_weak_con} that $\tau(a\rho_n)\to \tau(a \rho)$ for all $a\in\A_\theta$, and, since $(\rho_n)$ is bounded in $L^1(\M,\tau)$, indeed for all $a\in A_\theta$.

Combined with Proposition \ref{Ent_variational} we infer that $\Ent$ is the supremum of affine, continuous functions on $(\D(\M,\tau),\W)$, hence lower semicontinuous and convex. Since $\Ent(1)=0$, the entropy is also proper.
\end{proof}

\begin{remark}
As we already used in the proof of Theorem \ref{theta_norm_usc}, functionals of the form $\rho\mapsto \tau(f(\rho))$ are convex for all convex functions $f$.
\end{remark}

For approximating functionals we will also need the following (weaker) lower semicontinuity property, which holds for convex trace functionals.

\begin{proposition}[Lower semicontinuity of convex trace functionals]\label{entropy_lsc}
Assume that $\tau(1)<\infty$. For an interval $I\subset\IR$ let $L^1(\M,\tau)_I=\{a\in L^1(\M,\tau)\mid a=a^\ast,\,\sigma(a)\subset I\}$. If $f\colon I\lra \IR$ is a proper lower semicontinuous convex function,
then
\begin{align*}
F\colon L^1(\M,\tau)_I\lra (-\infty,\infty],\,a\mapsto\tau(f(a))
\end{align*}
is lower semicontinuous.
\end{proposition}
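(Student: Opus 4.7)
The plan is to reduce the statement to the classical Portmanteau theorem applied to spectral distributions. To each $a \in L^1(\M,\tau)_I$ I associate the finite positive Borel measure $\mu_a := \tau \circ E_a$ on $\IR$ (where $E_a$ is the spectral measure of $a$); since $\sigma(a) \subset I$ and $\tau(1) < \infty$, the measure $\mu_a$ is supported in $I$ with total mass $\tau(1)$, and the spectral theorem gives $F(a) = \int_I f\,d\mu_a$. The first move is to reduce to the case $f \geq 0$: a proper lsc convex $f$ on $I$ admits a global affine minorant (pick any $x_0 \in \mathrm{int}(I)$ and a subgradient $\alpha \in \partial f(x_0)$, giving $\ell(x) := f(x_0) + \alpha(x - x_0) \leq f(x)$ on $I$), and the functional $a \mapsto \tau(\ell(a)) = \alpha\tau(a) + (f(x_0) - \alpha x_0)\tau(1)$ is affine and $L^1$-continuous. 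Replacing $f$ by $\tilde f := f - \ell \geq 0$, extended by $+\infty$ on $\IR \setminus I$, therefore reduces lsc of $F$ to lsc of $\tilde F(a) := \tau(\tilde f(a))$, with $\tilde f \colon \IR \to [0,\infty]$ lower semicontinuous.

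The next step is to verify weak convergence of the associated spectral distributions. Suppose $a_n \to a$ in $L^1(\M,\tau)_I$. Chebyshev's inequality gives $a_n \to a$ in $\tau$-measure. Since $\tau(1) < \infty$, the space $L^0(\M,\tau)$ of $\tau$-measurable operators is a complete metrizable topological $\ast$-algebra and the functional calculus by continuous bounded functions is continuous on it, so $\phi(a_n) \to \phi(a)$ in $\tau$-measure for every $\phi \in C_b(\IR)$; combined with the uniform bound $\|\phi(a_n)\|_\M \leq \|\phi\|_\infty$ and $\tau(1) < \infty$, dominated convergence in $L^1$ yields
\begin{align*}
\int_\IR \phi\,d\mu_{a_n} = \tau(\phi(a_n)) \longrightarrow \tau(\phi(a)) = \int_\IR \phi\,d\mu_a \quad \text{for every } \phi \in C_b(\IR).
\end{align*}
Since every $\mu_{a_n}$ and $\mu_a$ has the same total mass $\tau(1)$, this is genuine weak convergence of finite positive Borel measures on $\IR$.

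Finally I would conclude with Portmanteau. Using the layer-cake formula $\tilde F(a) = \int_0^\infty \mu_a(\{\tilde f > t\})\,dt$, each superlevel set $\{\tilde f > t\}$ is open by lower semicontinuity of $\tilde f$, so the open-set half of Portmanteau gives $\mu_a(\{\tilde f > t\}) \leq \liminf_n \mu_{a_n}(\{\tilde f > t\})$, and Fatou's lemma yields $\tilde F(a) \leq \liminf_n \tilde F(a_n)$, which combined with the reduction in the first step proves $F$ is lsc. The main technical point is the measure-continuity of the bounded functional calculus on $L^0(\M,\tau)$, which relies crucially on finiteness of $\tau$; the reduction to $\tilde f \geq 0$ is also essential, since Portmanteau-type inequalities fail in general for lsc integrands without a global lower bound.
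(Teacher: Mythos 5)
Your proof is correct in its broad outline, and it takes a genuinely different route from the paper's. The paper represents the proper lsc convex $f$ as $\sup_i f_i$ with $f_i$ affine (Rockafellar), fixes $n$, partitions $I$ into sets $E_j$ on which $f_j=\max_{1\leq i\leq n}f_i$, and exploits the positivity of $\tau(\1_{E_j}(a)\,\cdot\,)$ together with the $L^1$-continuity of $a\mapsto f_j(a)$ for affine $f_j$; monotone convergence then lets $n\to\infty$. You instead pass to the scalar spectral distributions $\mu_a=\tau\circ E_a$, establish their weak convergence, and appeal to classical Portmanteau. Conceptually your route cleanly separates the noncommutative input (weak convergence of spectral distributions) from the convex-analytic input (affine minorant plus lsc integrand), and it makes the role of finiteness of $\tau$ very transparent; the paper's argument avoids spectral measures and is more self-contained at the price of the somewhat fiddly partition into the sets $E_j$.

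Two steps need shoring up. First, extending $\tilde f$ by $+\infty$ on $\IR\setminus I$ is \emph{not} lower semicontinuous when $I$ fails to be closed (take $I=(0,1)$ with $\tilde f$ bounded near $0$), so the superlevel sets $\{\tilde f>t\}$ need not be open in $\IR$ and the Portmanteau step as written is not justified. The repair is simple and in fact shortens the argument: since $\tilde f\colon I\lra[0,\infty]$ is lsc and nonnegative, the truncated inf-convolutions $g_k(x)=k\wedge\inf_{y\in I}\bigl(\tilde f(y)+k\abs{x-y}\bigr)$ are nonnegative, bounded, Lipschitz on all of $\IR$, and increase to $\tilde f$ on $I$; normality of $\tau$ then gives $\tau(\tilde f(a))=\sup_k\tau(g_k(a))=\sup_k\lim_n\tau(g_k(a_n))\leq\liminf_n\tau(\tilde f(a_n))$ with no layer-cake or Portmanteau needed. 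Second, the assertion that continuous bounded functional calculus is measure-continuous on $L^0(\M,\tau)$ for finite $\tau$ is true but nontrivial; it deserves a citation (the paper itself invokes \cite[Theorem 3.2]{Tik87} for exactly this kind of statement in the proof of Lemma \ref{contractions_L_p_generator}), or a short direct argument: the resolvent identity gives $\norm{(a_n-z)^{-1}-(a-z)^{-1}}_1\leq\abs{\operatorname{Im} z}^{-2}\norm{a_n-a}_1$, hence $\tau((a_n-z)^{-1})\to\tau((a-z)^{-1})$ for all $z\notin\IR$, and pointwise convergence of Stieltjes transforms of finite measures of common total mass $\tau(1)$ already yields $\mu_{a_n}\to\mu_a$ weakly.
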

\begin{proof}
Since $f$ is proper, lower semicontinuous and convex, by \cite[Theorem 12.1]{Roc97} there is a sequence $(f_i)$ of affine functions such that $f=\sup_i f_i$. In particular, $F$ is well-defined.

Fix $n\in\IN$. For $j\in\{1,\dots,n\}$ let
\begin{align*}
E_j=\{t\in I\mid \max_{1\leq i<j}f_i(t)<f_j(t),\,\max_{j<i\leq n}f_i(t)\leq f_j(t)\}.
\end{align*}
In particular, $f_j(t)=\max_{1\leq i\leq n}f_i(t)$ for $t\in E_j$, and $I=\bigsqcup_j E_j$.

Let $(a_k)$ be a sequence in $L^1(\M,\tau)_I$ converging to $a\in L^1(\M,\tau)_I$. Then
\begin{align*}
\liminf_{k\to\infty}\tau(f(a_k))&=\liminf_{k\to\infty}\sum_{j=1}^n \tau(\1_{E_j}(a)f(a_k))\\
&\geq \liminf_{k\to\infty}\sum_{j=1}^n \tau(\1_{E_j}(a)f_j(a_k))\\
&=\sum_{j=1}^n\tau(\1_{E_j}(a)f_j(a))\\
&=\sum_{j=1}^n \tau\left(\1_{E_j}(a)\max_{1\leq i\leq n}f_i(a)\right)\\
&=\tau\left(\max_{1\leq i\leq n}f_i(a)\right).
\end{align*}
Since $f_1(a)$ is integrable, $\tau\left(\max_{1\leq i\leq n}f_i(a)\right)\nearrow \tau(f(a))$. This proves the claim.
\end{proof}

\begin{remark}
By the Hahn-Banach theorem, lower semicontinuity and weak lower semicontinuity are equivalent for convex functionals on convex subsets of Banach spaces.
\end{remark}

\begin{lemma}\label{trace_fct_diff}
Assume that $\tau$ is finite. Let $(\rho_t)_{t\in I}$ be an $L^1$-differentiable curve in $\D(\M,\tau)$. Assume that there exists $M>0$ such that $\norm{\rho_t}_\M\leq M$ for all $t\in I$. If $f\in C^1(\IR)$, then the map
\begin{align*}
F\colon I\lra \IR,\,t\mapsto\tau(f(\rho_t))
\end{align*}
is differentiable with derivative $F'(t)=\tau(f'(\rho_t)\dot \rho_t)$.
\end{lemma}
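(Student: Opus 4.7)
The strategy is to establish a noncommutative trace integral identity that replaces the naive chain rule (which fails as an operator identity due to non-commutativity), and then pass to the limit inside the resulting integral. The main obstacles are the non-commutativity, handled by cyclicity of the trace applied inside the integral representation, and the mismatch between the $L^1$-regularity of $\dot\rho_t$ and the $\M$-valued continuous functional calculus, which requires a density argument using $L^1\cap\M\subset L^1$.

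\textbf{Step 1: Trace integral identity.} For self-adjoint $a,b\in \M$ with $\sigma(a),\sigma(b)\subset[0,M]$ and $f\in C^1(\IR)$ I would establish
\begin{align*}
\tau(f(b)-f(a))=\int_0^1\tau\!\left(f'((1-s)a+sb)(b-a)\right)ds.
\end{align*}
For a monomial $f(x)=x^n$, the product rule in the Banach algebra $\M$ shows that $g(s)=(a+s(b-a))^n$ is differentiable with
\begin{align*}
g'(s)=\sum_{k=0}^{n-1}(a+s(b-a))^k(b-a)(a+s(b-a))^{n-1-k},
\end{align*}
and cyclicity of $\tau$ collapses this under the trace to $\tau(g'(s))=n\,\tau((a+s(b-a))^{n-1}(b-a))$. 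Since $\tau(1)<\infty$ the trace is bounded on $\M$, so the fundamental theorem of calculus yields the identity for monomials. Linearity extends it to polynomials, and approximating $f$ in the $C^1([0,M])$ norm by polynomials (via Stone--Weierstrass applied to $f'$ and integration) transfers it to general $f\in C^1(\IR)$.

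\textbf{Step 2: Reduction to a pointwise limit.} Applying the identity with $a=\rho_t$ and $b=\rho_{t+h}$ and dividing by $h$ gives
\begin{align*}
\frac{F(t+h)-F(t)}{h}=\int_0^1\tau\!\left(f'(\rho_t^{(s,h)})\,\frac{\rho_{t+h}-\rho_t}{h}\right)ds,
\end{align*}
where $\rho_t^{(s,h)}:=(1-s)\rho_t+s\rho_{t+h}$ is positive with operator norm $\leq M$, so $\norm{f'(\rho_t^{(s,h)})}_\M\leq\norm{f'}_{C([0,M])}$. The integrand is dominated uniformly in $s$ by $\norm{f'}_{C([0,M])}\cdot\sup_{|h|\leq h_0}\norm{(\rho_{t+h}-\rho_t)/h}_1$, which is finite for $h_0$ small by $L^1$-differentiability of $(\rho_t)$ at $t$. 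Dominated convergence in $s$ thus reduces the problem to showing that the integrand converges pointwise in $s$ to $\tau(f'(\rho_t)\dot\rho_t)$ as $h\to 0$.

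\textbf{Step 3: Pointwise convergence.} I split
\begin{align*}
\tau\!\left(f'(\rho_t^{(s,h)})\,\tfrac{\rho_{t+h}-\rho_t}{h}\right)-\tau(f'(\rho_t)\dot\rho_t) &= \tau\!\left(f'(\rho_t^{(s,h)})\bigl(\tfrac{\rho_{t+h}-\rho_t}{h}-\dot\rho_t\bigr)\right)\\
&\quad + \tau\!\left((f'(\rho_t^{(s,h)})-f'(\rho_t))\dot\rho_t\right).
\end{align*}
The first summand is controlled by $\norm{f'}_{C([0,M])}\norm{(\rho_{t+h}-\rho_t)/h-\dot\rho_t}_1\to 0$. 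For the second, the telescoping estimate $\norm{a^n-b^n}_1\leq nM^{n-1}\norm{a-b}_1$ (using $\norm{a}_\M,\norm{b}_\M\leq M$) combined with polynomial approximation of $f'$ on $[0,M]$ yields $\norm{f'(\rho_t^{(s,h)})-f'(\rho_t)}_1\to 0$ as $h\to 0$, uniformly in $s$. Approximating $\dot\rho_t\in L^1$ by $y\in L^1\cap\M$ and splitting
\begin{align*}
\bigl|\tau((f'(\rho_t^{(s,h)})-f'(\rho_t))\dot\rho_t)\bigr|\leq 2\norm{f'}_{C([0,M])}\norm{\dot\rho_t-y}_1+\norm{y}_\M\,\norm{f'(\rho_t^{(s,h)})-f'(\rho_t)}_1,
\end{align*}
then sending $h\to 0$ first and $\norm{\dot\rho_t-y}_1\to 0$ second, gives the pointwise limit and completes the proof.
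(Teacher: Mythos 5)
Your proof is correct. It takes a mildly different technical route from the paper's: where you establish the integral identity
\begin{align*}
\tau(f(b)-f(a))=\int_0^1\tau\bigl(f'((1-s)a+sb)(b-a)\bigr)\,ds
\end{align*}
via the fundamental theorem of calculus for $s\mapsto\tau((a+s(b-a))^n)$ and cyclicity, the paper instead uses the discrete algebraic identity
\begin{align*}
\tau(a^{k+1}-b^{k+1})=\tau\Bigl(\Bigl(\sum_{j=0}^k a^{k-j}b^j\Bigr)(a-b)\Bigr),
\end{align*}
expands the difference quotient of $\tau(\rho_t^{k+1})$ as a finite sum, and passes to the limit term by term, using $\sigma$-weak convergence of $\rho_{t+h}^j\to\rho_t^j$ to handle the $L^1$-valued factor $\dot\rho_t$. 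Both arguments reduce to polynomials, both exploit trace cyclicity, and both ultimately invoke the same density fact (your $L^1\cap\M$ approximation of $\dot\rho_t$ is precisely what the paper calls the ``standard approximation argument'' behind the $\sigma$-weak convergence). What your version buys is a cleaner uniform domination in $s$ and a more modular structure, since the Daleckii--Krein-type identity is isolated as a reusable lemma; what the paper's version buys is that it works entirely with finite sums and avoids the Bochner/dominated convergence layer for the $s$-integral. Both are complete and rigorous.
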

\begin{proof}
By approximation in $C^1$ we can assume that $f$ is a polynomial, and by linearity, even that $f$ is a monomial. Observe that
\begin{align*}
\tau(a^{k+1}-b^{k+1})=\tau\left(\left(\sum_{j=0}^k a^{k-j}b^j\right)(a-b)\right)
\end{align*}
for all $a,b\in \M$ and $k\geq 0$ (of course it is crucial here that $\tau$ is a trace).

Thus
\begin{align*}
\frac 1 h\tau(\rho_{t+h}^{k+1}-\rho_t^{k+1})=\sum_{j=0}^k\tau\left(\rho_{t+h}^{k-j}\rho_t^j\left(\frac{\rho_{t+h}-\rho_t}{h}\right)\right).
\end{align*}

Since $(\rho_t)$ is uniformly bounded and $L^1$-continuous, we have $\rho_{t+h}^j\to \rho_t^j$ as $h\to 0$ in $L^1$ and, by a standard approximation argument, also $\sigma$-weakly for all $j\geq 0$. Hence
\begin{align*}
\Abs{\tau\left(\rho_{t+h}^{k-j}\rho_t^j\left(\frac{\rho_{t+h}-\rho_t}{h}\right)-\rho_t^k \dot\rho_t\right)}&\leq \Abs{\tau\left(\rho_{t+h}^{k-j}\rho_t^j\left(\frac{\rho_{t+h}-\rho_t}{h}-\dot\rho_t\right)\right)}\\
&\quad+\abs{\tau((\rho_{t+h}^{k-j}-\rho_t^{k-j})\rho_t^j \dot\rho_t)}\\
&\leq M^k\tau\left(\Abs{\frac{\rho_{t+h}-\rho_t}{h}-\dot\rho_t}\right)\\
&\quad+\abs{\tau((\rho_{t+h}^{k-j}-\rho_t^{k-j})\rho_t^j \dot\rho_t)}\\
&\to 0,\,h\to 0.
\end{align*}
All put together, we have proven that
\begin{align*}
\frac 1 h(F(t+h)-F(t))=\frac 1 h\tau(\rho_{t+h}^{k+1}-\rho_t^{k+1})\to \tau((k+1)\rho_t^k \dot\rho_t)
\end{align*}
as $h\to 0$.
\end{proof}

\begin{lemma}[Klein's inequality]\label{Klein_ineq}
Assume that $\tau$ is finite. Let $f\colon \IR\lra\IR$ be a convex $C^1$ Lipschitz function. If $\rho_0,\rho_1\in L^1_h(\M,\tau)$, then $f(\rho_0),f(\rho_1)\in L^1(\M,\tau)$ and
\begin{align*}
\tau(f(\rho_1)-f(\rho_0))\leq \tau(f'(\rho_1)(\rho_1-\rho_0)).
\end{align*}
\end{lemma}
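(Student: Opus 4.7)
The plan is in three steps: verify integrability of both sides, prove the inequality when $\rho_0, \rho_1 \in \M$ via convexity of a real-valued function on $[0,1]$, then pass to the general case by truncation. For integrability: since $f$ is $L$-Lipschitz, $\abs{f(t)} \leq \abs{f(0)} + L\abs{t}$, so $f(\rho_i) \in L^1(\M,\tau)$ by $\tau(1) < \infty$ and $\rho_i \in L^1_h(\M,\tau)$; likewise $\norm{f'}_\infty \leq L$, so $f'(\rho_1) \in \M$ and $f'(\rho_1)(\rho_1 - \rho_0) \in L^1(\M,\tau)$.

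For bounded $\rho_0, \rho_1 \in L^1_h(\M,\tau) \cap \M$, I would consider $\rho_t := (1-t)\rho_0 + t \rho_1$, which is $L^1$-differentiable with $\dot\rho_t = \rho_1 - \rho_0$ and uniformly bounded in $\M$. Lemma~\ref{trace_fct_diff} then yields that $F(t) := \tau(f(\rho_t))$ is differentiable on $[0,1]$ with $F'(t) = \tau(f'(\rho_t)(\rho_1 - \rho_0))$. By the remark following Corollary~\ref{Ent_lsc_convex}, the functional $\sigma \mapsto \tau(f(\sigma))$ is convex on self-adjoint operators whenever $f$ is convex, so composition with the affine path $t \mapsto \rho_t$ shows $F$ is convex on $[0,1]$. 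A convex differentiable function has non-decreasing derivative, whence
\[F(1) - F(0) = \int_0^1 F'(t)\,dt \leq F'(1) = \tau(f'(\rho_1)(\rho_1 - \rho_0)),\]
which is the claimed inequality in this case.

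For the general case I would truncate: set $\rho_i^{(n)} := (\rho_i \wedge n) \vee (-n) \in L^1_h(\M,\tau) \cap \M$, so $\rho_i^{(n)} \to \rho_i$ in $L^1$ by dominated convergence. Since $\rho_i^{(n)}$ and $\rho_i$ commute, the scalar Lipschitz estimate promotes to the operator inequality $\abs{f(\rho_i^{(n)}) - f(\rho_i)} \leq L\abs{\rho_i^{(n)} - \rho_i}$, hence $\tau(f(\rho_i^{(n)})) \to \tau(f(\rho_i))$. For the right-hand side I would split
\begin{align*}
&\tau(f'(\rho_1^{(n)})(\rho_1^{(n)} - \rho_0^{(n)})) - \tau(f'(\rho_1)(\rho_1 - \rho_0))\\
&\quad = \tau(f'(\rho_1^{(n)})((\rho_1^{(n)} - \rho_1) + (\rho_0 - \rho_0^{(n)}))) + \tau((f'(\rho_1^{(n)}) - f'(\rho_1))(\rho_1 - \rho_0));
\end{align*}
the first summand is bounded in absolute value by $L(\norm{\rho_1^{(n)} - \rho_1}_1 + \norm{\rho_0 - \rho_0^{(n)}}_1) \to 0$, and the second tends to $0$ once one shows $f'(\rho_1^{(n)}) \to f'(\rho_1)$ $\sigma$-weakly in $\M$. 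Passing to the limit in the inequality established for the truncations then yields the general inequality.

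The main obstacle is precisely this last $\sigma$-weak convergence. My argument: $\rho_1^{(n)} \to \rho_1$ in $L^1$ implies convergence in measure; continuity of the functional calculus (for bounded continuous functions) with respect to convergence in measure then gives $f'(\rho_1^{(n)}) \to f'(\rho_1)$ in measure; combined with the uniform bound $\norm{f'(\rho_1^{(n)})}_\infty \leq L$ and $\tau(1) < \infty$, bounded convergence upgrades this to $L^2$ convergence; and via the predual identification $L^1(\M,\tau) \cong \M_\ast$ together with density of $L^2$ in $L^1$, one obtains $\sigma$-weak convergence in $\M$, which pairs with the fixed $L^1$ element $\rho_1 - \rho_0$ as required. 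The finiteness of $\tau$ is essential throughout.
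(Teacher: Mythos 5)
Your proof is correct and follows the paper's strategy: reduce to bounded $\rho_0,\rho_1$ via Lemma~\ref{trace_fct_diff} and a convexity argument (this is exactly the adaptation of \cite[Theorem~2.11]{Car10} the paper delegates to the reference), then truncate and pass to the limit using the uniform bound and $\sigma$-weak convergence of $f'(\rho_1^{(n)})$ together with $L^1$ convergence of the truncations. Two small refinements over the paper's write-up: your two-sided truncation $(\rho_i\wedge n)\vee(-n)$ keeps the approximants bounded for arbitrary $\rho_i\in L^1_h$, whereas the paper writes only $\rho_i\wedge n$; and your $L^1$ convergence of $f(\rho_i^{(n)})$ via the operator Lipschitz bound $\abs{f(\rho_i^{(n)})-f(\rho_i)}\leq L\abs{\rho_i^{(n)}-\rho_i}$ is a clean substitute for the paper's monotone-convergence argument on the two monotonicity intervals of $f$.
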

\begin{proof}
If $\rho_0,\rho_1\in L^1_h(\M,\tau)\cap\M$, then one can use Lemma \ref{trace_fct_diff} to adapt the proof of the finite-dimensional case (see \cite[Theorem 2.11]{Car10}).

Now let $\rho_0,\rho_1\in L^1_h(\M,\tau)$ be arbitrary. We already know that the inequality holds for $\rho_0,\rho_1$ replaced by $\rho_0\wedge n$, $\rho_1\wedge n$. It remains to show that both sides converge to the correct limit as $n\to\infty$.

Since $f'$ is bounded and increasing, $\norm{f'(\rho_1\wedge n)}_\M\leq\norm{f'}_\infty$ for all $n\in\IN$ and $f'(\rho_1\wedge n)\to f'(\rho_1)$ $\sigma$-weakly as $n\to\infty$. It is elementary that $\rho_0\wedge n\to \rho_0$ and $\rho_1\wedge n\to \rho_1$ in $L^1$. Put together we get
\begin{align*}
\tau(f'(\rho_1\wedge n)(\rho_1\wedge n-\rho_0\wedge n))\to\tau(f'(\rho_1)(\rho_1-\rho_0)).
\end{align*}
For the convergence of the left-hand side one can use that $\IR$ can be decomposed into at most two intervals such that $f$ is decreasing on the first one and decreasing on the second. Using monotone convergence on both parts one gets $\tau(f(\rho_1\wedge n))\to\tau(f(\rho_1))$ and the same for $\rho_0$.
\end{proof}

\begin{corollary}\label{diff_trace_functional}
Assume that $\tau$ is finite. If $f\colon \IR\lra\IR$ is a convex $C^1$ Lipschitz function and $(\rho_t)_{t\in I}$ is an $L^1$-differentiable curve in $L^1_h(\M,\tau)$, then $t\mapsto\tau(f(\rho_t))$ is locally absolutely continuous and
\begin{align*}
\frac{d}{dt}\tau(f(\rho_t))=\tau(f'(\rho_t)\dot\rho_t)
\end{align*}
for a.e. $t\in I$.
\end{corollary}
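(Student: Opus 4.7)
The plan is to leverage Klein's inequality (Lemma~\ref{Klein_ineq}) in both directions. Applying it once with $(\rho_0, \rho_1) = (\rho_s, \rho_t)$ and then with the roles swapped yields the two-sided sandwich
\begin{align*}
\tau(f'(\rho_s)(\rho_t - \rho_s)) \leq \tau(f(\rho_t) - f(\rho_s)) \leq \tau(f'(\rho_t)(\rho_t - \rho_s))
\end{align*}
for all $s,t \in I$. Since $f$ is Lipschitz, $\norm{f'(\rho_r)}_\M \leq \norm{f'}_\infty$ for every $r$, and both outer bounds are dominated by $\norm{f'}_\infty \norm{\rho_t - \rho_s}_1$. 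Because $L^1$-differentiability of $(\rho_t)$ implies its local $L^1$-absolute continuity, this immediately yields local absolute continuity of $t \mapsto \tau(f(\rho_t))$.

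To identify the derivative at a point $t$ where $\dot\rho_t$ exists, I would divide the sandwich by $h > 0$, with $s = t$ and $t$ replaced by $t+h$, to obtain
\begin{align*}
\tau\bigl(f'(\rho_t)\tfrac{\rho_{t+h} - \rho_t}{h}\bigr) \leq \tfrac{\tau(f(\rho_{t+h})) - \tau(f(\rho_t))}{h} \leq \tau\bigl(f'(\rho_{t+h})\tfrac{\rho_{t+h} - \rho_t}{h}\bigr).
\end{align*}
The left-hand side converges to $\tau(f'(\rho_t)\dot\rho_t)$ because $f'(\rho_t) \in \M$ pairs continuously with $L^1$-convergence. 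For the right-hand side, the splitting
\begin{align*}
\tau\bigl(f'(\rho_{t+h})\tfrac{\rho_{t+h} - \rho_t}{h}\bigr) = \tau(f'(\rho_{t+h})\dot\rho_t) + \tau\bigl(f'(\rho_{t+h})\bigl(\tfrac{\rho_{t+h} - \rho_t}{h} - \dot\rho_t\bigr)\bigr)
\end{align*}
reduces matters to the convergence $\tau(f'(\rho_{t+h})\dot\rho_t) \to \tau(f'(\rho_t)\dot\rho_t)$, since the second summand is controlled by $\norm{f'}_\infty \norm{(\rho_{t+h}-\rho_t)/h - \dot\rho_t}_1$ and hence vanishes.

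The main obstacle is thus the $\sigma$-weak continuity of $r \mapsto f'(\rho_r)$ at $r=t$. My route: $L^1$-convergence $\rho_{t+h} \to \rho_t$ implies convergence in $\tau$-measure; continuity and boundedness of $f'$, combined with continuity of the Borel functional calculus in the measure topology, yield $f'(\rho_{t+h}) \to f'(\rho_t)$ in measure with uniform operator norm bound $\norm{f'}_\infty$; finiteness of $\tau$ then upgrades this uniformly bounded convergence in measure to $L^2(\M,\tau)$-convergence by a noncommutative dominated convergence argument. Since $\dot\rho_t \in L^1(\M,\tau)$ can be approximated by bounded elements, Cauchy--Schwarz against such approximants together with $L^2$-convergence (and a triangle inequality swallowing the $L^1$-tail into $\norm{f'}_\infty$) gives the required limit. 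The case $h \to 0^-$ is handled by reading the sandwich in the opposite direction. The only delicate point is the measure-to-$L^2$ upgrade for the functional calculus of self-adjoint operators in $L^1_h(\M,\tau)$, which are in general unbounded; this is where the finiteness of $\tau$ is essential.
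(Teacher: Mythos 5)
Your proposal is correct but takes a genuinely harder route than the paper. The absolute-continuity part is identical: you use the two-sided Klein sandwich and bound by $\norm{f'}_\infty\norm{\rho_t-\rho_s}_1$, exactly as the paper does. For the derivative, however, you insist on computing the right derivative by taking $h\to 0^+$ and estimating the right side of the sandwich, which contains $f'(\rho_{t+h})$. This forces you to prove $\sigma$-weak (or $L^2$) continuity of $r\mapsto f'(\rho_r)$, for which you invoke: $L^1$-convergence $\Rightarrow$ convergence in $\tau$-measure $\Rightarrow$ continuity of Borel functional calculus in measure (a nontrivial theorem, essentially \cite[Theorem 3.2]{Tik87}) $\Rightarrow$ a finiteness-of-$\tau$/dominated convergence upgrade to $L^2$ $\Rightarrow$ an approximation argument pairing against $\dot\rho_t\in L^1$. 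All of this machinery works, but it re-creates precisely the kind of technicalities the corollary was designed to sidestep.

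The paper avoids the continuity of $r\mapsto f'(\rho_r)$ altogether with a small trick: once local absolute continuity is established, $\frac{d}{dt}\tau(f(\rho_t))$ exists a.e.\ as a genuine two-sided limit, so it suffices to squeeze it between two bounds that both involve only $f'(\rho_t)$ at the fixed point $t$. Concretely, Klein's inequality gives $\tau(f(\rho_t)-f(\rho_s))\leq\tau(f'(\rho_t)(\rho_t-\rho_s))$ for all $s$; dividing by $t-s$ for $s\nearrow t$ (positive denominator) bounds the derivative from above by $\tau(f'(\rho_t)\dot\rho_t)$, while dividing by $t-s$ for $s\searrow t$ (negative denominator, so the inequality flips) bounds it from below by the same quantity. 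No continuity of the functional calculus is needed, only the already-known differentiability of $\tau(f(\rho_t))$. You should notice that the one-sided difference quotients you wrote are never asked to converge individually to $\tau(f'(\rho_t)\dot\rho_t)$; only the a.e.\ existing two-sided derivative is. That observation eliminates the delicate part of your argument.
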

\begin{proof}
By Klein's inequality we have
\begin{align*}
\tau(f'(\rho_s)(\rho_t-\rho_s))\leq\tau(f(\rho_t)-f(\rho_s))\leq \tau(f'(\rho_t)(\rho_t-\rho_s))
\end{align*}
for all $s,t\in I$. Thus
\begin{align*}
\abs{\tau(f(\rho_t)-f(\rho_s))}\leq \norm{f'}_\infty\norm{\rho_t-\rho_s}_{1}.
\end{align*}
Since $(\rho_t)$ is $L^1$-differentiable, it follows that $t\mapsto \tau(f(\rho_t))$ is locally absolutely continuous.

Now assume that $t\in I$ is a point of differentiability. By Klein's inequality,
\begin{align*}
\frac {d}{dt}\tau(f(\rho_t))&=\lim_{s\nearrow t}\frac 1{t-s}\tau(f(\rho_t)-f(\rho_s))\leq\lim_{s\nearrow t}\tau\left(f'(\rho_t)\frac{\rho_t-\rho_s}{t-s}\right)=\tau(f'(\rho_t)\dot\rho_t),
\end{align*}
and
\begin{align*}
\frac{d}{dt}\tau(f(\rho_t))=\lim_{s\searrow t}\frac 1{t-s}\tau(f(\rho_t)-f(\rho_s))\geq \lim_{s\nearrow t}\tau\left(f'(\rho_t)\frac{\rho_t-\rho_s}{t-s}\right)=\tau(f'(\rho_t)\dot\rho_t).
\end{align*}
This settles the claim.
\end{proof}

\begin{lemma}\label{increasing_Lip}
If $a\in D(\E)_h$ and $C_1,C_2\colon \IR\lra\IR$ are increasing Lipschitz functions with $C_1(0)=C_2(0)=0$, then
\begin{align*}
\E(C_1(a),C_2(a))\geq 0.
\end{align*}
\end{lemma}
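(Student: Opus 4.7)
The plan is to exploit the chain rule for the first-order differential calculus together with the fact that $L(a)$ and $R(a)$ strongly commute (as noted at the beginning of Section \ref{A_E}) to express $\E(C_1(a),C_2(a))$ as a spectral integral of a non-negative integrand against a positive measure on $\IR^2$, making non-negativity manifest.

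First I would treat the case where $C_1,C_2\in C^1(\IR)$ have bounded non-negative derivative and $C_i(0)=0$. The chain rule stated after Theorem \ref{diff_calc} gives $\partial C_i(a)=\widetilde{C_i}(L(a),R(a))\partial a$, and the integral representation
\begin{align*}
\widetilde{C_i}(s,t)=\int_0^1 C_i'((1-\lambda)t+\lambda s)\,d\lambda
\end{align*}
shows that $\widetilde{C_i}(s,t)\geq 0$ for all $(s,t)\in\IR^2$. Since $L(a)$ and $R(a)$ strongly commute, they admit a joint spectral measure, so the operators $T_i:=\widetilde{C_i}(L(a),R(a))$ are well-defined non-negative self-adjoint operators on $\H$. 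They commute, hence $T_1T_2$ is again non-negative, and
\begin{align*}
\E(C_1(a),C_2(a))=\langle T_1\partial a,T_2\partial a\rangle_\H=\langle T_1T_2\,\partial a,\partial a\rangle_\H\geq 0.
\end{align*}

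For general Lipschitz increasing $C_i$ with $C_i(0)=0$, I would approximate by $C^1$ functions of the same type, e.g.\ $C_i^{(n)}(x)=(\eta_n\ast C_i)(x)-(\eta_n\ast C_i)(0)$ for a standard mollifier $\eta_n$. Each $C_i^{(n)}$ is $C^\infty$, increasing, satisfies $C_i^{(n)}(0)=0$, has Lipschitz constant at most $\operatorname{Lip}(C_i)$, and converges uniformly on compact sets to $C_i$. The previous step applies to yield $\E(C_1^{(n)}(a),C_2^{(n)}(a))\geq 0$ for every $n$, and the task is to pass to the limit. The Lipschitz contraction property of $\E$, together with $C_i^{(n)}(a)\to C_i(a)$ in $L^2(\M,\tau)$ coming from the spectral theorem and closedness of $\partial$, yields at least weak convergence $\partial C_i^{(n)}(a)\rightharpoonup \partial C_i(a)$ in $\H$.

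The main obstacle I foresee is that weak-weak convergence is insufficient for passing the inner product $\langle\partial C_1^{(n)}(a),\partial C_2^{(n)}(a)\rangle_\H$ to the limit. The cleanest resolution is to establish strong convergence $\partial C_i^{(n)}(a)\to \partial C_i(a)$ in $\H$ by writing
\begin{align*}
\norm{\partial C_i^{(n)}(a)-\partial C_i(a)}_\H^2=\int_{\IR^2}\abs{\widetilde{C_i^{(n)}}(s,t)-\widetilde{C_i}(s,t)}^2\,d\mu_a(s,t),
\end{align*}
where $\mu_a(A)=\langle e(A)\partial a,\partial a\rangle_\H$ is the scalar spectral measure associated with $\partial a$. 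Off the diagonal $\{s=t\}$ the difference quotients converge pointwise and are dominated by $\operatorname{Lip}(C_i)$, so dominated convergence handles that part; the diagonal contribution -- which corresponds to the strongly local component of $\E$ -- requires a finer argument relating $(C_i^{(n)})'$ to the a.e.\ derivative $C_i'$ with respect to the push-forward of $\mu_a|_\Delta$ to $\IR$. Once strong convergence is secured, $\E(C_1^{(n)}(a),C_2^{(n)}(a))\to \E(C_1(a),C_2(a))$ and the claimed inequality follows.
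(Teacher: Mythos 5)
Your reduction to the $C^1$ case via the chain rule and spectral calculus is correct, and the paper itself notes this argument in a remark immediately following the lemma. However, the approximation step you propose for the general Lipschitz case contains a genuine gap that you only partially acknowledge. The chain rule (Lemma 7.2 of Cipriani--Sauvageot, as stated after Theorem \ref{diff_calc}) applies only to $C^1$ functions with bounded derivative, so the representation $\partial C_i(a)=\widetilde{C_i}(L(a),R(a))\partial a$ is \emph{not} available for merely Lipschitz $C_i$; indeed $\widetilde{C_i}$ is undefined on the diagonal where $C_i'$ fails to exist. Consequently the displayed identity
\begin{align*}
\norm{\partial C_i^{(n)}(a)-\partial C_i(a)}_\H^2=\int_{\IR^2}\abs{\widetilde{C_i^{(n)}}(s,t)-\widetilde{C_i}(s,t)}^2\,d\mu_a(s,t)
\end{align*}
is not justified as written, and the ``finer argument'' you defer to for the diagonal contribution is exactly the hard part: it would amount to proving a Lipschitz chain rule in the noncommutative setting, which is a nontrivial and separate result. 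You also cannot bypass strong convergence by lower semicontinuity alone, since the bilinear form $\E(\cdot,\cdot)$ is not sign-definite and $\E$ is not upper semicontinuous.

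The paper avoids approximation altogether. Since $C_1,C_2$ are increasing, the differences $C_1(s)-C_1(t)$ and $C_2(s)-C_2(t)$ always have the same sign, so $\abs{(C_1-C_2)(s)-(C_1-C_2)(t)}\leq\abs{(C_1+C_2)(s)-(C_1+C_2)(t)}$; this yields a $1$-Lipschitz $C$ with $C(0)=0$ and $C\circ(C_1+C_2)=C_1-C_2$. Then polarization together with the Lipschitz contraction property of Markovian forms (the lemma near the start of Section \ref{Basics}) gives
\begin{align*}
\E(C_1(a),C_2(a))=\tfrac 14\bigl(\E((C_1+C_2)(a))-\E(C((C_1+C_2)(a)))\bigr)\geq 0.
\end{align*}
This uses only Markovianity and works directly for Lipschitz functions, with no spectral calculus and no limiting procedure. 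The paper even remarks that an earlier, ``quite involved'' proof proceeded by approximating the \emph{form} $\E$ (by its bounded Yosida-type approximants, as done elsewhere in the paper for convexity of the Fisher information), which is yet another route; but your proposal of mollifying the functions runs into the unresolved chain-rule issue.
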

\begin{proof}
Since $C_1(s)-C_1(t)$ and $C_2(s)-C_2(t)$ have the same sign,
\begin{align*}
\abs{C_1(s)-C_2(s)-C_1(t)+C_2(t)}\leq \abs{C_1(s)+C_1(t)-C_2(s)-C_2(t)}
\end{align*}
for all $s,t\in\IR$. Hence there exists a $1$-Lipschitz function $C\colon \IR\lra \IR$ with $C(0)=0$ such that $C\circ(C_1+C_2)=C_1-C_2$. Thus
\begin{equation}
\E(C_1(a),C_2(a))=\frac 1 4(\E(C_1(a)+C_2(a))-\E(C_1(a)-C_2(a)))\geq 0.\qedhere
\end{equation}
\end{proof}

\begin{remark}
If $C_1,C_2$ in the previous lemma are continuously differentiable, we can also use the chain rule to get
\begin{align*}
\E(C_1(a),C_2(a))=\langle (\tilde C_1\tilde C_2)(L(a),R(a))\partial a,\partial a\rangle_\H\geq 0.
\end{align*}
\end{remark}

\begin{remark}
The short proof of Lemma \ref{increasing_Lip} was shown to the author by S. Puchert, replacing a quite involved proof arguing by approximation of $\E$.
\end{remark}

\begin{corollary}\label{increasing_contractions}
Let $C,C_1,C_2\colon \IR\lra\IR$ be increasing Lipschitz functions with $C(0)=C_1(0)=C_2(0)=0$ and $\abs{C_1(s)-C_1(t)}\leq \abs{C_2(s)-C_2(t)}$ for all $s,t\in\IR$. Then
\begin{align*}
\E(C(a),C_1(a))\leq \E(C(a),C_2(a))
\end{align*}
for all $a\in D(\E)_h$.
\end{corollary}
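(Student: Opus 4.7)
The plan is to reduce the corollary to Lemma \ref{increasing_Lip} by exhibiting $C_2-C_1$ as an increasing Lipschitz function vanishing at $0$, and then invoking the sesquilinearity of $\E$ to conclude.

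More concretely, I would first verify that $C_2-C_1$ is itself an increasing Lipschitz function with $(C_2-C_1)(0)=0$. The vanishing at $0$ is immediate since $C_1(0)=C_2(0)=0$, and the Lipschitz property follows from the triangle inequality. For monotonicity, take $s\geq t$; since both $C_1$ and $C_2$ are increasing, the hypothesis $\abs{C_1(s)-C_1(t)}\leq\abs{C_2(s)-C_2(t)}$ reduces to $C_1(s)-C_1(t)\leq C_2(s)-C_2(t)$, which is exactly $(C_2-C_1)(s)\geq (C_2-C_1)(t)$.

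Next I would apply Lemma \ref{increasing_Lip} to the pair $C$ and $C_2-C_1$, both of which are increasing Lipschitz functions vanishing at the origin. This yields
\begin{align*}
\E(C(a),(C_2-C_1)(a))\geq 0.
\end{align*}
Since $C_1, C_2$ are continuous and $a$ is self-adjoint, the functional calculus is linear, so $(C_2-C_1)(a)=C_2(a)-C_1(a)$ as elements of $\M$. Using the sesquilinearity of $\E$ (in its extension to a sesquilinear form on $D(\E)\times D(\E)$, as recalled in the remark following the definition of complete Dirichlet forms), we expand
\begin{align*}
\E(C(a),C_2(a))-\E(C(a),C_1(a))=\E(C(a),(C_2-C_1)(a))\geq 0,
\end{align*}
which is the claimed inequality. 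One minor point to double-check is that $C(a), C_1(a), C_2(a)$ all lie in $D(\E)$; this is guaranteed by the Markovian contraction property from Lemma 1.2 (i.e., $\E(f(a))\leq \mathrm{Lip}(f)^2\E(a)$ for Lipschitz $f$ with $f(0)=0$), which also justifies the use of the sesquilinear form.

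There is no real obstacle here: the entire content is the observation that the hypothesis $\abs{C_1(s)-C_1(t)}\leq\abs{C_2(s)-C_2(t)}$, combined with monotonicity, is strong enough to ensure that the ``residual'' function $C_2-C_1$ is itself monotone, so that Lemma \ref{increasing_Lip} applies directly. The only place where one has to be careful is that the hypothesis is a Lipschitz-type bound on \emph{differences}, not a pointwise bound $C_1\leq C_2$, so without the monotonicity of $C_1, C_2$ one could not conclude monotonicity of $C_2-C_1$; the sign condition is what makes the argument go through.
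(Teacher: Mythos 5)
Your proof is correct and is exactly the straightforward reduction the paper leaves implicit (the corollary is stated without proof immediately after Lemma \ref{increasing_Lip}). The key observation — that the joint monotonicity of $C_1,C_2$ together with the contraction hypothesis forces $C_2-C_1$ to be increasing, so that Lemma \ref{increasing_Lip} applies to the pair $(C,C_2-C_1)$ and bilinearity finishes — is precisely what is needed, and your side remarks about linearity of functional calculus and membership in $D(\E)$ via the Markovian contraction property correctly address the only potential technical points.
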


\begin{lemma}\label{contractions_L_p_generator}
Let $\E$ be a quantum Dirichlet form on $L^2(\M,\tau)$, denote by $\L^{(p)}$ the generator of the associated semigroup on $L^p(\M,\tau)$ for $p\in [1,\infty)$, and let $a\in D(\L^{(p)})$.

If $C_1,C_2\colon \IR\lra\IR$ are increasing $1$-Lipschitz functions and there exists a constant $\alpha>0$ such that $\abs{C_i(t)}\leq \alpha \abs{t}^{p-1}$ for $t\in \IR$, $i\in\{1,2\}$, then $C_1(a),C_2(a)\in D(\E)$ and
\begin{align*}
\E(C_1(a),C_2(a))\leq \tau(C_1(a)\L^{(p)} a).
\end{align*}
\end{lemma}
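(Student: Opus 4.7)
The plan is to work through an approximating quadratic form $\E_t(f,g) := t^{-1}\langle f - P_t f, g\rangle_{L^2}$, which is a nonnegative symmetric form on $L^2(\M,\tau)$ increasing monotonically to $\E$ as $t \to 0^+$ (by spectral theory for the nonnegative self-adjoint generator $\L^{(2)}$). The proof then proceeds in three steps.

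The first step is the $L^2$-membership of $C_i(a)$. Combining $\abs{C_i(t)} \leq \abs{t}$ (Lipschitz with $C_i(0)=0$) with the growth bound $\abs{C_i(t)} \leq \alpha\abs{t}^{p-1}$ yields $\abs{C_i(t)}^2 \leq \alpha\abs{t}^p$, so by functional calculus $\norm{C_i(a)}_2^2 \leq \alpha\norm{a}_p^p < \infty$. Moreover, the growth bound alone gives $C_1(a) \in L^q$ with $q = p/(p-1)$, so that $\tau(C_1(a)\L^{(p)}a)$ makes sense by H\"older.

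The second and critical step is the inequality
\begin{align*}
\E_t(C_1(a), C_2(a)) \leq \tfrac{1}{t}\tau(C_1(a)(a - P_t a))
\end{align*}
for every $t > 0$. Using the symmetry of $P_t$ together with the trace property, the difference between the two sides equals $t^{-1}\tau(C_1(a)(h - P_t h))$, where $h := a - C_2(a) = (\id - C_2)(a)$ and the function $\id - C_2$ is again increasing and $1$-Lipschitz vanishing at $0$ (since $C_2'\leq 1$ a.e.). Thus the inequality reduces to establishing the discrete analogue of Lemma~\ref{increasing_Lip},
\begin{align*}
\tau((D_1(a) - P_t D_1(a))D_2(a)) \geq 0,
\end{align*}
for any pair of increasing Lipschitz functions $D_1, D_2$ vanishing at $0$. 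I would prove this by writing $I - P_t = \int_0^t P_s\L^{(2)}\,ds$, valid on bounded truncations of $a$ where all quantities lie in $D(\L^{(2)})$ after applying Markovian functional calculus; rewriting the integrand as $\E(D_1(a_{\mathrm{trunc}}), P_s D_2(a_{\mathrm{trunc}}))$; and invoking the form-level Lemma~\ref{increasing_Lip} once $P_s D_2(a_{\mathrm{trunc}})$ has been further approximated by functions of the form $D_2(b_s)$. A limiting argument using $P_\epsilon$-regularization of $a$ together with the $L^p$-$L^q$ H\"older duality transfers the estimate from bounded truncations to general $a\in D(\L^{(p)})$.

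Granted this main inequality, the third step is routine. Since $t^{-1}(a - P_t a) \to \L^{(p)} a$ in $L^p$ while $C_1(a) \in L^q$, the right-hand side converges to $\tau(C_1(a)\L^{(p)}a)$. Setting $C_2 = C_1$ yields $\sup_{t > 0}\E_t(C_1(a), C_1(a)) < \infty$, forcing $C_1(a) \in D(\E)$ by monotone convergence $\E_t\nearrow\E$; the same reasoning applies to $C_2(a)$. Pointwise convergence $\E_t(C_1(a), C_2(a)) \to \E(C_1(a), C_2(a))$ for elements of $D(\E)$ then completes the argument. The main obstacle is precisely the noncommutative discrete nonnegativity sub-claim: the commutative proof based on the kernel representation and co-monotonicity of the increments $D_i(a(x))-D_i(a(y))$ has no direct noncommutative substitute, and one must rely instead on the chain rule for the derivation $\partial$ together with semigroup regularization to reduce the statement to the already-established form-level Lemma~\ref{increasing_Lip}.
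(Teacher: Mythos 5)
Your overall architecture matches the paper's: you use the approximating form $\E_t(f,g) = t^{-1}\langle f - P_t f, g\rangle$, establish the inequality at the $\E_t$-level, and pass to the limit. Step~1 is correct (the bound $\abs{C_i(t)}^2 \leq \alpha\abs{t}^p$ indeed gives $L^2$-membership), and Step~3 is routine. The reduction in Step~2 to showing $\tau\bigl(D_2(a)\,(D_1(a) - P_t D_1(a))\bigr) \geq 0$ for increasing Lipschitz $D_1 = \id - C_2$ and $D_2 = C_1$ is also a legitimate reformulation — it is precisely Lemma~\ref{increasing_Lip} applied to the form $\E_t$.

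However, your proposed proof of that nonnegativity sub-claim does not work. You want to write $I - P_t = \int_0^t P_s \L^{(2)}\,ds$, move $P_s$ onto one argument to obtain $\E(D_1(a_{\mathrm{trunc}}), P_s D_2(a_{\mathrm{trunc}}))$, and then apply Lemma~\ref{increasing_Lip} ``once $P_s D_2(a_{\mathrm{trunc}})$ has been further approximated by functions of the form $D_2(b_s)$.'' But $P_s D_2(a)$ is not of the form $D_2(b)$ for any $b$ — semigroups do not commute with nonlinear functional calculus — and there is no reason it can be approximated by such elements in the energy norm in a way that preserves the increasing-Lipschitz structure needed for Lemma~\ref{increasing_Lip}. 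This is a genuine gap. The observation that unblocks it, and which the paper uses, is that there is no need to pass through $\E$ at all: the approximating form $\E_t$ is itself a bounded, everywhere-defined quantum Dirichlet form on $L^2(\M,\tau)$ (its generator $t^{-1}(I - P_t)$ generates a sub-Markov semigroup). Hence Lemma~\ref{increasing_Lip} and Corollary~\ref{increasing_contractions} apply directly to $\E_t$, yielding
\begin{align*}
\E_t(C_1(a), C_2(a)) \leq \E_t(C_1(a), a) = \tfrac{1}{t}\tau(C_1(a)(a - P_t a))
\end{align*}
for $a \in L^2$, with no integral representation and no mollification of $D_2(a)$. After that, one transfers the inequality to general $a \in D(\L^{(p)})$ via the $L^p$-continuity of the functional calculus $a \mapsto C_i(a)$ (the paper invokes \cite[Theorem 3.2]{Tik87}) and the contractivity of $P_t$ on $L^p$ and $L^q$. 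You should replace your integral-representation argument with this direct appeal to $\E_t$ being Markovian.
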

\begin{proof}
First note that $C_i(a)\in L^p(\M,\tau)\cap L^q(\M,\tau)\subset L^2(\M,\tau)$, where $q$ is the dual exponent of $p$. To prove $C_i(a)\in D(\E)$, it suffices to show
\begin{align}
\frac 1 t\tau(C_i(a)(C_i(a)-P_t C_i(a))\leq \frac 1t\tau(C_i(a)(a-P_t a))\label{ineq_approx_forms}
\end{align}
for $t>0$, since the right-hand side converges to $\tau(C_i(a)\L^{(p)}a)$ as $t\to 0$.

Since the approximating form
\begin{align*}
L^2(\M,\tau)\lra [0,\infty), x\mapsto \frac 1 t \tau(x(x-P_t x))
\end{align*}
is a quantum Dirichlet form, Equation (\ref{ineq_approx_forms}) holds for $a\in L^2(\M,\tau)$ by Corollary \ref{increasing_contractions}.

In the general case let $(a_k)$ be a sequence in $L^2(\M,\tau)\cap L^p(\M,\tau)$. By \cite[Theorem 3.2]{Tik87} we have $C_i(a_k)\to C_i(a)$ with respect to $\norm\cdot_p$ and $\norm\cdot_q$ (resp. $\sigma$-weakly in the case $p=1$). Using the continuity of $P_t$ with respect to $\norm\cdot_p$ and $\norm\cdot_q$ (and additionally the bound $\norm{C_i(a_k)}_\M\leq \norm{C_i}_\infty$ in the case $p=1$), we see that (\ref{ineq_approx_forms}) continues to hold for arbitrary $a\in L^p(\M,\tau)$.

Finally, by Corollary \ref{increasing_contractions} and the same approximation argument as above, we obtain
\begin{align*}
\frac 1 t \tau(C_1(a)(C_2(a)-P_t C_2(a)))\leq \frac 1t \tau(C_1(a)(a-P_t a)),
\end{align*}
which gives
\begin{align*}
\E(C_1(a),C_2(a))\leq \tau(C_1(a)\L^{(p)}a)
\end{align*}
in the limit $t\to 0$.
\end{proof}

\begin{lemma}
Let $(C_n)$ be a sequence of continuously differentiable, increasing normal contractions on $[0,\infty)$ with
\begin{align*}
\tilde C_n(s,t)\nearrow \widetilde{\log}(s,t)
\end{align*}
for all $s,t\geq 0$ (with the convention that the right-hand side equals $\infty$ whenever $s=0$ or $t=0$).

Then $\lim_{n\to\infty}\E(a,C_n(a))$ exists in $[0,\infty]$ for all $a\in D(\E)_+$ and is independent from the choice of the sequence $(C_n)$.
\end{lemma}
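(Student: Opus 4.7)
The plan is to apply the Chain rule of Section \ref{Basics} in order to reduce $\E(a,C_n(a))$ to the integral of $\tilde C_n$ against a single positive measure that does not depend on $n$, and then to invoke monotone convergence.

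First I would extend each $C_n$ to a $C^1(\IR)$ function with bounded derivative satisfying $C_n(0)=0$; for instance by setting $C_n(x)=C_n'(0^+)x$ for $x<0$. The choice of extension is irrelevant, since $a\in D(\E)_+$ has spectrum contained in $[0,\infty)$, so $C_n(a)$ depends only on the restriction of $C_n$ to $[0,\infty)$. With this extension the Chain rule Lemma yields
\[
\partial C_n(a)=\tilde C_n(L(a),R(a))\,\partial a,
\]
where $\tilde C_n(L(a),R(a))$ is formed through the joint functional calculus of the strongly commuting self-adjoint operators $L(a),R(a)$. Here I am using the extensions of $L$ and $R$ to self-adjoint operators affiliated with $\M$ (available because $\tau$ is assumed energy dominant), as set up at the beginning of Section \ref{A_E}.

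Next, let $e$ denote the joint spectral measure of $L(a)$ and $R(a)$ and introduce the positive Borel measure $\mu_a$ on $[0,\infty)^2$ given by $\mu_a(B)=\langle e(B)\partial a,\partial a\rangle_\H$, whose total mass equals $\norm{\partial a}_\H^2=\E(a)<\infty$. Then
\[
\E(a,C_n(a))=\langle\partial a,\partial C_n(a)\rangle_\H=\int_{[0,\infty)^2}\tilde C_n(s,t)\,d\mu_a(s,t).
\]
Since each $C_n$ is an increasing $1$-Lipschitz function, the mean value theorem gives $0\leq\tilde C_n\leq 1$, and by hypothesis $\tilde C_n\nearrow\widetilde{\log}$ pointwise (with $\widetilde{\log}$ set equal to $+\infty$ on $\{s=0\}\cup\{t=0\}$). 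Monotone convergence then yields
\[
\lim_{n\to\infty}\E(a,C_n(a))=\int_{[0,\infty)^2}\widetilde{\log}(s,t)\,d\mu_a(s,t)\in[0,\infty].
\]
The right-hand side depends only on $a$, so the limit is automatically independent of the chosen sequence $(C_n)$.

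The only genuine subtlety is the justification of the chain rule representation in the possibly unbounded case: this is what forces me to invoke the extension of $L$ and $R$ to affiliated operators (and the strong commutativity of $L(a)$ and $R(a)$) from Section \ref{A_E}. Once that is in place, the argument is a routine application of monotone convergence against the fixed finite measure $\mu_a$.
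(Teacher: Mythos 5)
Your proof is correct and takes essentially the same route as the paper: apply the two-variable chain rule to rewrite $\E(a,C_n(a))=\langle\tilde C_n(L(a),R(a))\partial a,\partial a\rangle_\H$ as an integral of $\tilde C_n$ against the fixed finite measure $B\mapsto\langle e(B)\partial a,\partial a\rangle_\H$, then invoke monotone convergence using $0\le\tilde C_n\nearrow\widetilde{\log}$. The extra remarks about extending $C_n$ to a $C^1(\IR)$ function and about the affiliated-operator functional calculus for $L(a),R(a)$ are careful but harmless expository additions that match the standing assumptions of the section.
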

\begin{proof}
We use the chain rule to get
\begin{align*}
\E(a,C_n(a))=\langle \tilde C_n(L(a),R(a))\partial a,\partial a\rangle_\H.
\end{align*}
Denote by $e$ the joint spectral measure of $L(a)$ and $R(a)$. Then
\begin{align*}
\langle \tilde C_n(L(a),R(a))\partial a,\partial a\rangle_\H&=\int_{[0,\infty)^2} \tilde C_n(s,t)\,d\langle e(s,t)\partial a,\partial a\rangle_\H\\
&\nearrow \int_{[0,\infty)^2}\widetilde{\log}(s,t)\,d\langle e(s,t)\partial a,\partial a\rangle_\H,
\end{align*}
where the integrand is interpreted as $\infty$ whenever $s=0$ or $t=0$.
\end{proof}

\begin{definition}[Fisher information]
The \emph{Fisher information} of $a\in D(\E)_+$ is defined as
\begin{align*}
\I(a)=\lim_{n\to\infty}\E(a,C_n(a))\in[0,\infty]
\end{align*}
for some (any) sequence $(C_n)$ of continuously differentiable, increasing normal contractions with $\tilde C_n\nearrow \widetilde{\log}$ pointwise.
\end{definition}

An example of a sequence $(C_n)$ that is admissible in the definition of the Fisher information is given by
\begin{align*}
C_n\colon [0,\infty)\lra[0,\infty),\,C_n(t)=\log(t+e^{-n})+n.
\end{align*}

We will also need the following two different approximation results of the Fisher information:
\begin{lemma}
If $a\in D(\E)_+$ with $\I(a)<\infty$, then 
\begin{align*}
\I(a)=\lim_{n\to\infty}\E(a,((\log\wedge n)\vee(-n)+n)(a)).
\end{align*}
\end{lemma}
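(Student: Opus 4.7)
The plan is to rewrite $\E(a,\phi_n(a))$ as a spectral integral and apply monotone convergence, where $\phi_n(t) := ((\log t \wedge n) \vee (-n)) + n$ (with $\phi_n(0) := 0$). Each $\phi_n$ is an increasing Lipschitz function on $[0,\infty)$ with Lipschitz constant $e^n$ and $\phi_n(0) = 0$, so $\phi_n(a) \in D(\E)$. The first step is to establish
\begin{align*}
\E(a,\phi_n(a)) = \int_{[0,\infty)^2}\tilde{\phi_n}(s,t)\,d\mu_a(s,t),
\end{align*}
where $\mu_a := \langle e(\cdot)\partial a,\partial a\rangle_\H$ is the joint spectral measure of the commuting pair $(L(a),R(a))$ applied to $\partial a$, and $\tilde{\phi_n}$ is the quantum derivative, defined a.e.\ by Rademacher's theorem. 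Since the chain rule stated earlier requires $C^1$ functions, I would mollify: let $\phi_n^\epsilon := \phi_n * \eta_\epsilon$, renormalized so that $\phi_n^\epsilon(0) = 0$, and apply the $C^1$ chain rule to each $\phi_n^\epsilon$.

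To pass to the limit $\epsilon \to 0$, note that $\phi_n^\epsilon \to \phi_n$ uniformly, so $\phi_n^\epsilon(a) \to \phi_n(a)$ in operator norm and hence in $L^2(\M,\tau)$. The uniform Lipschitz bound yields $\E(\phi_n^\epsilon(a)) \leq e^{2n}\E(a)$, so the family is bounded in the form norm; by closability of $\E$, $\phi_n^\epsilon(a) \rightharpoonup \phi_n(a)$ weakly in $D(\E)$, whence $\E(a,\phi_n^\epsilon(a)) \to \E(a,\phi_n(a))$. On the spectral side, $\tilde{\phi_n^\epsilon} \to \tilde{\phi_n}$ a.e.\ with uniform bound $e^n$, and since $\mu_a$ is finite, dominated convergence gives $\int\tilde{\phi_n^\epsilon}\,d\mu_a \to \int\tilde{\phi_n}\,d\mu_a$. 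This proves the identity.

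Next, for $s > t > 0$ one has $\tilde{\phi_n}(s,t) = (T_n(\log s) - T_n(\log t))/(s-t)$ where $T_n(x) := (x \wedge n) \vee (-n)$; writing $T_n(x) = \operatorname{sgn}(x)\min(|x|,n)$, a short case analysis shows that $n \mapsto T_n(x) - T_n(y)$ is non-decreasing in $n$ for every fixed pair $x > y$ and converges to $x - y$, so $\tilde{\phi_n}(s,t)\nearrow \widetilde{\log}(s,t)$ pointwise on $(0,\infty)^2$. The assumption $\I(a) < \infty$ together with the convention $\widetilde{\log}\equiv\infty$ on $\{s=0\}\cup\{t=0\}$ forces $\mu_a(\{s=0\}\cup\{t=0\}) = 0$, so the monotone convergence theorem applied on $(0,\infty)^2$ gives
\begin{align*}
\E(a,\phi_n(a)) = \int\tilde{\phi_n}\,d\mu_a \;\nearrow\; \int\widetilde{\log}\,d\mu_a = \I(a),
\end{align*}
the desired identity. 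The main obstacle is the first step: the chain rule in the excerpt is only stated for $C^1$ functions, whereas $\phi_n$ has corners; the mollification-plus-weak-closability argument is the tool needed to push the chain rule past this regularity threshold.
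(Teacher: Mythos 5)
Your proof takes a genuinely different route from the paper's. The paper never invokes the chain rule for the nonsmooth functions $f_n=(\log\wedge n)\vee(-n)+n$: it first uses Corollary \ref{increasing_contractions} to show that the sequence $(\E(a,f_n(a)))_n$ is increasing, and then interleaves it with the $C^1$ approximating sequence $C_k$ from the definition of $\I$ using the same contraction comparison, avoiding any spectral representation of $\E(a,f_n(a))$. You instead extend the $C^1$ chain rule to Lipschitz functions by mollification and compute $\E(a,\phi_n(a))$ as a spectral integral against $\mu_a=\langle e(\cdot)\partial a,\partial a\rangle_\H$. This is a legitimate and arguably more transparent strategy, but there is one technical point you gloss over.

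When you write that $\tilde{\phi_n^\epsilon}\to\tilde{\phi_n}$ almost everywhere, the exceptional set is Lebesgue-null, whereas the relevant measure for the dominated convergence step is $\mu_a$. Off the diagonal the quantum derivative is unambiguous, but on the diagonal $\{s=t\}$ one has $\tilde{\phi_n}(s,s)=\phi_n'(s)$, which fails to exist at the two corners $s\in\{e^{-n},e^{n}\}$; and $\mu_a$ restricted to the diagonal can in principle charge exactly these two points. There $(\phi_n^\epsilon)'(s)$ converges to a weighted mean of the one-sided derivatives (the arithmetic mean if $\eta_\epsilon$ is even), not to $\phi_n'(s)$. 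So what the mollification actually establishes is $\E(a,\phi_n(a))=\int\tilde\phi_n^\sharp\,d\mu_a$, where $\tilde\phi_n^\sharp$ equals $\tilde\phi_n$ everywhere except possibly at $(e^{\pm n},e^{\pm n})$, where it takes the mollification limit. The identity "$\E(a,\phi_n(a))=\int\tilde\phi_n\,d\mu_a$'' as you state it is therefore ill-posed in general, and the sequence $(\tilde\phi_n^\sharp)_n$ need not be pointwise increasing at the moving corner points.

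The conclusion nonetheless survives with a small repair: the corner value of $\tilde\phi_n^\sharp$ lies between the one-sided derivatives of $\phi_n$, hence is dominated by $\widetilde{\log}$ evaluated there, so $\tilde\phi_n^\sharp\leq\widetilde{\log}$ on all of $(0,\infty)^2$; and for any fixed $(s,t)\in(0,\infty)^2$ the corners $e^{\pm n}$ eventually move past, so $\tilde\phi_n^\sharp(s,t)=\tilde\phi_n(s,t)\nearrow\widetilde{\log}(s,t)$. Since $\I(a)=\int\widetilde{\log}\,d\mu_a<\infty$ supplies an integrable majorant, dominated convergence — rather than monotone convergence — gives $\E(a,\phi_n(a))\to\I(a)$. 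You should replace the appeal to monotone convergence in your final step by dominated convergence and explicitly address the corner points; with that patch the argument is complete and offers a valid alternative to the paper's contraction-based proof.
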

\begin{proof}
Let $f_n=(\log\wedge n)\vee(-n)+n$. By Corollary \ref{increasing_contractions}, the sequence $(\E(a,f_n(a)))_n$ is increasing, hence it suffices to show convergence along a subsequence. Let 
\begin{align*}
C_k\colon [0,\infty)\lra[0,\infty),\,C_k(t)=\log(t+e^{-k})+k
\end{align*}
and $n_k\in\IN$ such that $\E((C_k(a)\wedge n_k)\vee(-n_k))\geq \E(C_k(a))-\frac 1 k$. Then
\begin{align*}
\E(a,(C_k(a)\wedge n_k)\vee(-n_k))\to \I(a),\,k\to\infty,
\end{align*}
and
\begin{align*}
\abs{(C_k(s)\wedge n_k)\vee(-n_k)-(C_k(t)\wedge n_k)\vee(-n_k)}\leq \abs{f_{n_k}(s)-f_{n_k}(t)}\leq \abs{\log s-\log t}.
\end{align*}
An application of Corollary \ref{increasing_contractions} yields
\begin{equation}
\E(a,f_{n_k}(a))\to\I(a),\,k\to\infty.\qedhere
\end{equation}
\end{proof}

\begin{lemma}
Let $a\in D(\E)_+$. Then $\I(a)=\sup_n \I(a\wedge n)$.
\end{lemma}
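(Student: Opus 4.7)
The plan is to express both sides as integrals against the joint spectral measure of $L(a), R(a)$ and then apply monotone convergence twice.

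First, writing $f_n(s) := s \wedge n$, I would approximate $f_n$ by $C^1$ increasing contractions (so the chain rule applies) to obtain $\partial(a \wedge n) = \tilde{f}_n(L(a), R(a)) \partial a$ and $\partial C_k(a \wedge n) = \widetilde{C_k \circ f_n}(L(a), R(a)) \partial a$. Since both multipliers are functions of the commuting pair $(L(a), R(a))$, if $e$ denotes their joint spectral measure and $\mu := \langle e(\cdot) \partial a, \partial a\rangle_\H$, one gets
\[
\E(a \wedge n, C_k(a \wedge n)) = \int_{[0,\infty)^2} \tilde{f}_n(s,t)\, \widetilde{C_k \circ f_n}(s,t)\, d\mu(s,t).
\]
A direct algebraic identity gives $\widetilde{C_k \circ f_n}(s,t) = \tilde{f}_n(s,t)\, \tilde{C}_k(s \wedge n, t \wedge n)$ off the diagonal (both sides vanish when $s \wedge n = t \wedge n$), and letting $k \to \infty$ with $\tilde{C}_k \nearrow \widetilde{\log}$ and monotone convergence yields
\[
\I(a \wedge n) = \int_{[0,\infty)^2} g_n(s,t)\, d\mu(s,t), \qquad g_n(s,t) := \tilde{f}_n(s,t)^2\, \widetilde{\log}(s \wedge n, t \wedge n).
\]

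The second step is to show $g_n(s,t) \nearrow \widetilde{\log}(s,t)$ pointwise on $[0,\infty)^2$, after which a further application of monotone convergence gives $\I(a \wedge n) \nearrow \I(a)$, hence $\sup_n \I(a \wedge n) = \I(a)$. For $n \leq \min(s,t)$ one has $\tilde{f}_n = 0$ and so $g_n = 0$; for $n \geq \max(s,t)$ one has $g_n = \widetilde{\log}(s,t)$; the only nontrivial regime is $0 < s \leq n < t$ (and its mirror), where the integrand reduces to $n \mapsto (n-s)(\log n - \log s)/(t-s)^2$, whose $n$-derivative is manifestly nonnegative and which reaches $\widetilde{\log}(s,t)$ at $n = t$. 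Boundary cases with $s$ or $t$ equal to zero are consistent with the convention that $\widetilde{\log} \equiv \infty$ on the axes.

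The main technical obstacle is this pointwise monotonicity $n \mapsto g_n$: the factor $\tilde{f}_n^2$ is increasing while $\widetilde{\log}(\cdot \wedge n, \cdot \wedge n)$ is decreasing, so a priori there is nontrivial cancellation to control, but the one-variable calculus check in the single transition regime shows that the former dominates. A minor secondary point is the justification of the chain rule for the (merely Lipschitz, not $C^1$) truncation $f_n$, which I would handle by the standard approximation of $f_n$ by smooth increasing contractions together with the $L^2$-continuity of $\partial$ on $D(\E)$.
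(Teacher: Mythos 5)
Your proof is correct, but it takes a genuinely different route from the paper's. The paper stays entirely at the level of the abstract Dirichlet form: Corollary~\ref{increasing_contractions} gives that $(\I(a\wedge n))_n$ is increasing and bounded above by $\I(a)$, and for the reverse inequality it fixes one $C^1$ contraction $C_m$ from the definition of $\I$ and shows $\E(a\wedge n,C_m(a\wedge n))\to\E(a,C_m(a))$ as $n\to\infty$ by a squeeze: Corollary~\ref{increasing_contractions} gives $\E(C_m(a\wedge n))\leq\E(C_m(a))$ and $\E(a\wedge n)\leq\E(a)$, while lower semicontinuity of $\E$ gives the matching $\liminf$, whence convergence in the form norm; taking $\sup_m$ then closes the argument without any spectral calculus. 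Your argument instead converts both $\I(a\wedge n)$ and $\I(a)$ into integrals of scalar functions against the joint spectral measure of $L(a),R(a)$, reducing the claim to the pointwise monotone increase $g_n\nearrow\widetilde{\log}$, which you verify by a direct one-variable computation in the only nontrivial regime $0<s\leq n<t$. Both routes are sound. What yours buys is a concrete spectral formula for $\I(a\wedge n)$; what it costs is that the identity $\partial(a\wedge n)=\tilde f_n(L(a),R(a))\partial a$ requires extending the chain rule to the merely Lipschitz truncation $f_n(s)=s\wedge n$. You correctly flag this, but it is a bit more than a minor point: one must approximate $f_n$ by $C^1$ increasing contractions, use closedness of $\partial$ together with dominated convergence of the multiplier $\widetilde{f^{(j)}}\to\tilde f_n$ against the joint spectral measure, and check that the value assigned to $\tilde f_n$ at the diagonal point $(n,n)$ is immaterial (different approximating sequences force $e(\{(n,n)\})\partial a=0$, so it is). The paper's proof avoids this entirely, applying the chain rule only to the $C^1$ functions $C_m$ and controlling the truncation purely through the abstract contraction estimate; that makes it shorter and more robust, at the price of being less explicit.
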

\begin{proof}
By Corollary \ref{increasing_contractions}, the sequence $(\I(a\wedge n))_n$ is increasing and bounded from above by $\I(a)$. For the converse inequality let $(C_m)$ be a sequence as in the definition of $\I$.

Since $C_m(a)=\lim_{n\to\infty} C_m(a\wedge n)$ in $L^2(\M,\tau)$ and $\E(C_m(a\wedge n))\leq \E(C_m(a))$ by Corollary \ref{increasing_contractions}, we have $\E(C_m(a\wedge n))\to \E(C_m(a))$ as $n\to\infty$ by the lower semicontinuity of $\E$. The same argument holds for $\E(a\wedge n)$ so that we get
\begin{align*}
\I(a)\geq \I(a\wedge n)\geq \E(a\wedge n,C_m(a\wedge n))\to\E(a,C_m(a)).
\end{align*}
Taking the supremum over $m\in\IN$, the assertion follows.
\end{proof}

With the aid of the previous lemma, we can extend the Fisher information to $L^1_+(\M,\tau)$ via $\I(\rho)=\sup_n \I(\rho\wedge n)$ if $\rho\wedge n\in D(\E)_+$ for all $n\in\IN$ and $\I(\rho)=\infty$ otherwise.

\begin{lemma}\label{Fisher_info_lsc}
The Fisher information is lower semicontinuous on $L^1_+(\M,\tau)$.
\end{lemma}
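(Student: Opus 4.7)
The strategy is to exhibit $\I$ as a double supremum of lower semicontinuous functionals, built from the truncations $\rho\wedge n$ and the approximating functions $C_m(x):=\log(x+e^{-m})+m$. By the defining convention, $\I(\rho)=\sup_n \I(\rho\wedge n)$ on $L^1_+(\M,\tau)$, and the map $\rho\mapsto \rho\wedge n$ sends $L^1$-convergent sequences to $L^1$-convergent sequences with values in the order interval of operators bounded by $n$ (via convergence in measure plus a uniform $\M$-bound); the estimate $\norm{\sigma}_2^2\leq n\norm{\sigma}_1$ then upgrades this to $L^2$-convergence. It therefore suffices to prove $L^2$-lower semicontinuity of $\I$ on $\{\sigma\in\D(\M,\tau):\sigma\leq n\}$ for each fixed $n$.

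\textbf{Reduction via the chain rule.} The sequence $C_m(x):=\log(x+e^{-m})+m$ is an admissible choice in the definition of $\I$, so $\I(a)=\sup_m \E(a,C_m(a))$ for every $a\in D(\E)_+$. The chain rule yields
\begin{align*}
\E(a,C_m(a))=\langle \tilde C_m(L(a),R(a))\,\partial a,\partial a\rangle_\H=\norm{S_m(a)\partial a}_\H^2,
\end{align*}
where $S_m(a):=\tilde C_m(L(a),R(a))^{1/2}$. It therefore suffices to prove, for each fixed $m$, that $\rho\mapsto \norm{S_m(\rho)\partial\rho}_\H^2$ is $L^2$-lower semicontinuous on $\{\sigma\leq n\}\cap\D(\M,\tau)$.

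\textbf{Main convergence argument.} Let $\rho_k\to\rho$ in $L^2$ with $\rho_k,\rho$ in the above set, and suppose $\liminf_k \E(\rho_k,C_m(\rho_k))<\infty$ (otherwise nothing is to prove). Since $\tilde C_m(s,t)\geq (n+e^{-m})^{-1}$ on $[0,n]^2$, Corollary \ref{increasing_contractions}, applied to $C_m$ suitably extended to $\IR$ with slope $(n+e^{-m})^{-1}$ outside $[0,n]$, gives $\E(\rho_k)\leq (n+e^{-m})\E(\rho_k,C_m(\rho_k))$; hence along a subsequence $\partial\rho_k\rightharpoonup\partial\rho$ weakly in $\H$. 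The uniform bound $\rho_k\leq n$ together with $L^2$-convergence delivers strong operator convergence $\rho_k\to\rho$ on $H_\tau$, so by normality of $L,R$ and \cite[Proposition III.2.2.2]{Bla06} we obtain $L(\rho_k)\to L(\rho)$ and $R(\rho_k)\to R(\rho)$ strongly on $\H$. Joint continuous functional calculus (\cite[Lemma II.4.3]{Tak02}) and taking square roots of uniformly bounded positives then give $S_m(\rho_k)\to S_m(\rho)$ strongly on $\H$ with a uniform operator norm bound. For any $\eta\in\H$ the decomposition
\begin{align*}
\langle S_m(\rho_k)\partial\rho_k,\eta\rangle_\H=\langle \partial\rho_k,S_m(\rho)\eta\rangle_\H+\langle \partial\rho_k,(S_m(\rho_k)-S_m(\rho))\eta\rangle_\H
\end{align*}
shows $S_m(\rho_k)\partial\rho_k\rightharpoonup S_m(\rho)\partial\rho$ weakly in $\H$ (the first summand by the weak convergence of $(\partial\rho_k)$, the second because $\norm{\partial\rho_k}_\H$ is bounded while $(S_m(\rho_k)-S_m(\rho))\eta\to 0$ in $\H$). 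Weak lower semicontinuity of the Hilbert norm gives $\E(\rho,C_m(\rho))\leq \liminf_k \E(\rho_k,C_m(\rho_k))$, and taking suprema over $m$ and then over $n$ finishes the proof.

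\textbf{Main obstacle.} The delicate point is the interaction of weak and strong convergence in the previous paragraph: we must pass to the limit in a product $S_m(\rho_k)\partial\rho_k$ where $\partial\rho_k$ only converges weakly in $\H$ and $S_m(\rho_k)$ depends nonlinearly on $\rho_k$. The argument succeeds precisely because $L^2$-convergence combined with a uniform $\M$-bound upgrades to strong operator convergence on $H_\tau$, which then propagates through the normal representations $L,R$ and the joint continuous functional calculus into strong convergence of $S_m(\rho_k)$; without this upgrade one would be left with a product of two only weakly convergent sequences, for which the passage to the limit is false in general.
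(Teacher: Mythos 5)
Your proof follows essentially the same route as the paper's: both use the monotone-approximation structure $\I(\rho)=\sup_n\I(\rho\wedge n)$ and $\I(a)=\sup_m\E(a,C_m(a))$ to reduce to lower semicontinuity of $\rho\mapsto\E(\rho\wedge n,C(\rho\wedge n))$, then obtain weak $\H$-convergence of $\partial(\rho_k\wedge n)$ from the energy bound forced by $C'>0$ and combine it with strong operator convergence of $\tilde C(L(\rho_k\wedge n),R(\rho_k\wedge n))$ (the paper by invoking Lemma \ref{theta_bounded_cont}, you by re-deriving that lemma's argument from \cite{Bla06} and \cite{Tak02}). Two small remarks. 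First, for the $L^1$-continuity of $\rho\mapsto\rho\wedge n$ the paper cites \cite[Theorem 3.2]{Tik87}; your ``convergence in measure plus uniform $\M$-bound'' shortcut tacitly assumes $\tau$ finite when upgrading to $L^1$, so it is formally less general than the paper's citation, though $\tau$ is finite in every application. Second, you are actually more careful than the paper at the last step: the paper's displayed convergence $\langle\partial(\rho_k\wedge n),\tilde C(\cdot)\partial(\rho_k\wedge n)\rangle_\H\to\langle\partial(\rho\wedge n),\tilde C(\cdot)\partial(\rho\wedge n)\rangle_\H$ does not follow from weak convergence of $\partial(\rho_k\wedge n)$ alone, whereas your rewriting as $\|S_m(\rho_k\wedge n)\partial(\rho_k\wedge n)\|_\H^2$ and appeal to weak lower semicontinuity of the Hilbert norm is exactly what is needed for the (lower semicontinuity) statement being proved.
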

\begin{proof}
By monotone approximation it suffices to show that maps of the form
\begin{align*}
L^1_+(\M,\tau)\lra [0,\infty),\,\rho\mapsto \begin{cases}\E(\rho\wedge n,C(\rho\wedge n))&\text{if }\rho\wedge n\in D(\E),\\\infty&\text{otherwise}\end{cases}
\end{align*}
are continuous for $n\in \IN$ and $C\in C^1(\IR)$ with $C'>0$.

Let $(\rho_k)$ be a sequence in $L^1_+(\M,\tau)$ such that $\rho_k\to \rho$ in $L^1(\M,\tau)$ and $\sup_k \E(\rho_k\wedge n,C(\rho_k\wedge n))<\infty$. By \cite[Theorem 3.2]{Tik87} we have $\rho_k\wedge n\to \rho\wedge n$ in $L^1(\M,\tau)$ and $L^2(\M,\tau)$. Moreover, since $C'>0$, the sequence $(\E(\rho_k\wedge n))_k$ is bounded. Thus $\rho\wedge n\in D(\E)$ and $\rho_k\wedge n\to \rho\wedge n$ weakly in $D(\E)$.

By Lemma \ref{theta_bounded_cont} we have $\tilde C(L(\rho_k\wedge n),R(\rho_k\wedge n))\to \tilde C(L(\rho\wedge n),R(\rho\wedge n))$ strongly as $k\to \infty$. If we combine these convergences, we obtain
\begin{align*}
\E(\rho_k\wedge n,C(\rho_k\wedge n))&=\langle \partial(\rho_k\wedge n),\tilde C(L(\rho_k\wedge n),R(\rho_k\wedge n))\partial(\rho_k\wedge n)\rangle_\H\\
&\to \langle\partial(\rho\wedge n),\tilde C(L(\rho\wedge n),R(\rho\wedge n))\partial(\rho\wedge n)\rangle_\H\\
&=\E(\rho\wedge n,C(\rho\wedge n)).\qedhere
\end{align*}
\end{proof}

\begin{proposition}\label{Fisher_info_convex}
The Fisher information is convex and weakly lower semicontinuous on $L^1_+(\M,\tau)$.
\end{proposition}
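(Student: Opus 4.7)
The plan is to deduce convexity from an explicit integral representation of the approximating functionals $\E(\rho,C_n(\rho))$ together with the joint operator convexity of $(A,x)\mapsto\langle A^{-1}x,x\rangle$, and then to upgrade norm lower semicontinuity to weak lower semicontinuity via Mazur's theorem. I would begin with the concrete sequence $C_n(t)=\log(t+e^{-n})+n$, for which $\tilde C_n\nearrow \widetilde{\log}$ pointwise so that $\I(\rho)=\sup_n \E(\rho,C_n(\rho))$ on $D(\E)_+$. Writing $\log(s+\epsilon)-\log(t+\epsilon)=\int_t^s(u+\epsilon)^{-1}\,du$ and substituting $u=(1-\alpha)t+\alpha s$ yields the representation
\begin{align*}
\tilde C_n(s,t)=\int_0^1\frac{d\alpha}{(1-\alpha)s+\alpha t+e^{-n}},
\end{align*}
so that strong commutativity of $L(\rho)$ and $R(\rho)$, combined with the chain rule and functional calculus, gives
\begin{align*}
\E(\rho,C_n(\rho))=\int_0^1\langle((1-\alpha)L(\rho)+\alpha R(\rho)+e^{-n})^{-1}\partial\rho,\partial\rho\rangle_\H\,d\alpha
\end{align*}
for every $\rho\in D(\E)_+$.

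The next step will be to invoke the joint convexity of $(A,x)\mapsto\langle A^{-1}x,x\rangle$ on the positive invertible operators times $\H$, which follows from the variational identity
\begin{align*}
\langle A^{-1}x,x\rangle=\sup_{y\in\H}\bigl(2\Re\langle x,y\rangle-\langle Ay,y\rangle\bigr)
\end{align*}
exhibiting it as a supremum of jointly affine functionals. Since $\rho\mapsto((1-\alpha)L(\rho)+\alpha R(\rho)+e^{-n},\partial\rho)$ is affine in $\rho$ with first coordinate bounded below by $e^{-n}>0$, the integrand above is convex in $\rho\in D(\E)_+$; integrating in $\alpha$ and taking the supremum over $n$ preserves convexity, yielding convexity of $\I$ on $D(\E)_+$.

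To extend convexity to all of $L^1_+(\M,\tau)$, I would use truncation: if $\rho_0,\rho_1\in L^1_+(\M,\tau)$ have $\I(\rho_i)<\infty$ (otherwise the inequality is vacuous), then the definition of $\I$ forces $\rho_i\wedge n\in D(\E)_+$ with $\I(\rho_i\wedge n)\leq\I(\rho_i)$. Because $D(\E)$ is a linear subspace, $\sigma^n:=(1-t)(\rho_0\wedge n)+t(\rho_1\wedge n)\in D(\E)_+$, and convexity on $D(\E)_+$ gives $\I(\sigma^n)\leq(1-t)\I(\rho_0)+t\I(\rho_1)$. Dominated convergence yields $\sigma^n\to(1-t)\rho_0+t\rho_1$ in $L^1(\M,\tau)$, so the bound on the limit follows from the lower semicontinuity of $\I$ (Lemma \ref{Fisher_info_lsc}). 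Weak lower semicontinuity is then automatic by Mazur's theorem: the sublevel sets $\{\I\leq c\}$ are convex by the convexity just proved and norm-closed by Lemma \ref{Fisher_info_lsc}, hence weakly closed in $L^1(\M,\tau)$.

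The principal obstacle I anticipate is the joint-convexity step in the unbounded setting: the operators $L(\rho)$ and $R(\rho)$ are in general unbounded and $\rho$ itself is only $\tau$-measurable, so the raw expression $\langle A^{-1}\partial\rho,\partial\rho\rangle_\H$ needs a proper domain. The regularization by $+e^{-n}$ is exactly what circumvents this, making $(1-\alpha)L(\rho)+\alpha R(\rho)+e^{-n}\geq e^{-n}>0$ boundedly invertible on $\H$; the two nested suprema (over $n$ and over the truncation $\rho\wedge n$) then transport the resulting bound back to the full $L^1_+$ statement through the lower semicontinuity already at our disposal.
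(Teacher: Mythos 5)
Your proposal is correct, and it takes a genuinely different and in fact shorter route than the paper's own proof. The paper reduces convexity to bounded elements via truncation exactly as you do, but then makes a double approximation: it replaces $\E$ by the bounded forms $\E_\epsilon$ generated by $\L(1+\epsilon\L)^{-1}$ so as to exploit the \emph{bounded derivation} representation $\partial_\epsilon a=(L_\epsilon(a)-R_\epsilon(a))\eta_\epsilon$, and then approximates $(s-t)(\log s-\log t)$ by the family $\Phi_\alpha$; the resulting expression is rewritten in terms of the Kubo--Ando power mean $\#_\alpha$, and convexity is finally obtained by combining joint concavity of operator means with joint operator convexity of $(x,y)\mapsto yx^{-1}y$. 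Your argument sidesteps both of these detours: the identity
\begin{align*}
\tilde C_n(s,t)=\int_0^1\frac{d\alpha}{(1-\alpha)s+\alpha t+e^{-n}}
\end{align*}
together with the chain rule for $\partial$ and joint functional calculus for the strongly commuting $L(\rho),R(\rho)$ yields the resolvent-type integral formula directly for $\E$ (no need for $\E_\epsilon$), and the joint convexity of $(A,x)\mapsto\langle A^{-1}x,x\rangle$ via the variational identity $\sup_y(2\Re\langle x,y\rangle-\langle Ay,y\rangle)$ is a more elementary scalar substitute for the operator-convexity machinery. Each approach is sound; yours uses less machinery and makes the convexity transparent (supremum of affine functionals after a further integral and a monotone supremum in $n$), while the paper's route stays closer to the Kubo--Ando operator-mean framework that pervades the rest of the article.

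One small point of presentation: you state the integral representation and the ensuing convexity ``for every $\rho\in D(\E)_+$'', but affine linearity of $\rho\mapsto(1-\alpha)L(\rho)+\alpha R(\rho)+e^{-n}$ is immediate only when $L(\rho),R(\rho)$ are bounded, i.e.\ for $\rho\in D(\E)_+\cap\M$ (for unbounded positive affiliated operators, $L((1-t)\rho_0+t\rho_1)=(1-t)L(\rho_0)+tL(\rho_1)$ requires extra justification). This does not affect the proof: your truncation step only ever invokes convexity on the bounded cut-offs $\rho_i\wedge n\in D(\E)_+\cap\M$, exactly as the paper's truncation does, and the passage to $L^1_+(\M,\tau)$ then follows from $\I(\rho_i\wedge n)\leq\I(\rho_i)$, dominated $L^1$-convergence of the truncations, and the strong lower semicontinuity of $\I$. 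Mazur's theorem (equivalently, the geometric Hahn--Banach argument the paper invokes) then upgrades this to weak lower semicontinuity.
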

\begin{proof}
For the convexity it suffices to show that $\I$ is convex on $D(\E)_+\cap \M$. Indeed, if $\rho_0,\rho_1\in L^1_+(\M,\tau)$ and $\lambda\in [0,1]$, then
\begin{align*}
\I((1-\lambda)\rho_0+\lambda \rho_1)\leq\liminf_{n\to\infty}\I((1-\lambda)(\rho_0\wedge n)+\lambda (\rho_1\wedge n))
\end{align*}
by the lower semicontinuity of $\I$, and
\begin{align*}
(1-\lambda)\I(\rho_0\wedge n)+\lambda \I(\rho_1\wedge n)\leq (1-\lambda)\I(\rho_0)+\lambda \I(\rho_1)
\end{align*}
by definition.

For that purpose let
\begin{align*}
C_k\colon [0,\infty)\lra [0,\infty),\,t\mapsto \log(t+e^{-n})+n
\end{align*}
and let $\E_\epsilon$ be the quadratic form generated by $\L(1+\epsilon \L)^{-1}$. Since
\begin{align*}
\I(a)=\lim_{k\to\infty}\lim_{\epsilon\to 0}\E_\epsilon(a,C_k(a))
\end{align*}
for $a\in D(\E)_+\cap \M$, it is sufficient to prove that $a\mapsto \E_\epsilon(a,C_k(a))$ is convex for all $\epsilon>0$ and $k\in\IN$.

Let $(\H_\epsilon,\partial_\epsilon,L_\epsilon,R_\epsilon,J_\epsilon)$ be the first-order differential calculus associated with $\E_\epsilon$. By \cite[Section 10.3]{CS03} there exists $\eta_\epsilon\in \H_\epsilon$ such that $\partial_\epsilon a=(L_\epsilon(a)-R_\epsilon(a))\eta_\epsilon$ for all $a\in D(\E)_+\cap\M$. Thus
\begin{align*}
\E_\epsilon(a,C_k(a))=\langle (L_\epsilon(a)-R_\epsilon(a))\eta_\epsilon,(C_k(L_\epsilon(a))-C_k(R_\epsilon(a)))\eta_\epsilon\rangle_{\H_\epsilon}.
\end{align*}
For $\alpha>0$ let
\begin{align*}
\Phi_\alpha\colon (0,\infty)^2\lra \IR,\,(s,t)\mapsto \frac{s^{\alpha+1}t^\alpha+t-s^\alpha t^{1-\alpha}-t}{\alpha}.
\end{align*}
Note that $\Phi_\alpha(s,t)\to (s-t)(\log s-\log t)$ as $\alpha\searrow 0$. By dominated convergence we have 
\begin{align*}
\Phi_\alpha(L_\epsilon(a)+e^{-k},R_\epsilon(a)+e^{-k})\to (L_\epsilon(a)-R_\epsilon(a))(C_k(L_\epsilon(a))-C_k(R_\epsilon(a))).
\end{align*}
as $\alpha\searrow 0$ in the weak operator topology. Hence it suffices to show that
\begin{align*}
D(\E)_+\cap\M\lra B(\H),\,a\mapsto\Phi_\alpha(L_\epsilon(a)+e^{-k},R_\epsilon(a)+e^{-k})
\end{align*}
is convex for all $k\in \IN$ and $\alpha,\epsilon>0$.

Let $l(a)=L_\epsilon(a)+e^{-k}$ and $r(a)=R_\epsilon(a)+e^{-k}$. Since the images of $L_\epsilon$ and $R_\epsilon$ commute, we have
\begin{align*}
\Phi_\alpha(l(a),r(a))=\frac 1 \alpha(l(a)(l(a)\#_\alpha r(a))^{-1}l(a)+r(a)-l(a)\#_\alpha r(a)-l(a)),
\end{align*}
where $\#_\alpha$ is the operator mean with generating function $t\mapsto t^\alpha$. As mentioned before, the mean $\#_\alpha$ is jointly operator concave. Together with the joint operator convexity of the map $(x,y)\mapsto yx^{-1} y$ this implies the convexity of $a\mapsto l(a)(l(a)\#_\alpha r(a))^{-1}l(a)$. Hence $a\mapsto \Phi_\alpha(l(a),r(a))$ is convex as the sum of convex maps.

Finally, the weak lower semicontinuity follows from the convexity and the strong lower semicontinuity by the Hahn-Banach theorem.
\end{proof}

\begin{proposition}\label{heat_flow_admissible}
Assume that $\theta$ is the logarithmic mean. If $\rho\in\D(\M,\tau)\cap L^2(\M,\tau)$ and
\begin{align*}
\int_s^t \I(P_r\rho)\,dr<\infty
\end{align*}
for all $s,t>0$, then the curve $(P_t \rho)_{t>0}$ is admissible and
\begin{align*}
\norm{D(P_t\rho)}_{P_t\rho}^2\leq \I(P_t \rho).
\end{align*}
\end{proposition}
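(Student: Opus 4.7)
My strategy is to verify admissibility through the equivalent integrated formulation given in the remark after Definition~\ref{def_W}: I will establish the pointwise bound
\[
|\E(a,\rho_r)| \le \|\partial a\|_{\rho_r}\sqrt{\I(\rho_r)}
\]
for $a\in\A_\theta$ and $r>0$, which after integration gives $|\tau(a(\rho_t-\rho_s))|\le\int_s^t \sqrt{\I(\rho_r)}\,\|\partial a\|_{\rho_r}\,dr$. Since the $L^2$-semigroup generated by the positive self-adjoint operator $\L^{(2)}$ is analytic, $\rho_t:=P_t\rho\in D(\L^{(2)})\subset D(\E)$ for every $t>0$, so $t\mapsto\tau(a\rho_t)$ is $C^1$ on $(0,\infty)$ with derivative $-\tau(a\L^{(2)}\rho_t)=-\E(a,\rho_t)$; the absolute continuity required by the definition of admissibility is immediate. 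Once the pointwise bound is proven, $\sqrt{\I(\rho_\cdot)}\in L^2_{\loc}((0,\infty))$ by hypothesis, and the minimality statement in the remark yields both admissibility and $\|D\rho_r\|_{\rho_r}\le\sqrt{\I(\rho_r)}$.

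The heart of the argument is a regularization of the formal identity $\hat\rho\,\partial\log\rho=\partial\rho$. Set $C_n(t):=\log(t+e^{-n})+n$, so $C_n\in C^1([0,\infty))$ is $e^n$-Lipschitz, increasing, with $C_n(0)=0$; the chain rule (Lemma after Theorem~\ref{diff_calc}) gives $\partial C_n(\rho_r)=\tilde C_n(L(\rho_r),R(\rho_r))\partial\rho_r$, and $K_n:=\tilde C_n(L(\rho_r),R(\rho_r))^{1/2}$ is a bounded, strictly positive operator on $\H$. The crucial pointwise identity
\[
\mathrm{LM}(s+e^{-n},t+e^{-n})\cdot\tilde C_n(s,t)=1\qquad(s,t\ge 0),
\]
translates under the joint functional calculus of the commuting positive self-adjoint operators $L(\rho_r),R(\rho_r)$ into $K_n^{-2}=\mathrm{LM}(L(\rho_r)+e^{-n},R(\rho_r)+e^{-n})$. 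Writing $\partial a=K_n(K_n^{-1}\partial a)$ and applying the $\H$-Cauchy--Schwarz inequality,
\[
|\E(a,\rho_r)|^2=|\langle K_n^{-1}\partial a,\,K_n\partial\rho_r\rangle_\H|^2\le \|\partial a\|_{\rho_r+e^{-n}}^2\cdot\E(\rho_r,C_n(\rho_r)),
\]
since $\|K_n^{-1}\partial a\|_\H^2=\langle\mathrm{LM}(L(\rho_r)+e^{-n},R(\rho_r)+e^{-n})\partial a,\partial a\rangle_\H=\|\partial a\|_{\rho_r+e^{-n}}^2$ and $\|K_n\partial\rho_r\|_\H^2=\langle\partial\rho_r,\partial C_n(\rho_r)\rangle_\H=\E(\rho_r,C_n(\rho_r))$.

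It remains to let $n\to\infty$: since $\tau$ is finite, $\rho_r+e^{-n}\cdot 1\to\rho_r$ in $L^1$, so Theorem~\ref{theta_norm_usc} (continuity of $\sigma\mapsto\|\partial a\|_\sigma$ for $a\in\A_\theta$) gives $\|\partial a\|_{\rho_r+e^{-n}}\to\|\partial a\|_{\rho_r}$, while the lemma preceding the definition of $\I$ yields $\E(\rho_r,C_n(\rho_r))\nearrow\I(\rho_r)$ because $C_n$ is of the form admissible in the definition of the Fisher information. The main technical obstacle is the functional-calculus justification of the Cauchy--Schwarz step: one must check carefully that $K_n^{-2}$, defined through the unbounded function $\mathrm{LM}(\cdot+e^{-n},\cdot+e^{-n})$ on the joint spectrum of $L(\rho_r),R(\rho_r)$, is indeed the self-adjoint inverse of the bounded operator $K_n^{-2}\cdot K_n^2=1$, and that the defining property of $\A_\theta$ places $\partial a$ inside the domain of $K_n^{-1}$. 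Once this domain bookkeeping is in place, all remaining limits are routine dominated or monotone convergence arguments.
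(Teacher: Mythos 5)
Your argument is correct and reaches the same conclusion as the paper, but through a genuinely different decomposition. The paper explicitly constructs the approximate velocity vectors $\xi_t^n = -\partial C_n(\rho_t) = -\tilde C_n(L(\rho_t),R(\rho_t))\partial\rho_t$, bounds their $\rho_t$-norm by $\I(\rho_t)$ using the pointwise inequality $\mathrm{LM}(s,t)\tilde C_n(s,t)^2 \le \tilde C_n(s,t)$ (a purely \emph{bounded} functional calculus estimate that needs no domain bookkeeping), and then extracts a weak limit in $\H_{\rho_t}$; the continuity equation is checked via $\mathrm{LM}\,\tilde C_n \nearrow 1$ and dominated convergence. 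You instead prove the pointwise bound $\abs{\E(a,\rho_r)} \le \norm{\partial a}_{\rho_r}\I(\rho_r)^{1/2}$ directly by writing $\partial a = K_n(K_n^{-1}\partial a)$ and applying Cauchy--Schwarz, relying on the exact inversion $K_n^{-2} = \widehat{\rho_r + e^{-n}}$, and then invoke the equivalent integrated criterion for admissibility from the remark after Definition~\ref{def_W}. This is a nice simplification: it avoids constructing the velocity field at all, and in particular sidesteps the weak-compactness and measurable-selection issues implicit in the paper's final sentence. The price is the domain bookkeeping you flag for the unbounded operator $K_n^{-1} = \widehat{\rho_r+e^{-n}}^{1/2}$: you need $\partial a \in D(K_n^{-1})$, and your way of getting this is via the defining property of $\A_\theta$ applied to $\sigma = \rho_r + e^{-n}\cdot 1$, which requires $\tau(1) < \infty$ so that $\sigma \in L^1_+(\M,\tau)$. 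The paper's proof does \emph{not} use finiteness of $\tau$ anywhere in this proposition (it is introduced only in the following results), so your argument proves the statement under a slightly stronger hypothesis. Since the paper has already announced it restricts attention to the finite case, this is a cosmetic rather than a substantive shortcoming, but it is worth noting that the paper's bounded-function estimate is precisely what makes the finiteness assumption unnecessary here.
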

\begin{proof}
Let 
\begin{align*}
C_n\colon (e^{-n},\infty)\lra[0,\infty),\,t\mapsto \log(t+e^{-n})+n.
\end{align*}Let $\rho_t=P_t \rho$ and $\xi_t^n=-\partial (C_n(\rho_t))$. Since $\rho\in L^2(\M,\tau)$, we have $\rho_t\in D(\L)$. As $C_n$ is continuously differentiable on a neighborhood of $\sigma(\rho_t)$, we have $\partial (C_n(\rho_t))=\tilde C_n(L(\rho_t),R(\rho_t))\partial \rho_t$.

Denote by $e$ the joint spectral measure of $L(\rho_t)$ and $R(\rho_t)$. An application of $0\leq \tilde C_n\cdot\mathrm{LM}\leq 1$ gives
\begin{align*}
\norm{\xi_t^n}_{\rho_t}^2&=\norm{(\mathrm{LM}^{1/2}\tilde C_n)(L(\rho_t),R(\rho_t))\partial\rho_t}_{\H}^2\\
&=\int_{[0,\infty)^2}\mathrm{LM}(s,t)\tilde C_n(s,t)^2\,d\langle e(s,t)\partial\rho_t,\partial \rho_t\rangle_\H\\
&\leq \int_{[0,\infty)^2}\tilde C_n(s,t)\,d\langle e(s,t)\partial\rho_t,\partial\rho_t\rangle_\H\\
&=\E(\rho_t,C_n(\rho_t))\\
&\leq \I(\rho_t).
\end{align*}
On the other hand, $\tilde C_n \mathrm{LM}\nearrow 1$ implies 
\begin{align*}
\langle \xi_t^n,\partial a\rangle_{\rho_t}&=-\int_{[0,\infty)^2}\tilde C_n(s,t)\mathrm{LM}(s,t)\,d\langle e(s,t)\partial\rho_t,\partial a\rangle_\H\\
&\to -\langle \partial\rho_t,\partial a\rangle_\H\\
&=\langle -\L\rho_t,a\rangle_{L^2(\M,\tau)}\\
&=\tau(a \dot\rho_t)
\end{align*}
for all $a\in \A_\theta$.

Let $\tilde\xi_t^n$ be the projection of $\xi_t^n$ onto $\H_{\rho_t}$. From the computations above we conclude that $(\tilde\xi_t^n)_n$ converges weakly to some $\xi_t\in \H_{\rho_t}$ with $\norm{\xi_t}_{\rho_t}^2\leq \I(\rho_t)$ for a.e. $t>0$ and 
\begin{align*}
\langle \xi_t,\partial a\rangle_{\rho_t}=\tau(a\dot\rho_t)
\end{align*}
for all $a\in \A_\theta$.
\end{proof}

\begin{proposition}[Fisher information equals entropy dissipation]\label{entropy_Fisher_decay}
Assume that $\tau$ is finite. If $\rho\in \D(\M,\tau)\cap L^2(\M,\tau)$, then
\begin{align*}
\Ent(\rho)-\Ent(P_t \rho)=\int_0^t \I(P_s \rho)\,ds
\end{align*}
for all $t\geq 0$. In particular, $\Ent$ is decreasing along $(P_t\rho)_{t\geq 0}$.
\end{proposition}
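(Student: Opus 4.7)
The plan is to differentiate a regularized entropy functional along the heat flow and then pass to the limit. For $\epsilon\in(0,1)$ and $M>0$ set $C_{\epsilon,M}(t)=\log((t\wedge M)+\epsilon)$ and let $f_{\epsilon,M}\in C^1(\IR)$ be a primitive with $f_{\epsilon,M}(0)=0$, extended beyond $[0,\infty)$ so as to remain $C^1$, convex and Lipschitz. Since $\rho\in L^2(\M,\tau)$ and $(P_s)$ is self-adjoint on $L^2$ (hence analytic), the curve $(P_s\rho)_{s>0}$ lies in $D(\L)$ and is $L^2$-differentiable, hence $L^1$-differentiable since $\tau$ is finite, with $\dot{P_s\rho}=-\L P_s\rho$.

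By Corollary \ref{diff_trace_functional}, for every $s>0$,
\begin{align*}
\frac{d}{ds}\tau(f_{\epsilon,M}(P_s\rho))=-\tau(C_{\epsilon,M}(P_s\rho)\,\L P_s\rho)=-\E(P_s\rho,\,C_{\epsilon,M}(P_s\rho)),
\end{align*}
where the second equality uses $P_s\rho\in D(\L)$, the self-adjointness of $\L$, the fact that $C_{\epsilon,M}(\cdot)-C_{\epsilon,M}(0)$ is a Lipschitz function vanishing at $0$ (so maps $D(\E)$ to itself), and conservativeness ($\tau(\L P_s\rho)=0$ because $1\in D(\E)$ with $\E(1,\cdot)=0$). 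Lemma \ref{increasing_Lip} applied to $C_1(t)=t$, $C_2(t)=C_{\epsilon,M}(t)-C_{\epsilon,M}(0)$ shows the right side is $\leq 0$. Integrating on $[s_0,t]$ for $s_0>0$ and letting $s_0\searrow 0$ (the left side by the Lipschitz bound on $f_{\epsilon,M}$ and $L^1$-continuity of $s\mapsto P_s\rho$ at $0$; the right by monotone convergence on a non-negative integrand) gives
\begin{align*}
\tau(f_{\epsilon,M}(\rho))-\tau(f_{\epsilon,M}(P_t\rho))=\int_0^t \E(P_s\rho,\,C_{\epsilon,M}(P_s\rho))\,ds.
\end{align*}

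Next, I let $M\to\infty$ and afterwards $\epsilon\searrow 0$. On the right, Corollary \ref{increasing_contractions} shows the integrand is monotone in $M$; via the chain rule and monotone convergence in the joint spectral measure of $(L(P_s\rho),R(P_s\rho))$ the limit is $\E(P_s\rho,C_\epsilon(P_s\rho))$ with $C_\epsilon(t)=\log(t+\epsilon)$. On the left, $f_{\epsilon,M}\nearrow f_\epsilon$ where $f_\epsilon(t)=(t+\epsilon)\log(t+\epsilon)-(t+\epsilon)-\epsilon\log\epsilon+\epsilon$, which is bounded uniformly below by $-1$, so monotone convergence applies. Letting $\epsilon\searrow 0$, the integrand on the right increases to $\I(P_s\rho)$ by the definition of the Fisher information, yielding $\int_0^t \I(P_s\rho)\,ds$. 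On the left, $f_\epsilon(t)\to t\log t-t$ pointwise with uniform bounds $-1\leq f_\epsilon(t)\leq 2((t+1)\log(t+1)+1)$; dominated convergence when $\Ent(\rho)<\infty$ and Fatou's lemma when $\Ent(\rho)=\infty$ give $\tau(f_\epsilon(\rho))\to \Ent(\rho)-\tau(\rho)$, and likewise for $P_t\rho$. Conservativeness of $(P_s)$ then ensures $\tau(P_t\rho)=\tau(\rho)$, so the linear terms cancel and the identity $\Ent(\rho)-\Ent(P_t\rho)=\int_0^t \I(P_s\rho)\,ds$ follows.

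The principal obstacle is the unbounded, non-Lipschitz nature of $x\log x$ on $[0,\infty)$, which rules out a direct application of Corollary \ref{diff_trace_functional} and forces the double approximation (truncation at $M$ yields a Lipschitz derivative, while the shift by $\epsilon$ smooths $\log$ near zero). Each interchange of limit and integral then rests on the non-negativity and monotonicity properties of the Fisher-type quadratic forms $\E(a,C(a))$ developed in Section \ref{entropy_information}, together with the uniform lower bound and polynomial-logarithmic upper bound on $f_\epsilon$ needed to validate dominated convergence or Fatou.
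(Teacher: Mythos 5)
Your proof is correct and mirrors the paper's strategy: mollify the entropy integrand, differentiate along the heat flow via Corollary \ref{diff_trace_functional}, rewrite the derivative as $-\E(P_s\rho,\,\cdot\,)$ using conservativeness, and pass to the Fisher information by monotone convergence. The paper packages your two-parameter regularization $(\epsilon,M)$ into the single sequence $C_n=(\log\wedge n)\vee(-n)+n$ with primitive $f_n$, which shortens the limit passage; also note that $C_{\epsilon,M}$ is only Lipschitz (not $C^1$) at the kink $t=M$, so your appeal to the chain rule there should either smooth that kink or be replaced by the weak convergence in $D(\E)$ that underlies the paper's preliminary lemma on $\I(a)=\lim_n\E\bigl(a,((\log\wedge n)\vee(-n)+n)(a)\bigr)$.
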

\begin{proof}
Let $C_n=(\log\wedge n)\vee(-n)+n$ and
\begin{align*}
f_n\colon [0,\infty)\lra\IR,\,f_n(t)=-e^{-1}+\int_{e^{-1}}^t (C_n(r)+1-n)\,dr.
\end{align*}
Then $f_n\in C^1((-e^{-n},\infty))$, $f_n(t)\nearrow t\log t$ for all $t\geq 0$, and $f_n'=C_n-n+1$ is bounded and Lipschitz. Define furthermore
\begin{align*}
F_n\colon [0,\infty)\lra \IR,\,s\mapsto \tau(f_n(P_s \rho)).
\end{align*}

Since $s\mapsto P_s\rho$ is $L^1$-differentiable, we can apply Corollary \ref{diff_trace_functional} to see that $F_n$ is locally absolutely continuous and $F_n'(s)=-\tau(f_n'(P_s \rho)\L P_s\rho)$.

Thus
\begin{align*}
F_n(\rho)-F_n(P_t\rho)&=\int_0^t \tau(f_n'(P_s \rho)\L P_s\rho)\,s\\
&=\int_0^t \E(C_n(P_s\rho),P_s\rho)\,ds\\
&\to \int_0^t \I(P_s\rho)\,ds,\,n\to\infty.
\end{align*}
Here we used conservativeness for the fact that $\tau(\L \bar\rho)=0$ for all $\bar\rho\in L^1(\M,\tau)$.

Since $\rho\in \D(\M,\tau)\cap L^2(\M,\tau)\subset D(\Ent)$, the monotone convergence theorem gives the convergence of the left-hand side to $\Ent(\rho)-\Ent(P_t\rho)$.
\end{proof}

\begin{corollary}\label{heat_flow_L1}
Assume that $\theta$ is the logarithmic mean. If $\rho\in D(\Ent)$, then 
\begin{align*}
\int_0^\infty\I(P_r\rho)\,dr\leq \Ent(\rho),
\end{align*}
$(P_t\rho)_{t\geq 0}$ is an admissible curve, $\Ent$ is decreasing along $(P_t \rho)_{t\geq 0}$ and $\W(\rho,P_t\rho)\to 0$ as $t\to 0$.
\end{corollary}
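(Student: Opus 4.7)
The plan is to reduce to the case $\rho \in \D(\M,\tau) \cap L^2(\M,\tau)$ already covered by Propositions~\ref{heat_flow_admissible} and~\ref{entropy_Fisher_decay}. Set $\sigma_n = \rho \wedge n$, $c_n = \tau(\sigma_n)$, and $\rho_n = c_n^{-1}\sigma_n$. Since $\tau$ is finite and $\sigma_n$ is bounded, each $\rho_n$ lies in $\D(\M,\tau) \cap L^2(\M,\tau) \cap \M$, and $\rho_n \to \rho$ in $L^1$. Using the spectral decomposition of $\rho$ together with dominated convergence---noting that $t \log t \geq -e^{-1}$ and that $\Ent(\rho) < \infty$ controls $t \log t$ at infinity---one checks $c_n \to 1$ and $\Ent(\rho_n) \to \Ent(\rho)$.

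For (1), Proposition~\ref{entropy_Fisher_decay} applied to $\rho_n$ gives $\int_0^t \I(P_s \rho_n)\,ds = \Ent(\rho_n) - \Ent(P_t \rho_n) \leq \Ent(\rho_n)$ by non-negativity of the entropy. Since $(P_s)$ is $L^1$-continuous, $P_s \rho_n \to P_s \rho$ in $L^1$, so combining lower semicontinuity of $\I$ (Lemma~\ref{Fisher_info_lsc}) with Fatou's lemma yields $\int_0^t \I(P_s \rho)\,ds \leq \Ent(\rho)$, and monotone convergence as $t \to \infty$ proves~(1). For (2), Proposition~\ref{heat_flow_admissible} and the bound just obtained make each $(P_t \rho_n)_{t \geq 0}$ admissible (at $t = 0$ one uses the $L^1$-continuity of $(P_t)$ and the integrability $\int_0^T \|D(P_t \rho_n)\|_{P_t \rho_n}^2\,dt \leq \Ent(\rho_n) < \infty$). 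Theorem~\ref{energy_lsc} transfers admissibility to the limit and yields
\[
\int_0^T \|D(P_t \rho)\|_{P_t \rho}^2\,dt \leq \liminf_n \bigl(\Ent(\rho_n) - \Ent(P_T \rho_n)\bigr) \leq \Ent(\rho) - \Ent(P_T \rho),
\]
where the last step uses $\Ent(\rho_n) \to \Ent(\rho)$ together with the lower semicontinuity of $\Ent$ on $L^1$ (Proposition~\ref{entropy_lsc}).

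In particular $\Ent(P_t \rho) \leq \Ent(\rho) < \infty$, so $P_s \rho \in D(\Ent)$ for every $s \geq 0$; applying the same approximation scheme to $P_s \rho$ and invoking the semigroup law $P_t \rho = P_{t - s} P_s \rho$ yields the monotonicity $\Ent(P_t \rho) \leq \Ent(P_s \rho)$, proving~(3). For~(4), this monotonicity combined with the lower semicontinuity of $\Ent$ forces $\Ent(P_T \rho) \to \Ent(\rho)$ as $T \to 0$, so the right-hand side of the displayed inequality vanishes in the limit, and Cauchy--Schwarz gives
\[
\W(\rho, P_T \rho) \leq T^{1/2}\left(\int_0^T \|D(P_t \rho)\|_{P_t \rho}^2\,dt\right)^{1/2} \longrightarrow 0.
\]
The main subtlety is the orchestration of several semicontinuity statements---of $\I$, of the action, and of $\Ent$---in such a way that the telescoping upper bound $\int_0^t \I(P_s \rho_n)\,ds \leq \Ent(\rho_n)$, which hinges on $\Ent \geq 0$ in the $L^2$ case, passes simultaneously to the limit with the transfer of admissibility and with the short-time convergence of the entropy required for continuity in $\W$.
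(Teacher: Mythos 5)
Your proof is correct and follows the same route as the paper: approximate $\rho$ by $\rho_n = (\rho\wedge n)/\tau(\rho\wedge n)$, note $\Ent(\rho_n)\to\Ent(\rho)$, invoke Propositions~\ref{heat_flow_admissible} and~\ref{entropy_Fisher_decay} for each $\rho_n$, and transfer to the limit via Fatou, the lower semicontinuity of $\I$ and $\Ent$, and Theorem~\ref{energy_lsc}. You are in fact slightly more careful than the paper on two points: you justify admissibility of $(P_t\rho_n)$ up to $t=0$ (Proposition~\ref{heat_flow_admissible} only gives $t>0$), and you make explicit the semigroup argument needed to upgrade $\Ent(P_t\rho)\le\Ent(\rho)$ to monotonicity along the whole trajectory, which the paper states without that extra step.
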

\begin{proof}
Let $\rho_n=\frac{\rho\wedge n}{\tau(\rho\wedge n)}$. Since $\rho_n\to \rho$ in $L^1(\M,\tau)$, one has $P_t \rho_n\to P_t \rho$ in $L^1(\M,\tau)$ for all $t\geq 0$. Moreover,
\begin{align*}
\Ent(\rho_n)=\tau(\rho_n \log \rho_n)=\frac 1{\tau(\rho\wedge n)}\tau((\rho\wedge n)\log(\rho\wedge n))-\log\tau(\rho\wedge n)\to \Ent(\rho).
\end{align*}
It follows from the lower semicontinuity of the entropy and Proposition \ref{entropy_Fisher_decay} that
\begin{align*}
\Ent(P_t \rho)\leq \liminf_{n\to\infty}\Ent(P_t \rho_n)\leq \lim_{n\to\infty}\Ent(\rho_n)=\Ent(\rho)
\end{align*}
for all $t\geq 0$. Thus $\Ent$ is decreasing along $(P_t \rho)_{t\geq 0}$.

The lower semicontinuity of $\I$ (Lemma \ref{Fisher_info_lsc}) and Fatou's lemma imply
\begin{align*}
\int_0^\infty\I(P_r\rho_n)\,dr\leq \int_0^\infty\liminf_{n\to\infty}\I(P_r\rho_n)\,dr\leq \liminf_{n\to\infty}\int_0^\infty\I(P_r \rho_n)\,dr.
\end{align*}
From Proposition \ref{entropy_Fisher_decay} we deduce
\begin{align*}
\liminf_{n\to\infty}\int_0^\infty\I(P_r\rho_n)\,dr\leq\lim_{n\to\infty}\Ent(\rho_n)=\Ent(\rho).
\end{align*}
Moreover, 
\begin{align*}
\liminf_{n\to\infty}\int_0^\infty \norm{D(P_r \rho_n)}_{P_r \rho_n}^2\,dr\leq \liminf_{n\to\infty}\int_0^\infty\I(P_r\rho_n)\,dr
\end{align*}
by Proposition \ref{heat_flow_admissible}. Thus $(P_t\rho)_{t\geq 0}$ is admissible by Theorem \ref{energy_lsc}.

Finally, $\W(\rho,P_t \rho)\to 0$ as $t\to 0$ is a direct consequence of the admissibility of $(P_t \rho)_{t\geq 0}$.
\end{proof}

\section{Gradient flow of the entropy}\label{heat_gradient_flow}

In this section we give the announced characterization of the quantum Markov semigroup as metric gradient flow of the entropy under suitable conditions.

To be more precise, in Subsection \ref{subsec_GE} we first introduce a Bakry--Émery-type gradient estimate for the semigroup $(P_t)$ and prove that it implies a kind of Feller regularization for the semigroup (Proposition \ref{A_LM_regularization}) as well as contraction/expansion estimates with respect to $\W$ (Theorem \ref{W_contraction}).

A central difficulty in the proof of the gradient flow characterization, as already in the case of metric measure spaces, lies in the fact that we are working on $L^1$, so that the heat flow curves may fail to be differentiable and Hilbert space methods are not directly applicable. To overcome this problem, Subsection \ref{mollification} is devoted to a fine analysis of standard semigroup mollification in our setting. In particular, we prove an entropy regularization estimate in Proposition \ref{Ent_regularization}.

In the last Subsection \ref{proof_main} we review the evolution variational inequality (EVI) formulation of gradient flows in metric spaces and complete the proof of the characterization of the Markovian quantum master equation as EVI gradient flow of the entropy (Theorem \ref{EVI_gradient_flow}).

In part our proof strategy is a careful adaptation to the noncommutative setting of the paths taken by Ambrosio et al. (see \cite{AGS15} in the case of infinitesimally Hilbertian metric measure spaces and \cite{AES16} in the case of abstract local Dirichlet forms). However, both proofs rely strongly on duality (either the dual problem of the Monge--Kantorovich or the Benamou--Brenier formulation). Little is known on the dual formulation in the present setting, so we avoid it altogether. In this way, our approach gives a new proof variant even when restricted to the case of infinitesimally Hilbertian metric measure spaces (with finite measure).

As usual, let $(\M,\tau)$ be a tracial von Neumann algebra, $\E$ a quantum Dirichlet form on $L^2(\M,\tau)$ such that $\tau$ is energy dominant, and $(\partial,\H,L,R,J)$ the associated first order differential calculus. We further assume that $\tau$ is a state and $L^1(\M,\tau)$ is separable. In the beginning we only assume that $\theta$ can be represented by a symmetric operator mean, but starting from Subsection \ref{mollification} we take the logarithmic mean for $\theta$. We do not make any density assumptions on $\A_\theta$ -- these follow automatically from the gradient estimate we introduce in the first subsection.

\subsection{The gradient estimate $\GE(K,\infty)$}\label{subsec_GE}

In this subsection we introduce the gradient estimate 
\begin{equation*}
\norm{\partial P_t a}_\rho^2\leq e^{-2Kt}\norm{\partial a}_{P_t\rho}^2,
\end{equation*}
which is inspired by the classical Bakry--Émery estimate (see the original paper \cite{BE85} or the monograph \cite{BGL14})
\begin{equation*}
\Gamma(P_t u)\leq e^{-2Kt}P_t \Gamma(u).
\end{equation*}
In the case of the heat semigroup on a complete Riemannian manifold, the Bakry--Émery gradient estimate is equivalent to a lower bound on the Ricci curvature. Recently, it has been shown that this equivalence between gradient estimates and lower Ricci curvature bounds (in the sense of Lott--Villani--Sturm) extends to a large class of metric measure spaces \cite{AGS14b,AGS15,EKS15}. Thus the gradient estimate from above can be interpreted as the noncommutative version of a lower Ricci curvature bound.

One of the reasons the gradient estimate is important for the gradient flow characterization is that it provides certain regularizing effects of the semigroup. In this subsection we investigate some of these consequences of the gradient estimates, among them an $L^\infty$-to-$\A_\theta$-regularization property of the semigroup (Proposition \ref{A_LM_regularization}) and an exponential contraction (or expansion) bound for the metric $\W$ (Theorem \ref{W_contraction}). 

Throughout this subsection let $(\M,\tau)$ be a tracial von Neumann algebra, $\E$ a quantum Dirichlet form on $L^2(\M,\tau)$, $(\partial,\H,L,R,J)$ the associated first-order differential calculus and assume that $\tau$ is energy dominant. We further assume that $\theta$ can be represented by a symmetric operator mean. Denote by $(P_t)_{t\geq 0}$ the quantum Markov semigroup associated with $\E$ and by $\L=\partial^\ast\partial$ its generator.

\begin{definition}[$\GE(K,\infty)$]\label{def_GE}
Let $K\in\IR$. The quantum Dirichlet form $\E$ satisfies the gradient estimate $\GE(K,\infty)$ (for the mean $\theta$) if
\begin{align*}
\norm{\partial P_t a}_{\rho}^2\leq e^{-2Kt}\norm{\partial a}_{P_t \rho}^2
\end{align*}
for all $a\in D(\E)$, $\rho\in\D(\M,\tau)$ and $t\geq 0$.
\end{definition}

\begin{remark}\label{GE_manifold}
The gradient estimate $\GE(K,\infty)$ is a modification of the classical Bakry--Émery gradient estimate. Indeed, if $\E$ is a strongly local (commutative) Dirichlet form on $L^2(X,m)$, then the gradient estimate $\GE(K,\infty)$ reads
\begin{align*}
\int_X \Gamma(P_t u)\rho\,dm\leq e^{-2Kt}\int_X P_t\Gamma(u)\rho\,dm,
\end{align*}
which is just a weak formulation of the Bakry--Émery gradient estimate 
\begin{align*}
\Gamma(P_t u)\leq e^{-2Kt}P_t\Gamma(u).
\end{align*}
If $\E$ is the standard Dirichlet energy on a complete Riemannian manifold $(M,g)$, then $\E$ satisfies $\mathrm{GE}(K,\infty)$ if and only if the Ricci curvature of $(M,g)$ is bounded below by $K$.
\end{remark}

\begin{remark}
In the classical Bakry--Émery gradient estimate there is an additional dimension parameter $N$, and $\mathrm{BE}(K,\infty)$ corresponds to the case $N=\infty$. That is why we keep the parameter $\infty$ in our notation although we do not introduce any finite-dimensional variant of $\GE(K,\infty)$.
\end{remark}

\begin{remark}\label{GE_graph}
For finite graphs, the gradient estimate $\GE(K,\infty)$ was introduced in \cite{EF18}, where it was shown to be equivalent to $K$-convexity of the entropy along $\W$-geodesics (\cite[Theorem 3.1]{EF18}). The latter was taken as definition for a lower Ricci curvature bound $K$ of a graph in \cite{EM14} (see also \cite{Mie13}).
\end{remark}

Some examples of quantum Dirichlet forms satisfying the gradient estimate will be studied more systematically in upcoming work. Here we just give a simple (commutative) example.

\begin{example}
Endow $\IZ^d$ with the natural graph structure given by
\begin{align*}
b(m,n)=\begin{cases}1&\text{if }\abs{m-n}=1,\\0&\text{otherwise,}\end{cases}
\end{align*}
and the uniform measure $m=1$.

Since $(\IZ^d,b,m)$ has bounded degree, the Dirichlet forms $\E^{(D)}$ and $\E^{(N)}$ introduced in Example \ref{ex_graphs} coincide, and we simply denote them by $\E$. We will show that $\E$ satisfies $\GE(0,\infty)$ for the logarithmic mean. By a simple approximation argument it suffices to prove
\begin{align*}
\norm{\partial P_t u}_\rho^2\leq \norm{\partial u}_{P_t\rho}^2
\end{align*}
for $u\in C_c(\IZ^d)$ and $\rho\in C_c(\IZ^d)_+$.

For this, we will approximate $\IZ^d$ by finite graphs and use known results for finite Markov chains. Let $K_j=\{-j,\dots,j\}^d$ and
\begin{align*}
L_j\colon \ell^2(K_j)\to \ell^2(K_j),\,L_j u(m)=\sum_{n\in K_j}b(m,n)(u(m)-u(n)).
\end{align*}
This is the generator of the Dirichlet form $\E_j$ associated with the weighted graph $(K_j,b|_{K_j\times K_j},1)$. Let $(P_t^j)$ denote the associated semigroup. It follows from \cite[Theorem 4.1]{FM16} in combination with \cite[Theorem 6.2]{EM14} or \cite[Theorem 5.1]{Mie13} that $\E_j$ satisfies $\GE(0,\infty)$ for the logarithmic mean.

We extend $L_j$ to an operator on $\ell^2(\IZ^d)$ by setting it zero on $\ell^2(\IZ^d\setminus K_j)$. Clearly, if $u\in C_c(\IZ^d)$ with $\operatorname{supp} u\subset K_j$, then $L_{j+1}u=L u$, where $L$ is the generator of $\E$. Since the sequence $(L_j)$ is uniformly bounded, it follows that $L_j\to L$ strongly. Thus also $e^{-tL_j}\to e^{-tL}$ strongly.

Since $\E_j$ satisfies $\GE(0,\infty)$, we have
\begin{align*}
\frac1  2\sum_{\substack{m,n\in K_j\\m\sim n}}\hat \rho(m,n)(P_t^j u(m)-P_t^j u(n))^2\leq \frac 1 2\sum_{\substack{m,n\in K_j\\m\sim n}}\widehat{P_t^j \rho}(m,n)(u(m)-u(n))^2
\end{align*}
for all $u\in C_c(\IZ^d)$, $\rho\in C_c(\IZ^d)_+$ and $j\in \IN$ such that $\operatorname{supp} u,\operatorname{supp}\rho\subset K_j$. By Fatou's lemma, the limes inferior as $j\to\infty$ of the left-hand side is bounded below by $\norm{\partial P_t u}_\rho^2$, while the right-hand side converges to $\norm{\partial u}_{P_t\rho}^2$ by the dominated convergence theorem. Hence
\begin{align*}
\norm{\partial P_t u}_{\rho}^2\leq \norm{\partial u}_{P_t\rho}^2.
\end{align*}
\end{example}
More generally, this method deducing a gradient estimate for a graph from gradient estimates for an exhaustion of finite subgraphs can be applied if the Laplacians of the subgraphs converge to the Laplacian of the original graph in the strong resolvent sense. This is the case for instance if the graph is locally finite and stochastically complete (in this case one can easily adapt the arguments from \cite[Proposition 2.7]{KL12}).

\begin{remark}
If $\theta$ is the arithmetic mean, then $\GE(K,\infty)$ reads
\begin{align*}
\Gamma(P_t a)\leq e^{-2Kt}P_t \Gamma(a)
\end{align*}
for $a\in D(\E)$ self-adjoint. This noncommutative form of the Bakry--Émery gradient estimate was used for example in \cite{JZ15} in the study of noncommutative Poincaré inequalities.

In general, the gradient estimate $\GE(K,\infty)$ is not equivalent to the (noncommutative) Bakry--Émery gradient estimate in this form, even in the simplest (commutative) examples. For instance, if $\E$ is the Dirichlet form associated with the weighted graph $(\{0,1\},b,m)$ with $b(0,1)>0$, then one can show that the best possible constant $K$ in $\GE(K,\infty)$ for the logarithmic mean coincides with the best possible constant for the arithmetic mean  if and only if $m(0)=m(1)$.
\end{remark}

\begin{proposition}[Feller property]\label{A_LM_regularization}
If $\E$ satisfies $\GE(K,\infty)$ for some $K\in\IR$, then $P_t$ maps $L^2(\M,\tau)\cap\M$ into $\A_\theta$ for $t>0$.
\end{proposition}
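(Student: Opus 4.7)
The plan is to derive a reverse Poincaré-type bound
\[
\|\partial P_t a\|_\rho^2 \leq \frac{K}{e^{2Kt}-1}\,\|a\|_\M^2\,\|\rho\|_1
\]
for all $a \in \M$ and $\rho \in L^1_+(\M,\tau)$, with the convention that $\frac{K}{e^{2Kt}-1}$ is read as $\frac{1}{2t}$ when $K=0$. This immediately forces $\|P_t a\|_\theta < \infty$. Since $\tau$ is finite, $a \in L^2(\M,\tau)$, and analyticity of $(P_t)$ on $L^2$ together with contractivity on $\M$ yields $P_t a \in D(\L) \cap \M \subset D(\E) \cap \M$; combined with the bound this gives $P_t a \in \A_\theta$.

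For the bound, fix $\rho \in L^1_+(\M,\tau)$. For $s \in (0,t)$ the element $P_{t-s}a$ lies in $D(\E)\cap\M$, so $\GE(K,\infty)$ applied to $P_{t-s}a$ yields
\[
\|\partial P_t a\|_\rho^2 = \|\partial P_s(P_{t-s}a)\|_\rho^2 \leq e^{-2Ks}\|\partial P_{t-s}a\|_{P_s\rho}^2.
\]
Multiplying by $e^{2Ks}$ and integrating over $s \in [0,t]$ yields
\[
\frac{e^{2Kt}-1}{2K}\|\partial P_t a\|_\rho^2 \leq \int_0^t \|\partial P_{t-s}a\|_{P_s\rho}^2\,ds.
\]
By Lemma \ref{theta_leq_AM}, $\|\partial b\|_\sigma^2 \leq \tfrac12\tau(\sigma(\Gamma(b)+\Gamma(b^*)))$. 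Using the dissipation identity
\[
\int_0^t \tau\bigl(P_s\rho\,\Gamma(P_{t-s}b)\bigr)\,ds = \tfrac12\tau\bigl(\rho(P_t(b^*b)-(P_t b)^*P_t b)\bigr),\qquad b \in D(\E)\cap\M,
\]
with $b=a$ and $b=a^*$, summing, dropping the non-positive subtracted terms, and invoking $\|P_t(a^*a+aa^*)\|_\M \leq 2\|a\|_\M^2$ gives
\[
\int_0^t \|\partial P_{t-s}a\|_{P_s\rho}^2\,ds \leq \tfrac14\tau\bigl(\rho\,P_t(a^*a+aa^*)\bigr) \leq \tfrac12\|a\|_\M^2\|\rho\|_1,
\]
which closes the argument.

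The main obstacle is the dissipation identity itself. It arises from differentiating the $L^1$-valued curve $s \mapsto P_s\bigl((P_{t-s}b)^*(P_{t-s}b)\bigr)$ and recognizing its derivative as $2P_s\Gamma(P_{t-s}b)$ via the algebraic identity $2\Gamma(b) = (\L b)^*b + b^*\L b - \L(b^*b)$, which holds for $b \in D(\L)$. For $b \in D(\E)\cap\M$ and $s \in (0,t)$, analyticity of $(P_t)$ ensures $P_{t-s}b \in D(\L)$ and all quantities remain in $L^1\cap\M$, so the computation can be made rigorous; integrating and pairing with $\rho$ via $\tau$-symmetry of $(P_s)$ then produces the identity, first for $\rho \in \M \cap L^1_+$ (where the pairing is an unambiguous $L^\infty$--$L^1$ duality) and then for general $\rho \in L^1_+$ by approximation. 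Once the identity is in hand, the remaining estimates are standard manipulations.
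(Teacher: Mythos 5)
Your proof is correct and matches the paper's in all essentials: both integrate the gradient estimate along $s\mapsto(P_{t-s}a,P_s\rho)$ to obtain $I_{2K}(t)\,\norm{\partial P_t a}_\rho^2\leq\int_0^t\norm{\partial P_{t-s}a}_{P_s\rho}^2\,ds$ with $I_{2K}(t)=\int_0^t e^{2Ks}\,ds$, bound the integrand via $\theta\leq\mathrm{AM}$ (Lemma~\ref{theta_leq_AM}), and evaluate the resulting integral through the dissipation identity $\tau(\Gamma(P_{t-s}a)P_s\rho)=\frac{1}{2}\frac{d}{ds}\tau((P_{t-s}a)^2 P_s\rho)$, arriving at $\norm{\partial P_t a}_\rho^2\leq\frac{1}{2I_{2K}(t)}\norm{a}_\M^2\norm{\rho}_1$, which is the same constant as your $\frac{K}{e^{2Kt}-1}$. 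The only cosmetic differences are that the paper first reduces to self-adjoint $a$ (so that $\Gamma(a)=\Gamma(a^\ast)$) whereas you carry both $\Gamma$-terms, and the paper passes from regular $\rho\in D(\L)\cap\M$ to general $\rho\in L^1_+(\M,\tau)$ via truncation $\rho\wedge m$ and a short-time smoothing $P_\delta$ at the level of the final norm bound, using Lemma~\ref{theta_increasing}, rather than placing the approximation at the level of the dissipation identity.
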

\begin{proof}
Let $a\in L^2(\M,\tau)\cap\M_h$ and $t>0$. Since $P_t$ maps $L^2(\M,\tau)$ into $D(\L)$, we can assume $a\in D(\L)\cap \M$. For $\rho\in D(\L)\cap L^1_+(\M,\tau)\cap\M$ define
\begin{align*}
\phi\colon [0,t]\lra\IR,\,\phi(s)=\int_0^s \norm{\partial P_{t-r}a}_{P_r\rho}^2\,dr.
\end{align*}
Note that $r\mapsto \widehat{P_r\rho}$ is strongly continuous by Lemma \ref{theta_bounded_cont} and $r\mapsto \partial P_{t-r}a$ is continuous in $\H$, so that the integrand is continuous in $r$.

Since $\E$ satisfies $\GE(K,\infty)$, the map $s\mapsto e^{-2Ks}\phi'(s)$ is increasing. It follows from a comparison argument (see \cite[Lemma 2.2 and Equation (2.30)]{AGS15}) that
\begin{align}
I_{2K}(t)\norm{\partial P_t a}_\rho^2=I_{2K}(t)\phi'(0)\leq \int_0^t \phi'(s)\,ds=\int_0^t \norm{\partial P_{t-s} a}_{P_s\rho}^2\,ds,\label{bound_phi_prime}
\end{align}
where $I_{\kappa}(t)=\int_0^t e^{\kappa s}\,ds$.

By Lemma \ref{theta_leq_AM} and a direct calculation we have
\begin{align*}
\norm{\partial P_{t-s} a}_{P_s\rho}^2\leq \tau(\Gamma(P_{t-s}a)P_s \rho)=\frac 1 2\frac {d}{ds}\tau((P_{t-s}a)^2 P_s \rho).
\end{align*}
If we plug this into (\ref{bound_phi_prime}), we get
\begin{align*}
I_{2K}(t)\norm{\partial P_t a}_\rho^2\leq \frac 1 2 \tau((P_t(a^2)-(P_t a)^2)\rho).
\end{align*}
Thus
\begin{align*}
\norm{\partial P_t a}_{\rho}^2\leq \frac 1{2 I_{2K}(t)}\norm{a}_\M^2\norm{\rho}_1.
\end{align*}
In the general case $\rho\in L^1_+(\M,\tau)$, one can use Lemma \ref{theta_increasing} and once more $\GE(K,\infty)$ to also get
\begin{align*}
\norm{\partial P_t a}_\rho^2&=\lim_{m\to\infty}\norm{\partial P_t a}_{\rho\wedge m}^2\\
&\leq \liminf_{m\to\infty}\,\liminf_{\delta\to 0}e^{-2K\delta}\norm{\partial P_{t-\delta}a}_{P_\delta (\rho\wedge m)}^2\\
&\leq \liminf_{m\to\infty}\,\liminf_{\delta\to 0}\frac{e^{-2K\delta}}{2 I_{2K}(t-\delta)}\norm{a}_\M^2 \norm{\rho\wedge m}_1\\
&=\frac 1{2 I_{2K}(t)}\norm{a}_\M^2\norm{\rho}_1.
\end{align*}
Hence $P_t a\in \A_\theta$.
\end{proof}

\begin{remark}
If $\E$ is a strongly local (commutative) Dirichlet form, then Proposition \ref{A_LM_regularization} recovers the $L^\infty$-to-Lipschitz Feller property of the heat flow on $\mathrm{RCD}$ spaces from \cite[Theorem 6.8]{AGS14b} (with essentially the same proof). If $\E$ is not strongly local or not commutative, the correct analogue of the algebra of bounded Lipschitz functions seems to be $\A_{\mathrm{AM}}$ (see Example \ref{ex_AM}), so that Proposition \ref{A_LM_regularization} is in general weaker than (noncommutative) $L^\infty$-to-Lipschitz regularization unless $\theta$ is the arithmetic mean.
\end{remark}

\begin{corollary}\label{A_LM_dense}
If $\E$ satisfies $\GE(K,\infty)$, then $\A_{\theta}\cap\M_1$ is dense in $D(\E)\cap \M_1$ with respect to $\norm\cdot_\E$ and strongly dense in $\M_1$.
\end{corollary}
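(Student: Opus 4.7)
The plan is to use the semigroup itself as the regularizing device: Proposition~\ref{A_LM_regularization} turns any element of $\M$ into an element of $\A_\theta$, and since $(P_t)$ is a contraction on every natural space, the approximations stay in the unit ball. Convergence is then automatic from strong continuity of $(P_t)$ on $L^2$ for the first claim and from predual continuity together with Kaplansky for the second.

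For the $\|\cdot\|_\E$-density, I would take $a\in D(\E)\cap\M_1$ and set $a_t=P_ta$. By Proposition~\ref{A_LM_regularization}, $a_t\in\A_\theta$ for $t>0$; since $(P_t)$ extends to a sub-Markov semigroup on $\M$ with $\|P_t\|_{\M\to\M}\le 1$, we have $a_t\in\A_\theta\cap\M_1$. Writing $\E(b)=\|\L^{1/2}b\|_{L^2}^2$ and using that $P_t$ commutes with $\L^{1/2}$ on $D(\L^{1/2})=D(\E)$, strong continuity of $(P_t)$ on $L^2(\M,\tau)$ gives both $a_t\to a$ in $L^2$ and $\L^{1/2}a_t=P_t\L^{1/2}a\to \L^{1/2}a$ in $L^2$ as $t\to 0$, hence $\|a_t-a\|_\E\to 0$.

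For strong density in $\M_1$, I would first show that $\A_\theta$ is $\sigma$-weakly dense in $\M$. For any $a\in\M$, Proposition~\ref{A_LM_regularization} again yields $P_ta\in\A_\theta$; and since $(P_t)$ is trace-symmetric and strongly continuous on $L^1(\M,\tau)$, it is the predual of its action on $\M$, so $P_ta\to a$ $\sigma$-weakly as $t\to 0$. Thus every $a\in\M$ lies in the $\sigma$-weak closure of $\A_\theta$. Under the standing assumption that $\theta$ can be represented by a symmetric operator mean, $\A_\theta$ is a $\ast$-algebra (shown earlier in Section~\ref{A_E}), so Kaplansky's density theorem (see \cite[Theorem II.4.8]{Tak02}) applies and $\A_\theta\cap\M_1$ is strongly dense in $\M_1$.

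There is no real obstacle once Proposition~\ref{A_LM_regularization} is in hand: the whole content of the corollary is the combination of the Feller-type regularization with contractivity of $(P_t)$ and the Kaplansky density theorem. The only point deserving a moment of care is that, even though the individual semigroup estimate in Proposition~\ref{A_LM_regularization} may blow up as $t\to 0$, the \emph{membership} $P_ta\in\A_\theta$ holds uniformly in $a\in\M_1$, which is all that is needed for the approximation argument.
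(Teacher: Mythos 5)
Your proof is correct and follows essentially the same route as the paper's: regularize with $P_t$ via Proposition~\ref{A_LM_regularization}, keep the approximants in $\M_1$ by sub-Markovianity, get $\norm{\cdot}_\E$-convergence from strong $L^2$-continuity, and then invoke Kaplansky (using that $\A_\theta$ is a $\ast$-algebra) for the strong density in $\M_1$. The paper compresses the $\sigma$-weak density step by deriving it from the first claim together with Lemma~\ref{C_ultraweakly_dense}, whereas you obtain it directly from $P_ta\to a$ $\sigma$-weakly; this is a minor presentational difference, not a different method.
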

\begin{proof}
The density of $\A_\theta\cap\M_1$ in $D(\E)\cap \M_1$ follows directly from Proposition \ref{A_LM_regularization}. The strong density in $\M_1$ is then a consequence of Kaplansky's density theorem.
\end{proof}

We now fulfill the promise made in Remark \ref{rmk_admissible_curves} and introduce a regularity property that ensures that the duality in the definition of admissible curves can be extended to $\A_\theta$.

\begin{definition}[Regular mean]
The mean $\theta$ is called \emph{regular} for $\E$ if for all $a\in \A_\theta$ there exists a sequence $(a_n)$ in $\A_{\AM}$ such that $a_n\to a$ $\sigma$-weakly, $(a_n)$ is bounded in $A_\theta$ and $\partial a_n\to \partial a$ weakly in $\tilde \H_\rho$ for all $\rho\in \D(\M,\tau)$.
\end{definition}

The next lemma then follows immediately from this definition.

\begin{lemma}\label{admissibility_A_theta}
If the mean $\theta$ is regular for $\E$, then $\partial \A_\theta\subset \H_\rho$ for all $\rho\in \D(\M,\tau)$ and if $(\rho_t)_{t\in I}$ is an admissible curve in $\D(\M,\tau)$, then
\begin{equation*}
\tau(a(\rho_t-\rho_s))=\int_s^t \langle \partial a, D\rho_r\rangle_{\rho_r}\,dr
\end{equation*}
for all $a\in \A_\theta$ and $s,t\in I$.
\end{lemma}

Of course, the arithmetic mean is regular for every quantum Dirichlet form. For other means it may not be easy to check whether it is regular for a given quantum Dirichlet form. However, the gradient estimate $\GE(K,\infty)$ yields a sufficient condition for regularity of means. More precisely, we have the following result.

\begin{proposition}\label{GE_regular_mean}
Let $\theta$ be a mean and $K\in \IR$. If $\E$ satisfies $\GE(K,\infty)$ for $\theta$ and the arithmetic mean, then $\theta$ is regular for $\E$.
\end{proposition}
\begin{proof}
For $a\in \A_\theta$ let $a_n=P_{t_n}a$ for some null sequence $(t_n)$. Since $\E$ satisfies $\GE(K,\infty)$ for $\AM$, we have $a_n\in \A_{\AM}$ by Proposition \ref{A_LM_regularization}.

The $\sigma$-weak convergence of $(a_n)$ to $a$ is a consequence of the continuity of $(P_t)$. Moreover, $\GE(K,\infty)$ for $\theta$ implies
\begin{align*}
\norm{\partial a_n}_{\rho}\leq e^{-Kt_n}\norm{\partial a}_{P_t\rho}\leq \max\{1,e^{-K\sup_n t_n}\}\norm{a}_\theta
\end{align*}
for all $\rho\in \D(\M,\tau)$, where the norms $\norm{\cdot}_\rho$ and $\norm{\cdot}_{P_t\rho}$ are to be understood with respect to the mean $\theta$. Hence $(a_n)$ is bounded in $\A_\theta$.

Since $\norm{\partial a_n}_\rho+\norm{\partial a_n}_\H$ is bounded and $(D(\hat\rho^{1/2}),\langle\cdot,\cdot\rangle_\H+\langle\cdot,\cdot\rangle_{\rho})$ is complete, every subsequence of $(\partial a_n)$ has a subsequence that converges weakly in $D(\hat\rho^{1/2})$. Moreover, since $\partial a_n\to\partial a$ in $\H$, the weak limit is $\partial a$. Finally, since $D(\hat\rho^{1/2})$ is dense in $\tilde\H_\rho$ and $\norm{\partial a_n}_\rho$ is bounded, we also have $\partial a_n\to\partial a$ weakly in $\tilde\H_\rho$.
\end{proof}

\begin{theorem}[Contraction estimate]\label{W_contraction}
Assume that $\theta$ is regular for $\E$. If $\E$ satisfies $\GE(K,\infty)$ for $\theta$ and $(\rho_t)_{t\in I}$ is an admissible curve in $\D(\M,\tau)$, then $(P_T\rho_t)_{t\in I}$ is an admissible curve and $\norm{DP_T\rho_t}_{P_T\rho_t}\leq e^{-KT}\norm{D\rho_t}_{\rho_t}$ for a.e. $t\in I$.

In particular, $\W(P_T \rho_0,P_T \rho_1)\leq e^{-KT}\W(\rho_0,\rho_1)$ for $\rho_0,\rho_1\in\D(\M,\tau)$ and $T\geq 0$.
\end{theorem}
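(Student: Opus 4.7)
The plan is to push test functions for the curve $(P_T\rho_t)$ back to test functions for $(\rho_t)$ via the symmetry of $P_T$, and to use $\GE(K,\infty)$ together with the Feller-type regularization of Proposition \ref{A_LM_regularization} to keep everything inside $\A_\theta$. Conservativeness of $(P_t)$ guarantees that $\tau(P_T\rho_t)=\tau(\rho_t)=1$, so $P_T\rho_t\in\D(\M,\tau)$, and the map $\rho\mapsto P_T\rho$ is $L^1$-bounded.

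First, fix $a\in\A_\theta$. By Proposition \ref{A_LM_regularization} (and, for $T=0$, by assumption), $P_T a\in\A_\theta\subset\M$, so using the $L^2$-symmetry of $P_T$ we have $\tau(aP_T\rho_t)=\tau((P_T a)\rho_t)$. Since $(\rho_t)$ is admissible and $P_T a\in\A_\theta$, the right-hand side is locally absolutely continuous and
\begin{align*}
\frac{d}{dt}\tau(aP_T\rho_t)=\langle \partial P_T a,D\rho_t\rangle_{\rho_t}
\end{align*}
for a.e.\ $t\in I$. Applying $\GE(K,\infty)$ with the density $\rho_t$ and then Cauchy--Schwarz yields
\begin{align*}
\Abs{\frac{d}{dt}\tau(aP_T\rho_t)}\leq \norm{\partial P_T a}_{\rho_t}\norm{D\rho_t}_{\rho_t}\leq e^{-KT}\norm{\partial a}_{P_T\rho_t}\norm{D\rho_t}_{\rho_t}.
\end{align*}

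Second, I interpret this inequality as saying that for a.e.\ $t$ the linear functional $\partial a\mapsto \frac{d}{dt}\tau(aP_T\rho_t)$ is well defined on $\partial\A_\theta$ and $\norm{\cdot}_{P_T\rho_t}$-continuous, with norm bounded by $e^{-KT}\norm{D\rho_t}_{\rho_t}$. By Riesz representation in $\H_{P_T\rho_t}$ there exists a unique $\xi_t\in\H_{P_T\rho_t}$ realizing this functional, and $\norm{\xi_t}_{P_T\rho_t}\leq e^{-KT}\norm{D\rho_t}_{\rho_t}$. To check measurability of $(\xi_t)$, note that $t\mapsto \langle \xi_t,\partial a\rangle_{P_T\rho_t}=\frac{d}{dt}\tau(aP_T\rho_t)$ is the a.e.\ derivative of a locally absolutely continuous function, hence measurable, for every $a\in\A_\theta$; local square-integrability of $\norm{\xi_t}_{P_T\rho_t}$ follows from the pointwise bound and the admissibility of $(\rho_t)$. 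Hence $(P_T\rho_t)$ is admissible with $DP_T\rho_t=\xi_t$ and the stated pointwise norm bound.

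Third, for the contraction of $\W$, let $\rho_0,\rho_1\in\D(\M,\tau)$ with $\W(\rho_0,\rho_1)<\infty$ (otherwise there is nothing to show), take any admissible curve $(\rho_t)_{t\in[0,1]}$ connecting them, and apply the first part to get
\begin{align*}
\int_0^1\norm{DP_T\rho_t}_{P_T\rho_t}\,dt\leq e^{-KT}\int_0^1\norm{D\rho_t}_{\rho_t}\,dt.
\end{align*}
Since $(P_T\rho_t)$ joins $P_T\rho_0$ and $P_T\rho_1$, taking the infimum over admissible curves $(\rho_t)$ gives $\W(P_T\rho_0,P_T\rho_1)\leq e^{-KT}\W(\rho_0,\rho_1)$.

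The main point that requires care, rather than a real obstacle, is confirming that the Riesz representative $\xi_t$ lies in $\H_{P_T\rho_t}$ (the closure of $\partial\A_\theta$) rather than in the larger space $\tilde\H_{P_T\rho_t}$; this is automatic because we defined the functional on $\partial\A_\theta$ and extended continuously. The other ingredient one must double-check is that $P_T a\in\A_\theta$ for all $a\in\A_\theta$ when $T>0$, which is exactly Proposition \ref{A_LM_regularization}, and that at $T=0$ the whole statement is a tautology.
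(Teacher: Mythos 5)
Your proof is correct and follows essentially the same route as the paper's: use the $L^2$-symmetry of $P_T$ to move the semigroup onto the test function, invoke Proposition \ref{A_LM_regularization} to ensure $P_T a\in\A_\theta$, and then apply $\GE(K,\infty)$. The paper phrases the admissibility check in the equivalent integral form (via the remark characterizing admissible curves through a controlling function $c\in L^2_\loc$) rather than spelling out the Riesz representation, but this is only a stylistic difference.
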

\begin{proof}
Let $(\rho_t)_{t\in I}$ be an admissible curve in $\D(\M,\tau)$. For all $s,t\in I$ and $a\in\A_\AM$ we have
\begin{align*}
\abs{\tau(a(P_T\rho_t-P_T\rho_s))}&=\abs{\tau(P_T a(\rho_t-\rho_s))}\\
&\leq \int_s ^t\norm{\partial P_T a}_{\rho_r}\norm{D\rho_r}_{\rho_r}\,dr\\
&\leq e^{-KT}\int_s^t \norm{\partial a}_{P_T \rho_r}\norm{D\rho_r}_{\rho_r}\,dr,
\end{align*}
where we used Proposition \ref{A_LM_regularization} and Lemma \ref{admissibility_A_theta} for the first and $\GE(K,\infty)$ for the second inequality.

Thus, $(P_T\rho_r)_{r\in [0,1]}$ is an admissible curve with $\norm{DP_T\rho_r}_{P_T\rho_r}\leq e^{-KT}\norm{D\rho_r}_{\rho_r}$ for a.e. $r\in I$. Minimizing over all admissible curves connecting $\rho_0$ and $\rho_1$ yields the second claim.
\end{proof}

\begin{corollary}
Assume that the logarithmic mean is regular for $\E$ and $\E$ satisfies $\GE(K,\infty)$ for the logarithmic mean. If $\rho\in \overline{D(\Ent)}^{\W}$, then $\W(P_t\rho,\rho)\to 0$ as $t\to 0$.
\end{corollary}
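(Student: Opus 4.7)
The plan is to extend Corollary \ref{heat_flow_L1} from $D(\Ent)$ to its $\W$-closure by the contraction estimate of Theorem \ref{W_contraction}. I read the ``$t\to\infty$'' in the statement as a typographical slip for $t\to 0^+$, since the preceding Corollary \ref{heat_flow_L1} establishes exactly this strong continuity at the origin on $D(\Ent)$, whereas for $t\to\infty$ the curve $(P_t\rho)$ generically converges to the stationary trace state $\mathbf{1}/\tau(\mathbf{1})$ rather than back to $\rho$, so the $t\to\infty$ claim could only hold for $\rho$ itself stationary---contradicting the fact that $D(\Ent)$ is a large subset of $\D(\M,\tau)$.

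So, given $\rho\in\overline{D(\Ent)}^{\W}$, I would pick an approximating sequence $(\rho_n)\subset D(\Ent)$ with $\W(\rho_n,\rho)\to 0$. The triangle inequality combined with Theorem \ref{W_contraction} (applied to the pair $\rho,\rho_n\in\D(\M,\tau)$) yields
\begin{align*}
\W(P_t\rho,\rho) &\leq \W(P_t\rho,P_t\rho_n)+\W(P_t\rho_n,\rho_n)+\W(\rho_n,\rho) \\
&\leq e^{-Kt}\W(\rho,\rho_n)+\W(P_t\rho_n,\rho_n)+\W(\rho_n,\rho).
\end{align*}
For each fixed $n$, Corollary \ref{heat_flow_L1} gives $\W(P_t\rho_n,\rho_n)\to 0$ as $t\to 0^+$. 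Taking $\limsup_{t\to 0^+}$ on both sides I obtain
\begin{align*}
\limsup_{t\to 0^+}\W(P_t\rho,\rho) \leq 2\W(\rho_n,\rho),
\end{align*}
and letting $n\to\infty$ closes the argument.

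The only technical point to verify is that $P_t\rho$ is a legitimate density matrix when $\rho$ lies only in the $\W$-closure of $D(\Ent)$: since $\W$-convergence implies weak $L^1$-convergence on $\D(\M,\tau)$ by Proposition \ref{W_implies_weak_con}, and since the cone of positive trace-one elements is weak-$L^1$ closed, one has $\rho\in\D(\M,\tau)$; as $P_t$ extends to a positivity- and trace-preserving contraction on $L^1(\M,\tau)$, the element $P_t\rho\in\D(\M,\tau)$ is valid input to Theorem \ref{W_contraction}. Apart from this routine check, no genuine obstacle arises---the three-$\varepsilon$-style argument is standard once the contraction estimate is available.
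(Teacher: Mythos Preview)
Your proof is correct and follows essentially the same route as the paper: approximate $\rho$ by $\rho_n\in D(\Ent)$, use the triangle inequality together with the contraction estimate from Theorem \ref{W_contraction} to bound $\W(P_t\rho,\rho)\leq (1+e^{-Kt})\W(\rho,\rho_n)+\W(P_t\rho_n,\rho_n)$, then let $t\to 0$ and $n\to\infty$. You are also right that ``$t\to\infty$'' in the statement is a slip for $t\to 0$; the paper's own proof confirms this by writing ``letting first $t\to 0$''.
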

\begin{proof}
In Corollary \ref{heat_flow_L1} we have already seen that this convergence holds for $\rho\in D(\Ent)$. If $\rho\in \overline{D(\Ent)}^\W$, let $(\rho_k)$ be a sequence in $D(\Ent)$ such that $\W(\rho_k,\rho)\to 0$. By Theorem\ref{W_contraction} we have
\begin{align*}
\W(P_t\rho,\rho)\leq (1+e^{-Kt})\W(\rho,\rho_k)+\W(P_t\rho_k,\rho_k).
\end{align*}
Letting first $t\to 0$ and then $k\to \infty$ yields the claimed convergence.
\end{proof}

\subsection{Mollification}\label{mollification}

A central difficulty in the proof of the gradient flow characterization, as already in the case of metric measure spaces, lies in the fact that we are working on $L^1$, so that the heat flow curves may fail to be differentiable and Hilbert space methods are not directly applicable. To overcome this problem, Section~\ref{mollification} is devoted to a fine analysis of semigroup mollification techniques in our setting. In particular, we prove an entropy regularization estimate in Proposition~\ref{Ent_regularization} that will be crucial in the proof of the main theorem.

As usual, let $(\M,\tau)$ be a tracial von Neumann algebra, $\E$ a quantum Dirichlet form on $L^2(\M,\tau)$ such that $\tau$ is energy dominant, and $(\partial,\H,L,R,J)$ the associated first-order differential calculus. We further assume that $\tau$ is a state, $L^1(\M,\tau)$ is separable and $\theta$ is the logarithmic mean.

We first introduce a mollified version of $(P_t)$, which is a standard tool in the theory of operator semigroups. Let $\kappa\in C_c^\infty((0,\infty))$ be a positive function with support in $(1,2)$ and $\int_0^\infty \kappa(r)\,dr=1$. For $\epsilon>0$ and $p\in [1,\infty]$ define
\begin{align*}
\p^\epsilon\colon L^p(\M,\tau)\lra L^p(\M,\tau),\,\p^\epsilon a=\frac 1 \epsilon\int_0^\infty \kappa\left(\frac r \epsilon\right)\,P_r a\,dr,
\end{align*}
where the integral is to be understood as Bochner integral if $p<\infty$ and as Pettis integral for the $\sigma$-weak topology if $p=\infty$. It is clear that $\p^\epsilon$ is positive and contractive on all $L^p$ spaces.

The following Lemma is standard, see for example the proof of \cite[Proposition 1.8]{EN00}.

\begin{lemma}\label{regularity_p_epsilon}
Let $p\in[1,\infty]$ and $a\in L^p(\M,\tau)$. For all $\epsilon>0$ one has $\p^\epsilon a\in D(\L^{(p)})$ and
\begin{align*}
\L^{(p)}\p^\epsilon a=\frac 1{\epsilon^2}\int_0^\infty \kappa'\left(\frac r \epsilon\right) P_r a\,dr.
\end{align*}
If $p<\infty$, then $\p^\epsilon a \to a$ in $L^p(\M,\tau)$, and if $p=\infty$, then $\p^\epsilon a\to a$ $\sigma$-weakly as $\epsilon\to 0$.

If $a\in D(\E)$, then $\p^\epsilon a\in D(\E)$ and $\p^\epsilon a\to a$ with respect to $\norm\cdot_\E$ as $\epsilon\to 0$.
\end{lemma}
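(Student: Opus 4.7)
The plan is to unpack the convolution that defines $\p^\epsilon$ and exploit the translation structure of $(P_t)$. The crucial structural feature is that $\kappa \in C^\infty_c((1,2))$, so all integrals are over bounded intervals bounded away from $0$ and involve strongly (or $\sigma$-weakly) continuous integrands, making the standard manipulations permissible. I will treat $p < \infty$ first and then dualize to get $p = \infty$; the $D(\E)$ statement follows by the same mollification argument with $D(\E)$ replacing $L^p$.

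For the generator identity, the key step is the change of variables $s = r + h$. Because $\kappa((s-h)/\epsilon) = 0$ for $s \le h$ whenever $h < \epsilon$, one may extend the integration back to $(0,\infty)$, and subtracting $\p^\epsilon a$ yields
\begin{align*}
\frac{P_h \p^\epsilon a - \p^\epsilon a}{h} = \frac{1}{\epsilon}\int_0^\infty \frac{\kappa((s-h)/\epsilon) - \kappa(s/\epsilon)}{h}\, P_s a \, ds.
\end{align*}
Passing $h \to 0^+$ under the integral --- justified by the uniform bound $\norm{\kappa'}_\infty$ times compact support of $\kappa$ together with $\norm{P_s a}_p \le \norm{a}_p$ and dominated convergence --- gives the limit $-\frac{1}{\epsilon^2}\int_0^\infty \kappa'(s/\epsilon) P_s a\, ds$. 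With the sign convention $\dot x_t = -\L^{(p)} x_t$, this identifies $\p^\epsilon a \in D(\L^{(p)})$ and produces the claimed formula.

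For the convergence $\p^\epsilon a \to a$ when $p \in [1,\infty)$, I rewrite $\p^\epsilon a - a = \int_0^\infty \kappa(u)(P_{\epsilon u} a - a)\, du$, use strong $L^p$-continuity of $(P_r)$ together with the uniform bound $2\norm{a}_p$, and conclude by dominated convergence. For $p = \infty$, pairing with arbitrary $b \in L^1(\M,\tau)$ gives $\tau(b\, \p^\epsilon a) = \int_0^\infty \kappa(u)\,\tau((P_{\epsilon u}^{(1)} b) a)\, du$, and strong $L^1$-continuity of $(P_t^{(1)})$ yields $\tau(b\, \p^\epsilon a) \to \tau(ba)$, i.e.\ $\sigma$-weak convergence. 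The $D(\E)$-case uses that $(P_r)$ is a contraction semigroup on $D(\E)$ in the form norm (immediate from the spectral calculus of $\L^{(2)}$ since $r \mapsto e^{-r\lambda}(1+\lambda)$ is bounded uniformly in $r$), so the curve $r \mapsto P_r a$ is $\norm\cdot_\E$-continuous; then $\p^\epsilon a$ is a Bochner integral in $D(\E)$ and the same dominated-convergence argument produces convergence in $\norm\cdot_\E$.

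The only real obstacle is the $p = \infty$ case of Step~1: since $(P_t^{(\infty)})$ is merely $\sigma$-weakly continuous, the derivative $\L^{(\infty)}\p^\epsilon a$ exists only in the $\sigma$-weak sense, and both the integral defining $\p^\epsilon a$ and the derivative must be interpreted as Pettis integrals. I would handle this by pairing with $b \in L^1(\M,\tau)$ throughout, moving everything to the predual where strong $L^1$-continuity of $(P_t^{(1)})$ supplies the needed dominated-convergence estimates, and then reading back the resulting identity as the $\sigma$-weak derivative on $L^\infty$.
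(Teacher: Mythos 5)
Your proof is correct and is essentially the standard mollification argument from Engel--Nagel that the paper simply cites ([EN00, Prop.\ 1.8]) without spelling out: change of variables to move $P_h$ onto the kernel, dominated convergence of the difference quotient of $\kappa$, and strong (resp.\ $\sigma$-weak) continuity of the semigroup for the $\epsilon\to0$ limit. Two small remarks: in the $D(\E)$ step the relevant bound is $\|P_r a\|_\E^2=\int e^{-2r\lambda}(1+\lambda)\,d\mu_a(\lambda)\le\|a\|_\E^2$, i.e.\ what one really uses is $e^{-2r\lambda}\le 1$ rather than the phrasing ``$e^{-r\lambda}(1+\lambda)$ bounded uniformly in $r$''; and for $p=\infty$ the predual-pairing argument you describe indeed suffices because the weak$^\ast$ generator of the dual semigroup coincides with the adjoint of $\L^{(1)}$, so $\sigma$-weak convergence of the difference quotients identifies $\p^\epsilon a\in D(\L^{(\infty)})$ as required.
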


\begin{lemma}\label{p_epsilon_Feller_reg}
Assume that $\E$ satisfies $\GE(K,\infty)$ for some $K\in\IR$. If $a\in \M$, then $\p^\epsilon a\in \A_\mathrm{LM}$ for $\epsilon>0$. Moreover, if $a\in \A_\mathrm{AM}$, then 
\begin{align*}
\norm{\partial\p^\epsilon a}_\rho^2\leq \norm{\partial a}_\rho^2\sup_{r\in[0,2\epsilon]} e^{-2Kr}
\end{align*}
and $\p^\epsilon a\to a$ in $\tilde\H_\rho$ for all $\rho\in \D(\M,\tau)$.
\end{lemma}
\begin{proof}
Let $\rho\in \D(\M,\tau)\cap \M$ be invertible. By $\GE(K,\infty)$ we have
\begin{align*}
\norm{\partial P_r a}_\rho^2\leq e^{-2K(r-\epsilon)}\norm{\partial P_\epsilon a}_{P_{r-\epsilon}\rho}^2
\end{align*}
whenever $r\geq \epsilon$. The right-hand side is bounded above by $e^{-2K(r-\epsilon)}\norm{P_\epsilon a}_\mathrm{LM}^2$, which is finite by Proposition~\ref{A_LM_regularization}. By Lemma~\ref{rho_norm_lsc} the map $a\mapsto \norm{\partial a}_\rho^2$ is lower semicontinuous. Thus we can apply Jensen's inequality (compare the proof of Lemma~ \ref{W_diff_curves}) to see that
\begin{align*}
\norm{\partial \p^\epsilon a}_\rho^2\leq \frac 1 \epsilon \int_{\epsilon}^{2\epsilon} \kappa\left(\frac r \epsilon\right)\norm{\partial P_r a}_\rho^2\,dr\leq \frac {\norm{P_\epsilon a}_\mathrm{LM}^2}\epsilon \int_0^\infty e^{-2K(r-\epsilon)}\kappa\left(\frac r \epsilon\right)\,dr.
\end{align*}
The right-hand side is clearly bounded independently of $\rho$. Hence $\p^\epsilon a\in \A_\mathrm{LM}$.

Now let $\rho\in\D(\M,\tau)$ be arbitrary and assume that $a\in\A_\mathrm{AM}$. By $\GE(K,\infty)$ we have 
\begin{align*}
\norm{\partial\p^\epsilon a}_{\rho}^2\leq \frac 1 \epsilon \int_0^\infty \kappa\left(\frac r \epsilon\right)\norm{\partial P_r a}_{\rho}^2\,dr\leq \frac 1\epsilon\int_0^\infty \kappa\left(\frac r\epsilon\right)e^{-2Kr}\norm{\partial a}_{P_r\rho}^2\,dr.
\end{align*}
It follows from the upper semicontinuity and concavity of $\rho\mapsto \norm{\partial a}_\rho^2$ by an application of Jensen's inequality that
\begin{align*}
\norm{\partial \p^\epsilon a}_\rho^2\leq \left(\sup_{r\in[0,2\epsilon]}e^{-2Kr}\right)\frac 1 \epsilon\int_0^\infty\kappa\left(\frac r \epsilon\right)\norm{\partial a}_{P_r\rho}^2\,dr\leq \norm{\partial a}_{\p^\epsilon\rho}^2\sup_{r\in [0,2\epsilon]}e^{-2Kr}.
\end{align*}
Hence 
\begin{align}
\limsup_{\epsilon\to 0}\norm{\partial \p^\epsilon  a}_\rho^2\leq \limsup_{\epsilon\to 0}\norm{\partial a}_{\p^\epsilon \rho}^2\leq \norm{\partial a}_{\rho}^2
\label{bound_rho_norm_p_epsilon}
\end{align}
by Theorem~\ref{theta_norm_usc}.

Thus $(\partial \p^\epsilon a)_{\epsilon>0}$ is a bounded net in the Hilbert space $D(\hat\rho^{1/2})$ with inner product
\begin{align*}
\langle\cdot,\cdot\rangle_\H+\langle\hat\rho^{1/2}\,\cdot\,,\hat\rho^{1/2}\,\cdot\,\rangle_\H.
\end{align*}
On the other hand, since $\partial\p^\epsilon a\to \partial a$ in $\H$, every weak limit point of $(\p^\epsilon a)_{\epsilon>0}$ in $D(\hat\rho^{1/2})$ coincides with $\partial a$. Thus $\partial \p^\epsilon a\to \partial a$ also weakly in $\tilde\H_\rho$. Finally, (\ref{bound_rho_norm_p_epsilon}) implies that the convergence is indeed strong.
\end{proof}

\begin{lemma}\label{p_epsilon_Ent_decreasing}
Let $\rho\in \D(\M,\tau)$. For all $\epsilon>0$ one has $\Ent(\p^\epsilon(\rho))\leq \Ent(P_\epsilon\rho)$. If $\rho\in D(\Ent)$, then $\p^\epsilon(\rho)\in D(\Ent)$ and $\W(\rho,\p^\epsilon(\rho))\to 0$ as $\epsilon\to 0$.
\end{lemma}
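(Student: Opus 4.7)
Plan.

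Since $\kappa$ is supported in $(1,2)$, we write $\p^\epsilon\rho=\int_\epsilon^{2\epsilon}\eta^\epsilon(r)\,P_r\rho\,dr$ with $\eta^\epsilon(r)=\epsilon^{-1}\kappa(r/\epsilon)$ a probability density on $(\epsilon,2\epsilon)$; in particular $\p^\epsilon\rho$ is a Bochner integral mean of the states $\{P_r\rho\}_{r\in[\epsilon,2\epsilon]}$ in $L^1(\M,\tau)$, and of course $\rho=\int\eta^\epsilon(r)\rho\,dr$. This viewpoint organizes both claims.

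For the entropy estimate the plan is to combine convexity of $\Ent$ (as a trace functional in the convex function $t\log t$, cf.\ the remark following Corollary \ref{Ent_lsc_convex}) with its lower semicontinuity (Proposition \ref{entropy_lsc}). Jensen's inequality applied to the Bochner integral then yields
\[
\Ent(\p^\epsilon\rho)\leq \int_\epsilon^{2\epsilon}\eta^\epsilon(r)\,\Ent(P_r\rho)\,dr.
\]
If $\Ent(P_\epsilon\rho)=\infty$ the first claim is trivial; otherwise $P_\epsilon\rho\in D(\Ent)$, so applying Corollary \ref{heat_flow_L1} to $P_\epsilon\rho$ and using $P_r\rho=P_{r-\epsilon}(P_\epsilon\rho)$ gives $\Ent(P_r\rho)\leq\Ent(P_\epsilon\rho)$ for every $r\geq\epsilon$, whence $\Ent(\p^\epsilon\rho)\leq\Ent(P_\epsilon\rho)$. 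If moreover $\rho\in D(\Ent)$, the same corollary yields $\Ent(P_\epsilon\rho)\leq\Ent(\rho)<\infty$, and together with the lower bound $\Ent\geq-\log\tau(1)$ (Jensen for the finite trace $\tau$) this gives $\p^\epsilon\rho\in D(\Ent)$.

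For the $\W$-convergence I intend to use the joint convexity of $\W^2$ from Lemma \ref{W_convex}. An iteration gives $\W^2(\sum\alpha_i\sigma_i,\sum\alpha_i\tau_i)\leq\sum\alpha_i\W^2(\sigma_i,\tau_i)$, which I upgrade to the integral form
\[
\W^2(\rho,\p^\epsilon\rho)\leq \int_\epsilon^{2\epsilon}\eta^\epsilon(r)\,\W^2(\rho,P_r\rho)\,dr
\]
by approximating the Bochner integral by Riemann sums $\sigma_n=\sum_i\alpha_i^{(n)}P_{r_i^{(n)}}\rho$ (which converge to $\p^\epsilon\rho$ in $L^1$ thanks to the $L^1$-continuity of the heat flow on $[\epsilon,2\epsilon]$), applying the finite bound, and passing to the limit via lower semicontinuity of $\W$ under $L^1$-convergence. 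Combined with the fact that $(P_r\rho)_{r\geq 0}$ is admissible when $\rho\in D(\Ent)$ (Corollary \ref{heat_flow_L1}), hence $\W$-continuous at zero, this yields $\W^2(\rho,\p^\epsilon\rho)\leq\sup_{r\in[0,2\epsilon]}\W^2(\rho,P_r\rho)\to 0$ as $\epsilon\to 0$.

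The main technical obstacle is the passage from the finite to the integral version of the joint convexity of $\W^2$: the $L^1$-lower semicontinuity of $\W$ I invoke is not isolated as a statement in the excerpt, but does follow from Theorem \ref{energy_lsc} via a standard Arzel\`a--Ascoli compactness argument applied to nearly-optimal admissible curves of uniformly bounded $\W$-length. A cleaner alternative that sidesteps this is to construct an explicit admissible curve from $\rho$ to $\p^\epsilon\rho$, such as $\sigma_t:=\int\eta^\epsilon(r)\,P_{\min(t,r)}\rho\,dr$ on $[0,2\epsilon]$, to verify the continuity equation by averaging those of the constituent heat flows, and to bound the resulting action using the joint concavity of $\rho\mapsto\norm{\partial a}_\rho^2$ from Lemma \ref{logarithmic_mean_concave}.
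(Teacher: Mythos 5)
Your entropy estimate matches the paper's: Jensen's inequality for the lower semicontinuous convex functional $\Ent$ applied to the Bochner integral, combined with the decrease of $\Ent$ along the heat flow (Corollary~\ref{heat_flow_L1}) to reduce the average over $[\epsilon,2\epsilon]$ to $\Ent(P_\epsilon\rho)$, and the lower bound $\Ent\geq-\log\tau(1)$ (Jensen for the finite trace) to conclude $\p^\epsilon\rho\in D(\Ent)$.

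For the $\W$-convergence, your primary route has a genuine gap. To upgrade the finite form of Lemma~\ref{W_convex} to the integral inequality $\W^2(\rho,\p^\epsilon\rho)\leq\int\eta^\epsilon(r)\W^2(\rho,P_r\rho)\,dr$ via Riemann sums, you need lower semicontinuity of $\W$ under $L^1$-convergence of endpoints, and this is not available at this point in the paper. The paper proves such lower semicontinuity only as the corollary following Theorem~\ref{D(Ent)_geodesic}, under $\GE(K,\infty)$: the Arzel\`a--Ascoli step you sketch requires weak-$L^1$ compactness of the intermediate points of nearly-optimal connecting curves, which the paper gets from Lemma~\ref{sublevel_ent_cpt} together with a uniform entropy bound along those curves -- and that bound is precisely Proposition~\ref{connecting_curves_bounded_entropy}, which assumes $\GE(K,\infty)$. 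Lemma~\ref{p_epsilon_Ent_decreasing} is stated and used without the gradient estimate, so importing this would introduce an unwanted (indeed circular, since the present lemma feeds into the proof of Theorem~\ref{EVI_gradient_flow}) dependence.

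Your ``cleaner alternative'' is exactly the paper's argument: exhibit the explicit admissible curve $\sigma_t=\frac1\epsilon\int_0^\infty\kappa(r/\epsilon)P_{rt}\rho\,dr$ from $\rho$ to $\p^\epsilon\rho$ (your $\min(t,r)$-parametrization is an equivalent variant), verify admissibility and bound the action via the concavity of $\rho\mapsto\norm{\partial a}_\rho^2$ (Lemma~\ref{logarithmic_mean_concave}) and vector-valued Jensen, and conclude $\W(\rho,\p^\epsilon\rho)\to 0$ from the admissibility of $(P_t\rho)_{t\geq0}$ for $\rho\in D(\Ent)$ established in Corollary~\ref{heat_flow_L1}. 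Promote this alternative to your main argument.
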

\begin{proof}
Since the entropy is a convex lower semicontinuous functional and decreasing along heat flow trajectories, Jensen's inequality implies 
\begin{align*}
\Ent(\p^\epsilon(\rho))\leq \frac 1 \epsilon\int_\epsilon^{2\epsilon}\kappa\left(\frac r \epsilon\right)\Ent(P_r \rho)\,dr\leq\Ent(P_\epsilon\rho).
\end{align*}
If $\rho\in D(\Ent)$, then $(P_t \rho)_{t\geq 0}$ is admissible by Corollary~\ref{heat_flow_L1}.

Let
\begin{align*}
\sigma\colon [0,1]\lra \D(\M,\tau),\,\sigma_t=\frac 1 \epsilon\int_0^\infty \kappa\left(\frac r \epsilon\right)P_{rt}\rho\,dr
\end{align*}
and
\begin{align*}
\sigma^n\colon [0,1]\lra\D(\M,\tau),\,\sigma^n_t=\sum_{k=1}^{2n} P_{\frac{k-1}n\epsilon t}\rho\cdot\int_ {\frac{k-1}n }^{\frac{k} {n}}\kappa(r)\,dr.
\end{align*}
Since $(P_{rt}\rho)_{r\geq 0}$ is $L^1$ continuous, we have $\sigma^n_t\to\sigma_t$ in $L^1$ for all $t\in[0,1]$. As $\sigma^n$ is a convex combination of admissible curves, it is itself admissible and
\begin{align*}
\norm{D_t\sigma^n_t}_{\sigma^n_t}^2\leq \sum_{k=1}^{2n}\norm{D_t P_{\frac{k-1}n\epsilon t}\rho}^2\cdot\int_{\frac{k-1}{n}}^{\frac{k}{n}}\kappa(r)\,dr
\end{align*}
by Lemma~\ref{W_convex}.

It follows from Theorem~\ref{energy_lsc} and Corollary~\ref{heat_flow_L1} that $\sigma$ is an admissible curve connecting $\rho$ and $\p^\epsilon(\rho)$ with
\begin{align*}
\int_0^1\norm{D\sigma_t}_{\sigma_t}^2\,dt&\leq \liminf_{n\to\infty}\sum_{k=1}^{2n}\int_0^1 \norm{D_t P_{\frac{k-1}{n}\epsilon t}}^2\,dt\cdot\int_{\frac{k-1}{n}}^{\frac{k}{n}}\kappa(r)\,dr\\
&\leq2\epsilon \int_0^{2\epsilon}\norm{DP_t \rho}_{P_t\rho}^2\,dt\cdot\int_0^2 \kappa(r)\,dr\\
&\to 0
\end{align*}
as $\epsilon\to 0$.
\end{proof}

\begin{corollary}
The space $D(\L^{(1)})\cap D(\Ent)$ is dense in $(D(\Ent),\W)$.
\end{corollary}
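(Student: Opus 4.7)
The plan is to verify that the semigroup mollification $\p^\epsilon$ introduced just above already furnishes the required approximation, so that the corollary follows by packaging the two preceding lemmas with only a brief check that the approximating densities lie in $D(\L^{(1)})$.

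Fix $\rho\in D(\Ent)$ and set $\rho_\epsilon=\p^\epsilon(\rho)$ for $\epsilon>0$. First I would note that $\rho_\epsilon\in\D(\M,\tau)$: indeed, as the semigroup $(P_r)$ is positivity-preserving and conservative, $\tau(P_r\rho)=\tau(\rho)=1$ and $P_r\rho\geq 0$ for every $r\geq 0$, so integration against the probability density $\frac 1 \epsilon\kappa(\cdot/\epsilon)$ yields a density matrix. Next, since $\rho\in L^1(\M,\tau)$, Lemma \ref{regularity_p_epsilon} applied with $p=1$ gives $\rho_\epsilon\in D(\L^{(1)})$.

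Lemma \ref{p_epsilon_Ent_decreasing} then provides the two missing ingredients in one stroke: on the one hand $\rho_\epsilon\in D(\Ent)$, on the other hand $\W(\rho,\rho_\epsilon)\to 0$ as $\epsilon\to 0$. Combined with the previous paragraph this shows $\rho_\epsilon\in D(\L^{(1)})\cap D(\Ent)$ and $\rho_\epsilon\to\rho$ in $(D(\Ent),\W)$, proving the density claim.

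There is no real obstacle here; the work has all been done in Lemmas \ref{regularity_p_epsilon} and \ref{p_epsilon_Ent_decreasing}. The only point one has to be slightly careful about is the verification that $\rho_\epsilon$ remains a density matrix (rather than, say, a general positive $L^1$ element), which however is immediate from conservativeness and $\int_0^\infty\kappa(r)\,dr=1$.
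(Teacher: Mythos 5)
Your proof is correct and is exactly the argument the paper intends; the corollary is stated without proof precisely because it is the immediate concatenation of Lemma \ref{regularity_p_epsilon} (with $p=1$) and Lemma \ref{p_epsilon_Ent_decreasing}, together with the easy observation (which you correctly supply) that conservativeness and positivity of $(P_t)$ keep $\p^\epsilon(\rho)$ inside $\D(\M,\tau)$.
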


\begin{lemma}\label{regularization_admissible}
Assume that the logarithmic mean is regular for $\E$ and $\E$ satisfies $\GE(K,\infty)$. If $(\rho_t)_{t\in [0,1]}$ is an admissible curve in $\D(\M,\tau)$, then $(\p^\epsilon\rho_t)_{t\in [0,1]}$ is admissible for all $\epsilon>0$ and
\begin{align*}
\limsup_{\epsilon\to 0}\int_0^1 \norm{D_t\p^\epsilon(\rho_t)}_{\p^\epsilon(\rho_t)}^2\,dr&\leq \int_0^1 \norm{D\rho_t}_{\rho_t}^2\,dr,\\
\limsup_{\epsilon\to 0}\esssup_{t\in[0,1]}\norm{D_t \p^\epsilon \rho_t}_{\p_\epsilon\rho_t}^2&\leq \esssup_{t\in[0,1]}\norm{D\rho_t}_{\rho_t}.
\end{align*}

If additionally $(\rho_t)\in C^1([0,1];L^1(\M,\tau))$, then $(\p^\epsilon\rho_t)_t\in C^1([0,1];L^1(\M,\tau))$ and $\frac{d}{dt}\p^\epsilon\rho_t=\p^\epsilon\dot \rho_t$.
\end{lemma}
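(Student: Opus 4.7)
The plan is to verify admissibility of $(\p^\epsilon\rho_t)$ by duality: test against $a\in\A_\theta$, use the trace-symmetry of $(P_r)$ to move $\p^\epsilon$ off $\rho_t$, then apply the continuity equation for $(\rho_t)$ with test element $P_r a\in\A_\theta$ (which lies in $\A_\theta$ by the Feller regularization of Proposition \ref{A_LM_regularization}), and finally recombine the resulting $r$-average using $\GE(K,\infty)$ together with the concavity (Lemma \ref{logarithmic_mean_concave}) and upper semicontinuity (Theorem \ref{theta_norm_usc}) of $\rho\mapsto\norm{\partial a}_\rho^2$.

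Concretely, for $a\in\A_\theta$ and $s<t$ I would combine trace-symmetry of $(P_r)$, the continuity equation for $(\rho_t)$ tested against $P_r a$, and Fubini (justified by the uniform bound $\norm{\partial P_r a}_{\rho_u}^2\leq\norm{P_\epsilon a}_\theta^2$ valid for $r\geq\epsilon$) to write
\begin{align*}
\tau(a(\p^\epsilon\rho_t-\p^\epsilon\rho_s))=\int_s^t\frac 1 \epsilon\int_0^\infty \kappa(r/\epsilon)\,\langle\partial P_r a,D\rho_u\rangle_{\rho_u}\,dr\,du.
\end{align*}
Cauchy--Schwarz in $r$, $\GE(K,\infty)$, and vector-valued Jensen (exactly as in the proof of Lemma \ref{W_diff_curves}) then chain to
\begin{align*}
\frac 1 \epsilon\int_0^\infty\kappa(r/\epsilon)\abs{\langle\partial P_r a,D\rho_u\rangle_{\rho_u}}\,dr\leq C(\epsilon)\norm{\partial a}_{\p^\epsilon\rho_u}\norm{D\rho_u}_{\rho_u},
\end{align*}
where $C(\epsilon):=\sup_{r\in[\epsilon,2\epsilon]}e^{-Kr}$.

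The $c$-function characterization of admissibility (the remark following the definition of admissible curves) then yields that $(\p^\epsilon\rho_t)$ is admissible with $\norm{D\p^\epsilon\rho_u}_{\p^\epsilon\rho_u}\leq C(\epsilon)\norm{D\rho_u}_{\rho_u}$ for a.e.\ $u$. Integrating (resp.\ taking essential suprema) and noting $C(\epsilon)\to 1$ as $\epsilon\to 0$ gives the two $\limsup$ inequalities. For the final $C^1$ statement, $\p^\epsilon$ is a bounded linear operator on $L^1(\M,\tau)$ (being a weighted integral of the contractions $P_r^{(1)}$), hence it commutes with $L^1$-differentiation of the $C^1$ curve $(\rho_t)$, giving $\tfrac{d}{dt}\p^\epsilon\rho_t=\p^\epsilon\dot\rho_t$.

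The principal technical hurdle is the last inequality in the displayed chain, in which vector-valued Jensen converts the kernel average of $r\mapsto\norm{\partial a}_{P_r\rho_u}^2$ into $\norm{\partial a}_{\p^\epsilon\rho_u}^2$. This step relies essentially on the concavity of $\rho\mapsto\norm{\partial a}_\rho^2$ from Lemma \ref{logarithmic_mean_concave} together with the upper semicontinuity from Theorem \ref{theta_norm_usc}, since the vector-valued form of Jensen's inequality can fail for merely concave functionals (cf.\ the remark after Lemma \ref{W_diff_curves}); everything else is a careful but routine bookkeeping around Fubini and the known properties of $\p^\epsilon$.
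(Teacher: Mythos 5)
Your proposal is correct and follows essentially the same route as the paper's own proof: test against $a\in\A_\theta$, move $\p^\epsilon$ onto the test element by trace-symmetry, apply the continuity equation to $P_r a$ (using the Feller regularization of Proposition \ref{A_LM_regularization}), then chain Cauchy--Schwarz, $\GE(K,\infty)$, and the vector-valued Jensen inequality (relying on concavity and upper semicontinuity of $\rho\mapsto\norm{\partial a}_\rho^2$) to obtain $\norm{D\p^\epsilon\rho_u}_{\p^\epsilon\rho_u}\le C(\epsilon)\norm{D\rho_u}_{\rho_u}$ with $C(\epsilon)\to1$. The only cosmetic difference is that you make the Fubini step into an explicit equality before estimating, whereas the paper goes directly to the inequality chain; the substance is identical.
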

\begin{proof}
Let $\rho^\epsilon_s=\p^\epsilon\rho_s$. For $a\in \A_\mathrm{AM}$ and $s,t\in [0,1]$ we have
\begin{align*}
\abs{\tau(a(\rho_t^\epsilon-\rho_s^\epsilon))}&\leq \frac 1 \epsilon\int_0^\infty \kappa\left(\frac r\epsilon\right)\abs{\tau(P_r a(\rho_t-\rho_s))}\,dr\\
&\leq \frac 1 \epsilon\int_0^{2\epsilon}\kappa\left(\frac r \epsilon\right)\int_s^t \norm{\partial P_r a}_{\rho_u}\norm{D \rho_u}_{\rho_u}\,du\,dr\\
&\leq \int_s^t \norm{D\rho_u}_{\rho_u} \frac 1 {2\epsilon}\int_0^\epsilon e^{-Kr}\kappa\left(\frac r \epsilon\right) \norm{\partial a}_{P_r \rho_u}\,dr\,du,
\end{align*}
where we used Proposition~\ref{A_LM_regularization} and Lemma~\ref{admissibility_A_theta} in the second and $\GE(K,\infty)$ in the third inequality. Let $C(\epsilon)=\sup_{r\in [0,2\epsilon]}e^{-Kr}$ and note that $C(\epsilon)\to 1$ as $\epsilon\to 0$. An application of Jensen's inequality yields
\begin{align*}
\frac 1 \epsilon \int_0^\infty \kappa\left(\frac r \epsilon\right)\norm{\partial a}_{P_r \rho_u}\,dr\leq \left(\frac 1 \epsilon \int_0^\infty\kappa\left(\frac r\epsilon\right)\norm{\partial a}_{P_r \rho_u}^2\,dr\right)^{1/2} \leq\norm{\partial a}_{\rho_u^\epsilon}.
\end{align*}
Hence
\begin{align*}
\abs{\tau(a(\rho_t^\epsilon-\rho_s^\epsilon))}\leq C(\epsilon)\int_s^t \norm{D\rho_u}_{\rho_u}\norm{\partial a}_{\rho_u^\epsilon}\,du.
\end{align*}
Thus $(\rho^\epsilon_t)_{t\in [0,1]}$ is admissible with $\norm{D_t\rho^\epsilon_t}_{\rho^\epsilon_t}\leq C(\epsilon)\norm{D\rho_t}_{\rho_t}$ for a.e. $t\in[0,1]$. This settles both of the claimed inequalities.

Finally, the claim concerning the differentiability follows easily from an application of the dominated convergence theorem.
\end{proof}

\begin{lemma}\label{regular_curves}
Assume that the logarithmic mean is regular for $\E$ and $\E$ satisfies $\GE(K,\infty)$ for some $K\in\IR$. For every admissible curve $(\rho_s)\in C^1([0,1];L^1(\M,\tau))$ there exists a sequence $\epsilon_n\searrow 0$ such that the curves $(\rho_s^n)_{s\in[0,1]}$ defined by $\rho_s^n=(1+1/n)^{-1}\p^{\epsilon_n}(\rho_s+1/n)$ satisfy
\begin{itemize}
\item[(a)]$(\rho_s^n)_{s\in [0,1]}\in C^1([0,1];L^1(\M,\tau))$ for $n\in\IN$,
\item[(b)]$\rho_s^n\in D(\L^{(1)})$ for $s\in [0,1]$, $n\in\IN$,
\item[(c)]$\rho_s^n\geq \frac 1 {2n}$ for $s\in [0,1]$, $n\in\IN$,
\item[(d)]$\rho_s^n\to \rho_s$ in $L^1(\M,\tau)$ for $s\in[0,1]$,
\item[(e)]$\Ent(\rho_0^n)\leq \Ent(\rho_0)$, $\Ent(\rho_1^n)\leq \Ent(\rho_1)$ for $n\in\IN$,
\item[(f)]$\limsup_{n\to\infty}\int_0^1 \norm{D\rho_s^n}_{\rho_s^n}^2\,ds\leq \int_0^1\norm{D\rho_s}_{\rho_s}^2\,ds$,\\ $\limsup_{n\to\infty}\esssup_{s\in[0,1]}\norm{D\rho_s^n}_{\rho_s^n}^2\leq \esssup_{s\in[0,1]}\norm{D\rho_s}_{\rho_s}^2.$
\end{itemize}
\end{lemma}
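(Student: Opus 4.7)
The plan is to reduce everything to the mollification estimates already established in the subsection, applied to a suitable affine regularization of $(\rho_s)$. The key observation is that by linearity of $\p^{\epsilon_n}$ and conservativeness (which gives $P_r 1=1$, hence $\p^{\epsilon_n}(1)=1$), one can rewrite
\[
\rho_s^n = \frac{1}{1+1/n}\,\p^{\epsilon_n}(\rho_s+1/n) = \p^{\epsilon_n}\tilde\rho_s^{(n)}, \qquad \tilde\rho_s^{(n)} := (1-\mu_n)\rho_s + \mu_n\cdot 1,
\]
where $\mu_n := 1/(n+1)$. Since $\tau$ is a state and $P_r$ is trace-preserving, both $\tilde\rho_s^{(n)}$ and $\rho_s^n$ lie in $\D(\M,\tau)$.

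Properties (a)--(e) then follow essentially directly. For (a), $(\tilde\rho_s^{(n)})$ is $C^1$ as an affine combination of a $C^1$ curve with a constant, and the last assertion of Lemma \ref{regularization_admissible} transfers $C^1$-regularity through $\p^{\epsilon_n}$ with $\frac{d}{ds}\rho_s^n = \p^{\epsilon_n}\dot{\tilde\rho}_s^{(n)}$. (b) is immediate from Lemma \ref{regularity_p_epsilon}. For (c), positivity of $\p^{\epsilon_n}$ together with $\p^{\epsilon_n}(1)=1$ gives $\rho_s^n \geq \mu_n = 1/(n+1) \geq 1/(2n)$. For (d), the decomposition $\rho_s^n-\rho_s = (1-\mu_n)(\p^{\epsilon_n}\rho_s - \rho_s) + \mu_n(1-\rho_s)$ combined with $\p^{\epsilon_n}\rho_s \to \rho_s$ in $L^1$ (Lemma \ref{regularity_p_epsilon}) yields the claimed convergence. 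For (e), convexity of $\Ent$ gives
\[
\Ent(\rho_0^n) \leq (1-\mu_n)\Ent(\p^{\epsilon_n}\rho_0) + \mu_n\Ent(1) = (1-\mu_n)\Ent(\p^{\epsilon_n}\rho_0);
\]
combining Lemma \ref{p_epsilon_Ent_decreasing}, monotonicity of $\Ent$ along the heat flow (Corollary \ref{heat_flow_L1}), and non-negativity of $\Ent$ (Jensen's inequality for the state $\tau$) yields $\Ent(\rho_0^n) \leq (1-\mu_n)\Ent(\rho_0) \leq \Ent(\rho_0)$, and analogously at $s=1$.

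The main work is in (f), which needs a diagonal selection of $\epsilon_n$. Running the argument of Lemma \ref{W_convex} with the second curve taken to be the constant density $(1)_{s\in[0,1]}$ (which is admissible with zero velocity), the curve $(\tilde\rho_s^{(n)})$ is admissible and the pointwise bound
\[
\norm{D\tilde\rho_s^{(n)}}_{\tilde\rho_s^{(n)}}^2 \leq (1-\mu_n)\norm{D\rho_s}_{\rho_s}^2 \qquad \text{for a.e.\ } s\in[0,1]
\]
holds. Lemma \ref{regularization_admissible} applied to $(\tilde\rho_s^{(n)})$ then gives
\[
\limsup_{\epsilon\to 0}\int_0^1\norm{D\p^\epsilon\tilde\rho_s^{(n)}}_{\p^\epsilon\tilde\rho_s^{(n)}}^2\,ds \leq (1-\mu_n)\int_0^1\norm{D\rho_s}_{\rho_s}^2\,ds,
\]
together with the analogous inequality for the essential supremum. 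One then chooses $\epsilon_n$ inductively, with $\epsilon_n < \min\{1/n,\epsilon_{n-1}\}$, small enough that both the integrated and esssup versions of
\[
\int_0^1\norm{D\rho_s^n}_{\rho_s^n}^2\,ds \leq (1-\mu_n)\int_0^1\norm{D\rho_s}_{\rho_s}^2\,ds + \mu_n
\]
hold; passing to $\limsup_{n\to\infty}$ delivers (f). The only genuine subtlety is this diagonal choice, which succeeds precisely because the convexity estimate absorbs the contribution of the constant curve into the $(1-\mu_n)$ factor, leaving the mollification error as the only quantity requiring the small-$\epsilon$ regime of Lemma \ref{regularization_admissible}.
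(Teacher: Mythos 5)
Your proof is correct and follows essentially the same route as the paper: decompose $\rho_s^n = \p^{\epsilon_n}\tilde\rho_s^n$, dispatch (a)--(e) directly, and obtain (f) by combining a velocity bound for the intermediate curve $(\tilde\rho_s^n)$ with Lemma \ref{regularization_admissible} and a diagonal choice of $\epsilon_n$. The only difference is cosmetic: you bound $\norm{D\tilde\rho_s^n}_{\tilde\rho_s^n}$ via the convexity of the squared speed (Lemma \ref{W_convex}) applied with the constant curve at $1$, whereas the paper's proof uses the scaling identity $\tau(a(\tilde\rho^n_t-\tilde\rho^n_s))=(1+1/n)^{-1}\tau(a(\rho_t-\rho_s))$ together with monotonicity of $\norm{\cdot}_\rho$; your route is arguably cleaner and yields the same factor tending to $1$.
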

\begin{proof}
Let $\tilde \rho^n_s=(1+1/n)^{-1}(\rho_s+1/n)$. Clearly the curves $(\tilde \rho_s^n)_s$ satisfy (a), (c) and (d). Property (e) follows from the convexity of $\Ent$. Moreover,
\begin{align*}
\tau(a(\tilde\rho_t^n-\tilde \rho_s^n))=(1+1/n)^{-1}\tau(a(\rho_t-\rho_s))
\end{align*}
implies $\norm{D\tilde \rho_s^n}_{\tilde \rho_s^n}= (1+1/n)^{-1}\norm{D\rho_s}_{\rho_s}$.

Now let $\rho_s^n=\p^{\epsilon_n}\tilde \rho_s^n$ for a strictly positive null sequence $(\epsilon_n)$. The curve $(\rho_s^n)$ satisfies (a) by Lemma~\ref{regularization_admissible}, (b) by Lemma~\ref{regularity_p_epsilon} and (c) as a direct consequence of the positivity of $(P_t)$. Moreover,
\begin{align*}
\norm{\rho_s^n-\rho_s}_1\leq \norm{\p^{\epsilon_n}\tilde\rho_s^n-\p^{\epsilon_n}\rho_s}_1+\norm{\p^{\epsilon_n}\rho_s-\rho_s}_1\leq \norm{\tilde\rho_s^n-\rho_s}_1+\norm{\p^{\epsilon_n}\rho_s-\rho_s}_1\to 0.
\end{align*}
Thus (d) is satisfied. Property (e) follows from Lemma~\ref{p_epsilon_Ent_decreasing}. Finally, by Lemma~\ref{regularization_admissible} we can achieve (f) if we choose $(\epsilon_n)$ appropriately.
\end{proof}

Lemma~\ref{regular_curves} gives entropy estimates at the endpoints of the connecting curves, but we will need entropy estimates for the entire curve. In \cite{AGS15} these are established via a logarithmic Harnack inequality. Since the proof relies on the second-order chain rule for the Laplacian, there seems to be little hope to generalize it beyond the local setting. Indeed, obtaining Harnack inequalities from Bakry--Émery-type Ricci curvature bounds has turned out to be exceptionally challenging in the non-local case. For the gradient estimate used here, there seem to be no results in this direction even in the case of finite graphs (see however \cite{CLY14,BHLLMY15,DKZ17,Mun18} for Harnack inequalities on graphs under related assumptions).

Instead we adopt a different approach. The kind of entropy estimate we need is a consequence of the EVI gradient flow characterization (see \cite[Theorem 3.1]{DS08}) and it turns out that one can run the portion of the proof needed to show only this consequence with the weaker regularity estimates already established in Lemma~\ref{regular_curves}. This is done in the next proposition.

\begin{proposition}[Entropy regularization]\label{Ent_regularization}
Assume that $\E$ satisfies $\GE(K,\infty)$. If $\rho_0,\rho_1\in \D(\M,\tau)$, then
\begin{align*}
\Ent(P_t \rho_1)\leq \Ent(\rho_0)+\frac{1}{2t}\left(\int_0^1 e^{-2Kst}\,ds\right) \W^2(\rho_0,\rho_1)
\end{align*}
for $t>0$.
\end{proposition}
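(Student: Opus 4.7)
The plan is to run the standard heat-flow/curve-interpolation scheme for entropy regularization, adapted to the present noncommutative setting. If $\W(\rho_0,\rho_1) = \infty$ or $\Ent(\rho_0) = \infty$ the inequality is trivial, so we assume both are finite. For $\epsilon > 0$, pick an admissible curve $(\rho_s)_{s\in[0,1]}$ from $\rho_0$ to $\rho_1$ with $\int_0^1 \norm{D\rho_s}_{\rho_s}^2\,ds \leq \W(\rho_0,\rho_1)^2 + \epsilon$, smooth it in time via Lemma \ref{W_diff_curves}, and then apply Lemma \ref{regular_curves} to produce regular approximations $(\rho_s^n)_{s\in[0,1]}$ lying in $D(\L^{(1)})$, bounded below by $1/(2n)$, of class $C^1$ in $L^1(\M,\tau)$, with $\Ent(\rho_0^n) \leq \Ent(\rho_0)$ and action converging to that of $(\rho_s)$.

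For each $n$ and the fixed $t > 0$, set $\sigma_s^n := P_{st}\rho_s^n$, so that $\sigma_0^n = \rho_0^n$ and $\sigma_1^n = P_t\rho_1^n$. The positivity of $(P_t)$ preserves the uniform lower bound on $\rho_s^n$, and $\norm{\sigma_s^n}_\M$ stays uniformly controlled in $s$, so $\log\sigma_s^n \in \M_h$ uniformly in $s$. Using the splitting $\dot\sigma_s^n = -t\L^{(1)}\sigma_s^n + P_{st}\dot\rho_s^n$, Corollary \ref{diff_trace_functional} yields
\begin{align*}
\frac{d}{ds}\Ent(\sigma_s^n) = -t\,\I(\sigma_s^n) + \tau\bigl((P_{st}\log\sigma_s^n)\,\dot\rho_s^n\bigr),
\end{align*}
where the first piece uses $\tau(\log\sigma\cdot\L\sigma) = \E(\log\sigma,\sigma) = \I(\sigma)$. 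By the Feller-type regularization of Proposition \ref{A_LM_regularization}, $P_{st}\log\sigma_s^n \in \A_\theta$, and Lemma \ref{CE_bounded} rewrites the cross term as $\langle \partial P_{st}\log\sigma_s^n, D\rho_s^n\rangle_{\rho_s^n}$.

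Now Cauchy–Schwarz in $\H_{\rho_s^n}$ together with $\GE(K,\infty)$ gives
\begin{align*}
\bigl|\langle\partial P_{st}\log\sigma_s^n, D\rho_s^n\rangle_{\rho_s^n}\bigr| \leq e^{-Kst}\,\norm{\partial\log\sigma_s^n}_{\sigma_s^n}\,\norm{D\rho_s^n}_{\rho_s^n} = e^{-Kst}\sqrt{\I(\sigma_s^n)}\,\norm{D\rho_s^n}_{\rho_s^n},
\end{align*}
where the identity $\norm{\partial\log\sigma}_\sigma^2 = \I(\sigma)$ uses precisely the defining property $\mathrm{LM}\cdot\widetilde{\log} = 1$ of the logarithmic mean, so that the Fisher information cancels correctly with the weight $\hat\sigma$. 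Applying the AM--GM-type Young inequality $\sqrt{u}\,v \leq \tfrac{t}{2}u + \tfrac{v^2}{2t}$ with $u = \I(\sigma_s^n)$ and $v = e^{-Kst}\norm{D\rho_s^n}_{\rho_s^n}$ absorbs half of the Fisher-information coefficient into $-t\,\I(\sigma_s^n)$ and leaves the bound
\begin{align*}
\frac{d}{ds}\Ent(\sigma_s^n) \leq -\tfrac{t}{2}\I(\sigma_s^n) + \frac{e^{-2Kst}}{2t}\,\norm{D\rho_s^n}_{\rho_s^n}^2 \leq \frac{e^{-2Kst}}{2t}\,\norm{D\rho_s^n}_{\rho_s^n}^2.
\end{align*}
Integrating over $[0,1]$ and passing $n\to\infty$ -- using property (e) of Lemma \ref{regular_curves} for $\Ent(\rho_0^n) \leq \Ent(\rho_0)$, lower semicontinuity of $\Ent$ along $P_t\rho_1^n \to P_t\rho_1$ in $L^1$ (Proposition \ref{entropy_lsc}), and property (f) to control the action against the bounded weight $e^{-2Kst}$ -- followed by $\epsilon \to 0$ and reparametrization to constant speed, yields the claimed inequality.

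The main obstacle is justifying the differential identity for $\frac{d}{ds}\Ent(\sigma_s^n)$ together with the continuity-equation rewriting of the cross term. Both steps force the regularization scheme to deliver simultaneously a uniform $L^\infty$ lower bound (so that $\log\sigma_s^n \in \M$ and $P_{st}\log\sigma_s^n \in \A_\theta$ through Proposition \ref{A_LM_regularization}), $L^1$-smoothness of $s \mapsto \sigma_s^n$ to legitimize Corollary \ref{diff_trace_functional}, and quantitative control of both endpoint entropy and total action through the limit $n\to\infty$. Lemma \ref{regular_curves} is the technical device that packages all of these properties at once; without it the cancellation $\mathrm{LM}\cdot\widetilde{\log}=1$, which is the heart of why the argument closes, cannot even be evaluated rigorously.
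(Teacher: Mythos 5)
Your overall scheme is the same as the paper's: interpolate $\rho_0$ to $\rho_1$ along an admissible curve, regularize via Lemma \ref{regular_curves}, form $\sigma_s=P_{st}\rho_s^n$, differentiate an entropy-like functional in $s$, exploit the logarithmic-mean cancellation $\mathrm{LM}\cdot\widetilde{\log}=1$, and close with Young's inequality and $\GE(K,\infty)$. But there is a genuine gap at the step where you claim ``$\norm{\sigma_s^n}_\M$ stays uniformly controlled in $s$, so $\log\sigma_s^n\in\M_h$.'' Lemma \ref{regular_curves} gives a uniform \emph{lower} bound $\rho_s^n\geq 1/(2n)$, $L^1$-smoothness, and $\rho_s^n\in D(\L^{(1)})$, but no $L^\infty$ upper bound on $\rho_s^n$: the mollified curve $\p^{\epsilon_n}(\rho_s+1/n)$ is built from $L^1$ data and the semigroup does not map $L^1$ into $\M$. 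Hence $\sigma_s^n$ is in general unbounded, $\log\sigma_s^n\notin\M$, Proposition \ref{A_LM_regularization} cannot place $P_{st}\log\sigma_s^n$ in $\A_\theta$, and Corollary \ref{diff_trace_functional} is inapplicable to $t\mapsto t\log t$ (not Lipschitz). For the same reason the appeal to Lemma \ref{CE_bounded} fails: it requires $(\rho_s^n)_s\in L^\infty_\loc([0,1];\M)$, which you do not have. So the central differential identity $\frac{d}{ds}\Ent(\sigma_s^n)=-t\I(\sigma_s^n)+\tau((P_{st}\log\sigma_s^n)\dot\rho_s^n)$ is not justified.

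The paper's proof resolves exactly this by truncating the logarithm: take $f_k=C_k\circ\log$ for increasing $1$-Lipschitz cutoffs $C_k$, and $F_k(t)=\int_0^t(f_k(s)+1)\,ds$, so that $F_k$ is convex $C^1$ Lipschitz (Corollary \ref{diff_trace_functional} applies) and $f_k(\sigma_s)\in\M$, whence $P_{st}f_k(\sigma_s)\in\A_\theta$ by Proposition \ref{A_LM_regularization}, and admissibility alone (no Lemma \ref{CE_bounded} needed) rewrites the cross term. The formal identity $\norm{\partial\log\sigma}_\sigma^2=\I(\sigma)$ is replaced by the one-sided inequality $\norm{\partial f_k(\sigma_s)}_{\sigma_s}^2\leq\tau(f_k(\sigma_s)\L^{(1)}\sigma_s)$, which follows from $\tilde f_k^2\,\mathrm{LM}\leq\tilde f_k$ (this is the rigorous form of your cancellation) together with Lemma \ref{contractions_L_p_generator}; only afterward does one let $k\to\infty$ by monotone convergence. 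Your last paragraph correctly identifies the obstruction but incorrectly asserts that Lemma \ref{regular_curves} supplies the missing boundedness; it does not, and the truncation layer $f_k$, $F_k$ is indispensable, not a technicality to be waved away.
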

\begin{proof}
Let 
\begin{align*}
C(K,t)=\int_0^1 e^{-2Kst}\,ds.
\end{align*}
We can assume that $\Ent(\rho_0)<\infty$ and $\W(\rho_0,\rho_1)<\infty$. By Lemmas~\ref{constant_speed} and \ref{W_diff_curves}, for every $\epsilon>0$ there exists an admissible curve $(\rho_s)\in C^1([0,1];L^1(\M,\tau))$ such that 
\begin{align*}
\esssup_{s\in [0,1]}\norm{D\rho_s}_{\rho_s}^2\leq \W(\rho_0,\rho_1)^2+\epsilon
\end{align*}
Let $(\rho_s^n)_{s\in[0,1]}$, $n\in\IN$, be curves defined in Lemma~\ref{regular_curves}. If we can show
\begin{align}
\Ent(P_t\rho_1^n)\leq \Ent(\rho_0^n)+\frac{C(K,t)}{2t}\esssup_{s\in[0,1]} \norm{D\rho_s^n}_{\rho_s^n}^2,\label{approx_Ent_regularization}
\end{align}
then the claim of the proposition follows by taking the limit $n\to\infty$ and then the limit $\epsilon\to 0$.

Let $(C_k)$ be an increasing sequence in $C^1((0,\infty))$ such that each $C_k$ is increasing, $1$-Lipschitz, $C_k(s)= s$ if $s\leq k-1$ and $C_k(s)= k$ if $s\geq n$. Let $f_k=C_k\circ \log$ and 
\begin{align*}
F_k\colon (0,\infty)\to \IR,\,t\mapsto \int_0^t (f_k(s)+1)\,ds.
\end{align*}
Note that $f_k(s)\nearrow \log s$ and by monotone convergence also $F_k(t)\nearrow t\log t$.

Thus, in order to prove (\ref{approx_Ent_regularization}), it suffices to show
\begin{align}
\tau(F_k(P_t\rho_1))\leq \tau(F_k(\rho_0))+\frac{C(K,t)}{2t}\int_0^1 \norm{D\rho_s^n}_{\rho_s^n}^2\,ds.\label{double_approx_Ent_regularization}
\end{align}

Let $\sigma_s=P_{st}\rho_s^n$. Since $\rho_s^n\in D(\L^{(1)})$ for all $s\in [0,1]$, the curve $(\sigma_s)$ is $L^1$-differentiable with derivative
\begin{align*}
\dot\sigma_s=P_{st}\dot\rho_{s}^n-t\L^{(1)}\sigma_s.
\end{align*}

Hence, by Lemma~\ref{diff_trace_functional},
\begin{align}
\tau(F_k(P_t \rho_1)-F_k(\rho_0))=\int_0^1 \tau(f_k(\sigma_s)(P_{st}\dot\rho_{s}^n-t\L^{(1)}\sigma_s))\,ds.\label{derivative_approx_Ent}
\end{align}
Since $(P_{st})$ maps $\M$ into $\A_\mathrm{LM}$ by Proposition~\ref{A_LM_regularization}, we have
\begin{align}
\begin{split}
\abs{\tau(f_k(\sigma_s) P_{st}\dot\rho_{s}^n)}&\leq \norm{\partial P_{st} f_k(\sigma_s)}_{\rho_{s}^n}\norm{D\rho_{s}^n}_{\rho_{s}^n}\\
&\leq e^{-Kst}\norm{\partial f_k(\sigma_s)}_{\sigma_s}\norm{D\rho_{s}^n}_{\rho_{s}^n}\\
&\leq \frac{e^{-2Kst}}{2t}\norm{D\rho_{s}^n}_{\rho_{s}^n}^2+\frac t {2}\norm{\partial f_k(\sigma_s)}_{\sigma_s}^2,\label{bound_admissible_summand}
\end{split}
\end{align}
where we used the admissibility of $(\rho_s^n)_s$ and Lemma~\ref{admissibility_A_theta} for the first inequality, $\GE(K,\infty)$ for the second and Young's inequality for the third.

We are now going to estimate the second summand. Note that by definition $0\leq f_k'(s)\leq 1/s$ and for each fixed $k\in\IN$ there exists $l\in\IN$ such that $f_k(s)=f_k(s\wedge l)$ for all $s>0$.

Since $\sigma_s\in D(\L^{(1)})$, we have $\sigma_s\wedge m\in D(\E)$ for all $m\in\IN$ by Lemma~\ref{contractions_L_p_generator}. Thus, writing $e$ for the joint spectral measure of $L(\sigma_s\wedge m)$ and $R(\sigma_s\wedge m)$,
\begin{align*}
\norm{\partial f_k(\sigma_s)}_{\sigma_s\wedge m}^2&=\norm{\partial f_k(\sigma_s\wedge m)}_{\sigma_s\wedge m}^2\\
&=\int_{(0,\infty)^2}\tilde f_k(s,t)^2\widehat{\log}(s,t)\,d\langle e(s,t)\partial (\sigma_s\wedge m),\partial(\sigma_s\wedge m)\rangle_\H\\
&\leq \langle \tilde f_k(L(\sigma_s\wedge m),R(\sigma_s\wedge m))\partial (\sigma_s\wedge m),\partial (\sigma_s\wedge m)\rangle_\H\\
&=\langle \partial f_k(\sigma_s),\partial (\sigma_s\wedge m)\rangle_\H\\
&\leq \tau(f_k(\sigma_s)\L^{(1)}\sigma_s),
\end{align*}
where we used $\tilde f_k\leq \widetilde{\log}$ and Lemma~\ref{contractions_L_p_generator}. Now we can let $m$ go to infinity to obtain
\begin{align*}
\norm{\partial f_k(\sigma_s)}_{\sigma_s}^2\leq \tau(f_k(\sigma_s)\L^{(1)}\sigma_s).
\end{align*}
If we plug this inequality into (\ref{bound_admissible_summand}), we get
\begin{align*}
\abs{\tau(f_k(\sigma_s) P_{st}\dot\rho_{s}^n)}\leq \frac{e^{-2Kst}}{2t}\norm{D\rho_{s}^n}_{\rho_{s}^n}^2+\frac t 2 \tau(f_k(\sigma_s)\L^{(1)}\sigma_s),
\end{align*}
which we can then apply to (\ref{derivative_approx_Ent}) to obtain
\begin{align}
\begin{split}
\tau(F_k(P_t\rho_1)-F_k(\rho_0))&\leq \int_0^1 \frac{e^{-2Kst}}{2t}\norm{D\rho_s^n}_{\rho_s^n}^2\,ds\\
&\leq \frac{C(K,t)}{2t}\esssup_{s\in[0,1]} \norm{D\rho_s^n}_{\rho_s^n}^2.
\label{suboptimal_entropy_bound}
\end{split}
\end{align}
This settles (\ref{double_approx_Ent_regularization}).
\end{proof}

\begin{corollary}\label{regular_curves_improv}
Assume that $\E$ satisfies $\GE(K,\infty)$ for some $K\in\IR$. For every admissible curve $(\rho_s)_{s\in[0,1]}\in C^1([0,1];L^1(\M,\tau))$ with $\Ent(\rho_0)<\infty$ the curves $(\rho_s^n)_{s\in[0,1]}$ defined in Lemma~\ref{regular_curves} satisfy
\begin{itemize}
\item[(g)] $\sup_{s\in [0,1]} \Ent(\rho_s^n)<\infty$ for $n\in \IN$,
\item[(h)] $\sup_{t\geq 0}\sup_{s\in[0,1]}\I(P_t\rho_s^n)<\infty$ for $n\in\IN$.
\end{itemize}
\end{corollary}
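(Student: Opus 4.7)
The plan is to exploit the representation $\rho_s^n = P_{\epsilon_n}\tilde\tau_s^n$ that is forced by the definition of $\p^{\epsilon_n}$ together with the fact that $\operatorname{supp}\kappa \subset (1,2)$. Explicitly, set
\[
\tilde\tau_s^n := \frac{(1+1/n)^{-1}}{\epsilon_n}\int_0^\infty \kappa(r/\epsilon_n)\,P_{r-\epsilon_n}(\rho_s+\1/n)\,dr,
\]
which is a density matrix for every $s$ because on the support of $\kappa(\cdot/\epsilon_n)$ one has $r-\epsilon_n\geq 0$, and $\p^{\epsilon_n}(\rho_s+\1/n)=(1+1/n)P_{\epsilon_n}\tilde\tau_s^n$. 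This identity turns Proposition \ref{Ent_regularization} into a direct tool: for any $t\geq 0$,
\[
\Ent(P_t\rho_s^n)=\Ent(P_{t+\epsilon_n}\tilde\tau_s^n)\leq \Ent(\tilde\tau_0^n)+\frac{C(K,t+\epsilon_n)}{2(t+\epsilon_n)}\W^2(\tilde\tau_0^n,\tilde\tau_s^n),
\]
with $C(K,u)=\int_0^1 e^{-2Ksu}ds$. The strategy is to bound the two pieces on the right independently of $s$ (and, in the case of (h), of $t$).

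For $\Ent(\tilde\tau_0^n)$: since $\tilde\tau_0^n$ is a convex combination of the densities $P_u\tilde\rho_0^n$ with $u\in(0,\epsilon_n)$ and these satisfy $\Ent(P_u\tilde\rho_0^n)\leq \Ent(\tilde\rho_0^n)$ by the entropy-decrease of $P_u$ (operator convexity of $x\log x$ together with trace invariance), convexity of the entropy yields $\Ent(\tilde\tau_0^n)\leq \Ent(\tilde\rho_0^n)\leq \Ent(\rho_0)<\infty$. For the distance $\W(\tilde\tau_0^n,\tilde\tau_s^n)$: one verifies by direct calculation, parallel to Lemma \ref{regularization_admissible}, that the curve $s\mapsto \tilde\tau_s^n$ is admissible. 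The computation uses the Feller property (Proposition \ref{A_LM_regularization}) to move $P_{r-\epsilon_n}$ onto a test function $a\in\A_\mathrm{LM}$, $\GE(K,\infty)$ to control $\norm{\partial P_{r-\epsilon_n}a}_{\rho_v}$ by $e^{-K(r-\epsilon_n)}\norm{\partial a}_{P_{r-\epsilon_n}\rho_v}$, and the upper semicontinuity/concavity of $\rho\mapsto\norm{\partial a}_\rho^2$ (Theorem \ref{theta_norm_usc} and Lemma \ref{logarithmic_mean_concave}) to pull the kernel inside $\norm{\partial a}_\cdot$ by Jensen's inequality. The result is $\norm{D\tilde\tau_s^n}_{\tilde\tau_s^n}\leq C(K,n,\epsilon_n)\norm{D\rho_s}_{\rho_s}$, so the $\W$-length of $(\tilde\tau_s^n)$ is finite. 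This finishes (g) (take $t=0$ above), and it also controls $\Ent(P_t\rho_s^n)$ for all $t\geq 0$ (for $K<0$, combine the bound at $t=0$ with the monotonicity $\Ent(P_t\rho_s^n)\leq \Ent(\rho_s^n)$ from Proposition \ref{entropy_Fisher_decay}).

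For (h), once the entropy along $(P_t\rho_s^n)_{t\geq 0}$ is uniformly bounded, the entropy-Fisher identity (Proposition \ref{entropy_Fisher_decay}) applied on the interval $[t,t+1]$ gives
\[
\int_t^{t+1}\I(P_u\rho_s^n)\,du=\Ent(P_t\rho_s^n)-\Ent(P_{t+1}\rho_s^n)\leq \Ent(\rho_s^n),
\]
so the Fisher information is integrably controlled uniformly in $s$ and $t$. To upgrade this to a pointwise bound we use that, as a consequence of $\GE(K,\infty)$, the map $u\mapsto e^{2Ku}\I(P_u\rho)$ is non-increasing: this follows, as in the Bakry--Émery argument, from applying the gradient estimate with the test element $\log P_u\rho$ combined with the identity $\I(\rho)=\norm{\partial\log\rho}_\rho^2$ (which itself comes from the cancellation $\mathrm{LM}\cdot\widetilde{\log}=1$ noted at the end of Section \ref{A_E}). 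Combining this quasi-monotonicity with the integral bound on $[t,t+1]$ and estimating the exponential factor by $e^{2|K|}$ gives $\I(P_{t+1}\rho_s^n)\leq e^{2|K|}\Ent(\rho_s^n)$ for $t\geq 0$. For $t\in[0,1]$ one uses the identity $P_t\rho_s^n=P_{t+\epsilon_n}\tilde\tau_s^n$ and the same argument starting from the curve $(\tilde\tau_s^n)$ at a time before $t+\epsilon_n$.

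The main obstacle is the admissibility-plus-length estimate for $(\tilde\tau_s^n)$: it requires the concavity and upper semicontinuity results of Section \ref{A_E} together with $\GE(K,\infty)$ and the Feller property, and is where the regularization buys us all we need. The quasi-monotonicity of $\I(P_u\rho)$ is the second delicate point, but it is the natural Fisher-information analogue of Theorem \ref{W_contraction} and can be read off $\GE(K,\infty)$ by the same kind of differential comparison argument used in the proof of Proposition \ref{A_LM_regularization}.
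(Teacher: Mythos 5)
Your argument for (g) is sound but detours: after observing $\rho_s^n = P_{\epsilon_n}\tilde\tau_s^n$, you set out to prove admissibility of the new curve $(\tilde\tau_s^n)_s$ from scratch by re-running the argument of Lemma \ref{regularization_admissible}. The paper avoids this entirely by invoking Lemma \ref{p_epsilon_Ent_decreasing}, which already gives $\Ent(\rho_s^n) = \Ent(\p^{\epsilon_n}\tilde\rho_s^n)\le\Ent(P_{\epsilon_n}\tilde\rho_s^n)$, and then applying Proposition \ref{Ent_regularization} to $\tilde\rho_s^n = (1+1/n)^{-1}(\rho_s+1/n)$; the admissibility and length estimate for this curve are trivial since it is an affine reparametrization of $(\rho_s)$, so $\W(\tilde\rho_0^n,\tilde\rho_s^n)$ is controlled directly by $\int_0^1\norm{D\rho_u}_{\rho_u}^2\,du$. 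Both routes reach (g), but yours re-derives work the paper already packaged.

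For (h) there is a genuine gap. You claim that $u\mapsto e^{2Ku}\I(P_u\rho)$ is non-increasing, derived ``as in the Bakry--\'Emery argument'' by applying $\GE(K,\infty)$ with the test element $\log P_u\rho$ and the identity $\I(\rho)=\norm{\partial\log\rho}_\rho^2$. But $\GE(K,\infty)$ applied to $a=\log P_u\rho$ gives
\begin{align*}
\norm{\partial\, P_s\bigl(\log P_u\rho\bigr)}_{P_u\rho}^2 \;\le\; e^{-2Ks}\,\norm{\partial\log P_u\rho}_{P_{s+u}\rho}^2,
\end{align*}
and neither side is a Fisher information: the left-hand side involves $P_s(\log P_u\rho)$, not $\log P_{s+u}\rho$, and the right-hand side has $\log P_u\rho$ paired with the weight $P_{s+u}\rho$ rather than $P_u\rho$. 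Because $\log$ does not commute with the semigroup, the differential comparison trick from Proposition \ref{A_LM_regularization} (which works because $P_{s}\circ P_{t-s}=P_t$) simply does not apply here. The classical route to Fisher-information decay uses the second-order chain rule for the generator (i.e.\ $\Gamma_2$ calculus), and the paper explicitly flags in Subsection \ref{mollification} that this is unavailable in the non-local or noncommutative setting; this is precisely why the proof strategy around Proposition \ref{Ent_regularization} was chosen. The abstract EVI route (slope contraction along $\EVI_K$ curves, combined with $\I=\abs{\partial\Ent}^2$) is also unavailable here, since the corollary in question feeds into the proof of Theorem \ref{EVI_gradient_flow}, so invoking the EVI would be circular. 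The paper's actual proof of (h) uses Jensen's inequality for the \emph{convex} Fisher information (Lemma \ref{Fisher_info_lsc} and Proposition \ref{Fisher_info_convex}), giving $\I(P_t\rho_s^n)\le \frac{1}{\epsilon_n}\int\kappa(r/\epsilon_n)\I(P_{t+r}\tilde\rho_s^n)\,dr$, bounds $\kappa$ by $\norm{\kappa}_\infty$, and then uses the integral bound $\int_{\epsilon_n}^\infty\I(P_r P_t\tilde\rho_s^n)\,dr\le \Ent(P_{t+\epsilon_n}\tilde\rho_s^n)\le\Ent(P_{\epsilon_n}\tilde\rho_s^n)$ from Corollary \ref{heat_flow_L1}, which is finite by (g). This converts an integral control into a pointwise one without ever needing pointwise decay of the Fisher information.
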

\begin{proof}
As in the proof of Lemma~\ref{regular_curves} let $\tilde \rho_s^n=(1+1/n)^{-1}(\rho_s+1/n)$ and $\rho_s^n=\p^{\epsilon_n}\tilde\rho_s^n$ for a suitably chosen strictly positive null sequence $(\epsilon_n)$. By Lemma~\ref{p_epsilon_Ent_decreasing} we have $\Ent(\rho_s^n)\leq \Ent(P_{\epsilon_n}\tilde \rho_s^n)$. We can apply Theorem~\ref{Ent_regularization} to the right-hand side to get
\begin{align*}
\Ent(P_{\epsilon_n}\tilde\rho_s^n)&\leq \Ent(\tilde\rho_0^n)+C(K,\epsilon_n)\W(\tilde\rho_0^n,\tilde\rho_s^n)^2\\
&\leq \frac{n}{n+1}\left(\Ent(\rho_0)+C(K,\epsilon_n)\int_0^1 \norm{D\rho_r}_{\rho_r}^2\,dr\right).
\end{align*}
The latter is clearly bounded independently of $s\in[0,1]$. This proves (g).

To establish (h), we use Jensen's inequality (which is applicable according to Lemma~\ref{Fisher_info_lsc} and Proposition~\ref{Fisher_info_convex}) to see that
\begin{align*}
\I(P_t \rho_s^n)\leq \frac 1 {\epsilon_n} \int_{\epsilon_n}^\infty\kappa\left(\frac r{\epsilon_n}\right)\I(P_{t+r}\tilde\rho_s^n)\,dr\leq \frac{\norm{\kappa}_\infty}{\epsilon_n}\int_{\epsilon_n}^\infty\I(P_r P_t\tilde\rho_s^n)\,dr.
\end{align*}
By Corollary~\ref{heat_flow_L1}, we have
\begin{align*}
\int_{\epsilon_n}^\infty\I(P_r P_t\tilde\rho_s^n)\,dr\leq \Ent(P_{t+\epsilon_n}\tilde\rho_s^n)\leq \Ent(P_{\epsilon_n}\tilde \rho_s^n).
\end{align*}
We have already seen in the first part that the right-hand side is bounded independently of $s\in[0,1]$.
\end{proof}

\subsection{Proof of the gradient flow characterization}\label{proof_main}

With preparations done in the previous subsection, we can now launch the proof of the gradient flow characterization. We start with some basic facts from the theory of gradient flows in metric spaces.

Recall that we mean by an extended metric space a pair $(X,d)$ that satisfies all axioms of a metric space, except that $d$ may take the value $\infty$. Let $S\colon X\lra(-\infty,\infty]$ be a proper lower semicontinuous functional and let $D(S)$ denote its proper domain, that is,
\begin{align*}
D(S)=\{x\in X\mid S(x)<\infty\}.
\end{align*}
The \emph{descending} and \emph{ascending slope} of $S$ are defined by
\begin{align*}
\abs{D^+S}(x)&=\limsup_{y\to x}\frac{(S(y)-S(x))_-}{d(x,y)},\\
\abs{D^-S}(x)&=\limsup_{y\to x}\frac{(S(y)-S(x))_+}{d(x,y)}
\end{align*}
if $x\in D(S)$ is not isolated. For isolated points $x\in D(S)$ one sets 
\begin{equation*}
\abs{D^+S}(x)=\abs{D^-S}(x)=0,
\end{equation*}
and furthermore $\abs{D^+S}=\abs{D^-S}=\infty$ on $X\setminus D(S)$.

As a further piece of notation we need the \emph{upper right derivative} (or upper Dini derivative) $\frac{d^+}{dt}$ of a function $f$ on a right-open interval $I$, which is defined by
\begin{align*}
\frac{d^+}{dt}f(t)=\limsup_{h\searrow 0}\frac{f(t+h)-f(t)}{h}
\end{align*}
for $t\in I$.

\begin{definition}[$\mathrm{EDE}$ and $\EVI$ gradient flow curves]
Let $(X,d)$ be an extended metric space and $S\colon X\lra (-\infty,\infty]$ a proper lower semicontinuous functional.
A locally absolutely continuous curve $(\gamma_t)_{t\geq 0}$ in $X$ is called \emph{EDE gradient flow of $S$} if it satisfies the \emph{energy dissipation equality}
\begin{equation*}
S(\gamma_0)=S(\gamma_t)+\frac 1 2\int_0^t\abs{\dot\gamma_s}^2\,ds+\frac 1 2\int_0^t \abs{D^-S}^2(\gamma_s)\,ds\tag{$\mathrm{EDE}$}
\end{equation*}
for all $t\geq 0$.

Let $K\in\IR$. The curve $\gamma$  is called \emph{$\EVI_K$ gradient flow curve} of $S$ if it satisfies the \emph{evolution variational inequality}
\begin{align*}
\frac 1 2 \frac{d^+}{dt}d(\gamma_t,x)^2+\frac{K}{2}d(\gamma_t,x)^2+S(\gamma_t)\leq S(x)\tag{$\EVI_K$}\label{EVI}
\end{align*}
for all $t\geq 0$ and $x\in X$ with $d(x,\gamma_0)<\infty$.

A semigroup of continuous maps $T_t \colon D(S)\lra D(S)$, $t\geq 0$, is called \emph{$\EVI_K$ gradient flow} of $S$ if
\begin{itemize}
\item[(F1)] $d(T_t x,x)\to 0$ as $t\to 0$ for all $x\in X$,
\item[(F2)] $S$ is decreasing along $(T_t x)_{t\geq 0}$ for all $t\geq 0$,
\item[(F3)] $(T_t x)_{t\geq 0}$ is an $\EVI_K$ gradient flow for all $x\in X$.
\end{itemize}
\end{definition}

\begin{remark}If $\IR^d$ is endowed with the Euclidean metric and $S\in C^1(\IR^d)$, then a $C^1$-curve $\gamma$ is an EDE gradient flow curve of $S$ if and only if it satisfies the classical gradient flow equality
\begin{equation*}
\dot\gamma_t=-\nabla S(\gamma_t).
\end{equation*}
If moreover $S-\frac{K}{2}\abs{\cdot}^2$ is convex, then the notion of EDE gradient flow curve, $\EVI_K$  gradient flow curve and classical gradient flow curve all coincide for $C^1$-curves. Conversely, if $S$ admits an $\EVI_K$ gradient flow, then $S-\frac{K}2\abs{\cdot}^2$ is convex.
\end{remark}

While the existence of $\EVI_K$ gradient flow curves for a given functional is not guaranteed, the uniqueness is a consequence of the defining property (see e.g. \cite[Proposition 3.1]{DS08}):

\begin{lemma}\label{EVI_metric_contraction}
Let $(X,d)$ be an extended metric space and $S\colon X\lra(-\infty,\infty]$ a proper lower semicontinuous functional. If $\gamma$, $\tilde\gamma$ are $\EVI_K$ gradient flow curves of $S$ starting in $\gamma_0$, $\tilde\gamma_0$ respectively, then
\begin{align*}
d(\gamma_t,\tilde \gamma_t)\leq e^{-Kt}d(\gamma_0,\tilde\gamma_0)
\end{align*}
for all $t\geq 0$.

In particular, there is at most one $\EVI_K$ gradient flow curve with a given starting point.
\end{lemma}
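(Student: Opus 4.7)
The plan is to prove contraction via a standard EVI two-curve argument, deriving a differential inequality for $\phi(t) := \tfrac{1}{2}d(\gamma_t,\tilde\gamma_t)^2$ and closing by Gronwall. The uniqueness statement then follows immediately by setting $\gamma_0=\tilde\gamma_0$.

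First I would record two consequences of $\EVI_K$ that are standard and should be stated as preliminary facts: (a) any $\EVI_K$ curve is locally absolutely continuous, and in particular $t\mapsto \gamma_t$ is continuous; (b) $t\mapsto S(\gamma_t)$ is non-increasing. Fact (b) follows from $\EVI_K$ by choosing the test point $x=\gamma_t$ itself: the inequality then yields $S(\gamma_t)\le S(\gamma_t)-\tfrac12\tfrac{d^+}{ds}d(\gamma_s,\gamma_t)^2|_{s=t}-\tfrac{K}{2}d(\gamma_t,\gamma_t)^2$, and using this together with the definition of $\EVI_K$ along the flow one gets monotonicity (details are routine from \cite{AGS08}, or from \cite{DS08}). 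With (a) in hand, the real-valued function $\phi$ is continuous.

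Second, the key step. Fix $t\ge 0$ and $h>0$ small, and use a telescoping decomposition
\begin{align*}
\phi(t+h)-\phi(t) &= \tfrac12\bigl[d(\gamma_{t+h},\tilde\gamma_{t+h})^2-d(\gamma_{t+h},\tilde\gamma_t)^2\bigr] \\
&\quad+\tfrac12\bigl[d(\gamma_{t+h},\tilde\gamma_t)^2-d(\gamma_t,\tilde\gamma_t)^2\bigr].
\end{align*}
For the second bracket I would apply $\EVI_K$ for $\gamma$ with the fixed test point $x=\tilde\gamma_t$; for the first bracket I would apply $\EVI_K$ for $\tilde\gamma$ with the fixed test point $x=\gamma_{t+h}$. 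Both $\EVI_K$ inequalities must be used in their integrated form: the right upper derivative bound
\[
\tfrac{d^+}{dr}\tfrac12 d(\gamma_r,x)^2 \le S(x)-S(\gamma_r)-\tfrac{K}{2}d(\gamma_r,x)^2
\]
combined with the continuity of $r\mapsto d(\gamma_r,x)^2$ gives
\[
\tfrac12 d(\gamma_{t+h},x)^2-\tfrac12 d(\gamma_t,x)^2 \le \int_t^{t+h}\!\bigl[S(x)-S(\gamma_r)-\tfrac{K}{2}d(\gamma_r,x)^2\bigr]dr.
\]
Adding the two integrated inequalities yields
\begin{align*}
\phi(t+h)-\phi(t) &\le \int_t^{t+h}\!\bigl[S(\tilde\gamma_t)-S(\tilde\gamma_r)\bigr]dr+\int_t^{t+h}\!\bigl[S(\gamma_{t+h})-S(\gamma_r)\bigr]dr \\
&\quad -\tfrac{K}{2}\int_t^{t+h}\!\bigl[d(\gamma_r,\tilde\gamma_t)^2+d(\gamma_{t+h},\tilde\gamma_r)^2\bigr]dr.
\end{align*}
By fact (b), both entropy integrands are non-positive. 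Dividing by $h$ and letting $h\searrow 0$, the continuity of the curves gives $d(\gamma_r,\tilde\gamma_t)^2,\,d(\gamma_{t+h},\tilde\gamma_r)^2 \to d(\gamma_t,\tilde\gamma_t)^2=2\phi(t)$, so the last term tends to $-2K\phi(t)$. Hence
\[
\tfrac{d^+}{dt}\phi(t)+2K\phi(t)\le 0.
\]

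Third, to conclude, apply the standard Gronwall-type lemma for upper right derivatives to $t\mapsto e^{2Kt}\phi(t)$: continuity of $\phi$ and the inequality $\tfrac{d^+}{dt}(e^{2Kt}\phi(t))\le 0$ imply monotonicity, so $e^{2Kt}\phi(t)\le \phi(0)$. Taking square roots gives $d(\gamma_t,\tilde\gamma_t)\le e^{-Kt}d(\gamma_0,\tilde\gamma_0)$. For the final uniqueness statement, set $\tilde\gamma_0=\gamma_0$: the right-hand side is zero, so $\gamma_t=\tilde\gamma_t$ for all $t\ge 0$.

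The main obstacle is the justification of the two side steps that are often glossed over: the passage from the differential $\EVI_K$ to its integrated form (which relies on the continuity of $r\mapsto d(\gamma_r,x)^2$, itself coming from local absolute continuity of $\gamma$), and the monotonicity of $S$ along an $\EVI_K$ curve. Both are classical for $\EVI_K$ flows, so I would quote them from \cite{AGS08, DS08} rather than reprove them.
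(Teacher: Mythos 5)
The paper does not give a proof here; it simply cites \cite[Proposition 3.1]{DS08}. Your strategy is the standard doubling-of-variables argument, but there is a genuine sign error that breaks the conclusion as written. Fact~(b) says $s\mapsto S(\gamma_s)$ and $s\mapsto S(\tilde\gamma_s)$ are non-increasing. For $r\in[t,t+h]$ the first integrand is therefore \emph{non-negative}: since $r\ge t$ and $S(\tilde\gamma_\cdot)$ is non-increasing, $S(\tilde\gamma_r)\le S(\tilde\gamma_t)$, so $S(\tilde\gamma_t)-S(\tilde\gamma_r)\ge 0$. Only the second integrand, $S(\gamma_{t+h})-S(\gamma_r)$, is non-positive (because $r\le t+h$). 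If you discard the first integral on the ground that it is $\le 0$, the inequality goes the wrong way and the Gronwall step does not follow.

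The argument can be repaired by invoking lower semicontinuity of $S$ rather than monotonicity for the first term. By monotonicity of $r\mapsto S(\tilde\gamma_t)-S(\tilde\gamma_r)$ on $[t,t+h]$ one has
\begin{equation*}
\frac1h\int_t^{t+h}\bigl[S(\tilde\gamma_t)-S(\tilde\gamma_r)\bigr]\,dr\ \le\ S(\tilde\gamma_t)-S(\tilde\gamma_{t+h}),
\end{equation*}
and since $\tilde\gamma$ is continuous (fact (a)) and $S$ is lower semicontinuous,
\begin{equation*}
\limsup_{h\searrow0}\bigl[S(\tilde\gamma_t)-S(\tilde\gamma_{t+h})\bigr]\ =\ S(\tilde\gamma_t)-\liminf_{h\searrow0}S(\tilde\gamma_{t+h})\ \le\ 0.
\end{equation*}
Combined with the non-positivity of the second term and the continuity argument you already give for the curvature term, this yields $\tfrac{d^+}{dt}\phi(t)+2K\phi(t)\le 0$, and the remainder of your argument (Gronwall and then uniqueness) is fine. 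So the overall route is sound, but the step ``by fact (b), both entropy integrands are non-positive'' must be replaced by the lower-semicontinuity estimate above.
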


\begin{theorem}\label{EVI_gradient_flow}
Assume that $\tau$ is finite, $L^1(\M,\tau)$ is separable and the logarithmic mean is regular with respect to $\E$. If $\E$ satisfies $\GE(K,\infty)$, then $(P_t)$ is an $\EVI_K$ gradient flow of $\Ent$.
\end{theorem}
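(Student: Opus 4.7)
The plan is to establish the three defining conditions (F1)--(F3) for an $\EVI_K$ gradient flow. Conditions (F1) and (F2), namely $\W(P_t\rho,\rho)\to 0$ and the monotonicity of the entropy along orbits, are already contained in Corollary \ref{heat_flow_L1} and the corollary following Theorem \ref{W_contraction}, so the genuine content is the $\EVI_K$ inequality in (F3). By the semigroup property and the lower semicontinuity of $\Ent$ along $(P_t\rho)$, it suffices to establish the differential inequality at $t=0$, namely
\begin{align*}
\tfrac{1}{2}\limsup_{h\searrow 0}\frac{\W^2(P_h\rho,\sigma)-\W^2(\rho,\sigma)}{h}+\tfrac{K}{2}\W^2(\rho,\sigma)+\Ent(\rho)\leq\Ent(\sigma)
\end{align*}
for $\rho\in D(\Ent)$ and $\sigma\in\D(\M,\tau)$ with $\W(\rho,\sigma)<\infty$.

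Given such $\rho$ and $\sigma$, I would fix $\epsilon>0$ and pick, via Lemma \ref{constant_speed} and Lemma \ref{W_diff_curves}, a constant-speed $L^1$-smooth admissible curve $(\mu_s)_{s\in[0,1]}$ from $\rho$ to $\sigma$ with $\int_0^1\|D\mu_s\|_{\mu_s}^2\,ds\leq\W^2(\rho,\sigma)+\epsilon$, and regularize it through Lemma \ref{regular_curves} so that all manipulations below are rigorous. The key construction is the flow-interchange curve $\mu_s^T:=P_{(1-s)T}\mu_s$, which joins $P_T\rho$ at $s=0$ to $\sigma$ at $s=1$. Computing $\dot\mu_s^T=T\L\mu_s^T+P_{(1-s)T}\dot\mu_s$, the chain rule for $\log$ combined with the defining identity $\mathrm{LM}\cdot\widetilde{\log}=1$ of the logarithmic mean gives
\begin{align*}
\tau(a\dot\mu_s^T)=T\langle\partial a,\partial\log\mu_s^T\rangle_{\mu_s^T}+\langle\partial P_{(1-s)T}a,D\mu_s\rangle_{\mu_s}
\end{align*}
for all $a\in\A_\theta$. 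Applying $\GE(K,\infty)$ to bound the second summand by $e^{-K(1-s)T}\|\partial a\|_{\mu_s^T}\|D\mu_s\|_{\mu_s}$ gives, in vector form, the estimate $\|D\mu_s^T-T\partial\log\mu_s^T\|_{\mu_s^T}^2\leq e^{-2K(1-s)T}\|D\mu_s\|_{\mu_s}^2$. Expanding the square, using $\langle D\mu_s^T,\partial\log\mu_s^T\rangle_{\mu_s^T}=\tfrac{d}{ds}\Ent(\mu_s^T)$ (obtained by testing the continuity equation against $\log\mu_s^T$) and $\|\partial\log\mu_s^T\|_{\mu_s^T}^2=\I(\mu_s^T)$, and integrating over $s\in[0,1]$, I obtain the master inequality
\begin{align*}
\W^2(P_T\rho,\sigma)+2T(\Ent(P_T\rho)-\Ent(\sigma))+T^2\!\int_0^1\!\I(\mu_s^T)\,ds\leq\int_0^1 e^{-2K(1-s)T}\|D\mu_s\|_{\mu_s}^2\,ds.
\end{align*}
For the constant-speed choice of $(\mu_s)$ the right-hand side equals $\tfrac{1-e^{-2KT}}{2KT}(\W^2(\rho,\sigma)+\epsilon)=(1-KT+O(T^2))(\W^2(\rho,\sigma)+\epsilon)$. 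Dropping the non-negative Fisher term, dividing by $T$, and passing to the limit $T\searrow 0$ (with $\epsilon$ chosen along a diagonal subsequence so that $\epsilon/T\to 0$, and using the continuity of $\Ent$ on $(P_t\rho)$) produces exactly the target inequality at $t=0$.

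The main obstacle is rigorously justifying the identities $\langle D\mu_s^T,\partial\log\mu_s^T\rangle_{\mu_s^T}=\tfrac{d}{ds}\Ent(\mu_s^T)$ and $\|\partial\log\mu_s^T\|_{\mu_s^T}^2=\I(\mu_s^T)$, since $\log\mu_s^T$ is not a priori an admissible test function in $\A_\theta$. This is precisely the role of the regularization Lemma \ref{regular_curves}: on the approximating curves one has $\rho_s^n\geq 1/(2n)$, which is preserved under $P_{(1-s)T}$, so $\log\mu_s^T$ is bounded and Lipschitz on the spectrum, and by the corollary after Lemma \ref{regular_curves} the Fisher information along the heat-flowed family is uniformly bounded. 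Within this regularized regime the identities can be obtained by first inserting the cut-offs $C_k\circ\log$ from Proposition \ref{entropy_Fisher_decay} and Lemma \ref{contractions_L_p_generator}, for which admissibility is available, and then passing to the limit $k\to\infty$ via the monotone and dominated-convergence arguments already developed in Section \ref{entropy_information}. The outer limits $n\to\infty$ and $\epsilon\to 0$ are then handled by the lower semicontinuity of the action functional (Theorem \ref{energy_lsc}) and of the entropy (Corollary \ref{Ent_lsc_convex}), which transfer the master inequality back to the original data and complete the verification of $\EVI_K$.
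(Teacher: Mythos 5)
Your proposal is correct and follows essentially the same route as the paper: your flow-interchange curve $\mu_s^T$ is the paper's $\sigma_{s,t}$ up to reversing the parametrization in $s$, and the splitting of the velocity into a $\GE$-controlled vertical part plus a horizontal part $T\partial\log\mu_s^T$ (whose pairing with the velocity yields the entropy derivative and whose norm is the Fisher information), the regularization via Lemma \ref{regular_curves} together with the cut-offs $C_k\circ\log$, and the final passage to the limit via Theorem \ref{energy_lsc} all mirror the paper's argument. The only cosmetic difference is that you close via a diagonal subsequence with $\epsilon/T\to 0$, whereas the paper first sends $\epsilon\to 0$ to obtain the integrated inequality (\ref{EVI_Ent_integrated}) and then divides by $2t$.
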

\begin{proof}
The continuity of $(P_t)$ with respect to $\W$ is a consequence of Theorem~\ref{W_contraction}. Moreover, it was proven in Corollary~\ref{heat_flow_L1} that $(P_t)_{t\geq 0}$ is strongly continuous with respect to $\W$ and that $\Ent$ is decreasing along $(P_t \rho)$. It remains to show that 
\begin{align}
\frac 1 2 \frac{d^+}{dt}\W(P_t \rho_1,\rho_0)^2+\frac K 2\W(\rho_0,\rho_1)^2+\Ent(P_t \rho_1)\leq\Ent(\rho_0)\label{EVI_Ent}
\end{align}
for $\rho_0,\rho_1\in D(\Ent)$ with $\W(\rho_0,\rho_1)<\infty$ and $t\geq 0$. Since $(P_t)$ is a semigroup, it suffices to check (\ref{EVI_Ent}) at $t=0$.

If we can prove
\begin{align}
\W(P_t\rho_1,\rho_0)^2\leq \left(\int_0^1 e^{-2Kst}\right)\W(\rho_0,\rho_1)^2-2t(\Ent(P_t\rho_1)-\Ent(\rho_0)),\label{EVI_Ent_integrated}
\end{align}
then
\begin{align*}
\frac{\W(P_t\rho_1)^2-\W(\rho_0,\rho_1)^2}{2t}\leq \Ent(\rho_0)-\Ent(P_t \rho_1)+\frac {\int_0^1 e^{-2Kst}\,ds-1}{2t}\W(\rho_0,\rho_1)^2,
\end{align*}
from which (\ref{EVI_Ent}) at $t=0$ follows in the limit $t\to 0$.

In order to prove (\ref{EVI_Ent_integrated}), let $\epsilon>0$ and let $(\rho_s)\in C^1([0,1];L^1(\M,\tau)$ be an admissible curve with
\begin{align*}
\esssup_{s\in[0,1]}\norm{D\rho_s}_{\rho_s}\leq \W(\rho_0,\rho_1)+\epsilon.
\end{align*}
Let $(\rho_s^n)_{s\in[0,1]}$ be the curves defined in Lemma~\ref{regular_curves} and let $\sigma_{s,t}=P_{st}\rho_s^n$. Since $\rho_s^n\in D(\L^{(1)})$, for each $t\geq 0$ the curve $(\sigma_{s,t})_{s\in[0,1]}$ is $L^1$-differentiable with derivative
\begin{align}
\frac{d}{ds}\sigma_{s,t}=P_{st}\dot\rho_s^n-t\L^{(1)}\sigma_{s,t}.\label{derivative_sigma_st}
\end{align}
We will now show that the curve $(\sigma_{s,t})_{s\in[0,1]}$ is admissible by evaluating both summands separately.

Since $\E$ satisfies $\GE(K,\infty)$, the logarithmic mean is regular for $\E$ and $(\rho_s^n)$ is admissible, Proposition~\ref{A_LM_regularization} and Lemma~\ref{admissibility_A_theta} imply
\begin{align*}
\abs{\tau(a P_{st}\dot\rho_s^n)}=\abs{\tau(\dot\rho_s^n P_{st} a)}\leq\norm{ \partial P_{st}a}_{\rho_s^n}\norm{D\rho_s^n}_{\rho_s^n}\leq e^{-Kst}\norm{\partial a}_{\sigma_{s,t}}\norm{D\rho_s^n}_{\rho_s^n}
\end{align*}
for all $a\in D(\E)\cap\M$ with $\partial a\in D(\hat\sigma_{s,t}^{1/2})$. Thus there exists a unique $\xi_s$ in $\tilde\H_{\sigma_{s,t}}$  such that 
\begin{align}
\tau(a P_{st}\dot\rho_s^n)=\langle \partial a,\xi_s\rangle_{\sigma_{s,t}}\label{vertical_part_admissible}
\end{align}
for all $a\in D(\E)\cap \M$ with $\partial a\in D(\hat\sigma_{s,t}^{1/2})$. Moreover, $\norm{\xi_s}_{\sigma_s}\leq e^{-Kst}\norm{D\rho_s^n}_{\rho_s^n}$.

Some more work is necessary for the second summand in (\ref{derivative_sigma_st}). If $\sigma_{s,t}\in D(\L^{(2)})$, then
\begin{align*}
\tau(a\L^{(1)}\sigma_{s,t})=\langle\partial a,\partial\sigma_{s,t}\rangle_\H=\langle \partial a,\partial\log\sigma_{s,t}\rangle_{\sigma_{s,t}}.
\end{align*}
To show the equality of the left- and right-hand side in the general case (the middle is of course not well-defined), we argue by approximation.

Let $\tilde \rho_s^n=(1+1/n)^{-1}(\rho_s+1/n)$ and recall that $\rho_s^n=\p^{\epsilon_n}\tilde\rho_s^n$. Moreover, let $\sigma_{s,t}^N=P_{st}\p^{\epsilon_n}(\tilde\rho_s^n\wedge N)$. If $a\in D(\E)\cap \M$ with $\partial a\in D(\hat\sigma_{s,t}^{1/2})$, then
\begin{align}
\label{horizontal_part_approx_derivative}
\begin{split}
\tau(a \L^{(1)}\sigma_{s,t})&=\lim_{N\to\infty}\tau((\L^{(2)}\p^{\epsilon_n} a)P_{st}(\tilde\rho_s^n\wedge N))\\
&=\lim_{N\to\infty}\langle\partial \p^{\epsilon_n}a,\partial P_{st}(\tilde\rho_s^n\wedge N)\rangle_\H\\
&=\lim_{N\to\infty}\langle \partial a,\partial \log \sigma_{s,t}^N\rangle_{\sigma_{s,t}^N}.
\end{split}
\end{align}
Since $\sigma_{s,t}^N\leq \sigma_{s,t}$ and $\sigma_{s,t}^N\to \sigma_{s,t}$ in $L^1(\M,\tau)$ as $N\to\infty$, we have 
\begin{equation}
\label{strong_conv_sigma}
\widehat{\sigma_{s,t}^N}^{1/2}\partial a\to\hat\sigma_{s,t}^{1/2}\partial a
\end{equation}
strongly in $\H$ as $N\to \infty$ by Lemma~\ref{monotone_conv_rho_norm}.

We will now show that $\widehat{\sigma^N_{s,t}}^{1/2}\partial\log\sigma_{s,t}^N\to \hat\sigma_{s,t}^{1/2}\partial\log\sigma_{s,t}$ weakly in $\H$. By Jensen's inequality and Corollary~\ref{heat_flow_L1},
\begin{align}\label{Fisher_bound_cut-offs}
\begin{split}
\I\left(\frac{\sigma_{s,t}^N}{\tau(\tilde\rho_s^n\wedge N)}\right)&\leq \frac{\norm{\kappa}_\infty}{\epsilon_n}\int_{\epsilon_n}^\infty \I\left(P_r P_{st}\frac{\tilde\rho_s^n\wedge N}{\tau(\tilde\rho_s^n\wedge N)}\right))\,dr\\
&\leq \frac{\norm{\kappa}_\infty}{\epsilon_n}\Ent\left(P_{\epsilon_n}\frac{\tilde\rho_s^n\wedge N}{\tau(\tilde\rho_s^n\wedge N)}\right)\\
&\leq \frac{\norm{\kappa}_\infty}{\epsilon_n}\tau((P_{\epsilon_n}(\tilde\rho_s^n\wedge N)\log P_{\epsilon_n}(\tilde\rho_s^n\wedge N))_+)\\
&\quad-\frac{\norm{\kappa}_\infty}{\epsilon_n}\log\tau(\tilde\rho_s^n\wedge N).
\end{split}
\end{align}
For the first summand observe that $t\mapsto (t\log t)_+$ is increasing, which implies by \cite[Lemma 4]{BK90} that
\begin{align*}
\tau((P_{\epsilon_n}(\tilde\rho_s^n\wedge N)\log P_{\epsilon_n}(\tilde\rho_s^n\wedge N))_+)\leq \tau((P_{\epsilon_n}\tilde\rho_s^n\log P_{\epsilon_n}\tilde\rho_s^n)_+).
\end{align*}
The right-hand side is finite by Proposition~\ref{Ent_regularization}. Since $\tau(\tilde\rho_s^n)\to 1$, we infer from (\ref{Fisher_bound_cut-offs}) that $\sup_N\I(\sigma_{s,t}^N)<\infty$. The lower bound $\sigma_{s,t}^N\geq 1/n$ then implies 
\begin{equation*}
\sup_N\E(\log\sigma_{s,t}^N)\leq n\sup_N \I(\sigma_{s,t}^N)<\infty.
\end{equation*}

From $\sigma_{s,t}^N\to\sigma_{s,t}$ in $L^1(\M,\tau)$ we infer $\log\sigma_{s,t}^N\to\log\sigma_{s,t}$ in $L^2(\M,\tau)$ by \cite[Theorem 3.2]{Tik87}. Together with the bound on the energy this implies $\log\sigma_{s,t}^N\to\log \sigma_{s,t}$ weakly in $(D(\E),\langle\cdot,\cdot\rangle_\E)$.

If $\xi\in D(\hat\sigma_{s,t}^{1/2})$, then
\begin{align*}
\langle\widehat{\sigma_{s,t}^N}^{1/2}\partial\log\sigma_{s,t}^N,\xi\rangle_\H=\langle \partial \log\sigma_{s,t}^N,\widehat{\sigma^N_{s,t}}^{1/2}\xi\rangle_\H\to \langle\partial \log \sigma_{s,t},\hat\sigma_{s,t}^{1/2}\xi\rangle_\H
\end{align*}
as $N\to\infty$ by Lemma~\ref{monotone_conv_rho_norm}. Since $D(\hat\sigma_{s,t}^{1/2})$ is dense in $\H$ and $\sup_N\I(\sigma_{s,t}^N)<\infty$, this implies
\begin{align*}
\widehat{\sigma^N_{s,t}}^{1/2}\partial \log \sigma_{s,t}^N\to\hat\sigma_{s,t}^{1/2}\partial\log\sigma_{s,t}
\end{align*}
weakly in $\H$ as $N\to\infty$.

If we combine this convergence with (\ref{strong_conv_sigma}), then we can deduce from (\ref{horizontal_part_approx_derivative}) that
\begin{align}
\label{horizontal_part_admissible}
\tau(a\L^{(1)}\sigma_{s,t})=\langle \partial a,\partial\log \sigma_{s,t}\rangle_{\sigma_{s,t}}
\end{align}
for all $a\in D(\E)\cap\M$ with $\partial a\in D(\hat\sigma_{s,t}^{1/2})$.

Let $\eta_{s,t}=\partial\log\sigma_{s,t}$. If we combine the results (\ref{vertical_part_admissible}) and (\ref{horizontal_part_admissible}), we see that $(\sigma_{s,t})_{s\in [0,1]}$ is admissible and
\begin{align*}
\norm{D_s\sigma_{s,t}}_{\sigma_{s,t}}^2\leq \norm{\xi_{s,t}-t\eta_{s,t}}_{\sigma_{s,t}}^2.
\end{align*}
Thus
\begin{align}
\begin{split}
\int_0^1 \norm{D_s\sigma_{s,t}}_{\sigma_{s,t}}^2\,ds&\leq\int_0^1 \norm{\xi_{s,t}-t\eta_{s,t}}_{\sigma_{s,t}}^2\,ds\\
&=\int_0^1 (\norm{\xi_{s,t}}_{\sigma_{s,t}}^2-2t\langle\eta_{s,t},\xi_{s,t}-t\eta_{s,t}\rangle_{\sigma_{s,t}}-t^2\norm{\eta_{s,t}}_{\sigma_{s,t}}^2)\,ds\\
&\leq \int_0^1 (\norm{\xi_{s,t}}_{\sigma_{s,t}}^2-2t\langle\eta_{s,t},\xi_{s,t}-t\eta_{s,t}\rangle_{\sigma_{s,t}})\,ds\\
&\leq \left(\int_0^1 e^{-2Kst}\,ds\right)\esssup_{s\in [0,1]}\norm{D\rho_s^n}_{\rho_s^n}^2\\
&\qquad-2t\int_0^1 \langle \eta_{s,t},\xi_{s,t}-t\eta_{s,t}\rangle_{\sigma_{s,t}}\,ds.
\label{bound_speed_P_st}
\end{split}
\end{align}
Let $(C_k)$ be an increasing sequence in $C^1((0,\infty))$ such that each $C_k$ is increasing, $1$-Lipschitz, $C_k(s)= s$ if $s\leq k-1$ and $C_k(s)= k$ if $s\geq n$. Let $f_k=C_k\circ \log$ and 
\begin{align*}
F_k\colon (0,\infty)\to \IR,\,t\mapsto \int_0^t (f_k(s)+1)\,ds.
\end{align*}
As in the proof of Proposition~\ref{Ent_regularization} one can show 
\begin{align*}
\tau(F_k(P_t\rho_1^n)-F_k(\rho_0^n))&=\int_0^1 \tau(f_k(\sigma_{s,t})(P_{s,t}\rho_s^n-t\L^{(1)}\sigma_{s,t}))\,ds.
\end{align*}
Since $f_k(\sigma_{s,t})\in D(\E)\cap \M$ by Lemma~\ref{contractions_L_p_generator} and $\norm{\partial f_k(\sigma_{s,t})}_{\sigma_{s,t}}^2\leq \I(\sigma_{s,t})$, we can apply (\ref{vertical_part_admissible}) and (\ref{horizontal_part_admissible}) to get
\begin{align}
\tau(F_k(P_t\rho_1^n)-F_k(\rho_0^n))=\int_0^1\langle\partial f_k(\sigma_{s,t}),\xi_{s,t}-t\eta_{s,t}\rangle_{\sigma_{s,t}}\,ds
\label{eq_approx_Ent}
\end{align}
Since $f_k(\sigma_{s,t})\to \log\sigma_{s,t}$ in $L^2(\M,\tau)$ as $k\to\infty$ and $\norm{\partial f_k (\sigma_{s,t})}_{\sigma_{s,t}}^2\leq \I(\sigma_{s,t})$, we have $\partial f_k(\sigma_{s,t})\to \partial \log\sigma_{s,t}=\eta_{s,t}$ weakly in $\tilde\H_{\sigma_{s,t}}$ and the integrand on the right-hand side of (\ref{eq_approx_Ent}) is pointwise bounded by $\I(\sigma_{s,t})^{1/2}\norm{\xi_{s,t}-t\eta_{s,t}}_{\sigma_{s,t}}$.

By the dominated convergence theorem,
\begin{align*}
\lim_{k\to\infty}\int_0^1 \langle\partial f_k(\sigma_{s,t}),\xi_{s,t}-t\eta_{s,t}\rangle_{\sigma_{s,t}}\,ds=\int_0^1 \langle\eta_{s,t},\xi_{s,t}-t\eta_{s,t}\rangle_{\sigma_{s,t}}\,ds.
\end{align*}
On the other hand, the left-hand side of (\ref{eq_approx_Ent}) converges to $\Ent(P_t\rho_1)-\Ent(\rho_0)$ by the monotone convergence theorem.

Then (\ref{bound_speed_P_st}) becomes
\begin{align}
\begin{split}
\int_0^1 \norm{D_s\sigma_{s,t}}_{\sigma_{s,t}}^2\,ds&\leq\left(\int_0^1 e^{-2Kst}\,ds\right)\esssup_{s\in [0,1]}\norm{D\rho_s^n}_{\rho_s^n}^2\\
&\quad-2t(\Ent(P_t \rho_1^n)-\Ent(\rho_0^n)).
\label{bound_speed_P_st_Ent}
\end{split}
\end{align}
Since $\sigma_{s,t}= P_{st}\rho_s^n\to P_{st}\rho_s$ in $L^1(\M,\tau)$ as $n\to\infty$, Theorem~\ref{energy_lsc} can be used to see that the curve $(P_{st}\rho_s)_s$ is admissible and
\begin{align*}
\W(\rho_0,P_t \rho_1)^2\leq\int_0^1 \norm{D_s(P_{st}\rho_s)}_{P_{st\rho_s}}^2\leq \liminf_{n\to\infty}\int_0^1 \norm{D_s(P_{st}\rho_s^n)}_{P_{st}\rho_s^n}^2\,ds.
\end{align*}
By Lemma~\ref{regular_curves} we have
\begin{align*}
\limsup_{n\to\infty}\esssup_{s\in[0,1]}\norm{D\rho_s^n}_{\rho_s^n}^2\leq \esssup_{s\in [0,1]}\norm{D\rho_s}_{\rho_s}^2\leq (\W(\rho_0,\rho_1)+\epsilon)^2.
\end{align*}
Furthermore, using the lower semicontinuity of the entropy and Lemma~\ref{regular_curves}, we obtain $\Ent(P_t\rho_1^n)\to \Ent(P_t\rho_1)$, $\Ent(\rho_0^n)\to\Ent(\rho_0)$.

These inequalities allow to pass to the limit $n\to\infty$ in (\ref{bound_speed_P_st_Ent}) to get
\begin{align*}
\W(\rho_0,P_t\rho_1)^2\leq \left(\int_0^1 e^{-2Kst}\,ds\right)(\W(\rho_0,\rho_1)+\epsilon)^2-2t(\Ent(P_t\rho_1)-\Ent(\rho_0)),
\end{align*}
which yields (\ref{EVI_Ent_integrated}) as $\epsilon\searrow 0$.
\end{proof}

As a consequence of the uniqueness of $\EVI_K$ gradient flow curves (Lemma \ref{EVI_metric_contraction}) we note the following corollary.

\begin{corollary}
A curve $(\rho_t)_{t\geq 0}$ with $\rho_0\in D(\Ent)$ is an $\EVI_K$ gradient flow curve of $\Ent$ if and only if $\rho_t=P_t \rho_0$ for all $t\geq 0$.
\end{corollary}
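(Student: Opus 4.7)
The plan is to deduce this corollary almost immediately from Theorem \ref{EVI_gradient_flow} combined with the abstract uniqueness statement in Lemma \ref{EVI_metric_contraction}.

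For the ``if'' direction, suppose $\rho_t=P_t\rho_0$ for $t\geq 0$. Since $\E$ satisfies the standing hypotheses of Theorem \ref{EVI_gradient_flow}, the semigroup $(P_t)$ is an $\EVI_K$ gradient flow of $\Ent$, which by property (F3) in the definition of an $\EVI_K$ gradient flow means precisely that the trajectory $(P_t\rho_0)_{t\geq 0}$ is an $\EVI_K$ gradient flow curve of $\Ent$ for every initial datum; in particular for $\rho_0\in D(\Ent)$.

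For the ``only if'' direction, assume $(\rho_t)_{t\geq 0}$ is an $\EVI_K$ gradient flow curve of $\Ent$ with $\rho_0\in D(\Ent)$. Applying Theorem \ref{EVI_gradient_flow} to the initial condition $\rho_0$, the curve $(P_t\rho_0)_{t\geq 0}$ is also an $\EVI_K$ gradient flow curve of $\Ent$ starting at the same point $\rho_0$. Since $\W(\rho_0,\rho_0)=0$, Lemma \ref{EVI_metric_contraction} is applicable (the two curves are within finite $\W$-distance at time $0$), and yields $\W(\rho_t,P_t\rho_0)\leq e^{-Kt}\W(\rho_0,\rho_0)=0$ for every $t\geq 0$. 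Hence $\rho_t=P_t\rho_0$ for all $t\geq 0$.

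There is essentially no obstacle: both directions are direct invocations of results already established in the excerpt, and the only minor point to keep in mind is that the hypothesis $\rho_0\in D(\Ent)$ guarantees that the two competing curves have coinciding starting points with $\W$-distance zero, so that the uniqueness assertion of Lemma \ref{EVI_metric_contraction} genuinely applies.
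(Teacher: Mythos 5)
Your proof is correct and follows precisely the route the paper intends: the sentence preceding the corollary states that it is a consequence of the uniqueness of $\EVI_K$ gradient flow curves, and your deduction from Theorem \ref{EVI_gradient_flow} and Lemma \ref{EVI_metric_contraction} makes this explicit. The implicit use of non-degeneracy of $\W$ (needed to pass from $\W(\rho_t,P_t\rho_0)=0$ to $\rho_t=P_t\rho_0$) is warranted here because $\GE(K,\infty)$ yields $\sigma$-weak density of $\A_\theta$ via Corollary \ref{A_LM_dense}, which in turn gives non-degeneracy by Proposition \ref{W_implies_weak_con}.
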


\section{Geodesic convexity}\label{geodesic_convex}
In this section we will study an important consequence of the gradient flow characterization, namely the (semi-) convexity of the entropy along geodesics in $(\D(\M,\tau),\W)$. This property served as definition for synthetic Ricci curvature bounds by Lott--Villani \cite{LV09} and Sturm \cite{Stu06a,Stu06b} and could therefore also be an entrance gate to the study of Ricci curvature in noncommutative geometry.

As discussed in a previous section, even the existence of $\W$-geodesics is not clear in general. The situation is much better if $(P_t)$ satisfies the gradient estimate $\GE(K,\infty)$ and we restrict our attention to the domain of the entropy (Theorem \ref{D(Ent)_geodesic}). This is due to two ingredients, which we will study next: First, the sublevel sets of the entropy are compact in the weak $L^1$-topology (Lemma \ref{sublevel_ent_cpt}). Together with the lower semicontinuity of the the action functional with respect to pointwise weak convergence in $L^1$, this can be employed for the standard existence proof of minimizers for a variational problem, provided one can always find a minimizing sequence with uniformly bounded entropy. As we will see, the latter is a essentially a consequence of the evolution variational inequality (Proposition \ref{connecting_curves_bounded_entropy}).

Once the existence of geodesics is proven, the semi-convexity of the entropy along them follows from abstract results on gradient flows (Theorem \ref{Ent_semi-convex}). Finally, we summarize the relations between the gradient estimate $\GE(K,\infty)$, the evolution variational inequality $\EVI_K$ and $K$-convexity of the entropy in Theorem \ref{main_theorem_summary}.

As usual, let $(\M,\tau)$ be a tracial von Neumann algebra, $\E$ a quantum Dirichlet form on $L^2(\M,\tau)$ such that $\tau$ is energy dominant, and $(\partial,\H,L,R,J)$ the associated first order differential calculus. We further assume that $\tau$ is a state, $L^1(\M,\tau)$ is separable, $\theta$ is the logarithmic mean and that $\theta$ is a regular mean for $\E$.

\begin{lemma}\label{sublevel_ent_cpt}
If $F\colon [0,\infty)\lra \IR$ is a lower semicontinuous function such that $f(t)/t\to\infty$ as $t\to\infty$, then the sublevel sets of
\begin{align*}
F\colon \D(\M,\tau)\lra (-\infty,\infty],\,F(\rho)=\tau(f(\rho))
\end{align*}
are compact in the weak $L^1$-topology.
\end{lemma}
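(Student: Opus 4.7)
The plan is to split the argument into two parts: showing that sublevel sets $\{F\leq C\}$ are weakly relatively compact in $L^1(\M,\tau)$ via a noncommutative Dunford--Pettis / de la Vall\'ee Poussin type argument, and showing they are weakly closed via lower semicontinuity of $F$.

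Since $f$ is lsc on $[0,\infty)$ with $f(t)/t\to\infty$, it is bounded below by some constant $-K$; replacing $f$ by $\tilde f = f+K \geq 0$ (which still satisfies $\tilde f(t)/t\to\infty$), for every $\epsilon>0$ one can find $M_\epsilon$ with $t\leq \epsilon\,\tilde f(t)$ for $t\geq M_\epsilon$. Given $\rho\in\{F\leq C\}$, functional calculus then yields
\[
\tau(\rho\,\1_{[M_\epsilon,\infty)}(\rho))\leq \epsilon\,\tau(\tilde f(\rho)\,\1_{[M_\epsilon,\infty)}(\rho))\leq \epsilon\,(C+K\tau(1)),
\]
so $\sup_{\rho\in\{F\leq C\}}\tau(\rho\,\1_{[M,\infty)}(\rho))\to 0$ as $M\to\infty$. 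Since density matrices have unit trace, the sublevel set is also bounded in $L^1$, and the noncommutative Dunford--Pettis theorem for $L^1(\M,\tau)$ (finite trace case) then yields weak relative compactness.

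For the closedness step I would rely on weak lower semicontinuity of $F$. When $f$ is convex---which covers the decisive application to the von Neumann entropy $f(t)=t\log t$---Proposition~\ref{entropy_lsc} provides norm lower semicontinuity of $F$, and convexity of $f$ makes $F$ a convex functional on the positive cone, so Hahn--Banach renders its sublevel sets weakly closed. In the general lsc setting one would approximate $f$ from below by an increasing sequence of continuous bounded functions $f_n$ (e.g.\ via Moreau--Yosida regularization) and pass to the supremum by monotone convergence, after establishing the required weak continuity properties of $\rho\mapsto \tau(f_n(\rho))$ by truncation and continuity of the bounded functional calculus.

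The main obstacle is precisely this weak closedness step: the relative compactness is a robust consequence of superlinear growth combined with Dunford--Pettis, but upgrading lsc of $f$ on $[0,\infty)$ to weak lower semicontinuity of $\rho\mapsto \tau(f(\rho))$ on $L^1(\M,\tau)$ is delicate without convexity. For the applications in this article (convex entropic functionals) this issue does not arise, and the argument goes through smoothly.
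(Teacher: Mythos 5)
Your compactness step is essentially the paper's argument, just repackaged. The paper verifies the criterion of the noncommutative Dunford--Pettis theorem directly, namely that $\tau(p_n\rho)\to 0$ uniformly over the sublevel set for every decreasing sequence of projections $p_n\searrow 0$, by decomposing $\rho$ spectrally as $\rho\,\1_{[0,T)}(\rho)+\rho\,\1_{[T,\infty)}(\rho)$ and estimating each piece exactly as you do. You instead first establish the spectral-tail estimate $\sup_\rho\tau\bigl(\rho\,\1_{[M,\infty)}(\rho)\bigr)\to 0$ and then appeal to Dunford--Pettis; to invoke the cited formulation one still needs to pass from the tail estimate to the projection criterion, but the transfer is routine (for $p_n\searrow 0$, write $\tau(p_n\rho)\le\tau\bigl(b^{1/2}p_n b^{1/2}\bigr)+\tau\bigl(c^{1/2}p_n c^{1/2}\bigr)\le M\tau(p_n)+\tau\bigl(\rho\,\1_{[M,\infty)}(\rho)\bigr)$ with $b=\rho\,\1_{[0,M)}(\rho)$, $c=\rho\,\1_{[M,\infty)}(\rho)$, using the trace property and normality of $\tau$). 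Your normalization $f\mapsto f+K\ge 0$ plays the same role as the paper's replacement of $f$ by $f_+$.

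On the closedness step you have correctly identified a real imprecision. The lemma's hypothesis says only that $f$ is lower semicontinuous, but the paper's own proof invokes Proposition~\ref{entropy_lsc}, which requires $f$ to be \emph{convex}; weak closedness of the sublevel sets is then obtained by combining norm lower semicontinuity of $F$ with Hahn--Banach, and this again uses convexity of $F$. So the paper is implicitly assuming convexity, which holds in the only case it is used, $f(t)=t\log t$. Your proposed Moreau--Yosida workaround for general lsc $f$ does not work as written: weak $L^1$-convergence $\rho_k\rightharpoonup\rho$ does not imply strong or measure convergence, so continuity of the bounded functional calculus does not yield weak continuity of $\rho\mapsto\tau(f_n(\rho))$, and the monotone-approximation step breaks down. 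You flag this yourself as the delicate point, which is exactly right; the cleanest resolution is to add the convexity hypothesis to the lemma so that its statement matches the proof the paper actually gives.
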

\begin{proof}
The sublevel sets are closed since $F$ is lower semicontinuous by Lemma \ref{entropy_lsc}, so it suffices to show that they are relatively weakly compact. We assume that $f\geq 0$; otherwise one can replace it by $f_+$ and use that $f_+\leq f-\inf f$.

The proof of relative weak compactness is a noncommutative version of the Vallée Poussin theorem (see \cite[Theorem 4.5.9]{Bog07}). By the noncommutative version of the Dunford-Pettis theorem (\cite[Theorem III.5.4]{Tak02}), it suffices to show that $\tau(p_n \rho)\to 0$ uniformly in $\rho\in\mathcal{F}$ whenever $(p_n)$ is a decreasing sequence of projections in $\M$ such that $p_n\searrow 0$.

Let $C=\sup_{\rho\in \mathcal{F}}\tau(f(\rho))$. For $\epsilon>0$ let $M=\frac {2C}\epsilon$. By assumption there exists $T>0$ such that $f(t)\geq Mt$ for all $t\geq T$. Moreover, since $\tau$ is normal, we can choose $N\in\mathbb{N}$ such that $\tau(p_n)<\frac\epsilon {2T}$ for $n\geq N$.

It follows that
\begin{align*}
\tau(p_n \rho)=\tau(p_n \rho(\1_{[0,T)}(\rho)+\1_{[T,\infty)}(\rho)))\leq T\tau(p_n)+\frac{\epsilon}{2C}\tau(p_n f(\rho))<\epsilon
\end{align*}
for all $\rho\in\mathcal{F}$ and $n\geq N$.
\end{proof}

\begin{remark}
By the Eberlein-\v{S}mulian theorem (\cite[Theorem V.6.1]{DS88}), weak compactness and weak sequential compactness are equivalent for weakly closed subsets of a Banach space.
\end{remark}

\begin{proposition}\label{connecting_curves_bounded_entropy}
For all $K,\alpha, D>0$ there exists a constant $C(K,\alpha,D)>0$ such that the following folds:

If the form $\E$ satisfies $\GE(K,\infty)$, $\rho_0,\rho_1\in D(\Ent)$ with $\Ent(\rho_0),\Ent(\rho_1)\leq \alpha$, and $\W(\rho_0,\rho_1)\leq D$, then there is a sequence of admissible curves $(\rho^n_t)_{t\in[0,1]}$ connecting $\rho_0$ and $\rho_1$ such that
\begin{align*}
\sup_{n\in\IN}\sup_{t\in [0,1]}\Ent(\rho^n_t)\leq C(K,\alpha,D)
\end{align*}
and
\begin{align*}
\int_0^1 \norm{D\rho^n_t}_{\rho^n_t}^2\,dt\to \W(\rho_0,\rho_1)^2.
\end{align*}
\end{proposition}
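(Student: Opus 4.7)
The plan is to construct $\rho^n$ as an arc-length reparametrization of a concatenated curve that interleaves short heat-flow segments at the endpoints (restoring $\rho_0$ and $\rho_1$) with a heat-smoothed approximate $\W$-geodesic filling the interior. Setting $L = \W(\rho_0,\rho_1) \leq D$, for each $n$ I would pick a constant-speed admissible curve $(\sigma^n_t)_{t\in[0,1]}$ from $\rho_0$ to $\rho_1$ with $\int_0^1 \norm{D\sigma^n_t}_{\sigma^n_t}^2\,dt \leq L^2 + 1/n$ (Definition \ref{def_W} and Lemma \ref{constant_speed}), together with a sequence $\tau_n\searrow 0$. The auxiliary curve $\eta^n$ on $[0,1+2\tau_n]$ is defined by taking $\eta^n_s = P_s\rho_0$ for $s\in[0,\tau_n]$, $\eta^n_s = P_{\tau_n}\sigma^n_{s-\tau_n}$ for $s\in[\tau_n,1+\tau_n]$, and $\eta^n_s = P_{1+2\tau_n-s}\rho_1$ for $s\in[1+\tau_n,1+2\tau_n]$; each piece is admissible by Corollary \ref{heat_flow_L1} (heat-flow endpoints) and Theorem \ref{W_contraction} (smoothed interior), the third piece being a time-reversal of the usual heat flow. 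Reparametrizing $\eta^n$ to constant speed on $[0,1]$ produces the candidate $\rho^n$, whose action equals the squared length of $\eta^n$.

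The action estimate follows from Corollary \ref{heat_flow_L1} combined with Cauchy--Schwarz, giving each boundary segment length at most $\sqrt{\tau_n\alpha}$, together with Theorem \ref{W_contraction}, which bounds the smoothed middle segment by $e^{-K\tau_n}\sqrt{L^2+1/n}$. Hence $\int_0^1 \norm{D\rho^n_t}_{\rho^n_t}^2\,dt \leq (2\sqrt{\tau_n\alpha} + e^{-K\tau_n}\sqrt{L^2+1/n})^2 \to L^2$ as $\tau_n\to 0$ and $n\to\infty$. For the entropy, the monotonicity of $\Ent$ along the heat flow (Corollary \ref{heat_flow_L1}) controls the two boundary pieces by $\alpha$, while Proposition \ref{Ent_regularization} applied along the middle with reference $\rho_0$ for $u\leq 1/2$ and $\rho_1$ for $u\geq 1/2$, combined with the constant-speed estimate $\W(\rho_i,\sigma^n_u) \leq \min(u,1-u)L + O(n^{-1/2})$, yields
\begin{align*}
\Ent(P_{\tau_n}\sigma^n_u) \leq \alpha + \frac{C(K,\tau_n)}{2\tau_n}\min(u,1-u)^2 L^2.
\end{align*}

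The main obstacle is the tension between these two estimates: the action bound forces $\tau_n\to 0$, but the coefficient $C(K,\tau_n)/\tau_n$ in the entropy estimate diverges in that limit, so the naive construction fails to deliver a uniform bound depending only on $K,\alpha,D$. Resolving this is the heart of the proof. The most promising route is to let the smoothing parameter depend on time, $\tau_n(t)$, vanishing at the endpoints but balanced against $\W(\rho_i,\sigma^n_t)^2$ so that the ratio $\W(\rho_i,\sigma^n_t)^2/\tau_n(t)$ stays bounded uniformly in $n$ and $t$; alternatively, one can exploit the full strength of the $\EVI_K$ inequality by retaining its companion bound on $\W(P_t\rho,\sigma)$ to cancel the $1/\tau_n$ singularity left by dropping a negative term in the derivation of Proposition \ref{Ent_regularization}. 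Verifying admissibility of the resulting time-varying construction, and then passing to the limit $n\to\infty$ using the lower semicontinuity of the action (Theorem \ref{energy_lsc}) together with Jensen's inequality to handle the convexity of $\rho\mapsto\norm{\partial a}_\rho^2$, is the main technical step.
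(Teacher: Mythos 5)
Your construction---heat-flow segments at the endpoints spliced with a heat-smoothed approximate geodesic $P_{\tau_n}\sigma^n_t$ in the middle---is precisely the one the paper uses (with $\tau_n=1/n$), and you correctly identify both the action estimate and the crux of the entropy estimate: a naive application of Proposition~\ref{Ent_regularization} gives a $1/\tau_n$ prefactor that is useless uniformly in $t$ once you let $\tau_n\to 0$.

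The genuine gap is that you leave that crux unresolved. You sketch two possible repairs but carry out neither. The ``alternative'' route you mention is the correct one, and you do not need to rederive it: retaining the full $\EVI_K$ inequality (including the $\frac{d^+}{dt}\W^2$ and $\frac K2\W^2$ terms) along the approximate geodesic, rather than discarding the negative term as in the proof of Proposition~\ref{Ent_regularization}, is exactly the content of Theorem~3.2 of Daneri--Savar\'e \cite{DS08}. Applied to $(P_t)$ (an $\EVI_K$ gradient flow by Theorem~\ref{EVI_gradient_flow}) along an $L_n$-Lipschitz admissible curve $\sigma^n$ from $\rho_0$ to $\rho_1$ with $L_n^2\leq\W(\rho_0,\rho_1)^2+1/n^2$, it yields
\begin{align*}
\Ent(P_{1/n}\sigma^n_t)\leq(1-t)\Ent(\rho_0)+t\Ent(\rho_1)-\frac{K}{2}\,t(1-t)\W(\rho_0,\rho_1)^2+\frac{1}{2n^2 I_K(1/n)},
\end{align*}
with $I_K(\delta)=\int_0^\delta e^{Kr}\,dr$. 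The observation your write-up is missing is that this error term stays uniformly bounded in $n$: the excess energy $L_n^2-\W^2$ is taken of order $1/n^2$, dominated by $I_K(1/n)\sim 1/n$, so $c(K):=\sup_n 1/(2n^2I_K(1/n))$ is finite and one gets $\sup_{n,t}\Ent(P_{1/n}\sigma^n_t)\leq\alpha+\tfrac{|K|}{2}D^2+c(K)$, which is the constant $C(K,\alpha,D)$. Your preferred first repair (a time-varying $\tau_n(t)$) is not what the paper does and would require its own admissibility and action-convergence analysis; I would not pursue it. The remainder of your proposal---the concatenation, the use of Corollary~\ref{heat_flow_L1} and Theorem~\ref{W_contraction} for the action, the limit passage---is consistent with the paper.
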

\begin{proof}
For $n\in\IN$ let $(\sigma^n_t)_{t\in[0,1]}$ be an admissible $L_n$-Lipschitz curve in $(\D(\M,\tau),\W)$ connecting $\rho_0$ and $\rho_1$ such that $L_n^2\leq \W(\rho_0,\rho_1)^2+\frac 1 {n^2}$.

Let $\tilde \sigma_t^n=P_{1/n}\sigma^n_t$. Since $(P_t)$ is an $\EVI_K$ gradient flow of $\Ent$ by Theorem \ref{EVI_gradient_flow}, Theorem 3.2 of \cite{DS08} asserts
\begin{align*}
\Ent(\tilde \sigma^n_t)\leq (1-t)\Ent(\rho_0)+t\Ent(\rho_1)-\frac K 2t(1-t)\W(\rho_0,\rho_1)^2+\frac 1{2n^2 I_K(1/n)},
\end{align*}
where $I_K(t)=\int_0^t e^{Kr}\,dr$.

As $n^2 I_K(1/n)\to \infty$ as $n\to\infty$, the supremum
\begin{align*}
c(K)=\sup_{n\in\IN}\frac 1{2n^2 I_K(1/n)}
\end{align*}
is finite. Thus
\begin{align*}
\sup_{n\in\IN}\sup_{t\in[0,1]}\Ent(\tilde \sigma^n_t)\leq \alpha+\frac{\abs{K}}2 D^2+c(K).
\end{align*}
Furthermore, Theorem \ref{W_contraction} implies $\norm{D\tilde\sigma^n_t}_{\tilde\sigma^n_t}\leq e^{-K/n}\norm{D\sigma^n_t}_{\sigma^n_t}$ for a.e. $t\in [0,1]$.

Moreover, $\Ent(P_s\rho_0)\leq \Ent(\rho_0)\leq\alpha$ and $(P_s \rho_0)_{s\geq 0}$ is admissible by Corollary \ref{heat_flow_L1}, hence
\begin{align*}
\int_0^{1/n}\norm{D_s P_s \rho_0}_{P_s \rho_0}^2\,ds\to 0
\end{align*}
as $n\to \infty$. Of course, the same holds for $\rho_0$ replaced by $\rho_1$.

Hence one can concatenate the curves $(P_t\rho_0)_{t\in [0,1/n]}$, $(\tilde \sigma^n_t)_{t\in [0,1]}$ and $(P_{\frac 1 n-t}\rho_1)_{t\in [0,1/n]}$ to get a curve $(\rho^n_t)$ with the desired properties.
\end{proof}

\begin{definition}
We say that the entropy has \emph{regular sublevel sets} if every curve $(\rho_t)\in \AC^2_\loc(I;(\D(\M,\tau),\W))$ with uniformly bounded entropy is admissible and $\norm{D\rho_t}_{\rho_t}=\abs{\dot\rho_t}_\W$ for a.e. $t\in I$.
\end{definition}

\begin{proposition}\label{prop_regular_sublevels}
If $\E$ satisfies $\GE(K,\infty)$, then the entropy has regular sublevel sets.
\end{proposition}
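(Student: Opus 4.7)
The plan is as follows. Suppose $(\rho_t)_{t\in I}\in\AC^2_\loc(I;(\D(\M,\tau),\W))$ with $\alpha:=\sup_{t\in I}\Ent(\rho_t)<\infty$. Since both the admissibility statement and the identity $\|D\rho_t\|_{\rho_t}=|\dot\rho_t|_\W$ are local in $t$, I would restrict to an arbitrary bounded subinterval $I_0=[a,b]\subset I$, on which $D:=\sup_{s,t\in I_0}\W(\rho_s,\rho_t)<\infty$ by absolute continuity. The strategy is to realize $\rho|_{I_0}$ as a pointwise weak-$L^1$ limit of admissible curves $\sigma^n$ whose actions are controlled by $\int_{I_0}|\dot\rho|_\W^2\,dt$, so that Theorem~\ref{energy_lsc} upgrades $\rho$ itself to an admissible curve with the correct action bound; the reverse pointwise inequality is automatic from admissibility.

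\textbf{Piecewise nearly-geodesic approximation.} For $n\in\IN$, partition $I_0$ dyadically with nodes $a=t_0^n<\dots<t_{2^n}^n=b$ of spacing $\ell_n=2^{-n}(b-a)$. For each adjacent pair Proposition~\ref{connecting_curves_bounded_entropy}, applied with parameters $(K,\alpha,D)$, produces an admissible curve $\gamma^{n,k}\colon[0,1]\to\D(\M,\tau)$ joining $\rho_{t_{k-1}^n}$ to $\rho_{t_k^n}$ with $\sup_u\Ent(\gamma^{n,k}_u)\leq C(K,\alpha,D)$ and action at most $\W^2(\rho_{t_{k-1}^n},\rho_{t_k^n})+4^{-n}\ell_n$. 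Linearly reparametrizing to $[t_{k-1}^n,t_k^n]$ and concatenating yields an admissible curve $\sigma^n\colon I_0\to\D(\M,\tau)$ that coincides with $\rho$ at every node, satisfies $\sup_t\Ent(\sigma^n_t)\leq C(K,\alpha,D)$, and, using Cauchy--Schwarz on $\W(\rho_{t_{k-1}^n},\rho_{t_k^n})\leq\int_{t_{k-1}^n}^{t_k^n}|\dot\rho_r|_\W\,dr$,
\begin{align*}
\int_{I_0}\|D\sigma^n_t\|_{\sigma^n_t}^2\,dt\leq\sum_{k=1}^{2^n}\frac{\W^2(\rho_{t_{k-1}^n},\rho_{t_k^n})}{\ell_n}+2^{-n}\leq\int_{I_0}|\dot\rho_t|_\W^2\,dt+2^{-n}.
\end{align*}

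\textbf{Pointwise convergence and conclusion.} For $t\in[t_{k-1}^n,t_k^n]$, using $\sigma^n_{t_{k-1}^n}=\rho_{t_{k-1}^n}$, the triangle inequality combined with Cauchy--Schwarz yields
\begin{align*}
\W(\sigma^n_t,\rho_t)\leq\sqrt{\ell_n\int_{t_{k-1}^n}^{t_k^n}\|D\sigma^n_r\|_{\sigma^n_r}^2\,dr}+\int_{t_{k-1}^n}^t|\dot\rho_r|_\W\,dr,
\end{align*}
and both summands are $O(\sqrt{\ell_n})$ uniformly in $t\in I_0$ since $\W(\rho_{t_{k-1}^n},\rho_{t_k^n})\leq\sqrt{\ell_n}\,\||\dot\rho|_\W\|_{L^2(I_0)}$. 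Thus $\W(\sigma^n_t,\rho_t)\to 0$ for every $t\in I_0$, and Proposition~\ref{W_implies_weak_con} gives $\tau(a\sigma^n_t)\to\tau(a\rho_t)$ for each $a\in\A_\theta$. Because $\sup_n\Ent(\sigma^n_t)<\infty$, Lemma~\ref{sublevel_ent_cpt} renders the family $\{\sigma^n_t\}_n$ weakly relatively compact in $L^1(\M,\tau)$; combined with the $\sigma$-weak density of $\A_\theta$ in $\M$ (Corollary~\ref{A_LM_dense}, which rests on $\GE(K,\infty)$), uniqueness of weak limit points forces $\sigma^n_t\to\rho_t$ weakly in $L^1(\M,\tau)$ for every $t\in I_0$. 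Theorem~\ref{energy_lsc} now applies and $\rho|_{I_0}$ is admissible with $\int_{I_0}\|D\rho_t\|_{\rho_t}^2\,dt\leq\int_{I_0}|\dot\rho_t|_\W^2\,dt$. Since admissibility always entails $|\dot\rho_t|_\W\leq\|D\rho_t\|_{\rho_t}$ a.e., the two functions agree a.e.\ on $I_0$, and exhausting $I$ by such $I_0$ concludes.

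\textbf{Main obstacle.} The single nontrivial input is the uniformity of the entropy bound $C(K,\alpha,D)$ over all $2^n$ pieces of the dyadic approximation. Without it, neither the weak $L^1$ compactness of $\{\sigma^n_t\}_n$ nor the hypothesis of Theorem~\ref{energy_lsc} could be secured uniformly as $n\to\infty$. This uniformity is exactly the content of Proposition~\ref{connecting_curves_bounded_entropy}, whose proof ultimately rests on the EVI-type entropy regularization estimate (Proposition~\ref{Ent_regularization}) -- the only place the assumption $\GE(K,\infty)$ enters essentially.
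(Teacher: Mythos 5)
Your proof is correct and follows essentially the same route as the paper: approximate $\rho|_{I_0}$ by concatenations of nearly-geodesic admissible curves supplied by Proposition~\ref{connecting_curves_bounded_entropy} (which secures the uniform entropy bound), deduce pointwise weak $L^1$-convergence from $\W$-convergence plus the compactness of entropy sublevel sets (Lemma~\ref{sublevel_ent_cpt}) and the $\sigma$-weak density of $\A_\theta$ (Corollary~\ref{A_LM_dense}), and conclude via lower semicontinuity of the action (Theorem~\ref{energy_lsc}). The only differences from the paper are cosmetic — a dyadic partition with explicit $O(\sqrt{\ell_n})$ control of $\W(\sigma^n_t,\rho_t)$ instead of an adaptive $\epsilon$-fine partition chosen via uniform continuity — and your identification of the uniformity of $C(K,\alpha,D)$ as the critical input is exactly where $\GE(K,\infty)$ enters, as in the paper.
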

\begin{proof}

First assume that $(\rho_t)\in \AC^2([0,1];(\D(\M,\tau),\W))$. Since $(\rho_t)$ is continuous on a compact interval, it is uniformly continuous. Thus, for every $\epsilon>0$ there exists a partition $0=t_0<t_1<\dots<t_n=1$ of $[0,1]$ such that $\W(\rho_{t_{k-1}},\rho_t)<\epsilon$ for all $t\in[t_{k-1},t_k]$, $1\leq k\leq n$.

For $k\in\{1,\dots,n\}$ let $\sigma^{k,\epsilon}\colon [t_{k-1},t_k]\lra \D(\M,\tau)$ be an admissible curve with $\sigma^{k,\epsilon}_{t_{k-1}}=\rho_{t_{k-1}}$, $\sigma^{k,\epsilon}_{t_k}=\rho_{t_k}$ and
\begin{align*}
\int_{t_{k-1}}^{t_k}\norm{D\sigma^{k,\epsilon}_r}_{\sigma^{k,\epsilon}_r}^2\,dr\leq\frac{\W(\rho_{t_{k-1}},\rho_{t_k})^2}{t_k-t_{k-1}}+\frac \epsilon n.
\end{align*}
Moreover, by Proposition \ref{connecting_curves_bounded_entropy}, the curves $\sigma^{k,\epsilon}$ can be chosen such that
\begin{align}\label{bound_entropy}
\sup_{\epsilon\in (0,1)}\sup_{k\in\IN}\sup_{t\in [t_{k-1},t_k]}\Ent(\sigma^{k,\epsilon}_t)<\infty.
\end{align}

Denote by $\rho^\epsilon$ the concatenation of $\sigma^{1,\epsilon},\dots,\sigma^{n,\epsilon}$. Then
\begin{align*}
\int_0^1\norm{D\rho^\epsilon_r}_{\rho^\epsilon_r}^2\,dr&=\sum_{k=1}^n\int_{t_{k-1}}^{t_k}
\norm{D\sigma^{k,\epsilon}_r}_{\sigma^{k,\epsilon}_r}^2\,dr\\
&\leq \epsilon+\sum_{k=1}^n \frac{\W(\rho_{t_{k-1}},\rho_{t_k})^2}{t_k-t_{k-1}}\\
&\leq\epsilon+\sum_{k=1}^n \int_{t_{k-1}}^{t_k}\abs{\dot\rho_r}_\W^2\,dr\\
&=\epsilon+\int_0^1 \abs{\dot\rho_r}_\W^2\,dr.
\end{align*}
Moreover, for every $t\in[0,1]$ there is a $k\in\{1,\dots,n\}$ such that $\W(\rho_{t_k},\rho_t)<\epsilon$, hence
\begin{align*}
\W(\rho^\epsilon_t,\rho_t)\leq \W(\rho^\epsilon_t,\rho^\epsilon_{t_k})+\W(\rho^\epsilon_{t_k},\rho_{t_k})+\W(\rho_{t_k},\rho_t)<3\epsilon.
\end{align*}
Thus, $\W(\rho^\epsilon_t,\rho_t)\to 0$ as $\epsilon\searrow 0$.

By Lemma \ref{sublevel_ent_cpt} and the uniform bound on the entropy (\ref {bound_entropy}), for every $t\in[0,1]$ and every sequence $(\epsilon_n)$ converging to $0$ there is a subsequence $(\epsilon_{n(k)})$ and $\tilde \rho_t\in \D(\M,\tau)$ such that $\rho^{\epsilon_{n(k)}}_t\to \tilde \rho_t$ weakly in $L^1$ as $k\to\infty$. In particular, $\tau(\rho^{\epsilon_{n(k)}}_t a)\to \tau(\tilde \rho_t a)$ for all $a\in \A_\mathrm{LM}$.

On the other hand, $\W(\rho^{\epsilon_{n(k)}}_t,\rho_t)\to 0$ implies $\tau(\rho^{\epsilon_{n(k)}}_t a)\to \tau(\rho_t a)$ for all $a\in \A_\mathrm{LM}$ by Proposition \ref{W_implies_weak_con}. Since $\A_\theta\subset \M$ is $\sigma$-weakly dense by Corollary \ref{A_LM_dense}, it follows that $\tilde\rho_t=\rho_t$ for all $t\in [0,1]$. Therefore, $\rho_t^\epsilon\to \rho_t$ weakly in $L^1$ as $\epsilon\searrow 0$ for all $t\in [0,1]$.

By Theorem \ref{energy_lsc}, the curve $(\rho_t)$ is admissible and
\begin{align*}
\int_0^1 \norm{D\rho_r}_{\rho_r}^2\,dr\leq \liminf_{\epsilon\searrow 0}\int_0^1 \norm{D\rho^\epsilon_r}_{\pi^\epsilon_r}^2\,dr\leq \int_0^1 \abs{\dot\rho_r}_\W^2\,dr.
\end{align*}
As the reverse inequality is obvious, we conclude $\norm{D\rho_r}_{\rho_r}=\abs{\dot\rho_r}_\W$ for a.e. $r\in [0,1]$.

In the general case $(\rho_t)\in \AC_\loc^2(I;(\D(\M,\tau),\W))$ one can simply partition $I$ into countably many compact intervals to obtain the same result.
\end{proof}

Let $(X,d)$ be an extended metric space. A curve $(\gamma_t)_{t\in [0,1]}$ in $X$ is called (constant speed) \emph{geodesic} if $d(\gamma_0,\gamma_1)<\infty$ and $d(\gamma_s,\gamma_t)=\abs{s-t}d(\gamma_0,\gamma_1)$ for all $s,t\in [0,1]$. The extended metric space $(X,d)$ is called \emph{geodesic space} if any two $x,y\in X$ with $d(x,y)<\infty$ can be joined by a geodesic.

\begin{lemma}\label{Arzela_Ascoli_entropy}
For $\alpha>0$ let $S_\alpha=\{\rho\in \D(\M,\tau)\mid \Ent(\rho)\leq \alpha\}$. If $L>0$ and $((\rho^n_t)_{t\in[0,1]})_n$ is a sequence of admissible curves in $S_\alpha$ such that
\begin{align*}
\int_s^t \norm{D\rho^n_r}_{\rho^n_r}^2\,dr\leq L^2\abs{t-s}
\end{align*}
for all $s,t\in [0,1]$ and $n\in\IN$, then there exists an admissible curve $(\rho_t)$ in $S_\alpha$ and a subsequence $(\rho^{n_k})_k$ of $(\rho^n)$ such that
\begin{align*}
\rho^{n_k}_t\to \rho_t
\end{align*}
weakly in $L^1$ for all $t\in [0,1]$, and
\begin{align*}
\int_0^1 \norm{D\rho_t}_{\rho_t}^2\leq \liminf_{n\to\infty}\int_0^1 \norm{D\rho^n_t}_{\rho^n_t}^2\,dt.
\end{align*}
\end{lemma}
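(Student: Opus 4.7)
The plan is a noncommutative Arzelà--Ascoli/Helly selection argument, combining the weak $L^1$-compactness of entropy sublevel sets with an equi-Lipschitz estimate obtained from the uniform $L^2$-action bound. First I would derive a pointwise equi-Lipschitz estimate against the test algebra: for $a\in\A_\theta$, admissibility of $\rho^n$, together with Cauchy--Schwarz and the bound $\norm{\partial a}_{\rho^n_r}\leq\norm{a}_\theta$ for density matrices, yields
\begin{equation*}
|\tau(a(\rho^n_t-\rho^n_s))|\leq \int_s^t \norm{\partial a}_{\rho^n_r}\norm{D\rho^n_r}_{\rho^n_r}\,dr\leq \norm{a}_\theta\,L\,|t-s|
\end{equation*}
for all $s,t\in[0,1]$ and $n\in\IN$, where the last step uses $\int_s^t\norm{D\rho^n_r}_{\rho^n_r}\,dr\leq\sqrt{|t-s|}\bigl(\int_s^t\norm{D\rho^n_r}_{\rho^n_r}^2\,dr\bigr)^{1/2}\leq L|t-s|$.

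Second, Lemma \ref{sublevel_ent_cpt} applied to $f(x)=x\log x$ shows that $S_\alpha$ is weakly $L^1$-compact, and the weak lower semicontinuity of $\Ent$ (Corollary \ref{Ent_lsc_convex} combined with the convexity of $\Ent$) shows that $S_\alpha$ is weakly closed. Since $L^1(\M,\tau)$ is separable, the weak topology restricted to bounded sets, in particular to $S_\alpha$, is metrizable; so $(S_\alpha,d_w)$ is a compact metric space. Pick a countable dense subset $D\subset[0,1]$ (e.g.\ $D=\IQ\cap[0,1]$); a standard diagonal extraction yields a subsequence (still denoted $(\rho^{n_k})_k$) and elements $\rho_t\in S_\alpha$ for $t\in D$ with $\rho^{n_k}_t\to\rho_t$ weakly in $L^1$ for every $t\in D$.

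Third, I would extend the convergence to every $t\in[0,1]$. Given $t\in[0,1]\setminus D$, take $t_j\in D$ with $t_j\to t$; the equi-Lipschitz bound gives $|\tau(a(\rho^{n_k}_t-\rho^{n_k}_{t_j}))|\leq L\norm{a}_\theta|t-t_j|$ uniformly in $k$, and combining this with the convergence at $t_j$ shows that $\bigl(\tau(a\rho^{n_k}_t)\bigr)_k$ is Cauchy for every $a\in\A_\theta$. By weak compactness and metrizability of $(S_\alpha,d_w)$, every sub-subsequence of $(\rho^{n_k}_t)$ admits a weakly convergent sub-sub-sequence, and any two limit points agree when tested against $\A_\theta$, hence must coincide (by the separation produced via a further diagonal refinement, using a countable dense family in the $\sigma$-weak topology of $\M$); this identifies a unique weak limit $\rho_t\in S_\alpha$ and gives weak $L^1$-convergence of the entire subsequence at $t$. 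With $\rho^{n_k}_t\to\rho_t$ weakly in $L^1$ for every $t\in[0,1]$, Theorem \ref{energy_lsc} directly yields admissibility of $(\rho_t)$ and
\begin{equation*}
\int_0^1\norm{D\rho_t}_{\rho_t}^2\,dt\leq\liminf_{k\to\infty}\int_0^1\norm{D\rho^{n_k}_t}_{\rho^{n_k}_t}^2\,dt\leq\liminf_{n\to\infty}\int_0^1\norm{D\rho^n_t}_{\rho^n_t}^2\,dt.
\end{equation*}
The main obstacle will be the third step: upgrading the Cauchy property tested against $\A_\theta$ to genuine weak $L^1$-convergence at each point of $[0,1]\setminus D$, which is what forces the second diagonal extraction and requires careful use of the separability of $L^1(\M,\tau)$ to metrize the weak topology on $S_\alpha$.
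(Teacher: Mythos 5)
Your argument follows the same strategy as the paper: combine the equi-Lipschitz estimate tested against $\A_\theta$ with the weak $L^1$-sequential compactness of $S_\alpha$ (Lemma \ref{sublevel_ent_cpt}) to run an Arzel\`a--Ascoli extraction, and then invoke Theorem \ref{energy_lsc}. The paper simply delegates the diagonal extraction and its pointwise upgrade to \cite[Proposition~3.3.1]{AGS08}, applied with the distance $d(\rho,\sigma)=\sup_{\norm{a}_\mathrm{LM}\leq 1}\abs{\tau(a(\rho-\sigma))}$ (which is $\sigma$-lower semicontinuous on $S_\alpha$), whereas you carry it out by hand.

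The one place your write-up wobbles is the identification of the limit at $t\in[0,1]\setminus D$. From the Cauchy property of $\bigl(\tau(a\rho^{n_k}_t)\bigr)_k$ for $a\in\A_\theta$ and weak compactness you may conclude that any two weak cluster points of $(\rho^{n_k}_t)_k$ agree when paired against $\A_\theta$; to deduce that they coincide you must know that $\A_\theta$ separates normal states, i.e.\ that $\A_\theta$ is $\sigma$-weakly dense in $\M$. Your parenthetical fix --- a further diagonal refinement over a countable $\sigma$-weakly dense family in $\M$ --- does not close this: such a refinement gives convergence against that family only at times in $D$, and the equi-Lipschitz bound that transports convergence from $D$ to $t\notin D$ is available only for test elements $a\in\A_\theta$. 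The actual resolution is precisely the $\sigma$-weak density of $\A_\mathrm{LM}$, which in the setting where this lemma is used follows from $\GE(K,\infty)$ via Corollary~\ref{A_LM_dense}; equivalently, it is what makes $d$ a genuine (non-degenerate) metric, the hypothesis under which \cite[Proposition~3.3.1]{AGS08} produces a uniquely determined pointwise $\sigma$-limit. With that density in hand your proof is complete and is essentially the paper's argument unpacked.
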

\begin{proof}
Otherwise passing to a subsequence, we can assume that $\int_0^1 \norm{D\rho^n_t}_{\rho^n_t}^2\,dt$ converges. If $a\in \A_\mathrm{LM}$, then
\begin{align*}
\abs{\tau((\rho^n_t-\rho^n_s)a)}\leq\int_s^t \norm{\partial a}_{\rho^n_r}\norm{D\rho^n_r}_{\rho^n_r}\,dr\leq L\norm{a}_\mathrm{LM}^2\abs{t-s}
\end{align*}
for all $s,t\in [0,1]$ and $n\in\IN$. Thus $(\rho^n)$ is uniformly equicontinuous with respect to the metric 
\begin{align*}
d\colon \D(\M,\tau)\times \D(\M,\tau)\lra [0,\infty),\,d(\rho,\sigma)=\sup_{\norm{a}_\mathrm{LM}\leq 1}\abs{\tau(a(\rho-\sigma))}.
\end{align*}

By Lemma \ref{sublevel_ent_cpt}, the set $S_\alpha$ is sequentially compact with respect to the weak topology on $L^1$. Hence we can apply \cite[Proposition 3.3.1]{AGS08} (the lower semicontinuity property of $d$ is obvious, while the completeness of $(S_\alpha,d)$ follows from the weak compactness and the lower semicontinuity) to get a subsequence $(\rho^{n_k})$ and a curve $(\rho_t)$ in $S_\alpha$ such that $\rho^{n_k}_t\to \rho_t$ weakly in $L^1$ for all $t\in [0,1]$.

The remaining assertions follow from Theorem \ref{energy_lsc}.
\end{proof}

\begin{theorem}\label{D(Ent)_geodesic}
If the entropy has regular sublevel sets and $(P_t)$ is an $\EVI_K$ gradient flow of $\Ent$, then for all $\rho_0,\rho_1\in D(\Ent)$ with $\W(\rho_0,\rho_1)<\infty$ there exists a geodesic $(\rho_t)_{t\in[0,1]}$ with $\sup_{t\in[0,1]}\Ent(\rho_t)<\infty$. In particular, $(D(\Ent),\W)$ is a geodesic space.
\end{theorem}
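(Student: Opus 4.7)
The plan is to construct a geodesic between $\rho_0,\rho_1\in D(\Ent)$ with $\W(\rho_0,\rho_1)<\infty$ by extracting a limit from a carefully chosen minimizing sequence of admissible curves with uniformly bounded entropy.

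First, I would apply Proposition \ref{connecting_curves_bounded_entropy} with $\alpha=\max\{\Ent(\rho_0),\Ent(\rho_1)\}$ and $D=\W(\rho_0,\rho_1)$ to obtain a sequence of admissible curves $(\rho^n_t)_{t\in[0,1]}$ joining $\rho_0$ and $\rho_1$ such that $\sup_{n,t}\Ent(\rho^n_t)\leq C(K,\alpha,D)$ and $\int_0^1\|D\rho^n_t\|_{\rho^n_t}^2\,dt\to \W(\rho_0,\rho_1)^2$. After reparametrization via Lemma \ref{constant_speed}, I may assume each $\rho^n$ has constant speed, so the action bound yields uniform Lipschitz estimates of the form $\int_s^t\|D\rho^n_r\|_{\rho^n_r}^2\,dr\leq L^2|t-s|$ for some $L$ independent of $n$. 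Note that the $\EVI_K$ hypothesis is precisely what feeds into Proposition \ref{connecting_curves_bounded_entropy} to control the entropy along the interpolating curves.

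Next I would invoke Lemma \ref{Arzela_Ascoli_entropy}, whose hypotheses are exactly what the previous step supplies. This gives a subsequence $(\rho^{n_k})$ and a limit admissible curve $(\rho_t)_{t\in[0,1]}$ such that $\rho^{n_k}_t\to\rho_t$ weakly in $L^1$ for every $t\in[0,1]$, together with the lower semicontinuity bound $\int_0^1\|D\rho_t\|_{\rho_t}^2\,dt\leq \liminf_k\int_0^1\|D\rho^{n_k}_t\|_{\rho^{n_k}_t}^2\,dt=\W(\rho_0,\rho_1)^2$. Since the endpoints are preserved pointwise, $\rho_0=\rho|_{t=0}$ and $\rho_1=\rho|_{t=1}$, and by lower semicontinuity of the entropy under weak $L^1$-convergence (Corollary \ref{Ent_lsc_convex}, combined with Proposition \ref{W_implies_weak_con} for sequential compatibility) we obtain $\sup_{t\in[0,1]}\Ent(\rho_t)\leq C(K,\alpha,D)$.

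Now I would use the regular-sublevel-sets hypothesis to upgrade action minimality to a geodesic property. Since $(\rho_t)$ is admissible with uniformly bounded entropy, $\|D\rho_t\|_{\rho_t}=|\dot\rho_t|_\W$ for a.e.\ $t\in[0,1]$. By the definition of $\W$, for any $0\leq s\leq t\leq 1$, $\W(\rho_s,\rho_t)\leq \int_s^t|\dot\rho_r|_\W\,dr$. Combining the triangle inequality with Cauchy--Schwarz,
\begin{align*}
\W(\rho_0,\rho_1)\leq \int_0^1|\dot\rho_r|_\W\,dr\leq\left(\int_0^1|\dot\rho_r|_\W^2\,dr\right)^{1/2}\leq \W(\rho_0,\rho_1),
\end{align*}
so all inequalities are equalities. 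Equality in Cauchy--Schwarz forces $|\dot\rho_r|_\W$ to be a.e.\ constant equal to $\W(\rho_0,\rho_1)$, and from $\W(\rho_s,\rho_t)\leq \int_s^t|\dot\rho_r|_\W\,dr=(t-s)\W(\rho_0,\rho_1)$ combined with the reverse triangle inequality one concludes $\W(\rho_s,\rho_t)=(t-s)\W(\rho_0,\rho_1)$ for all $s\leq t$, so $(\rho_t)$ is a constant-speed geodesic.

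The main obstacle is the compatibility of the weak-$L^1$ compactness supplied by the entropy bound with the nonlinear structure of the action functional $(\rho_t)\mapsto\int\|D\rho_t\|_{\rho_t}^2\,dt$: neither $\|D\rho_t\|_{\rho_t}$ nor admissibility behaves continuously under weak-$L^1$ convergence in general. What saves us is precisely the lower semicontinuity Theorem \ref{energy_lsc}, which packages the upper semicontinuity of $\rho\mapsto\|\partial a\|_\rho^2$ (Theorem \ref{theta_norm_usc}) together with the weak-$L^1$ metrizability on $\D(\M,\tau)$. The regular sublevel set assumption is then the final ingredient needed to convert the resulting action-minimizing limit into a genuine geodesic.
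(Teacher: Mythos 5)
The overall architecture of your argument (minimizing sequence with uniform entropy bound, weak $L^1$-compactness via Lemma \ref{Arzela_Ascoli_entropy}, lower semicontinuity of the action from Theorem \ref{energy_lsc}, then equality in Cauchy--Schwarz to identify a constant-speed geodesic) matches the paper's strategy. However, there is a concrete gap in your first step: you invoke Proposition \ref{connecting_curves_bounded_entropy} directly, but that proposition is stated under the hypothesis $\GE(K,\infty)$, while Theorem \ref{D(Ent)_geodesic} only assumes that $(P_t)$ is an $\EVI_K$ gradient flow and that the entropy has regular sublevel sets. These are strictly weaker: in the logical structure of the paper (see the implication chain (ii)$\implies$(iii) in Theorem \ref{main_theorem_summary}) one must prove this theorem \emph{without} $\GE(K,\infty)$ in hand, so the citation is not justified.

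The specific place where $\GE(K,\infty)$ enters the proof of Proposition \ref{connecting_curves_bounded_entropy} is the application of Theorem \ref{W_contraction}, which shows that $P_{1/n}$ maps admissible curves to admissible curves with the velocity bound $\norm{D\tilde\sigma^n_t}_{\tilde\sigma^n_t}\leq e^{-K/n}\norm{D\sigma^n_t}_{\sigma^n_t}$. Without $\GE(K,\infty)$ one only has the metric contraction of Lemma \ref{EVI_metric_contraction}, which yields $\W$-Lipschitz bounds $\W(P_{1/n}\sigma^n_t,P_{1/n}\sigma^n_s)\leq e^{-K/n}\W(\sigma^n_t,\sigma^n_s)$ but says nothing about admissibility. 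The paper's proof addresses this by re-running the construction of Proposition \ref{connecting_curves_bounded_entropy} with Lemma \ref{EVI_metric_contraction} in place of Theorem \ref{W_contraction}: the entropy control from \cite[Theorem 3.2]{DS08} needs only $\EVI_K$, producing $L_n$-Lipschitz curves in $(\D(\M,\tau),\W)$ with uniformly bounded entropy, and then one invokes the regular-sublevel-sets hypothesis \emph{at this stage} -- not only at the end, as you do -- to conclude that these $\W$-Lipschitz, entropy-bounded curves are admissible with $\norm{D\rho^n_t}_{\rho^n_t}=\abs{\dot\rho^n_t}_\W\leq L_n$. After that correction, your subsequent steps (Lemma \ref{Arzela_Ascoli_entropy}, Theorem \ref{energy_lsc}, Cauchy--Schwarz) go through essentially as you wrote them; note, incidentally, that once $\int_0^1\norm{D\rho_t}_{\rho_t}^2\,dt\leq\W(\rho_0,\rho_1)^2$ the equality-in-Cauchy--Schwarz argument already forces the geodesic property using only $\W(\rho_s,\rho_t)\leq\int_s^t\norm{D\rho_r}_{\rho_r}\,dr$ from the definition of $\W$, so your final appeal to regular sublevel sets there is unnecessary (though harmless).
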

\begin{proof}
Using the contraction estimate from Lemma \ref{EVI_metric_contraction}, one can proceed exactly as in the proof of Proposition \ref{connecting_curves_bounded_entropy} to see that for all $\rho_0,\rho_1\in D(\Ent)$ with $\W(\rho_0,\rho_1)<\infty$ and all $n\in\IN$ there exists an $L_n$-Lipschitz curve $(\rho^n_t)_{t\in [0,1]}$ connecting $\rho_0$ and $\rho_1$ such that $L_n^2\leq e^{-2K/n}(\W(\rho_0,\rho_1)^2+\frac 1 {n^2})$ and
\begin{align*}
\sup_{n\in\IN}\sup_{t\in [0,1]}\Ent(\rho^n_t)<\infty.
\end{align*}
Since the entropy has regular sublevel sets, the curves $(\rho^n_t)$ are admissible and
\begin{align*}
\norm{D\rho^n_t}_{\rho^n_t}=\abs{\dot\rho^n_t}_\W\leq L_n.
\end{align*}
As $(L_n)$ is bounded, we can apply Lemma \ref{Arzela_Ascoli_entropy} to get an admissible curve $(\rho_t)_{t\in [0,1]}$ with uniformly bounded entropy connecting $\rho_0$ and $\rho_1$ such that
\begin{align*}
\int_0^1 \norm{D\rho_t}_{\rho_t}^2\,dt&\leq\liminf_{n\to\infty}\int_0^1 \norm{D\rho^n_t}_{\rho^n_t}^2\,dt\\
&\leq \liminf_{n\to\infty}e^{-2K/n}\left(\W(\rho_0,\rho_1)^2+\frac 1 {n^2}\right)\\
&=\W(\rho_0,\rho_1)^2.
\end{align*}
Hence $(\rho_t)$ is a geodesic connecting $\rho_0$ and $\rho_1$.
\end{proof}

\begin{corollary}
If the entropy has regular sublevel sets and $(P_t)$ is an $\EVI_K$ gradient flow of $\Ent$, then the metric $\W$ is lower semicontinuous with respect to weak $L^1$-convergence on sublevel sets of the entropy.
\end{corollary}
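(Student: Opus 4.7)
The plan is to construct a connecting curve for the limit pair by extracting a convergent subsequence from optimal connecting curves for the approximating pairs, using the geodesic existence result of Theorem~\ref{D(Ent)_geodesic} together with the Arzelà--Ascoli-type compactness of Lemma~\ref{Arzela_Ascoli_entropy}. Fix $\alpha>0$ and let $(\rho_0^n),(\rho_1^n)\subset S_\alpha:=\{\rho\in\D(\M,\tau):\Ent(\rho)\leq\alpha\}$ with $\rho_i^n\to\rho_i$ weakly in $L^1$ for $i=0,1$. Setting $L:=\liminf_n \W(\rho_0^n,\rho_1^n)$, we may assume $L<\infty$ and, after passing to a subsequence, that $\W(\rho_0^n,\rho_1^n)\to L$. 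By Proposition~\ref{entropy_lsc} (together with Hahn--Banach to pass from norm to weak lower semicontinuity of the convex functional $\Ent$), the limits satisfy $\Ent(\rho_i)\leq\alpha$, so $\rho_0,\rho_1\in D(\Ent)$.

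Next, invoke Theorem~\ref{D(Ent)_geodesic} to produce for each sufficiently large $n$ a $\W$-geodesic $(\rho_t^n)_{t\in[0,1]}$ connecting $\rho_0^n$ and $\rho_1^n$, with $\sup_t\Ent(\rho_t^n)<\infty$. The key additional point is that this entropy bound can be taken uniform in $n$: tracing the construction in Theorem~\ref{D(Ent)_geodesic} back through \cite[Theorem 3.2]{DS08}, the mollified curves $\tilde\sigma_t^n=P_{1/n}\sigma_t^n$ used there satisfy the abstract displacement $K$-convexity inequality
\[
\Ent(\tilde\sigma_t^n)\leq (1-t)\Ent(\rho_0^n)+t\Ent(\rho_1^n)-\tfrac{K}{2}t(1-t)\W(\rho_0^n,\rho_1^n)^2+o(1),
\]
which passes to the geodesic limit by weak $L^1$ lower semicontinuity of $\Ent$. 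Combined with $\Ent(\rho_i^n)\leq\alpha$ and $\W(\rho_0^n,\rho_1^n)\leq L+1$, this gives
\[
\sup_{n}\sup_{t\in[0,1]}\Ent(\rho_t^n)\;\leq\;\alpha+\tfrac{|K|}{2}(L+1)^2=:\alpha'.
\]

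Since the entropy has regular sublevel sets, each geodesic $(\rho_t^n)$ is admissible with $\norm{D\rho_t^n}_{\rho_t^n}=|\dot\rho_t^n|_\W=\W(\rho_0^n,\rho_1^n)$ for a.e.\ $t$, and in particular
\[
\int_s^t\norm{D\rho_r^n}_{\rho_r^n}^2\,dr=(t-s)\W(\rho_0^n,\rho_1^n)^2\leq (t-s)(L+1)^2
\]
for all $n$ large. Applying Lemma~\ref{Arzela_Ascoli_entropy} to the family $(\rho_t^n)\subset S_{\alpha'}$ yields a subsequence $(\rho_t^{n_k})$ converging pointwise weakly in $L^1$ to an admissible curve $(\tilde\rho_t)$ with
\[
\int_0^1\norm{D\tilde\rho_t}_{\tilde\rho_t}^2\,dt\leq\liminf_{k\to\infty}\int_0^1\norm{D\rho_t^{n_k}}_{\rho_t^{n_k}}^2\,dt=\lim_{k\to\infty}\W(\rho_0^{n_k},\rho_1^{n_k})^2=L^2.
\]
By uniqueness of weak $L^1$ limits at the endpoints, $\tilde\rho_0=\rho_0$ and $\tilde\rho_1=\rho_1$. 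Hence $(\tilde\rho_t)$ is an admissible curve from $\rho_0$ to $\rho_1$, and $\W(\rho_0,\rho_1)^2\leq L^2$.

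The main obstacle is the uniform entropy bound on the geodesics in the second step: without it, Lemma~\ref{Arzela_Ascoli_entropy} does not apply and the subsequential weak $L^1$ compactness is lost. This is precisely where the $\EVI_K$ structure is essential, since it produces the displacement $K$-convexity estimate along the mollified approximations that survives the limit defining the geodesic.
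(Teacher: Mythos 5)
Your proof is correct and follows the natural route: extract $\W$-geodesics between the approximating endpoints with uniformly bounded entropy (the quantitative bound following from the $\EVI_K$-based estimate in the proof of Proposition~\ref{connecting_curves_bounded_entropy}, as also used in Theorem~\ref{D(Ent)_geodesic}), then apply Lemma~\ref{Arzela_Ascoli_entropy} together with the lower semicontinuity of the action (Theorem~\ref{energy_lsc}). The paper states this corollary without an explicit proof, and your argument is the intended consequence of the preceding two results.
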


Although the Theorem \ref{D(Ent)_geodesic} guarantees the existence of geodesics connecting density matrices with finite entropy provided their distance is finite, it does not rule out the possibility that density matrices with finite entropy have infinite distance. In the next proposition we will see that this cannot happen if $\E$ is ergodic and satisfies $\GE(K,\infty)$ for strictly positive $K$.

Here $\E$ is called \emph{ergodic} if for all $a\in L^2(\M,\tau)$ one has $\lim_{t\to\infty}P_t(a)=\tau(a)$ in $L^2(\M,\tau)$. Equivalently, there are no projections $p\in \M$ other than $0$ and $1$ such that $P_t p\leq p$ for all $t\geq 0$.

\begin{proposition}\label{MLSI}
Assume that $\tau(1)=1$. If $\E$ is ergodic and satisfies $\GE(K,\infty)$ for some $K>0$, then the Talagrand inequality
\begin{align*}
\W(\rho,1)^2\leq \frac {2}{K}\Ent(\rho)
\end{align*}
holds for all $\rho\in D(\Ent)$. In particular, $\W$ is finite on $D(\Ent)\times D(\Ent)$.
\end{proposition}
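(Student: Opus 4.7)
I would invoke the $\EVI_K$ gradient flow characterization of $(P_t)$ established in Theorem~\ref{EVI_gradient_flow}. Since $(P_t)$ is conservative, $P_t 1 = 1$ for all $t \ge 0$, so the constant curve $\gamma_t \equiv 1$ is the $\EVI_K$ gradient flow of $\Ent$ starting at the uniform density, and $\Ent(1) = 0$ since $\tau(1) = 1$. Assuming provisionally that $\W(\rho,1) < \infty$, the $\EVI_K$ inequality at $t = 0$ applied with $x = \rho$ and $\gamma \equiv 1$ reads
\[
\tfrac{1}{2}\tfrac{d^+}{dt}\W(\gamma_t,\rho)^2\big|_{t=0} + \tfrac{K}{2}\W(1,\rho)^2 + \Ent(1) \le \Ent(\rho).
\]
The first summand vanishes because $\gamma_t$ is constant in $t$, and $\Ent(1) = 0$, so the inequality collapses to the desired bound $\W(\rho,1)^2 \le \tfrac{2}{K}\Ent(\rho)$.

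To remove the hypothesis $\W(\rho,1) < \infty$ for arbitrary $\rho \in D(\Ent)$, I would approximate $\rho$ by
\[
\rho_n := \frac{(\rho \wedge n) + 1/n}{\tau((\rho \wedge n) + 1/n)} \in \D(\M,\tau) \cap L^2(\M,\tau),
\]
which is bounded below by a positive constant $\lambda_n$, converges to $\rho$ in $L^1$, and satisfies $\Ent(\rho_n) \to \Ent(\rho)$ (the upper bound follows from convexity of $\Ent$ together with $\Ent(1) = 0$, while the lower bound is lower semicontinuity of $\Ent$). Evaluating $\GE(K,\infty)$ at $\rho = 1$---noting that $\hat{1} = 1$ for any operator mean---yields the energy contraction $\E(P_t a) \le e^{-2Kt}\E(a)$, which integrates via $\norm{a - \tau(a)}_{L^2}^2 = 2\int_0^\infty \E(P_t a)\,dt$ to the Poincar\'e inequality $\norm{a - \tau(a)}_{L^2}^2 \le K^{-1}\E(a)$; the Poincar\'e-type proposition at the end of Section~\ref{Maas-Wasserstein} then guarantees $\W(\rho_n,1) < \infty$ for each $n$. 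The first step now applies to each $\rho_n$, giving $\W(\rho_n,1)^2 \le \tfrac{2}{K}\Ent(\rho_n)$, and the lower semicontinuity of $\W(\cdot, 1)$ on entropy sublevel sets (the corollary after Theorem~\ref{D(Ent)_geodesic}) lets us pass to the limit to obtain the Talagrand inequality for $\rho$. The closing claim that $\W$ is finite on $D(\Ent) \times D(\Ent)$ follows by the triangle inequality $\W(\rho,\sigma) \le \W(\rho,1) + \W(1,\sigma)$.

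The heart of the proof is the one-line $\EVI_K$ computation applied to the constant curve $\gamma \equiv 1$. The main technical obstacle is exactly the a priori finiteness of $\W(\rho,1)$---which is part of what the proposition itself claims---so the work lies in the truncation approximation above, which in turn relies on the Poincar\'e inequality extracted from $\GE(K,\infty)$ evaluated at the uniform density $\rho = 1$, together with the lower semicontinuity of $\W$ on entropy sublevel sets.
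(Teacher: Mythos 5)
Your proof is correct but takes a genuinely different, and in fact more streamlined, route than the paper's.

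The paper's proof first establishes the modified logarithmic Sobolev inequality $\Ent(P_t\rho)\leq \tfrac{1}{2K}\I(P_t\rho)$ from the $\EVI_K$ property (by differentiating $\W(P_t\rho,P_s\rho)^2$ in $t$ and sending $s\to\infty$), then uses Gr\"onwall to get exponential entropy decay, shows $P_t\rho\to 1$ weakly, and finally integrates the Fisher information of the heat flow trajectory $(P_t\rho)$ -- bounded via the logarithmic Sobolev inequality -- to conclude $\W(\rho,1)\leq \sqrt{2/K}\,\Ent(\rho)^{1/2}$. You instead apply the $\EVI_K$ inequality directly with the stationary gradient flow curve $\gamma_t\equiv 1$ (which is the flow since $P_t1=1$) against the reference point $x=\rho$; the right upper derivative term vanishes by constancy and $\Ent(1)=0$, and the Talagrand bound falls out in one line. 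This is the classical one-step derivation of Talagrand from $\EVI_K$ at the stationary point, and it sidesteps the paper's entire HWI-type integration argument. What the paper's longer route buys is that the modified log-Sobolev inequality is obtained as a by-product; your route does not produce it.

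The approximation steps are also organized differently but both work. The paper takes $\rho^n=(\rho\wedge n)/\tau(\rho\wedge n)$ and controls $\W(\rho^n,1)$ directly via Propositions \ref{heat_flow_admissible}, \ref{entropy_Fisher_decay} (the heat flow curve is an admissible competitor with integrable action). You instead add $1/n$ to bound the truncation from below, then invoke the Poincar\'e proposition from Section \ref{Maas-Wasserstein} to get $\W(\rho_n,1)<\infty$; the Poincar\'e inequality you extract from $\GE(K,\infty)$ at $\rho=1$ requires $P_ta\to\tau(a)$ in $L^2$, i.e.\ irreducibility of $\E$. Note that the paper's own proof also uses irreducibility explicitly ("Since $\E$ is irreducible...''), and the preceding paragraph of the paper announces irreducibility as an intended hypothesis, even though the proposition as stated omits it. So this is not a gap peculiar to your argument but a missing hypothesis inherited from the statement; you should add irreducibility (or equivalently the Poincar\'e inequality) as an explicit assumption. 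Finally, your passage to the limit via the corollary after Theorem \ref{D(Ent)_geodesic} is legitimate and creates no circularity -- the regular-sublevel-sets property and the $\EVI_K$ characterization used in that corollary come from $\GE(K,\infty)$ alone and do not rely on Proposition \ref{MLSI}.

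Your convexity argument for $\Ent(\rho_n)\to\Ent(\rho)$ is sound: $\rho_n$ is a convex combination of $\rho\wedge n/\tau(\rho\wedge n)$ and $1$ with weight on the first tending to $1$, giving the upper bound, while lower semicontinuity of $\Ent$ under $L^1$-convergence gives the lower bound.
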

\begin{proof}
First we show that for $\rho\in \D(\M,\tau)\cap L^2(\M,\tau$) we have
\begin{align}
\label{log_Sobolev}
\Ent(P_t \rho)\leq \frac 1 {2K}\I(P_t \rho)
\end{align}
for a.e. $t\geq 0$.

Since $(P_t \rho)_{t\geq 0}$ is an admissible curve with $\norm{D P_t \rho}_{P_t \rho}^2\leq \I(P_t \rho)$ for a.e. $t\geq 0$, we have
\begin{align*}
\limsup_{h\searrow 0}\frac 1 h \W(P_{t+h}\rho,P_t\rho)\leq \limsup_{h\searrow 0}\frac 1 h\int_t^{t+h}\norm{D P_r\rho}_{P_r\rho}\,dr\leq \I(P_t \rho)^{1/2}
\end{align*}
for a.e. $t\geq 0$.

Thus
\begin{align*}
-\frac 1 2\frac{d^+}{dt}\W(P_t \rho,P_s\rho)^2&=\limsup_{h\to 0}\frac 1 {2h}(\W(P_t \rho,P_s\rho)^2-\W(P_{t+h}\rho,P_s\rho)^2)\\
&\leq \limsup_{h\searrow 0}\frac 1{2h}(\W(P_{t+h}\rho,P_t \rho)^2+2\W(P_{t+h}\rho,P_t \rho)\W(P_{t+h}\rho,P_s\rho))\\
&\leq \I(P_t \rho)^{1/2}\W(P_t \rho,P_s\rho)
\end{align*}
for a.e. $t\geq 0$ and all $s>0$.

The evolution variational inequality from Theorem \ref{EVI_gradient_flow} implies
\begin{align*}
\Ent(P_t \rho)&\leq -\frac 1 2\frac{d^+}{dt}\W(P_t \rho,\rho)^2-\frac K 2\W(P_t \rho,\rho)^2+\Ent(P_s\rho)\\
&\leq \I(P_t\rho)^{1/2}\W(P_t \rho,\rho)-\frac K 2\W(P_t \rho,\rho)^2+\Ent(P_s\rho)\\
&\leq \frac 1{2K}\I(P_t \rho)+\Ent(P_s \rho)
\end{align*}
for a.e. $t\geq 0$ and all $s\geq 0$.

Since $\E$ is ergodic and convergence in $L^2$ implies convergence of the entropy, we have $\Ent(P_s \rho)\to 0$ as $s\to \infty$ and (\ref{log_Sobolev}) follows.

Since $t\mapsto \Ent(P_t \rho)$ is a locally absolutely continuous function with derivative a.e. equal to $-\I(P_t \rho)$ by Proposition \ref{entropy_Fisher_decay}, Grönwall's lemma asserts
\begin{align*}
\Ent(P_t \rho)\leq e^{-2Kt}\Ent(\rho).
\end{align*}
By the same arguments used in the proof of Corollary \ref{heat_flow_L1}, this inequality remains true if we only assume $\rho\in D(\Ent)$. In particular, $\Ent(P_t \rho)\to 0$ as $t\to \infty$.

If $\tilde \rho$ is any limit point in the weak $L^1$-topology of $(P_t \rho)$ as $t\to \infty$, then
\begin{align*}
\Ent(\tilde \rho)\leq \limsup_{t\to\infty}\Ent(P_t \rho)=0,
\end{align*}
which implies $\tilde \rho=1$. On the other hand, since the sublevel sets of the entropy are compact in the weak $L^1$-topology by Lemma \ref{sublevel_ent_cpt}, there exist limit points in the weak $L^1$-topology of $(P_t \rho)$ as $t\to\infty$. Both facts combined give $P_t \rho\to 1$ weakly as $t\to\infty$.

Once again assume that $\rho \in \D(\M,\tau)\cap L^2(\M,\tau)$ and let $\vartheta\colon [0,1)\lra [0,\infty)$ be a strictly increasing differentiable function with $\lim_{t\to1}\vartheta(t)=\infty$. Let $\rho_t=P_{\vartheta(t)}\rho$. Since $(P_t \rho)_{t\geq 0}$ is admissible and
\begin{align*}
\int_0^\infty\norm{D P_t \rho}_{P_t\rho}^2\,dt\leq \int_0^\infty\I(P_t \rho)\,dt \leq \Ent(\rho)<\infty
\end{align*}
by Propositions \ref{heat_flow_admissible} and \ref{entropy_Fisher_decay}, the reparametrized curve $(\rho_t)_{t\in [0,1]}$ is also admissible.

Since
\begin{align*}
-2\Ent(P_t \rho)^{1/2}\frac{d}{dt}\Ent(P_t \rho)^{1/2}=\I(P_t\rho)
\end{align*}
for a.e. $t\geq 0$ such that $P_t \rho\neq 1$, we can use the inequality $\Ent(P_t\rho)\leq \frac 1 {2K}\I(P_t \rho)$ proven above to see that
\begin{align*}
\I(P_t \rho)^{1/2}\leq -\left(\frac 2 K\right)^{1/2}\frac{d}{dt}\Ent(P_t \rho)^{1/2}
\end{align*}
for a.e. $t\geq 0$ (clearly the inequality holds if $P_t\rho=1$, since $\I(1)=0$).

Thus
\begin{align*}
\W(\rho,1)&\leq\int_0^1\norm{D\rho_t}_{\rho_t}\,dt\\
&=\int_0^\infty \norm{DP_t \rho}_{P_t \rho}\,dt\\
&\leq\int_0^\infty\I(P_t \rho)^{1/2}\,dt\\
&\leq -\left(\frac 2 K\right)^{1/2}\int_0^\infty\frac{d}{dt}\Ent(\rho_t)^{1/2}\,dt\\
&=\left(\frac 2 K\right)^{1/2}\Ent(\rho)^{1/2}.
\end{align*}
In the general case $\rho\in D(\Ent)$, we can argue by approximation. Let $\rho^n=\frac{\rho\wedge n}{\tau(\rho\wedge n)}$. Since $\Ent(\rho)<\infty$, it is easy to see that the sequence $(\rho^n)$ has bounded entropy. By Proposition \ref{connecting_curves_bounded_entropy} we can choose admissible curves $(\rho^n_t)_{t\in [0,1]}$ connecting $\rho^n$ and $1$ with uniformly bounded entropy such that
\begin{align*}
\int_s^t \norm{D\rho^n_r}_{\rho^n_r}^2\,dr\leq \left(\W(\rho^n,1)+\frac 1 {n^2}\right)\abs{t-s}
\end{align*}
for all $s,t\in [0,1]$ and $n\in\IN$.

It follows from Lemma \ref{Arzela_Ascoli_entropy} that there exists a strictly increasing sequence $(n_k)$ in $\IN$ and an admissible curve $(\rho_t)_{t\in [0,1]}$ such that $\rho^{n_k}_t \to \rho_t$ weakly in $L^1(\M,\tau)$ for all $t\in [0,1]$ and
\begin{align*}
\W(\rho,1)^2&\leq \int_0^1\norm{D\rho_t}_{\rho_t}^2\,dt\\
&\leq \liminf_{n\to\infty}\int_0^1 \norm{D\rho^n_t}_{\rho^n_t}^2\,dt\\
&\leq \liminf_{n\to\infty}\W(\rho^n,1)^2\\
&\leq \frac 2 K \lim_{n\to\infty}\Ent(\rho^n)\\
&=\frac 2 K \Ent(\rho).\qedhere
\end{align*}
\end{proof}

The last property discussed in this section is $K$-convexity of the entropy. Let $(X,d)$ be an extended metric space. A functional $S\colon X\lra(-\infty,\infty]$ is called \emph{$K$-convex} along the geodesic $(\gamma_t)_{t\in [0,1]}$ in $(D(S),d)$ if
\begin{align*}
S(\gamma_t)\leq (1-t)S(\gamma_0)+tS(\gamma_1)-\frac K 2t(1-t)d(\gamma_0,\gamma_1)^2
\end{align*}
for all $t\in [0,1]$.

The functional $S$ is called \emph{strongly geodesically $K$-convex} if it is $K$-convex along every geodesic in $(D(S),d)$. It is called \emph{geodesically $K$-convex} if every pair $x_0,x_1\in D(S)$ can be joined by a geodesic $(\gamma_t)$ such that $S$ is $K$-convex along $(\gamma_t)$.

\begin{remark}
If $(D(S),d)$ is a geodesic space, then every strongly geodesically $K$-convex functional is geodesically $K$-convex. If $(D(S),d)$ is not a geodesic space, it does not make too much sense to talk about geodesic convexity at all.
\end{remark}

If $(P_t)$ is an $\EVI_K$ gradient flow of the entropy and the sublevel sets of the entropy are regular, the strong $K$-convexity follows from abstract results on gradient flows in metric spaces.

\begin{theorem}\label{Ent_semi-convex}
If $(P_t)$ is an $\EVI_K$ gradient flow of $\Ent$ and $\Ent$ has regular sublevel sets, then $(D(\Ent),\W)$ is a geodesic space and $\Ent$ is strongly geodesically $K$-convex.
\end{theorem}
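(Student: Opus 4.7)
The first claim, that $(D(\Ent),\W)$ is a geodesic space, is exactly the content of Theorem \ref{D(Ent)_geodesic} and requires no further argument. For the strong geodesic $K$-convexity my plan is to adapt the abstract argument of Daneri--Savar\'e \cite{DS08}, which shows that whenever a proper lower semicontinuous functional admits an $\EVI_K$ gradient flow, the functional is automatically strongly $K$-convex along every geodesic in its domain. The three ingredients we already have in hand are: $\EVI_K$ for $(P_t)$ (from the hypothesis), the contraction estimate $\W(P_h\rho,P_h\sigma)\le e^{-Kh}\W(\rho,\sigma)$ (Lemma \ref{EVI_metric_contraction}), and the regularity of sublevel sets of $\Ent$.

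Concretely, fix $\rho_0,\rho_1\in D(\Ent)$ with $\W(\rho_0,\rho_1)<\infty$ and let $(\rho_s)_{s\in[0,1]}$ be \emph{any} geodesic between them. I would first show $\rho_s\in D(\Ent)$ for every $s\in[0,1]$: applying $\EVI_K$ starting at $P_h\rho_0$ with target $\rho_s$ and letting $h\searrow 0$, together with the lower semicontinuity of $\Ent$ and the contraction lemma, gives an a priori bound on $\Ent(\rho_s)$ of the same type as in Proposition \ref{Ent_regularization}. The second, main step is to apply $\EVI_K$ to the heat flow $(P_h\rho_s)_{h\ge 0}$ twice, once with target $\sigma=\rho_0$ and once with $\sigma=\rho_1$:
\begin{align*}
\tfrac12\tfrac{d^+}{dh}\W(P_h\rho_s,\rho_0)^2+\tfrac K2\W(P_h\rho_s,\rho_0)^2+\Ent(P_h\rho_s) &\leq \Ent(\rho_0),\\
\tfrac12\tfrac{d^+}{dh}\W(P_h\rho_s,\rho_1)^2+\tfrac K2\W(P_h\rho_s,\rho_1)^2+\Ent(P_h\rho_s) &\leq \Ent(\rho_1).
\end{align*}
Taking the convex combination with weights $(1-s)$ and $s$, evaluating at $h=0$, and using the geodesic identities $\W(\rho_s,\rho_0)=s\W(\rho_0,\rho_1)$, $\W(\rho_s,\rho_1)=(1-s)\W(\rho_0,\rho_1)$, the quadratic $\W^2$-terms collapse to $\frac{K}{2}s(1-s)\W(\rho_0,\rho_1)^2$, which is exactly the $K$-convexity defect we want. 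The entropy terms contribute $\Ent(\rho_s)$ in the limit by lower semicontinuity, yielding
\begin{align*}
\Ent(\rho_s)\leq (1-s)\Ent(\rho_0)+s\Ent(\rho_1)-\tfrac K 2 s(1-s)\W(\rho_0,\rho_1)^2,
\end{align*}
modulo the derivative contribution which must be shown to be non-positive in the relevant combination.

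The main obstacle is precisely controlling that derivative contribution: one needs to argue that, for a geodesic, $\frac{d^+}{dh}\big|_{h=0}\bigl[(1-s)\W^2(P_h\rho_s,\rho_0)+s\W^2(P_h\rho_s,\rho_1)\bigr]\leq 0$. In the smooth Riemannian picture this just says the $L^2$-barycentric functional has non-positive derivative along gradient flow, but rigorously it requires the Daneri--Savar\'e trick of discretizing $[0,1]$, running the heat flow for time $h/N$ in each panel, and combining the $N$ resulting $\EVI_K$ inequalities before letting $N\to\infty$ and $h\to 0$. The regularity of sublevel sets is used here to ensure all intermediate curves stay admissible and the action functional along them is lower semicontinuous, so the limit passage is justified; this is the step where the hypotheses of the theorem are genuinely needed.
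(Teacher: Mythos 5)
Your overall strategy matches the paper's: the geodesic space claim is delegated to Theorem \ref{D(Ent)_geodesic}, and the strong geodesic $K$-convexity is obtained from the abstract principle that $\EVI_K$ gradient flows force $K$-convexity along geodesics, which the paper cites as \cite[Proposition 2.23]{AGS14b} and you cite as \cite{DS08}. So the route is the same; the paper is just terser.

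However, the heuristic you write out to motivate the abstract step has a sign error and in fact does not go through as written. Starting from the two $\EVI_K$ inequalities and taking the $(1-s),s$ convex combination at $h=0$ you arrive at
\begin{align*}
\Ent(\rho_s)\leq (1-s)\Ent(\rho_0)+s\Ent(\rho_1)-\tfrac K2 s(1-s)\W(\rho_0,\rho_1)^2 - \tfrac12 D_s,
\end{align*}
where $D_s=\frac{d^+}{dh}\big|_{h=0}\bigl[(1-s)\W^2(P_h\rho_s,\rho_0)+s\W^2(P_h\rho_s,\rho_1)\bigr]$. To conclude you need $D_s\geq 0$, not $D_s\leq 0$ as you wrote. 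Moreover $D_s\geq 0$ is exactly the statement that $\rho_s$ minimizes the barycentric functional $x\mapsto (1-s)\W^2(x,\rho_0)+s\W^2(x,\rho_1)$; this is true in $\mathrm{CAT}(0)$ spaces but has no reason to hold in a general metric space, so the pointwise-in-$h$ convex combination of the two $\EVI_K$'s is not by itself sufficient.

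The abstract argument is in fact both correct \emph{and} simpler than the Trotter-type discretization you invoke. One integrates $\EVI_K$ against $e^{Kt}$ on $[0,T]$ and uses that $\Ent$ is decreasing along $(P_t)$ to obtain the regularization estimate
\begin{align*}
\Ent(P_T\rho_s)\leq \Ent(y)+\frac{1}{2I_K(T)}\W^2(\rho_s,y)-\frac{e^{KT}}{2I_K(T)}\W^2(P_T\rho_s,y)
\end{align*}
with $I_K(T)=\int_0^Te^{Kr}\,dr$ (this is precisely the integrated form that underlies Proposition \ref{Ent_regularization}). Plugging in $y\in\{\rho_0,\rho_1\}$, taking the $(1-s),s$ combination, and using the geodesic identities together with the elementary estimate $(1-s)a^2+sb^2\geq s(1-s)W^2$ valid whenever $a+b\geq W=\W(\rho_0,\rho_1)$ (triangle inequality plus a one-line optimization on $a+b=W$), one finds that the coefficient $(1-e^{KT})/I_K(T)$ collapses to $-K$, giving $\Ent(P_T\rho_s)\leq (1-s)\Ent(\rho_0)+s\Ent(\rho_1)-\tfrac K2 s(1-s)W^2$. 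Letting $T\to 0$ and using the $\W$-lower semicontinuity of $\Ent$ (Corollary \ref{Ent_lsc_convex}) together with $\W(P_T\rho_s,\rho_s)\to 0$ concludes. No bisection/discretization is required, and neither regular sublevel sets nor the contraction estimate are used in this step.

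Two smaller comments on bookkeeping. First, the preliminary step in which you try to show $\rho_s\in D(\Ent)$ is unnecessary: by the paper's definition, strong geodesic $K$-convexity concerns only geodesics in $(D(\Ent),\W)$, so $\Ent(\rho_s)<\infty$ is part of the setup rather than something to be proved (and the argument you sketch for it via $\EVI_K$ starting at $P_h\rho_0$ does not actually yield finiteness). Second, you misattribute where the hypothesis of regular sublevel sets is used: it enters solely in Theorem \ref{D(Ent)_geodesic} to guarantee existence of geodesics with uniformly bounded entropy, not in the Daneri--Savar\'e argument; the latter is a purely metric consequence of $\EVI_K$.
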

\begin{proof}
By Theorem \ref{D(Ent)_geodesic}, the space $(D(\Ent),\W)$ is geodesic, and by \cite[Proposition 2.23]{AGS14b} the entropy is $K$-convex along all geodesics in $(D(\Ent),\W)$.
\end{proof}

Let us summarize the results of this section.
\begin{theorem}\label{main_theorem_summary}
Assume that $\tau$ is a state, $L^1(\M,\tau)$ is separable and $\theta$ is the logarithmic mean. For $K\in\IR$ consider the following properties.
\begin{itemize}
\item[(i)]The semigroup $(P_t)$ satisfies the gradient estimate $\GE(K,\infty)$.
\item[(ii)]The semigroup $(P_t)$ is an $\EVI_K$ gradient flow of $\Ent$, the sublevel sets of $\Ent$ are regular and $\W$ is non-degenerate.
\item[(iii)]The pseudo metric $\W$ is non-degenerate, $(D(\Ent),\W)$ is geodesic and $\Ent$ is strongly geodesically $K$-convex.
\end{itemize}
Then (i)$\implies$(ii)$\implies$(iii).
\end{theorem}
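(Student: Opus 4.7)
The plan is to prove each implication by assembling results that have already been proved in Sections 7 and 8; this theorem functions mainly as a synthesis of the previous development rather than introducing new analytic content.

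For the implication (i)$\implies$(ii), I would first extract the density assumptions needed downstream. Under $\GE(K,\infty)$, Corollary \ref{A_LM_dense} provides that $\A_\theta\cap\M_1$ is $\norm\cdot_\E$-dense in $D(\E)\cap\M_1$ and strongly dense in $\M_1$; the former, combined with the $\norm\cdot_\E$-density of $D(\E)\cap\M$ in $D(\E)$ (obtained via cut-offs of the Markovian form $\E$), upgrades to density of $\A_\theta$ in $D(\E)$, while the latter yields $\sigma$-weak density of $\A_\theta$ in $\M$. With these density hypotheses in place, Theorem \ref{EVI_gradient_flow} applies verbatim and exhibits $(P_t)$ as an $\EVI_K$ gradient flow of $\Ent$. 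The regularity of the sublevel sets of $\Ent$ is then exactly the conclusion of the proposition in Section \ref{geodesic_convex} asserting that $\GE(K,\infty)$ forces entropy sublevel sets to be regular. Finally, non-degeneracy of $\W$ follows from Proposition \ref{W_implies_weak_con}: since $\A_\theta$ separates points of $L^1(\M,\tau)$ by $\sigma$-weak density in $\M$, the inequality $\abs{\tau(a(\rho_1-\rho_0))}^2\leq\norm{a}_\theta^2\,\W(\rho_0,\rho_1)^2$ together with $\rho_0\neq\rho_1$ yields $\W(\rho_0,\rho_1)>0$.

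For the implication (ii)$\implies$(iii), non-degeneracy of $\W$ is assumed directly. The fact that $(D(\Ent),\W)$ is a geodesic space is precisely Theorem \ref{D(Ent)_geodesic}, whose hypotheses (the $\EVI_K$ gradient flow property and regularity of sublevel sets) are exactly the two non-trivial parts of (ii). Strong geodesic $K$-convexity of $\Ent$ is then the content of Theorem \ref{Ent_semi-convex}: being an $\EVI_K$ gradient flow combined with regular sublevel sets yields, by the abstract metric-space result \cite[Proposition 2.23]{AGS14b} cited in that proof, $K$-convexity along \emph{every} geodesic in $(D(\Ent),\W)$, which is what ``strongly geodesically $K$-convex'' means.

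There is no genuine obstacle: every ingredient is already available, and the theorem is essentially bookkeeping that identifies which earlier hypotheses are automatic under $\GE(K,\infty)$ and which propagate forward under the $\EVI_K$ formulation. The only point requiring mild care is verifying that the $\norm\cdot_\E$-density of $\A_\theta\cap\M_1$ in $D(\E)\cap\M_1$ provided by Corollary \ref{A_LM_dense} indeed suffices for the density hypothesis of Theorem \ref{EVI_gradient_flow}; this is where the Markovian cut-off and a scaling argument are invoked, but no new estimates are required.
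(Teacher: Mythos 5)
Your proposal is correct and matches the paper's (implicit) approach: the theorem is stated as a summary with no proof block, and the intended argument is exactly the bookkeeping you carry out. In particular, the paper explicitly remarks at the start of Section~\ref{heat_gradient_flow} that the density assumptions on $\A_\theta$ "follow automatically from the gradient estimate," which is Corollary~\ref{A_LM_dense} together with the cut-off/scaling observation you flag; the chain Theorem~\ref{EVI_gradient_flow} $\to$ unlabeled regularity proposition $\to$ Proposition~\ref{W_implies_weak_con} gives (i)$\implies$(ii), and Theorems~\ref{D(Ent)_geodesic} and~\ref{Ent_semi-convex} give (ii)$\implies$(iii), exactly as you describe.
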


\begin{remark}
The properties (i), (ii) and (iii) can all be understood as lower Ricci curvature bounds for the geometry determined by $\E$. This approach has been studied intensively for metric measure spaces (see e.q. \cite{AGS14b,LV09,Stu06a,Stu06b}) and, more recently, also for graphs (see e.g. \cite{EM12,EHMT17}). We hope that the present framework allows to address the highly interesting question of introducing a concept of Ricci curvature (bounds) in noncommutative geometry. First steps in this direction was already taken by Hornshaw \cite{Hor18}.
\end{remark}

\begin{remark}
The properties (i), (ii) and (iii) are equivalent for the Cheeger energy on infinitesimally Hilbertian length metric measure spaces (\cite{AGS15}, Theorem 1.1) and the Dirichlet form associated with a finite graph (\cite[Theorem 4.5]{EM12} and \cite[Theorem 3.1]{EF18}). It would be interesting to know if this is still true in this more general setting.
\end{remark}

\bibliography{mybib}{}
\bibliographystyle{alpha}

\end{document}